\documentclass[10pt,a4paper,reqno]{amsart}
\usepackage{amsmath,amsfonts,mathrsfs,amssymb,amsthm,mathtools}
\usepackage[shortlabels]{enumitem}

\usepackage[foot]{amsaddr}
\usepackage[toc, page]{appendix}

\usepackage[margin=1in]{geometry}

\usepackage{comment}

\usepackage[dvipsnames]{xcolor}
\definecolor{MyColor}{HTML}{0047AB}
\usepackage{fancyhdr}
\usepackage{hyperref}
\hypersetup{
colorlinks=true, 
linktoc=all,     
linkcolor=blue,  
citecolor=blue,
}

\theoremstyle{definition}
\newtheorem{anytheorem}{Theorem}[section] 
\newtheorem{definition}[anytheorem]{Definition}
\newtheorem{theorem}[anytheorem]{Theorem}
\newtheorem{corollary}[anytheorem]{Corollary}
\newtheorem{lemma}[anytheorem]{Lemma}
\newtheorem{proposition}[anytheorem]{Proposition}
\newtheorem{remark}[anytheorem]{Remark}

\newtheorem{assumption}[anytheorem]{Assumption}

\allowdisplaybreaks


\DeclareMathOperator*{\argmin}{arg\,min}

\DeclareMathOperator*{\tr}{Tr}

\newcommand{\E}{\ensuremath{\mathbb{E}}}  
\newcommand{\KL}{\text{KL}}  
  
\newcommand{\R}{\ensuremath{\mathbb{R}}}


\def\cL{\mathcal{L}}
\def\cM{\mathcal{M}}

\def\cO{\mathcal{O}}
\def\cP{\mathcal{P}}

\def\cT{\mathcal{T}}

\def\d{{ \mathrm{d}}}

\def\btau{{\boldsymbol{\tau}}}

\def\sE{{\mathbb{E}}}

\def\sN{{\mathbb{N}}}
\def\sP{\mathbb{P}}

\def\sR{{\mathbb R}}

\newcommand{\lc}
{\mathrel{\raise2pt\hbox{${\mathop<\limits_{\raise1pt\hbox
{\mbox{$\sim$}}}}$}}}

\newcommand{\gc}
{\mathrel{\raise2pt\hbox{${\mathop>\limits_{\raise1pt\hbox{\mbox{$\sim$}}}}$}}}

\newcommand{\ec}
{\mathrel{\raise2pt\hbox{${\mathop=\limits_{\raise1pt\hbox{\mbox{$\sim$}}}}$}}}

\def\bb{\begin{equation}} \def\ee{\end{equation}}
\def\bbn{\begin{equation*}} \def\een{\end{equation*}}

\def\beqn{\begin{eqnarray}}  \def\eqn{\end{eqnarray}}

\def\beqnx{\begin{eqnarray*}} \def\eqnx{\end{eqnarray*}}

\def\bn{\begin{enumerate}} \def\en{\end{enumerate}}

\def\bd{\begin{description}} \def\ed{\end{description}}

\makeatletter
\@namedef{subjclassname@2020}{\textup{2020} Mathematics Subject Classification}
\makeatother

\begin{document}
 
\title{Entropy annealing for  policy  mirror descent in continuous time and space }

\author{Deven Sethi$^1$}
\email{D.Sethi-1@sms.ed.ac.uk}
\author{David \v{S}i\v{s}ka$^1$}
\email{d.siska@ed.ac.uk}
\author{Yufei Zhang$^2$}
\email{yufei.zhang@imperial.ac.uk}

\address{$^1$School of Mathematics, University of Edinburgh, United Kingdom}
\address{$^2$Department of Mathematics, Imperial College London, United Kingdom}

\keywords{Mirror descent, policy gradient method,   stochastic relaxed control, convergence rate analysis, 
entropy regularization, annealing}

\subjclass[2020]{Primary 93E20; Secondary 49M29, 68Q25, 60H30, 35J61}

\begin{abstract}
Entropy regularization has been widely used in policy optimization algorithms to 
enhance exploration and the robustness of the optimal control; however
it also introduces
an additional regularization bias.
This work quantifies the impact of entropy regularization on the convergence of policy gradient methods for stochastic exit time control problems.
We analyze a continuous-time policy mirror descent dynamics, which updates the policy based on the gradient of an entropy-regularized value function
and adjusts the strength of entropy regularization as the algorithm progresses.
We prove that with a fixed entropy level, the mirror descent dynamics converges exponentially to the optimal solution of the regularized problem. 
We further   show that when the entropy level decays at suitable polynomial rates, the annealed flow converges to the solution of the unregularized problem at a rate of  $\mathcal O(1/S)$ for discrete action spaces and, under suitable conditions, at a rate of $\mathcal O(1/\sqrt{S})$ for general action spaces, with $S$ being the gradient flow running time.
The technical challenge lies in analyzing  the gradient flow in the infinite-dimensional space of Markov kernels for nonconvex objectives.
This paper explains how entropy regularization improves policy optimization, even with the true gradient, 
from the perspective of convergence rate.
\end{abstract}

\maketitle


\section{Introduction}
\label{sec:intro}

Policy gradient (PG) method and its variants have proven highly effective in seeking optimal feedback policies for stochastic control problems (see, e.g., \cite{konda1999actor, sutton1999policy, kakade2001natural, schulman2015trust, han2016deep, schulman2017proximal, tomar2020mirror, hamdouche2022policy, pham2022mean}). These algorithms parameterize the policy as a function of the system state and
seek the optimal policy parameterization based on gradient descent of the control objective. 
When coupled with appropriate  function approximations, they  can   handle high-dimensional systems with continuous state and action spaces \cite{van2012reinforcement,  manna2022learning}.

Despite the practical success of PG   methods,   a mathematical theory that guarantees their convergence  has remained elusive, especially  for continuous-time control problems. This challenge arises from the inherent nonconvexity of the objective function
with respect to policies,  a feature that persists even in linear-quadratic (LQ) control settings \cite{fazel2018global, giegrich2024convergence}. 
Most existing theoretical works   concentrate on discrete-time Markov decision processes (MDPs),
 leveraging specific problem structures to circumvent the nonconvexity of the loss function 
(see e.g., \cite{fazel2018global, agarwal2020optimality, mei2020global, hambly2021policy, kerimkulov2023fisher}).
However, certain structural properties crucial for analysing PG methods in MDPs are intrinsically tied to 
the time and spatial discretization scales  of the underlying system, 
making them inapplicable in continuous-time and state problems 
  \cite{tallec2019making, giegrich2024convergence}. 
Hence  new analytical techniques are necessary for designing and analyzing PG methods in continuous time and space.

In particular, the entropy-regularized relaxed control formulation has emerged as a promising approach for designing efficient algorithms in continuous-time and state control problems 
\cite{wang2020reinforcement, wang2020continuous,   jia2022policy,  guo2022entropy, szpruch2024optimal}. 
This approach regularizes the objective with an additional entropy term, presenting a natural extension of well-established regularized MDPs (see e.g.,   \cite{ahmed2019understanding, mei2020global, kerimkulov2023fisher}) into the continuous domain. Entropy regularization guarantees the existence of the optimal 
stochastic 
policy,
which facilitates extending existing PG algorithms for MDPs with softmax policies to the continuous-time setting \cite{  wang2020continuous, jia2022policy}.
It also 
 ensures the Lipschitz stability with respect to  the underlying model \cite{reisinger2021regularity}, which  is critical for analyzing the sample complexity of algorithms  \cite{ basei2022logarithmic, szpruch2021exploration, guo2023reinforcement}.

 Despite the recent increased interest in entropy-regularized control problems,  to the best of our knowledge there is   no theoretical work quantifying the impact of entropy regularization on the convergence of PG methods. A high degree of entropy regularization convexifies the optimization landscape  \cite{ahmed2019understanding}  but also introduces a larger regularization bias.
This yields  the following natural  question: 

\vspace{0.3cm}
\emph{
How should    the strength of entropy  regularization be set  for a convergent policy gradient algorithm in continuous-time control problems?
} 
\vspace{0.3cm}

This work takes an initial step toward
  answering this question   in the context of  exit time control problems.
  We propose a policy mirror descent algorithm based on the gradients of entropy-regularized value functions.
  The strength of entropy regularization is determined by balancing the resulting regularization bias and the optimization error, thereby optimizing the convergence rate of the proposed algorithm.

\subsection{Outline of main results}  
In the sequel, we provide a road map of the key ideas and contributions of this work without introducing needless
technicalities. The precise assumptions and statements of the results can be found in Section \ref{sec:main_results}.

\subsubsection*{Exit time control problem}
We conduct a thorough analysis  for   exit time relaxed control problems with drift control;
see   Section \ref{sec:control_diffusion}  for  extension to controlled diffusion coefficients.
Let $A$ be a metric space  representing 
the action space,
and let $\cP(A|\sR^d)$ be the space of probability  kernels 
 representing   all stochastic polices. 
For each $x\in \sR^d$ and $\pi\in \cP(A|\sR^d)$, 
let 
the state process  $X^{x,\pi}$ be the unique weak solution 
to 
the following  dynamics:
\begin{align}
\label{eqn:state_intro}
dX_t=\left(\int_A b(X_t,a)\pi(da | X_t)\right)dt+\sigma(X_t)dW_t,\enspace t> 0; \quad X_0=x,
\end{align}
where $b:\sR^d\times A\to \sR^d$ and $\sigma:\sR^d\to\sR^{d\times d'}$ are given    measurable  functions,
and $(W_t)_{t\ge 0}$ is a $d'$-dimensional   standard Brownian motion
defined on a  filtered probability space $(\Omega,\mathcal{F},\mathbb{F},\mathbb{P}^{x,\pi})$. 
Let $\cO $ be a given bounded open subset of   $  \sR^d$
and 
consider  the value function 
\begin{align}
\label{eq:value_intro}
 v^\pi_0(x)
    \coloneqq &\E^{\sP^{x,\pi}}\left[\int_0^{\tau_\cO} \Gamma^\pi_t \left(\int_Af(X_t,a)\pi(da | X_t) \right)dt+ \Gamma^\pi_{\tau_\cO}  g(X_{\tau_\cO})\right],
\end{align}
where 
$\tau_{\mathcal O}$ is  the first exit time of  $X^{x,\pi}$ from 
 $\cO   $,
$  \Gamma^\pi_t = \exp\left(-\int_0^t \int_A c(X_s^{x,\pi},a)\pi(da | X_s^{x,\pi}) ds \right)$ 
is  the controlled discount factor,
and 
$f:\sR^d\times A\to \sR$, $c:\sR\times A\to \sR$ and $g:\sR^d\to\sR $ are given measurable   functions.
The  optimal value function is 
 defined by 
\begin{align}
\label{eq:value_optimal_intro}
v^*_0(x)
\coloneqq &\inf_{\pi\in \cP(A|\sR^d) }  v^\pi_0(x),\quad x\in \overline{\cO}\,.
\end{align}
Note that 
in \eqref{eq:value_intro} and hereafter, 
we denote expectations of quantities defined on the space $(\Omega, \mathbb{F},\mathbb{P}^{x,\pi})$ with the superscript $\mathbb{P}^{x,\pi}$, indicating their dependence on $x$ and $\pi$.
 Precise assumptions on $\cO$, $b$, $\sigma$, $c$, $f$ and $g$  
are given in Assumptions \ref{ass:data} and \ref{assum:lsc}.
 
 \subsubsection*{Policy mirror descent }
We now derive a (continuous-time) mirror descent algorithm for \eqref{eq:value_optimal_intro},
which 
is  analogous to the mirror descent algorithm
in \cite{kerimkulov2023fisher}  for discrete-time MDPs.
 The algorithm  relies on  three key components: 
 1)  It optimizes the value function  \eqref{eq:value_intro}
over the class of   Gibbs policies;
2) It computes the policy gradient by regularizing the value function \eqref{eq:value_intro} with an entropy term;
3) It gradually reduces the strength of entropy regularization as the algorithm progresses.

 More precisely, 
 let $\cP(A)$ be the space of probability measures on $A$,
 let  $\mu\in \mathcal{P}(A)$ be a prescribed   reference measure,
 and consider  the  following  class $\Pi_{\mu}$ of Gibbs policies:
\begin{align}
\label{eqn:policy_intro}
\Pi_{\mu}\coloneqq \left\{\pi\in \cP(A|\sR^d)\mid \pi=\boldsymbol{\pi}(Z)\; \textnormal{for some $Z\in B_b(\cO\times A)$} \right\},
\end{align}
where   $B_b(\cO\times A)$ 
is  the space of bounded measurable functions, 
and 
the map 
$\boldsymbol{\pi}:B_b(\cO\times A)\to \cP(A|\sR^d)$
is defined by
\begin{align}
\label{eq:operator_pi_intro}
\boldsymbol{\pi}(Z)(da|x) \coloneqq 
\frac{e^{Z(x,a)}}{\int_Ae^{Z(x,{a}')}\mu(d{a}')}\mu(da),\quad x\in \cO;
\quad  
\boldsymbol{\pi}(Z)(da|x)\coloneqq  \mu(da),\quad x\not\in \cO\,.
\end{align}

Each   policy $\boldsymbol{\pi}(Z)$ in $\Pi_{\mu}$ is parameterized  by the  feature function $Z$, which extends the softmax policies  for discrete state and action spaces \cite{agarwal2020optimality, mei2020global} to the present setting with continuous state space and general action space.
The policy parameterization~\eqref{eq:operator_pi_intro}
is also inspired by 
the form of the optimal policy for  an entropy-regularized control problem,
where the feature $Z$ corresponds to the   Hamiltonian of the control problem; see Proposition~\ref{prop:verification} and \cite{jia2023q}.

Note that the map $Z\mapsto v^{\boldsymbol{\pi}(Z)}_0$ is generally nonconvex,  
even in   a stateless   bandit setting  as pointed out in \cite[Proposition 1]{mei2020global}.
This presents the main technical challenge in designing convergent policy optimization algorithms.

In the paper, we seek a (nearly) optimal   policy $\boldsymbol{\pi}(Z)$
for \eqref{eq:value_optimal_intro} 
 by optimizing the feature  $Z$ 
 via
 the following 
mirror descent flow:  
given an initial feature $Z_0 \in  B_b(\cO\times A)$, and consider
\begin{equation}
\label{eq:md_intro}
\partial_s {Z_s}(x,a) =-\left( \overline{ \mathcal L}^a v^{\boldsymbol{\pi}(Z_s)}_{\btau_s} (x)+ f(x,a)  + \btau_s Z_s(x,a)\right)  \,,\quad   (x,a)\in \cO\times A,\,s>0; \quad Z\vert_{s=0}=Z_0\,,
\end{equation}
where  
 $\overline{\mathcal{L}}^a$  is the differential operator such that 
\begin{align}
\label{eq:first_order_generator}
(\overline{\mathcal{L}}^a v)(x) = b(x,a)^\top Dv(x) -c(x,a)v(x),
\quad \forall v\in C^{1}(\cO)\,,
\end{align}
$\btau:[0,\infty)\to (0,\infty) $ is a prescribed   scheduler for the  regularization parameter,
and 
  for each $\pi\in \Pi_\mu$ and  $\tau>0$, 
  $v^\pi_\tau$ is the entropy-regularized value function defined by 
\begin{align}
\label{eq:value_tau_intro}
 v^\pi_\tau(x)
    \coloneqq & v^\pi_0(x)+\tau  \E^{\sP^{x,\pi}}\left[\int_0^{\tau_\cO} \Gamma^\pi_t  \textrm{KL}\left(\pi |\mu \right)(X_t) dt \right]\,,
    \end{align}
with $ v^\pi_0(x)$ being defined as in \eqref{eq:value_intro},
and      $  \textrm{KL}\left(\pi  |\mu \right)(X_t)$ 
being the Kullback--Leibler (KL) divergence
of $\pi (\cdot|X_t )$ with respect to $\mu$.
The additional KL divergence  in \eqref{eq:value_tau_intro}
 is crucial for ensuring the   convergence of \eqref{eq:md_intro} with continuous action spaces, as we shall discuss in detail later.
Note that for $\pi \in \Pi_\mu$ and $x\in \mathcal O$ we have $\operatorname{KL}(\pi(\cdot|x)|\mu) < \infty$ and later, once all assumptions on $b$, $\sigma$ and $c$ are stated, we will see that $\E^{\sP^{x,\pi}}\left[\int_0^{\tau_\cO} \Gamma^\pi_t  \textrm{KL}\left(\pi |\mu \right)(X_t) dt \right] <\infty$ as well.

The flow \eqref{eq:md_intro} is a continuous-time limit of a
mirror descent algorithm with  regularized gradient directions.  
Indeed, one can show that    for all $\pi,\pi'\in \Pi_\mu$ and $\tau>0$  (see  Lemma~\ref{lemma:performance_diff}),
 \begin{align*}
 \lim_{\varepsilon\searrow 0}\frac{v_\tau^{\pi+\varepsilon(\pi'-\pi)}(x)-v_\tau^{\pi}(x)}{\varepsilon}  
 &=\left\langle \pi'-\pi, \overline{\mathcal{L}}^\cdot v_\tau^{\pi}+f 
+ \tau\ln\frac{  d \pi}{  d\mu}  \right \rangle_\pi \,,
\end{align*}
where  
$\langle \cdot,\cdot  \rangle_\pi: b\cM(A\mid \sR^d)\times B_b(\cO\times A)\to \sR$ 
is a  (policy-dependent) dual   pairing defined by 
$\langle
\tilde{\pi},h  \rangle_\pi \coloneqq \E^{\mathbb P^{x,\pi}}
\int_0^{\tau_\cO} \Gamma^\pi_t \int_A 
h(X_t,a) \tilde{\pi} \left(da|X_t\right)  dt$,
and $b\cM(A\mid \sR^d)$ is the space of bounded signed kernels. 
Heuristically,  the function 
\begin{equation}
\label{eq first variation intro}
(x,a)\mapsto \frac{\delta v_\tau^{{\pi}}}{\delta \pi}(x,a)  \coloneqq     (\overline{\mathcal{L}}^a v_\tau^{\pi})(x)+f (x,a)
+ \tau\ln\frac{  d \pi}{  d\mu}(a|x)
\end{equation}
 can be interpreted 
 as a  derivative (first variation) of $  v^\cdot_\tau$ at $\pi$ relative to the pairing $\langle
\cdot,\cdot  \rangle_\pi $ (see e.g., \cite{kerimkulov2023fisher}). 
 Now consider the following policy mirror descent update: 
 let $\pi_0\in \Pi_\mu$, and for all $n\in \sN\cup\{0\}$, 
given $\tau_n>0$, define  
 \begin{align*}
    \pi_{n+1} (da|x) 
    &=\argmin_{m\in\mathcal{P}(A)}\bigg(\int_A
        \frac{\delta v_{\tau_n}^{{\pi_n}}}{\delta \pi}(x,a) 
 m(da) + \frac{1}{\lambda}\KL(m|\pi_n)(x)\bigg)
 =\frac{e^{- {\lambda}    \frac{\delta v_{\tau_n}^{{\pi_n}}}{\delta \pi}(x,a) }}{\int_A e^{- {\lambda}    \frac{\delta v_{\tau_n}^{{\pi_n}}}{\delta \pi}(x,a')  }\pi_n(da'|x)} \pi_n(da|x)\,,
\end{align*}
which
 optimizes the first-order approximation of $\pi\mapsto v^\pi_{\tau_n}$ 
 around $\pi_n$, and uses the KL divergence 
  to ensure optimization within a sufficiently small domain.
 A straightforward computation shows that 
 this is equivalent to 
 setting $\pi_{n}=\boldsymbol{\pi}(Z_{n})$ 
 for all $n\in \sN$, 
 and  updating  $Z_n$  by   
 \begin{align*}
    \frac{Z_{n+1}(x,a)-Z_n(x,a)}{\lambda} =   -\left(\overline{\mathcal{L}}^av^{\pi_n}_{\tau_n}(x) + f(x,a)+\tau_n Z_n(x,a)\right) \,,
\end{align*}
from which, by interpolating and  letting 
  $\lambda\rightarrow 0$,  we obtain the flow  \eqref{eq:md_intro}.

The first variation of 
value function with respect to policies 
has been used to design policy mirror descent for discrete-time control problems 
in \cite{kerimkulov2023fisher}.
In that case, it is represented as the (discrete-time) $Q$-function along with the log-density of the policy.  
The first variation 
$  \frac{\delta v_\tau^{{\pi}}}{\delta \pi}$ in~\eqref{eq first variation intro}
is a continuous-time analogue of 
the representation in~\cite{kerimkulov2023fisher},
and 
the function
$(x,a)\mapsto (\overline{\mathcal{L}}^a v_\tau^{\pi})(x)+f (x,a)$ 
has been referred to as the $q$-function in \cite{jia2023q}.

 \subsubsection*{Our contributions}

This work analyzes    
the  mirror descent flow \eqref{eq:md_intro} with different  choices of  schedulers $\btau$. 
 
  \begin{itemize}
  \item 
We show that the flow \eqref{eq:md_intro} with a continuous scheduler $\btau$ admits  a unique  solution  when  the state process \eqref{eqn:state_intro} has nondegenerate noise (Theorem \ref  {thm:Existence_Solutions}).
  Moreover, 
    regularized value functions decrease along the flow   if $\btau$ is continuously differentiable and decreasing
(Theorem \ref{ref:cost_decrease_along_flow}).
  
 \item
We prove that for a constant   $\btau\equiv \tau$,  the solution to  \eqref{eq:md_intro}  converges at a global
exponential rate to the optimal solution of the $\tau$-regularized problem \eqref{eq:value_tau_intro} (Corollary \ref{cor:convergence_of_GF}).

\item 

We analyze the convergence of \eqref{eq:md_intro} to the unregularized problem \eqref{eq:value_optimal_intro} using a constant   
 $\btau$ chosen  based on a prescribed   running horizon  $S$ of \eqref{eq:md_intro}. The resulting flow yields   an error of 
$\cO(1/S)$  for discrete action spaces  (Theorem \ref{ex:discrete_action_space}), and under suitable conditions, achieves   a comparable rate for general action spaces, albeit with an additional logarithmic factor  (Theorem \ref{thm:conv_rate_continuous_constant}).
 
\item 

We examine the annealed flow \eqref{eq:md_intro} with a decaying $\btau$. For discrete action spaces, $\btau_s = 1/(s+1)$ achieves $\cO(1/S)$ convergence  to the unregularized problem \eqref{eq:value_optimal_intro} as  the  running horizion  $S\to \infty$ 
(Theorem  \ref{thm:conv_discrete_anneal}). For general action spaces, $\btau_s = 1/\sqrt{s+1}$ yields $\cO(1/\sqrt{S})$ convergence, up to a   logarithmic term (Theorem \ref{thm:conv_general_anneal}).

  \end{itemize} 
  
  To the best of our knowledge, this is the first theoretical work on the precise impact  of the entropy scheduler on the convergence rate of a PG method for continuous-time control problems.

 \subsubsection*{Our approaches and the importance of   entropy regularization}

The key idea of the convergence analysis of    \eqref{eq:md_intro} 
is to balance the optimization error and the regularization bias. 
In particular, let $(Z_s)_{s\ge 0}$ be the solution to  \eqref{eq:md_intro}  with a given  scheduler $\btau$, 
 we   decompose the error of  $\boldsymbol{\pi}(Z_s)$  into 
 \begin{equation}
\label{eq:error_decomposition}
0\le v^{{\boldsymbol{\pi}(Z_s)}}_0-v^*_0 = (v^{{\boldsymbol{\pi}(Z_s)}}_0-v^{{\boldsymbol{\pi}(Z_s)}}_{\btau_s})+(v^{{\boldsymbol{\pi}(Z_s)}}_{\btau_s} -v^*_{\btau_s})
+(v^*_{\btau_s}
- v^*_0)
\,,
\end{equation}
where
$v^*_{\btau_s}=\inf_{\pi\in \cP(A|\sR^d)}v^\pi_{\btau_s}$ is the optimal  ${\btau_s}$-regularized value function. 
The first term in \eqref{eq:error_decomposition} is negative due to the positivity of the KL divergence.
The second term in \eqref{eq:error_decomposition}  represents the optimization error of  \eqref{eq:md_intro} for a regularized problem.
The third term in \eqref{eq:error_decomposition} is the  regularization  bias resulting from    the additional KL divergence in  \eqref{eq:value_tau_intro}.

We 
establish explicit   bounds for  the 
optimization error and  regularization bias in terms of $\btau$,
and 
 optimize the overall error \eqref{eq:error_decomposition} by selecting appropriate   constant or time dependent 
schedulers 
$\btau$. 
For 
  the optimization error, we derive an explicit upper bound of   $v^{{\boldsymbol{\pi}(Z_s)}}_{\btau_s} -v^*_{\tau}$   in terms of the scheduler $\btau$ and $\tau>0$ (Proposition \ref{thm:convergence_of_GF} and Theorem \ref{cor:extend_conv_GF}). 
This is achieved by showing $s\mapsto \mathbb E^{\mathbb P^{x,\pi^\ast_\tau} } \int_0^{\tau_{\cO}} \Gamma^{\pi^*_\tau}_t \text{KL}(\pi^\ast_\tau | {\boldsymbol{\pi}(Z_s)} )( X_t)\,dt$ serves as a differentiable Lyapunov function,
with $\pi^*_\tau$ being the optimal $\tau$-regularized policy, 
and by overcoming the    non-convexity of $Z\mapsto v^{\boldsymbol{\pi}(Z)}_\tau$
using a performance difference lemma  (Lemma~\ref{lemma:performance_diff}).
We further  prove that   the regularization bias $v^*_{\tau}
- v^*_0$ vanishes as $\tau\to 0$ for any sufficiently exploring      reference measure $\mu\in\cP(A)$ (Theorem \ref{thm:convergece_tau}).
An explicit  decay rate in terms of $\tau$
 is identified by deriving  
  precise  asymptotic expansions of the regularized Hamiltonians.

It is worth pointing out that in cases  with continuous action spaces $A$, computing the policy gradient using a regularized value function in \eqref{eq:md_intro} is essential for  the convergence analysis of the   flow. 
Indeed, 
consider the flow with unregularized gradient ($\btau \equiv 0$ in   \eqref{eq:md_intro}): 
\begin{equation}
\label{eq:md_unregularized_intro}
\partial_s {Z_s}(x,a) =-\left( \overline{\mathcal{L}}^a v^{\boldsymbol{\pi}(Z_s)}_0 (x)+ f(x,a)    \right)  \,,\quad   (x,a)\in \cO\times A,\,s>0\,; \quad Z\vert_{s=0}=Z_0. 
\end{equation}
Suppose  that  the unregularized problem \eqref{eq:value_optimal_intro}
has an  optimal  policy $\pi^*_0$.
Formally   differentiating $s\mapsto \mathbb E^{\mathbb P^{x,\pi^\ast_0} } \int_0^{\tau_{\cO}} \Gamma^{\pi^*_0}_t \text{KL}(\pi^\ast_0 | {\boldsymbol{\pi}(Z_s)} )( X_t)\,dt$ suggests that  
 \begin{equation}
 \label{eq:error_unregulraised}
 v^{\boldsymbol{\pi}(Z_s)}_0  (x)-v^*_0 (x)\leq \frac{1}{s}
\mathbb E^{\mathbb P^{x,\pi^\ast_0} } \int_0^{\tau_{\cO}} \Gamma^{\pi^\ast_0}_t \text{KL}(\pi^\ast_0|{\boldsymbol{\pi}(Z_0)} )( X_t) \,dt, \quad s>0\,,
\end{equation}
which can be viewed as the limiting case of \eqref{eq:convergence_gf_constant}   as $\tau\to 0$ (see also \cite[Theorem 4.1]{zhang2021wasserstein}).
However, the error bound 
\eqref{eq:error_unregulraised} does not imply  \eqref{eq:md_unregularized_intro} converges with a rate   $\cO(1/s)$
as the constant in \eqref{eq:error_unregulraised}   is generally  infinite. 
In fact, since $\pi^*_0(\cdot|x)$ is typically   a Dirac measure and ${\boldsymbol{\pi}(Z_0)}\in \Pi_\mu$, 
it is   infeasible to prescribe  a reference measure $\mu$ such  that 
$\text{KL}(\pi^\ast_0|\boldsymbol{\pi}(Z_0) )(X_t)<\infty$ for all $t$. 

We overcome this difficulty by utilizing a regularized policy gradient in  \eqref{eq:md_intro}, which allows for introducing   the regularized value function as an intermediate step for the convergence analysis. By carefully balancing the optimization error and regularization bias, we achieve similar error bounds   up to a logarithmic term.

\subsection{Most related works}
Mirror descent is a well-established optimization algorithm, whose convergence has been  extensively analyzed for static optimization problems over both Euclidean spaces \cite{bubeck2015convex, beck2003mirror, lu2018relatively} and spaces of measures \cite{aubin2022mirror, nitanda2021particle}. Recently, mirror descent has been adapted to design policy optimization algorithms for discrete-time MDPs, owing to its equivalence with the natural policy gradient method \cite{raskutti2015information}. 
It has been shown that  policy mirror descent achieves linear convergence to the global optimum for MDPs whose action spaces are finite  sets  \cite{lan2023policy, zhan2023policy, alfano2024novel}, subsets of  Euclidean spaces \cite{lan2022policy}, and  general Polish   spaces \cite{kerimkulov2023fisher}.

For continuous-time control problems,
most existing works
on PG algorithms
focus on algorithm design rather than convergence analysis. 
The primary approach involves applying existing algorithms for MDPs after discretizing both time and state spaces, and then sending     discretization parameters  to zero  \cite{munos2006policy, munos1998reinforcement, munos2000study, han2016deep, pham2022mean, hamdouche2022policy}. 
Recently, \cite{ jia2022policy, zhao2024policy} extend PG   methods  to continuous-time problems without time and space discretisation, in order to develop algorithms that are robust across different time and spatial discretization scales. Yet, very little is known regarding the convergence rate  of   these algorithms.

In fact, convergence analysis of gradient-based algorithms for continuous-time control problems is fairly limited. Works such as \cite{siska2024gradient,sethi2024modified, kerimkulov2022convergence} have established convergence rates for gradient flows involving open-loop controls, which are functions of  the system's underlying noise.
Incorporating open-loop controls  avoid the complexities of nonlinear feedback in state processes, thereby substantially simplifying the analysis. 
For Markov controls, existing studies  typically rely on uniform derivative estimates of policy   iterates to assure algorithmic convergence. For example, \cite{giegrich2024convergence} analyzes natural PG methods  for LQ control problems and demonstrates the uniform Lipschitz continuity of policies by leveraging the inherent LQ structure. 
This analysis is extended to nonlinear drift control problems by \cite{reisinger2023linear}, where similar Lipschitz estimates are established  under sufficiently convex cost functions. 
Moreover,    \cite{zhou2023policy, zhou2024solving} establish the convergence of PG methods for general control problems  under the a-priori assumption that the policy iterates have  uniformly bounded derivatives up to the fourth order, although they   do not provide conditions that guarantee these prerequisites.

In contrast to previous works,  this work introduces a weak formulation of the control problem, which facilities  working with merely measurable policies. This eliminates the need for uniform derivative estimates as required in previous works and allows for considering control problems with more irregular coefficients, broadening the applicability   of our analysis.

\subsection{Notation}

Given topological  spaces $E_1$ and $E_2$,
we denote by $\mathcal{B}(E_1;E_2)$ the space of Borel measurable functions $\phi:E_1\to E_2$,
and by $B_b(E_1; \sR^k)$ the space of bounded Borel measurable functions $\phi:E_1\to \sR^k$ equipped with the supremum norm $\|\phi\|_{B_b(E_1)}=\sup_{x\in E_1}|\phi(x)|$. 
We will write $B_b(E_1)=B_b(E_1; \sR^k)$ when the range is clear from the context.
Given $E\subset\mathbb{R}^n$ and $p\in[1,\infty)$ 
let $L^p(E)$ be the space of Borel measurable functions $f:E\rightarrow\mathbb{R}$ such that $\int_E|f|^pdx<\infty$
equipped with the norm $\|f\|_{L^p(E)}
\coloneqq \left(\int_E|f|^pdx\right)^{\frac{1}{p}}$.
Also for $k\in\mathbb{N}$ and $p\in[1,\infty)$ 
let $W^{k,p}(E)$ be the space 
of all Borel measurable $f:E\rightarrow\mathbb{R}$ 
whose generalized derivatives up to order $k$ exists and are in $L^p(E)$ and is equipped with the norm $\|f\|_{W^{k,p}(E)}=\left(\sum_{|\alpha|\leq k}\|D^\alpha f\|_{L^p(E)}^p\right)^{\frac{1}{p}}$ where $\alpha$ is a multi-index and $D^\alpha$ the generalized derivative. 

For a given domain 
$\cO\subset\mathbb{R}^d$ and $k\in \mathbb{N}$, $C^{k}(E)$ denotes the space of $k$-times continuously differentiable functions in $\overline{\cO}$
equipped with the norm $\|f\|_{C^k(\overline{\cO})}=\sum_{i=0}^k\|D^if\|_{C^0(\overline{\cO})}$, where 
$\|f\|_{C^0(\overline{\cO})}=\sup_{x\in\overline{\cO}}|f(x)|$.
Given $\alpha\in(0,1)$ and a function $u:\cO\rightarrow\mathbb{R}$
we define the H\"older semi-norm $[u]_{\alpha}=\sup_{x,y\in\overline{\cO}}\frac{|u(x)-u(y)|}{|x-y|^\alpha}$ and the  H\"older space $C^{k,\alpha}(\overline{\cO})$ 
is the space of all functions in $C^k(\overline{\cO})$ such that $\|u\|_{C^{k,\alpha}(\overline{\cO})}=\|u\|_{C^k(\overline{\cO})}+[D^ku]_{\alpha}<\infty$.

Given normed vector spaces $(X,\|\cdot\|_Y)$ and $(Y,\|\cdot\|_Y)$ we denote by $\mathcal{L}(X,Y)$ the space of bounded linear operators $T:X\rightarrow Y$ equipped with the operator norm $\|T\|_{\mathcal{L}(X,Y)}=\sup_{\|x\|_X\leq 1}\|Tx\|_Y$.
Given a Banach space $(X, \|\cdot\|_X)$ and a constant $S>0$, let $C^1([0,S];X)$
be the space of continuously  (Fr\'{e}chet) differentiable functions $f:[0,S]\rightarrow X$
equipped with the norm
$\|f\|_{C^1([0,S];X)}=\sup_{s\in[0,S]}\|f(s)\|_X+\sup_{s\in[0,S]}\left\|\frac{d}{ds}f(s)\right\|_X $.

For a given $E\subset\mathbb{R}^d$ we denote by $\mathcal B(E)$ the Borel $\sigma$-algebra and by
$\mathcal{M}(E)$ the space all finite signed measures $\mu$ on $E$ endowed with the total variation norm $\|\mu\|_{\mathcal{M}(E)}=|\mu|(E) = \mu^+(E) + \mu^-(E)$ where for any $B\in \mathcal B(E)$ we have $\mu^+(B) \coloneqq  \sup_{A\in \mathcal B(E), A\subset B} \mu(A) $ and $\mu^-(B) \coloneqq  -\inf_{A\in \mathcal B(E), B \subset A}\mu(A)$. 
Denote by $\mathcal{P}(E)\subset\mathcal{M}(E)$ the space of all probability measures on $E$, again endowed with the total variation norm.
Given $\mu,\nu\in\mathcal{P}(E)$ we write $\nu\ll\mu$ if $\nu$ is absolutely continuous with respect to $\mu$ and define the Kullback--Liebler (KL) divergence of $\nu$ with respect to $\mu$ by $\KL(\nu|\mu)=\int_A\ln\frac{d\nu}{d\mu}(x)\nu(dx)$ if $\nu\ll\mu$ and $+\infty$ otherwise. 

Given $E_1\subset\mathbb{R}^d$ and a separable metric space $(E_2,d_2)$,
$b\mathcal{M}(E_2|E_1)$ denotes the Banach space of bounded signed kernels
$\pi:E_1\rightarrow\mathcal{M}(E_2)$ 
endowed with the norm 
$\|\pi\|_{b\mathcal{M}(E_2|E_1)}=\sup_{x\in E_1}\|\pi(x)\|_{\mathcal{M}(E_2)}$,
i.e.
for each $B\in\mathcal{B}(E_1)$ the map $x\mapsto \pi(B|x)$ is measurable and for each fixed $x$, $\pi(da|x)\in\mathcal{M}(E_1)$.
For a fixed positive measure 
$\mu\in\mathcal{M}(E_2)$ and $\pi\in b\mathcal{M}(E_2|E_1)$ we will write $\pi\ll\mu$ if for each $x\in E_1$, $\pi(\cdot|x)\ll\mu$.
For $\pi,\pi' \in b\mathcal M(E_2|E_1)$ and $x\in E_1$ we define $\operatorname{KL}(\pi|\pi')(x) \coloneqq  \operatorname{KL}(\pi(\cdot|x)|\pi'(\cdot|x))$ and note that $E_1\ni x\mapsto \operatorname{KL}(\pi|\pi')(x)\in \mathbb R \cup \{+\infty\}$ is measurable.
Let $\mathcal{P}(E_1|E_2)\subset b\mathcal{M}(E_1|E_2)$ be the space of bounded probability kernels,
that is $\pi\in b\mathcal{M}(E_1|E_2)$ such that $\pi(d\bar{x}|x)\in\mathcal{P}(E_1)$ for all $x\in E_2$.

\section{Problem formulation and main results}
\label{sec:main_results}

This section summarizes the model assumptions and presents the main results.

\subsection{Relaxed control problem}

The following standing  assumptions  on the coefficients are imposed throughout this paper.

\begin{assumption}\label{ass:data}
Let $A$ be a separable metric space,
$d,d'\in \sN$,  
and
let $\cO\subset \sR^d$ be a bounded domain  (i.e. a connected open set)   
whose boundary $\partial \cO$ is of the class $C^{1,1}$. 
Let 
$b\in B_b(\sR^d\times A; \sR^d)$,
$c\in B_b(\sR^d\times A  ; [0,\infty) )$, $f\in B_b(\sR^d\times A;\sR)$
and
let $\sigma \in  B_b( \sR^d; \sR^{d\times d'})\cap C(\sR^d; \sR^{d\times d'})$ 
satisfy $ \lambda\coloneqq \inf_{x\in \sR^d , u\in \sR^{d'}\setminus \{0\}}\frac{|\sigma (x)u|^2}{|u|^2}>0$. 
Let    
$g\in W^{2,p^*}(\cO)$ with some $p^*\in (d,\infty)\cap [2,\infty)$.

\end{assumption}

Under Assumption \ref{ass:data},  
we consider a weak formulation of the exit time problem.
For each $x\in \sR^d$ and $\pi\in \cP(A|\sR^d)$, 
let
the state process  $X^{x,\pi}$ be the unique weak solution 
to
\begin{align}
\label{eqn:state}
dX_t=\left(\int_A b(X_t,a)\pi(da | X_t)\right)dt+\sigma(X_t)dW_t,\enspace t> 0; \quad X_0=x,
\end{align}
where    $(W_t)_{t\ge 0}$ is a $d'$-dimensional   standard Brownian motion
defined      on a  filtered probability space $(\Omega,\mathcal{F},\mathbb{F},\mathbb{P}^{x,\pi})$. 
Since the diffusion coefficient $\sigma$ is non-degenerate,
the weak solution $X^{x,\pi}$ to \eqref{eqn:state} exists  and  is unique in the sense of probability law 
(see  e.g.,~\cite[Theorem 7.2.1]{stroock1997multidimensional}).
Let $\tau_{\mathcal O}\coloneqq \inf\{t\ge 0\mid X_t^{x,\pi}\not \in \cO\}$ be  the first exit time of  $X^{x,\pi}$ from the domain $\cO$,
and let 
$\Gamma^{\pi}=(\Gamma^{ \pi}_t)_{t\in [0,\tau_\cO)}$ 
be  the controlled discount factor given by
$\Gamma^{\pi}_t \coloneqq \exp\left(-\int_0^t \int_A c(X_s^{x,\pi},a)\pi(da | X_s^{x,\pi}) ds \right)$.
We define the value function 
\begin{equation}
\label{eq:value}
v^\pi_\tau(x)
\coloneqq \E^{\sP^{x,\pi}}\left[\int_0^{\tau_\cO} \Gamma^\pi_t \left(\int_Af(X_t,a)\pi(da | X_t) + \tau\textrm{KL}(\pi |\mu)(X_t )\right)dt+ \Gamma^\pi_{\tau_\cO}  g(X_{\tau_\cO})\right]\,,
\end{equation}
where 
$\tau\geq 0$ is a given regularising weight and
$\mu \in \cP(A)$ is a given reference measure.
Note that  $v^\pi_\tau (x) $ in \eqref{eq:value} is a well-defined extended-real number, due to the boundedness of $c$, $f$ and $g$,  
$\operatorname{KL}(\nu|\mu)\ge 0$
and $\sE^{\sP^{x,\pi}}[\tau_{\cO}]<\infty$ 
(see~\cite[Ch.~2, Sec.~2, Theorem 4, p.~54]{krylov2008controlled}).
Define the optimal value function 
$v^*_\tau:\overline{\cO}\to \sR\cup\{\infty\}$ 
by
\begin{align}
\label{eq:value_optimal}
v^*_\tau(x)
\coloneqq &\inf_{\pi\in \cP(A|\sR^d) }  v^\pi_\tau(x)\,.
\end{align}

To facilitate the presentation, we provide a unified formulation of exit time control problems for any regularization parameter $\tau \geq 0$. As alluded to in Section \ref{sec:intro}, our goal is to analyze the convergence of the mirror descent flow \eqref{eq:md_intro}, guided by an appropriately defined   entropy scheduler $\btau:[0,\infty)\to (0,\infty)$,
to the unregularized problem  \eqref{eq:value_optimal} with $\tau=0$ (i.e.,  \eqref{eq:value_optimal_intro}). This analysis will be conducted in three   steps: 
(1) establishing the well-posedness of~\eqref{eq:md_intro}; 
(2) quantifying the convergence rate of~\eqref{eq:md_intro} to the optimal solution of the regularized problem; (3) quantifying the resulting regularization bias and optimizing the total error over the regularization weight.

\subsection{Well-posedness of the mirror descent flow}

We start by showing that the flow  \eqref{eq:md_intro} admits a unique solution for any   entropy scheduler $\btau\in C([0,\infty); (0,\infty))$. The essential step is to analyze the regularity of the
nonlinearity  $Z\mapsto \overline{\cL}^a v^{\boldsymbol{\pi}(Z)}_\tau$ in the flow  \eqref{eq:md_intro}.

To this end, recall that     the class $\Pi_\mu$ of  Gibbs policies is defined by:  
\begin{align}\label{eqn:policy}
\Pi_{\mu}\coloneqq \left\{\pi\in \cP(A|\sR^d)\mid \pi=\boldsymbol{\pi}(Z)\; \textnormal{for some $Z\in B_b(\cO\times A)$} \right\}\,,
\end{align}
with  
$\boldsymbol{\pi}:B_b(\cO\times A)\to \cP(A|\sR^d)$ given by  
\begin{align}
\label{eq:operator_pi}
\boldsymbol{\pi}(Z)(da|x) \coloneqq 
\frac{e^{Z(x,a)}}{\int_Ae^{Z(x,{a}')}\mu(d{a}')}\mu(da),\quad x\in \cO;
\quad  
\boldsymbol{\pi}(Z)(da|x)\coloneqq  \mu(da),\quad x\not\in \cO\,. 
\end{align}   
As it suffices to determine the policy for the state variable  inside the domain $\cO$,  
we simply extend the policy outside the domain   by $\mu$. 
For each $\pi\in \Pi_\mu$, we introduce the so-called on-policy-Bellman equation:
\begin{align}
\label{eq:on_policy_bellman}
\int_A  \Big((\mathcal{L}^a v)(x)
+f(x,a)\Big)\,\pi(da|x) + \tau \textrm{KL}(\pi |\mu)(x) =0,  \,\,\, \textnormal{a.e.~$x\in \cO$};  \,\,\, 
v(x)=g(x),  \,\,\, x \in \partial \cO\,,
\end{align}
where
for each $a\in A$,   $\mathcal{L}^a:W^{2,p^*}(\cO)\to L^{p^*}(\cO)$ is the operator defined by
\begin{align}
\label{eq:generator}
(\mathcal{L}^a v)(x) = \frac{1}{2}\tr(\sigma(x)\sigma(x)^\top D^2v(x) )+b(x,a)^\top Dv(x) -c(x,a)v(x)\,.
\end{align}

The following proposition characterizes the regularized value function \eqref{eq:value}
as the solution of \eqref{eq:on_policy_bellman}.

\begin{proposition} 
\label{prop:Bellman_PDE_wellposedness}
Suppose Assumption~\ref{ass:data} holds and $\tau>0$.
Let $\pi\in \Pi_{\mu}$,
and let $v^{\pi }_\tau$ be the associated value function given by~\eqref{eq:value}. 
Then $v^{\pi}_\tau$ satisfies the Dirichlet problem~\eqref{eq:on_policy_bellman}, $v^\pi_\tau\in W^{2,p^*}(\cO)$ 
with $p^*$ from Assumption~\ref{ass:data}, and 
$\tr(\sigma \sigma^\top D^2 v^{\pi}_\tau) \in L^\infty(\cO)$.

\end{proposition}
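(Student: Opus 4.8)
The plan is to recognize the on-policy Bellman equation \eqref{eq:on_policy_bellman} as a \emph{linear} second-order uniformly elliptic PDE with bounded measurable coefficients, solve it by $L^{p^*}$ theory, and then identify its unique strong solution with $v^\pi_\tau$ via a Feynman--Kac verification. Fix $\pi=\boldsymbol{\pi}(Z)\in\Pi_\mu$ and introduce the averaged coefficients
\[
\bar b^\pi(x):=\int_A b(x,a)\,\pi(da|x),\quad \bar c^\pi(x):=\int_A c(x,a)\,\pi(da|x),\quad \bar f^\pi(x):=\int_A f(x,a)\,\pi(da|x)+\tau\,\KL(\pi|\mu)(x).
\]
Since $b,c,f$ are bounded and $\KL(\pi|\mu)(x)=\int_A\big(Z(x,a)-\ln\int_A e^{Z(x,a')}\mu(da')\big)\pi(da|x)$ is bounded by $2\|Z\|_{B_b(\cO\times A)}$ for any Gibbs policy, all three averaged coefficients lie in $B_b(\cO)$, and \eqref{eq:on_policy_bellman} becomes $\tfrac12\tr(\sigma\sigma^\top D^2 v)+\bar b^\pi\cdot Dv-\bar c^\pi v=-\bar f^\pi$ a.e.\ in $\cO$, with $v=g$ on $\partial\cO$.

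Next I would invoke the $L^p$ theory for strong solutions. The leading coefficient $\tfrac12\sigma\sigma^\top$ is continuous and uniformly elliptic by Assumption~\ref{ass:data}, the lower-order coefficients are bounded measurable with $\bar c^\pi\ge 0$, the boundary $\partial\cO$ is $C^{1,1}$, and $g\in W^{2,p^*}(\cO)$. Subtracting $g$ (which moves $\mathcal L^\pi g\in L^{p^*}$ to the right-hand side) reduces to homogeneous boundary data, and the standard Agmon--Douglis--Nirenberg / Gilbarg--Trudinger $L^p$ Dirichlet theory for strong solutions yields a unique $v\in W^{2,p^*}(\cO)$. Because $p^*>d$, the Sobolev embedding $W^{2,p^*}(\cO)\hookrightarrow C^{1,\alpha}(\overline\cO)$ gives $v\in C^1(\overline\cO)$, so both $v$ and $Dv$ are continuous and bounded on $\overline\cO$.

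The main work, and the delicate step, is the verification that this PDE solution equals the probabilistic value function $v^\pi_\tau$. As $v$ is only a strong solution, the classical It\^o formula does not apply; instead I would use the It\^o--Krylov formula for $W^{2,p^*}$ functions composed with the nondegenerate diffusion $X^{x,\pi}$, whose justification rests on the Krylov occupation-time estimate $\E^{\sP^{x,\pi}}\!\big[\int_0^{\tau_\cO}|h(X_t)|\,dt\big]\le C\|h\|_{L^{p^*}(\cO)}$, available here since $\sigma\sigma^\top$ is uniformly elliptic and the drift is bounded. Applied to $\Gamma^\pi_t v(X_t)$, the drift part produces $\Gamma^\pi_t(\mathcal L^\pi v)(X_t)=-\Gamma^\pi_t\bar f^\pi(X_t)$, while the martingale part has integrand $\Gamma^\pi_t Dv(X_t)^\top\sigma(X_t)$, which is bounded because $Dv$, $\sigma$ are bounded and $\Gamma^\pi\le 1$. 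Stopping at $\tau_\cO\wedge T$, taking expectations to discard the (true) martingale, using $v=g$ on $\partial\cO$, and letting $T\to\infty$ with $\E^{\sP^{x,\pi}}[\tau_\cO]<\infty$ and dominated convergence to control the terminal and integral terms, yields $v(x)=v^\pi_\tau(x)$. The care needed is precisely in the localization and integrability bookkeeping when passing $T\to\infty$ and in the approximation argument underlying the It\^o--Krylov formula; this is where I expect the proof to require the most attention.

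Finally, the $L^\infty$ bound on $\tr(\sigma\sigma^\top D^2 v^\pi_\tau)$ is immediate from the equation itself. Rearranging gives
\[
\tfrac12\tr(\sigma\sigma^\top D^2 v^\pi_\tau)=-\bar b^\pi\cdot Dv^\pi_\tau+\bar c^\pi v^\pi_\tau-\bar f^\pi\quad\text{a.e.\ in }\cO,
\]
and the right-hand side is bounded: $Dv^\pi_\tau$ and $v^\pi_\tau$ are continuous on the compact $\overline\cO$ (hence bounded), and $\bar b^\pi,\bar c^\pi,\bar f^\pi\in B_b(\cO)$. Therefore $\tr(\sigma\sigma^\top D^2 v^\pi_\tau)\in L^\infty(\cO)$, which completes the argument.
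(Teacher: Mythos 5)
Your proposal is correct and takes essentially the same route as the paper's proof: rewrite \eqref{eq:on_policy_bellman} as a linear uniformly elliptic PDE with bounded measurable averaged coefficients (noting the KL term is bounded by $2\|Z\|_{B_b(\cO\times A)}$ for a Gibbs policy), solve it in $W^{2,p^*}(\cO)$ by classical $L^p$ Dirichlet theory (the paper's Lemma~\ref{lemma:elliptic_regularity}), read off $\tr(\sigma\sigma^\top D^2 v^\pi_\tau)\in L^\infty(\cO)$ from the equation via the Sobolev embedding, and identify the PDE solution with $v^\pi_\tau$ through the generalized It\^o--Krylov formula \cite[Theorem 1, p.~122]{krylov2008controlled}. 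The only differences are cosmetic: you cite Agmon--Douglis--Nirenberg/Gilbarg--Trudinger where the paper cites \cite{chen1998second}, and you spell out the stopping-time and $T\to\infty$ bookkeeping that the paper subsumes into the citation.
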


The proof of  Proposition \ref{prop:Bellman_PDE_wellposedness} is given in Appendix~\ref{sec:well-posedness of HJB}.
It follows from standard regularity results of linear PDEs and  It\^{o}'s formula  for Sobolev functions \cite[Theorem 1, p.~122]{krylov2008controlled}. 
Note that Assumption~\ref{ass:data}
only assumes the drift \( b \) and cost function \( f \)   to be measurable, and hence 
 we do not expect \( v^\pi_\tau \) to exhibit \( C^2 \) interior regularity.

The following theorem proves the well-posedness of \eqref{eq:md_intro} with a continuous scheduler.

\begin{theorem}
\label{thm:Existence_Solutions}
Suppose Assumption \ref{ass:data} holds.
For each   $Z_0\in B_b(\cO\times A)$ and
$\boldsymbol{\tau}\in C([0,\infty); (0,\infty))$, 
there exists a unique $Z\in \cap_{S>0}C^{1}([0,S];B_b(\cO\times A))$  satisfying~\eqref{eq:md_intro}.
\end{theorem}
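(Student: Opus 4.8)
The plan is to recast \eqref{eq:md_intro} as an ordinary differential equation in the Banach space $X \coloneqq B_b(\cO\times A)$ and to apply the Cauchy--Lipschitz (Picard--Lindel\"of) theorem for ODEs in Banach spaces, supplemented by an a priori bound that rules out finite-time blow-up. Writing the right-hand side of \eqref{eq:md_intro} as
\[
F(s,Z)(x,a) \coloneqq -\Big(\overline{\mathcal{L}}^a v^{\boldsymbol{\pi}(Z)}_{\btau_s}(x) + f(x,a) + \btau_s Z(x,a)\Big),
\]
the flow becomes $\partial_s Z_s = F(s,Z_s)$ with $Z_0$ given. First I would check that $F$ indeed maps $[0,\infty)\times X$ into $X$: by Proposition~\ref{prop:Bellman_PDE_wellposedness} we have $v^{\boldsymbol{\pi}(Z)}_{\btau_s}\in W^{2,p^*}(\cO)$, and since $p^*>d$ the Sobolev embedding $W^{2,p^*}(\cO)\hookrightarrow C^{1,\alpha}(\overline{\cO})$ with $\alpha = 1-d/p^*$ gives a bounded gradient; together with the boundedness of $b$, $c$ and $f$ this shows $F(s,Z)\in B_b(\cO\times A)$.

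The crux is the local Lipschitz estimate: for every $R,S>0$ there is $L=L(R,S)$ with $\|F(s,Z_1)-F(s,Z_2)\|_{X}\le L\|Z_1-Z_2\|_X$ for all $s\in[0,S]$ and $\|Z_i\|_X\le R$. I would prove this in two stages. Stage one is the regularity of the Gibbs map: for $\|Z_i\|_X\le R$ the total variation $\|\boldsymbol{\pi}(Z_1)(\cdot|x)-\boldsymbol{\pi}(Z_2)(\cdot|x)\|_{\mathcal M(A)}$ is bounded by $C_R\|Z_1-Z_2\|_X$ uniformly in $x$, whence the averaged coefficients $\bar b^{\pi}(x)\coloneqq\int_A b(x,a)\pi(da|x)$, $\bar c^{\pi}$, $\bar f^{\pi}$ and the source $\KL(\boldsymbol{\pi}(Z)|\mu)$ all depend $C_R$-Lipschitz continuously on $Z$ in $L^\infty(\cO)$ (for the last term one uses $\ln\frac{d\boldsymbol{\pi}(Z)}{d\mu}(a|x)=Z(x,a)-\ln\int_A e^{Z(x,a')}\mu(da')$). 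Stage two is the stability of the linear Dirichlet problem \eqref{eq:on_policy_bellman}: writing $v_i\coloneqq v^{\boldsymbol{\pi}(Z_i)}_{\btau_s}$, the difference $w\coloneqq v_1-v_2$ solves a linear elliptic equation with the fixed uniformly elliptic principal part $\tfrac12\tr(\sigma\sigma^\top D^2\cdot)$, bounded lower-order coefficients $\bar b^{\pi_1},\bar c^{\pi_1}$, zero boundary data, and right-hand side $(\bar b^{\pi_1}-\bar b^{\pi_2})\cdot Dv_2-(\bar c^{\pi_1}-\bar c^{\pi_2})v_2+(\bar f^{\pi_1}-\bar f^{\pi_2})+\btau_s(\KL(\boldsymbol{\pi}(Z_1)|\mu)-\KL(\boldsymbol{\pi}(Z_2)|\mu))$. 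The global $W^{2,p^*}$ estimate (Calder\'on--Zygmund theory), whose constant is independent of $Z$ because the principal part is fixed and $\|\bar b^\pi\|_\infty\le\|b\|_\infty$, $0\le\bar c^\pi\le\|c\|_\infty$, then yields $\|w\|_{W^{2,p^*}(\cO)}\le C\|\text{r.h.s.}\|_{L^{p^*}(\cO)}$; bounding the right-hand side by Stage one together with the uniform $C^{1,\alpha}$ bound on $v_2$ valid for $\|Z_2\|_X\le R$ (itself a consequence of the a priori $W^{2,p^*}$ estimate with source $\bar f^{\pi_2}+\btau_s\KL$ and boundary data $g$), and finally applying $\overline{\mathcal{L}}^a$ and the Sobolev embedding, produces $\|\overline{\mathcal{L}}^\cdot v_1-\overline{\mathcal{L}}^\cdot v_2\|_X\le C_R\|Z_1-Z_2\|_X$. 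Adding the trivially Lipschitz term $\btau_s(Z_1-Z_2)$ and using $\sup_{s\in[0,S]}\btau_s<\infty$ closes the estimate uniformly in $s\in[0,S]$. This PDE stability step, and in particular the careful tracking of the $Z$-dependence of the coefficients and the KL source through the elliptic estimates, is where I expect the main difficulty to lie.

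It remains to assemble the pieces. Continuity of $s\mapsto F(s,Z)$ for fixed $Z$ follows from the continuity of $\btau$ together with the affine dependence of $v^{\boldsymbol{\pi}(Z)}_\tau$ on $\tau$ (the parameter $\tau$ enters \eqref{eq:on_policy_bellman} only linearly through the KL source, so the solution is of the form $v^{\boldsymbol{\pi}(Z)}_0+\tau\,u^{\boldsymbol{\pi}(Z)}$). With $F$ jointly continuous and locally Lipschitz in $Z$ uniformly on compact time intervals, the Cauchy--Lipschitz theorem furnishes a unique maximal $C^1$ solution on some interval $[0,S_{\max})$. To see $S_{\max}=\infty$ I would derive an a priori bound: along the flow, the a priori $W^{2,p^*}$ estimate and $\|\KL(\boldsymbol{\pi}(Z_s)|\mu)\|_{L^\infty(\cO)}\le 2\|Z_s\|_X$ give $\|\overline{\mathcal{L}}^\cdot v^{\boldsymbol{\pi}(Z_s)}_{\btau_s}\|_X\le C(S)(1+\|Z_s\|_X)$ for $s\in[0,S]$, hence $\tfrac{d}{ds}\|Z_s\|_X\le\|\partial_s Z_s\|_X\le C(S)(1+\|Z_s\|_X)$, and Gr\"onwall's inequality bounds $\|Z_s\|_X$ on every finite interval. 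This precludes blow-up, so the solution extends to each $[0,S]$ and lies in $\cap_{S>0}C^1([0,S];B_b(\cO\times A))$, with uniqueness inherited from the local statement.
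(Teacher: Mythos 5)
Your proposal is correct and follows essentially the same route as the paper: local Lipschitz continuity of $Z\mapsto\overline{\mathcal{L}}^{\cdot}v^{\boldsymbol{\pi}(Z)}_{\btau_s}$ obtained from elliptic $W^{2,p^*}$ stability of the on-policy Bellman equation (the paper's Proposition~\ref{prop:W^{2,p} bound}), linear growth of the nonlinearity (Lemma~\ref{lem:DV_s_bound}), and a Gr\"onwall a priori bound ruling out finite-time blow-up (Lemma~\ref{lem:|Z_s|_bound}). The only difference is one of packaging: the paper proves local existence by truncating the nonlinearity ($\mathcal{H}_N$) and running a Banach fixed point in an exponentially weighted norm, then shows via the a priori bound that the truncation never activates, whereas you invoke the abstract Picard--Lindel\"of/maximal-solution theorem and rule out blow-up with the same a priori bound --- two equivalent formulations of standard locally Lipschitz ODE theory in Banach spaces.
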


The proof of Theorem \ref{thm:Existence_Solutions} is given in   Appendix~\ref{sec:existence_of_sols}.
The argument  begins by  leveraging elliptic PDE theory to prove  that the map 
$B_b(\cO\times A)\ni Z\mapsto \overline{\cL}^\cdot v^{\boldsymbol{\pi}(Z)}_\tau\in B_b(\cO\times A)$
is    locally Lipschitz continuous (Proposition \ref{prop:W^{2,p} bound}) and of linear growth (Lemma \ref{lem:DV_s_bound}).
A priori estimate further shows that  a   solution of \eqref{eq:md_intro}
will not blow up on a   finite interval, hence 
applying a truncation argument and  the Banach fixed point theorem on each finite interval
yield the desired conclusion.

\subsection{Convergence of mirror descent for the regularized problem}

We proceed to quantify the accuracy  of   \eqref{eq:md_intro}
for approximating a regularized problem.
In particular, we shall prove that  \eqref{eq:md_intro} with a constant scheduler $\btau\equiv \tau$ converges exponentially to  the $\tau$-regularized problem  \eqref{eq:value_optimal}.

We first prove that
the   regularized value function decreases  along the flow   \eqref{eq:md_intro},
if  the entropy scheduler $\btau$ is continuously differentiable and decreases in time.  

\begin{theorem}
\label{ref:cost_decrease_along_flow}
Suppose Assumption~\ref{ass:data} holds.
Let $Z_0\in B_b(\cO\times A)$, 
$\btau\in C^1([0,\infty);(0,\infty))$,
and 
$Z\in \cap_{S>0}C^{1}([0,S];B_b(\cO\times A))$  be the solution to  \eqref{eq:md_intro}.
Then for all $x\in \cO$, 
$[0,\infty) \ni s\mapsto  v^{\boldsymbol{\pi}(Z_s)}_{\btau_s}(x)\in\mathbb{R}$ is differentiable and
for all $s>0$,
\begin{equation}
\label{eq:decreasing_along_Z}
\begin{split}
&\partial_s v^{\boldsymbol{\pi}(Z_s)}_{\btau_s}(x)
\\
&=-\E^{\mathbb{P}^{x,\boldsymbol{\pi}(Z_s)}}\int_0^{\tau_\cO}\Gamma^{\boldsymbol{\pi}(Z_s)}_t\int_A\left(\mathcal{L}^av_{\btau_s}^{\boldsymbol{\pi}(Z_s)}(X_t)+f(X_t,a)+\btau_s \ln\frac{d\boldsymbol{\pi}(Z_s)}{d\mu}(a|X_t)\right)^2\boldsymbol{\pi}(Z_s)(da|X_t)dt\\
&\enspace\enspace+(\partial_s\btau_s)\E^{\mathbb{P}^{x,\boldsymbol{\pi}(Z_s)}}\int_0^{\tau_{\cO}}\Gamma^{\boldsymbol{\pi}(Z_s)}_t\KL(\boldsymbol{\pi}(Z_s)|\mu)(X_t)dt\,.
\end{split}
\end{equation}
Consequently, if $\btau\in C^1([0,\infty);(0,\infty))$ is decreasing, then $\partial_s v^{\boldsymbol{\pi}(Z_s)}_{\btau_s}(x)\le 0$ for all $s>0$ and $x\in \cO$. 
\end{theorem}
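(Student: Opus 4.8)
The plan is to differentiate $s\mapsto v^{\boldsymbol{\pi}(Z_s)}_{\btau_s}(x)$ by the chain rule, exploiting that $s$ enters in two ways: through the feature $Z_s$ (hence the policy $\pi_s\coloneqq\boldsymbol{\pi}(Z_s)$) and directly through the weight $\btau_s$. Writing $F(s,\tau)\coloneqq v^{\boldsymbol{\pi}(Z_s)}_{\tau}(x)$, I would decompose
\[
\partial_s v^{\pi_s}_{\btau_s}(x)=\Big[\partial_s v^{\boldsymbol{\pi}(Z_s)}_{\tau}(x)\Big]_{\tau=\btau_s}+\Big[\partial_\tau v^{\pi_s}_{\tau}(x)\Big]_{\tau=\btau_s}(\partial_s\btau_s).
\]
The parameter term is immediate: the dependence of $v^\pi_\tau$ on $\tau$ in \eqref{eq:value} is linear, so $\partial_\tau v^\pi_\tau(x)=\E^{\sP^{x,\pi}}\int_0^{\tau_\cO}\Gamma^\pi_t\KL(\pi|\mu)(X_t)\,dt$, which multiplied by $\partial_s\btau_s$ reproduces the last line of \eqref{eq:decreasing_along_Z}.

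For the policy term I would combine the first-variation formula \eqref{eq first variation intro} of Lemma~\ref{lemma:performance_diff} with the G\^{a}teaux derivative of the Gibbs map. Setting $\overline{h}^{\pi}(x)\coloneqq\int_A h(x,a)\,\pi(da|x)$, a direct computation gives $D\boldsymbol{\pi}(Z)[h]=(h-\overline{h}^{\boldsymbol{\pi}(Z)})\,\boldsymbol{\pi}(Z)$, a mean-zero signed kernel. Chaining this with \eqref{eq first variation intro} and writing $G_s(x,a)\coloneqq\overline{\cL}^a v^{\pi_s}_{\btau_s}(x)+f(x,a)+\btau_s\ln\tfrac{d\pi_s}{d\mu}(a|x)$ for the first variation yields
\[
\Big[\partial_s v^{\boldsymbol{\pi}(Z_s)}_{\tau}(x)\Big]_{\tau=\btau_s}=\E^{\sP^{x,\pi_s}}\int_0^{\tau_\cO}\Gamma^{\pi_s}_t\int_A\big(\partial_s Z_s-\overline{\partial_s Z_s}^{\pi_s}\big)(X_t,a)\,G_s(X_t,a)\,\pi_s(da|X_t)\,dt.
\]
The crucial algebraic point is that the flow \eqref{eq:md_intro} gives $\partial_s Z_s=-G_s-\btau_s\ln\!\big(\int_A e^{Z_s(\cdot,a')}\mu(da')\big)$, i.e.\ $\partial_s Z_s=-G_s$ up to an $a$-independent function. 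Since the velocity kernel is mean-zero this additive function drops out, so $\partial_s Z_s-\overline{\partial_s Z_s}^{\pi_s}=-(G_s-\overline{G_s}^{\pi_s})$, and the integrand collapses to $-\int_A(G_s-\overline{G_s}^{\pi_s})^2\pi_s=-\mathrm{Var}_{\pi_s}(G_s)$.

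It then remains to match this variance with the squared second-order term in \eqref{eq:decreasing_along_Z}. Let $\tilde G_s(x,a)\coloneqq\cL^a v^{\pi_s}_{\btau_s}(x)+f(x,a)+\btau_s\ln\tfrac{d\pi_s}{d\mu}(a|x)$ be the analogue of $G_s$ built from the full second-order generator \eqref{eq:generator}. Because $\cL^a-\overline{\cL}^a=\tfrac12\tr(\sigma\sigma^\top D^2\,\cdot\,)$ is independent of $a$, the difference $\tilde G_s-G_s$ is $a$-independent, whence $\mathrm{Var}_{\pi_s}(G_s)=\mathrm{Var}_{\pi_s}(\tilde G_s)$. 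By Proposition~\ref{prop:Bellman_PDE_wellposedness} the function $v^{\pi_s}_{\btau_s}$ solves the on-policy Bellman equation \eqref{eq:on_policy_bellman}, which is precisely the statement $\overline{\tilde G_s}^{\pi_s}=0$; hence $\mathrm{Var}_{\pi_s}(\tilde G_s)=\int_A\tilde G_s^2\,\pi_s$, giving the first line of \eqref{eq:decreasing_along_Z}. The final claim is then immediate: when $\btau$ is decreasing, $\partial_s\btau_s\le0$, the KL integral is nonnegative, and the squared term is nonpositive, so $\partial_s v^{\pi_s}_{\btau_s}(x)\le0$.

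The main obstacle I anticipate is the rigorous justification of the chain rule rather than the algebra above. One must upgrade the pointwise directional derivative of Lemma~\ref{lemma:performance_diff} to genuine (Fr\'echet) differentiability of $(Z,\tau)\mapsto v^{\boldsymbol{\pi}(Z)}_{\tau}(x)$ on $B_b(\cO\times A)\times(0,\infty)$, with enough uniformity to compose with the $C^1$ flow $s\mapsto Z_s$ from Theorem~\ref{thm:Existence_Solutions} and the $C^1$ scheduler $\btau$, and to differentiate under the expectation and the exit-time integral. Establishing the requisite local Lipschitz and differentiability estimates for $Z\mapsto v^{\boldsymbol{\pi}(Z)}_{\tau}$ (through the elliptic regularity of Proposition~\ref{prop:Bellman_PDE_wellposedness} and continuity of the Gibbs map) is where the genuine work lies; once these are in place, the chain rule together with the mean-zero cancellation and the Bellman identity $\overline{\tilde G_s}^{\pi_s}=0$ deliver \eqref{eq:decreasing_along_Z}.
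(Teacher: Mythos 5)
Your proposal is correct and follows essentially the same route as the paper: the paper's proof is exactly the chain rule applied to the differentiable map $(Z,\tau)\mapsto v^{\boldsymbol{\pi}(Z)}_\tau(x)$ (Proposition~\ref{prop:funct_derivative_rho}), followed by the same substitution of the flow equation into the Gibbs-map derivative, the same cancellation of the $a$-independent terms, and the same use of the on-policy Bellman equation \eqref{eq:on_policy_bellman}; your variance bookkeeping is only a cosmetic reorganization of that algebra. The one refinement worth noting is that the differentiability you flag as the remaining work is established in the paper in the Hadamard (not Fr\'echet) sense, which is weaker, easier to obtain, and still suffices for the chain rule along the $C^1$ path $s\mapsto(Z_s,\btau_s)$.
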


The proof of Theorem~\ref{ref:cost_decrease_along_flow} is given in Section~\ref{sec:convergence}.
It relies on the Hadamard differentiability of the map 
$(Z,\tau) \mapsto v^{\boldsymbol{\pi}(Z )}_{\btau }(x)$ established in Proposition \ref{prop:funct_derivative_rho}.

We then quantify the error 
$ v^{\boldsymbol{\pi}(Z_s)}_{\btau_s}(x)-v^*_{\tau}(x)$
for any given $\tau>0$. 
To this end, for each $\tau>0$, let  
$H_\tau : \cO \times   \sR\times \sR^{d}\to \sR$ be the regularized Hamiltonian  
such that for all 
$(x,u, p )\in \cO \times   \sR\times \sR^{d}$, 
\begin{align}
\begin{split}
\label{eq:Hamiltonian_tau}
H_\tau(x,u,p) &\coloneqq  \inf_{m\in \mathcal P(A)} \left(\int_A [b(x,a)^\top p - c(x,a)u+f(x,a)]\,m(da)+\tau\operatorname{KL}(m|\mu)  \right)
\\
&=-\tau \ln\left(\int_A \exp\left(-\frac{b(x,a)^\top p -c(x,a)u+f(x,a)}{\tau }\right) \mu(da)\right)\,. 
\end{split}
\end{align}
The  Hamilton-Jacobi-Bellman (HJB) equation 
associated to the regularized problem is given by:
\begin{equation}
\label{eq:semilinear}
\frac{1}{2}\tr(\sigma(x)\sigma(x)^\top D^2v(x) )+ H_\tau (x,v(x),Dv(x)) =0,\quad \textnormal{a.e.~$x\in \cO$}\, ;  \quad 
v(x)=g(x),  \quad x \in \partial \cO\,.
\end{equation}

The following proposition characterizes the optimal  regularized value function
and the optimal regularized policy using 
the solution to \eqref{eq:semilinear}.

\begin{proposition}
\label{prop:verification}
Suppose Assumption~\ref{ass:data} holds and $\tau>0$.
Then 
\eqref{eq:semilinear} admits a unique solution $v \in  W^{2,p^*}(\cO)$ with $p^*$ as in Assumption~\ref{ass:data},
and $v (x)=v^*_\tau (x) $ for all $x\in \overline{\cO}$,
where  $v^*_\tau $ 
is  the optimal value function 
defined    in \eqref{eq:value_optimal}. 
Moreover, if  $\pi^*_\tau \in \cP(A|\sR^d)$ satisfies  for all $x\in \cO$ that
\[
\pi^*_\tau (da|x) = \frac{e^{-\frac{1}{\tau  } Z^*_\tau (x,a)  }}{\int_A  e^{ -\frac{1}{\tau  } Z^*_\tau (x,a')  } \mu(da') } \mu(da)\,\quad 
\textnormal{with $ Z^*_\tau (x,a) \coloneqq  b(x,a)^\top  Dv^*_\tau (x) - c(x,a)v^*_\tau(x)+f(x,a)$},
\]
then
$\pi^*_\tau$ is an optimal policy of   \eqref{eq:value_optimal} in the sense that 
$v^{\pi^*_\tau}_\tau (x) = v^*_\tau (x) $ for all $x\in \overline{\cO}$. 
\end{proposition}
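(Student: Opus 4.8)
The plan is to split the statement into three tasks: (i) existence, uniqueness and $W^{2,p^*}$-regularity of a solution $v$ to the semilinear HJB equation \eqref{eq:semilinear}; (ii) the identification $v=v^*_\tau$; and (iii) the optimality of the Gibbs policy $\pi^*_\tau$. Throughout I would exploit two structural features of the regularized Hamiltonian recorded in \eqref{eq:Hamiltonian_tau}. First, its explicit log-partition form shows that for each fixed $x$ the infimum over $m\in\cP(A)$ is attained at the Gibbs measure $m^*_x(da)\propto \exp(-\tfrac{1}{\tau}(b(x,a)^\top p-c(x,a)u+f(x,a)))\mu(da)$, which coincides with $\pi^*_\tau(\cdot|x)$ when $(u,p)=(v^*_\tau(x),Dv^*_\tau(x))$. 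Second, since $b,c,f$ are bounded and $c\ge 0$, the map $(u,p)\mapsto H_\tau(x,u,p)$ is Lipschitz with $\partial_p H_\tau=\int_A b\,m^*_x$ bounded by $\|b\|_{B_b(\sR^d\times A)}$ and, by the envelope (Danskin) identity, $\partial_u H_\tau=-\int_A c\,m^*_x\le 0$; the sign of the latter is the key to both uniqueness and comparison.

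For existence and regularity I would set up a fixed-point argument. Given $w\in C^{1,\alpha}(\overline{\cO})$, let $v=T(w)$ solve the linear Dirichlet problem $\tfrac{1}{2}\tr(\sigma\sigma^\top D^2 v)=-H_\tau(\cdot,w,Dw)$ in $\cO$ with $v=g$ on $\partial\cO$; since $\sigma\sigma^\top$ is continuous and nondegenerate and the right-hand side is bounded (because $H_\tau$ evaluated at bounded arguments is bounded), standard $W^{2,p}$ (Calderón--Zygmund) estimates together with the embedding $W^{2,p^*}\hookrightarrow C^{1,\alpha}$ (valid as $p^*>d$) make $T$ a compact self-map of a suitable ball. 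A priori $L^\infty$ bounds for any fixed point, uniform in the relevant homotopy parameter, follow from the maximum principle using $c\ge 0$ and the boundedness of $f,g$, so the Leray--Schauder/Schauder theorem yields a fixed point $v$ solving \eqref{eq:semilinear}. Because $v\in C^{1,\alpha}$, the term $H_\tau(\cdot,v,Dv)$ lies in $L^\infty(\cO)$, whence the PDE gives $\tr(\sigma\sigma^\top D^2 v)\in L^\infty(\cO)$ and $v\in W^{2,p^*}(\cO)$.

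Uniqueness I would obtain by comparison. If $v_1,v_2$ both solve \eqref{eq:semilinear}, then $w=v_1-v_2$ vanishes on $\partial\cO$ and, writing $H_\tau(\cdot,v_1,Dv_1)-H_\tau(\cdot,v_2,Dv_2)=\hat c\,w+\hat b\cdot Dw$ via the mean value theorem with $\hat b$ bounded and $\hat c\le 0$, satisfies the homogeneous nondegenerate linear equation $\tfrac{1}{2}\tr(\sigma\sigma^\top D^2 w)+\hat b\cdot Dw+\hat c\,w=0$. The maximum principle for such operators with nonpositive zero-order coefficient forces $w\equiv 0$.

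Finally, for the identification and optimality I would run a verification argument using the generalized It\^o formula for $W^{2,p^*}$ functions \cite[Theorem 1, p.~122]{krylov2008controlled}. Applying it to $s\mapsto\Gamma^\pi_s v(X_s)$ on $[0,\tau_\cO\wedge T]$ for an arbitrary $\pi\in\cP(A|\sR^d)$ gives $d(\Gamma^\pi_s v(X_s))=\Gamma^\pi_s\int_A \mathcal{L}^a v(X_s)\,\pi(da|X_s)\,ds+{}$ a martingale; substituting $\tfrac{1}{2}\tr(\sigma\sigma^\top D^2 v)=-H_\tau(\cdot,v,Dv)$ and the defining inequality $H_\tau(x,v,Dv)\le\int_A[b^\top Dv-cv+f]\pi(da|x)+\tau\KL(\pi|\mu)(x)$ yields the pointwise bound $\int_A\mathcal{L}^a v\,\pi(da)\ge -\int_A f\,\pi(da)-\tau\KL(\pi|\mu)$. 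Taking expectations, using the boundedness of $v$ and $\E^{\sP^{x,\pi}}[\tau_\cO]<\infty$ to drop the martingale and pass $T\to\infty$, and using $v=g$ on $\partial\cO$, I obtain $v(x)\le v^\pi_\tau(x)$ for every $\pi$, hence $v\le v^*_\tau$. Choosing $\pi=\pi^*_\tau$, which attains the infimum in \eqref{eq:Hamiltonian_tau}, turns every inequality into an equality and gives $v(x)=v^{\pi^*_\tau}_\tau(x)$; combining the two shows $v=v^*_\tau=v^{\pi^*_\tau}_\tau$, so $\pi^*_\tau$ is optimal. The main obstacle I expect is the existence step---establishing the uniform a priori $L^\infty$/$W^{2,p^*}$ bounds and compactness needed for the fixed-point argument with merely measurable lower-order coefficients---whereas the verification is routine once the $W^{2,p^*}$ solution and the Gibbs structure of $\pi^*_\tau$ are in hand.
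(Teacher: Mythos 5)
Your proposal is correct and follows essentially the same route as the paper: existence via a Leray--Schauder fixed-point argument for linearized Dirichlet problems (compactness from $W^{2,p^*}$ elliptic estimates and the embedding for $p^*>d$, a priori bounds from the maximum principle exploiting $\partial_u H_\tau\le 0$), uniqueness via the mean-value-theorem linearization with nonpositive zero-order coefficient and the maximum principle, and identification/optimality via It\^o-based verification using the Gibbs attainment of the infimum in $H_\tau$. The only cosmetic differences are that you run the fixed point in $C^{1,\alpha}$ rather than $W^{1,p^*}$ (which spares you the Nemytskij-operator continuity lemma the paper invokes, at the price of fixing $\alpha<1-d/p^*$ for compactness), and that you spell out the verification step that the paper delegates to cited references.
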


The proof of Proposition
\ref{prop:verification} is given in Appendix~\ref{sec:well-posedness of HJB}.
The crucial step is establishing that \eqref{eq:semilinear} admits a unique solution in $W^{2,p^*}(\cO)$. 
While the well-posedness of semilinear HJB equations in Sobolev spaces has been  examined in~\cite[Section 3.2.3]{bensoussan1984}, the analysis therein assumes the cost function is uniformly bounded over all actions, and the discount factor is strictly positive.   These conditions are not fulfilled by \eqref{eq:semilinear} since $\cP(A)\ni m\mapsto \textrm{KL}(m|\mu)\in \sR\cup \{\infty\}$ is unbounded, and the discount factor $c$ can be zero.
In Appendix~\ref{sec:well-posedness of HJB}, 
we   provide a self-contained proof based on 
the Leray--Schauder Theorem~\cite[Theorem 11.3]{gilbarg1977elliptic}.

We  now   state the   error bound  of  \eqref{eq:md_intro}
for a given regularized problem, whose proof is given   in Section~\ref{sec:convergence}.

\begin{proposition}
\label{thm:convergence_of_GF}
Suppose Assumption \ref{ass:data} holds. 
Let   $Z_0\in B_b(\cO\times A)$,   let
$\btau\in C^1([0,\infty);(0,\infty))$ be decreasing, and 
let $Z\in \cap_{S>0}C^1([0,S];B_b(\cO\times A))$ be the solution to \eqref{eq:md_intro}. Then for all $s>0$, $x\in \overline\cO$ and $\tau>0$,
\begin{align}
\label{eq:convergence_gf}
\begin{split}
v^{{\boldsymbol{\pi}(Z_s)}  }_{\btau_{s}}(x)-v^{*}_{{\tau}}(x)
&\leq
\frac{1}
{\int_0^s e^{\int_0^{s'}\btau_{r}dr} d{s'}}\E^{\mathbb{P}^{x,\pi^*_{{\tau}}}}\int_0^{\tau_\cO}\Gamma^{\pi^*_{{\tau}}}_t\KL(\pi^*_{{\tau}}|\boldsymbol{\pi}(Z_0))(X_t)dt
\\
&\quad 
+\frac{\int_0^s(\btau_{s'}-{\tau})^+ e^{\int_0^{s'}\btau_{r}dr} ds'}{\int_0^s e^{\int_0^{s'}\btau_{r}dr} ds'}\E^{\mathbb{P}^{x,\pi^*_{\tau}}}\int_0^{\tau_\cO}\Gamma^{\pi^*_{{\tau}}}_t\KL(\pi^*_{{\tau}}|\mu)(X_t)dt\,.
\end{split}
\end{align}

\end{proposition}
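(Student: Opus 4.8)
The plan is to run a Lyapunov argument based on the discounted divergence to the target policy. Concretely, fix $\tau>0$ and let $\pi^*_\tau$ be the optimal $\tau$-regularised policy from Proposition~\ref{prop:verification}, and introduce
\[
D_s \coloneqq \E^{\Prob^{x,\pi^*_\tau}}\int_0^{\tau_\cO}\Gamma^{\pi^*_\tau}_t\,\KL(\pi^*_\tau\,|\,\boldsymbol{\pi}(Z_s))(X_t)\,dt\,,\qquad R_\tau \coloneqq \E^{\Prob^{x,\pi^*_\tau}}\int_0^{\tau_\cO}\Gamma^{\pi^*_\tau}_t\,\KL(\pi^*_\tau\,|\,\mu)(X_t)\,dt\,.
\]
Since the diffusion, its exit time and the discount factor all depend only on the fixed policy $\pi^*_\tau$, the only $s$-dependence sits inside the integrand. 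First I would differentiate $s\mapsto\KL(\pi^*_\tau|\boldsymbol{\pi}(Z_s))(y)$ pointwise, using $\partial_s\ln\frac{d\boldsymbol{\pi}(Z_s)}{d\mu}(a|y)=\partial_s Z_s(y,a)-\int_A\partial_s Z_s(y,a')\,\boldsymbol{\pi}(Z_s)(da'|y)$ and substituting the flow~\eqref{eq:md_intro}. The $a$-independent normalisation constant then cancels against the mean-zero increment $\pi^*_\tau-\boldsymbol{\pi}(Z_s)$, leaving the clean identity
\[
\partial_s D_s = \left\langle \pi^*_\tau-\boldsymbol{\pi}(Z_s),\ \frac{\delta v^{\boldsymbol{\pi}(Z_s)}_{\btau_s}}{\delta \pi}\right\rangle_{\pi^*_\tau}\,.
\]

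Next I would apply the performance difference lemma (Lemma~\ref{lemma:performance_diff}) at regularisation level $\btau_s$ to the pair $(\pi^*_\tau,\boldsymbol{\pi}(Z_s))$, which identifies the pairing above with $v^{\pi^*_\tau}_{\btau_s}(x)-v^{\boldsymbol{\pi}(Z_s)}_{\btau_s}(x)-\btau_s D_s$. Because $v^{\pi^*_\tau}_{\btau_s}$ and $v^{\pi^*_\tau}_\tau=v^*_\tau$ differ only through the coefficient multiplying the entropy term, a direct comparison in the definition~\eqref{eq:value} gives $v^{\pi^*_\tau}_{\btau_s}(x)=v^*_\tau(x)+(\btau_s-\tau)R_\tau$, with $R_\tau\ge 0$. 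Combining these yields the scalar ODE identity
\[
\partial_s D_s + \btau_s D_s = -\bigl(v^{\boldsymbol{\pi}(Z_s)}_{\btau_s}(x)-v^*_\tau(x)\bigr)+(\btau_s-\tau)R_\tau\,.
\]

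I would then solve this linear ODE with the integrating factor $e^{\int_0^s\btau_r\,dr}$. Integrating from $0$ to $s$ and discarding the nonnegative boundary term $e^{\int_0^s\btau_r dr}D_s\ge 0$ bounds $\int_0^s e^{\int_0^{s'}\btau_r dr}\bigl(v^{\boldsymbol{\pi}(Z_{s'})}_{\btau_{s'}}(x)-v^*_\tau(x)\bigr)\,ds'$ from above by $D_0+R_\tau\int_0^s e^{\int_0^{s'}\btau_r dr}(\btau_{s'}-\tau)\,ds'$; replacing $(\btau_{s'}-\tau)$ by its positive part is legitimate since $R_\tau\ge 0$. Finally, as $\btau$ is decreasing, Theorem~\ref{ref:cost_decrease_along_flow} makes $s\mapsto v^{\boldsymbol{\pi}(Z_s)}_{\btau_s}(x)$ non-increasing, so for $s'\le s$ the integrand dominates its terminal value; factoring this terminal value out of the integral and dividing by $\int_0^s e^{\int_0^{s'}\btau_r dr}\,ds'$ produces precisely~\eqref{eq:convergence_gf}, with $D_0$ and $R_\tau$ recognised as the two expected-divergence constants appearing in the statement.

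The hard part will be the rigorous justification of differentiating $D_s$ under the expectation and the random exit-time integral in the first step. This needs a dominated-convergence argument with an integrable majorant for $\partial_s\KL(\pi^*_\tau|\boldsymbol{\pi}(Z_s))$ that is uniform in $s$ over compact intervals, which in turn rests on the $C^1$-in-$s$ regularity of $Z$ from Theorem~\ref{thm:Existence_Solutions}, the local Lipschitz and linear-growth control of $Z\mapsto\overline{\mathcal{L}}^\cdot v^{\boldsymbol{\pi}(Z)}_\tau$ from the well-posedness analysis, and the finiteness $\E^{\Prob^{x,\pi^*_\tau}}[\tau_\cO]<\infty$ together with the integrability of the discounted KL functionals noted after~\eqref{eq:value_tau_intro}. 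One must also check that $\pi^*_\tau$ is sufficiently exploring for $D_0$ and $R_\tau$ to be finite, which is exactly where the Gibbs form of $\pi^*_\tau$ from Proposition~\ref{prop:verification} and the membership $\boldsymbol{\pi}(Z_0)\in\Pi_\mu$ are used.
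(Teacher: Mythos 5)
Your proposal is correct and follows essentially the same route as the paper's proof: the same Lyapunov function $D_s$ (the paper's $\mathcal{D}^{\pi^*_\tau}_x(Z_s,Z^*_\tau)$ via Lemma~\ref{lem:D_to_RE}), differentiation along the flow with cancellation of the $a$-independent normalisation, the performance difference lemma, the integrating factor $e^{\int_0^s\btau_r dr}$, and the monotonicity from Theorem~\ref{ref:cost_decrease_along_flow}. The only cosmetic difference is that you absorb the exact identity $v^{\pi^*_\tau}_{\btau_s}=v^*_\tau+(\btau_s-\tau)R_\tau$ into the ODE before integrating, whereas the paper splits off this term after solving the ODE; the two bookkeepings are equivalent.
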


Proposition~\ref{thm:convergence_of_GF} quantifies the precise   impact  of  a time-dependent scheduler $\btau$
on the convergence rate of   \eqref{eq:md_intro}.
To see it, suppose that one aims to solve the regularized problem with a fixed 
$\tau>0$.
In this case,   the first term in the estimate \eqref{eq:convergence_gf} represents the acceleration resulting from using a higher entropy regularization in   \eqref{eq:md_intro},
while the second term in the estimate \eqref{eq:convergence_gf} quantifies the error  caused by using a time-dependent   scheduler.

Note that 
the error bound in 
Proposition~\ref{thm:convergence_of_GF} depends on  the integrated KL divergence up to the exit time $\tau_{\cO}$. 
By further analyzing the behavior of the KL divergence in relation to     $\tau$
and the cardinality of action space $A$,
the  following theorem
provides a more explicit upper bound on 
$v^{{\boldsymbol{\pi}(Z_s)}  }_{\btau_{s}} -v^{*}_{{\tau}}$
in terms of   the entropy scheduler $\btau$.
It will be used to optimize the scheduler $\btau$  
for the unregularized problem. 

\begin{theorem}
\label{cor:extend_conv_GF}
Suppose Assumption~\ref{ass:data} holds, and let   $Z_0\in B_b(\cO\times A)$.
Then there exists  $C>0$ such that 
for all decreasing $\btau\in C^1([0,\infty);(0,\infty))$,
the solution $Z\in \cap_{S>0}C^1([0,S];B_b(\cO\times A))$   to \eqref{eq:md_intro}
satisfies for all 
$s>0$, $x\in \overline \cO$ and $\tau>0$,
\begin{align}
\label{eqn:modified_conv}
\begin{split}
v^{{\boldsymbol{\pi}(Z_s)}  }_{\btau_{s}}(x)-v^{*}_{{\tau}}(x)
&\leq
\frac{C}{\tau }\left( \frac{1+\tau}
{\int_0^s e^{\int_0^{s'}\btau_{r}dr} d{s'}}
+\frac{\int_0^s(\btau_{s'}-{\tau})  e^{\int_0^{s'}\btau_{r}dr} ds'}{\int_0^s e^{\int_0^{s'}\btau_{r}dr} ds'} \right)\,.
\end{split}
\end{align}
Assume further that  $A$ is of finite cardinality. Then  
for all
$x\in \mathcal O$ and $s>0$,
\begin{align}
\label{eqn:modified_conv_fin_A}
\begin{split}
v^{{\boldsymbol{\pi}(Z_s)}  }_{\btau_{s}}(x)-v^{*}_{{\tau}}(x)
&\leq
C \left( \frac{1 }
{\int_0^s e^{\int_0^{s'}\btau_{r}dr} d{s'}}
+\frac{\int_0^s(\btau_{s'}-{\tau})  e^{\int_0^{s'}\btau_{r}dr} ds'}{\int_0^s e^{\int_0^{s'}\btau_{r}dr} ds'} \right)\,.
\end{split}
\end{align}

\end{theorem}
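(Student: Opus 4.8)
The plan is to derive Theorem~\ref{cor:extend_conv_GF} as a corollary of Proposition~\ref{thm:convergence_of_GF}, by replacing the two integrated Kullback--Leibler terms on the right-hand side of \eqref{eq:convergence_gf} with explicit functions of $\tau$. Since $\Gamma^{\pi^*_\tau}_t\le 1$ and, by the nondegeneracy of $\sigma$ and the boundedness of $b$, the exit time obeys $\E^{\mathbb{P}^{x,\pi^*_\tau}}[\tau_\cO]\le C_\cO$ uniformly in $x\in\overline\cO$ and $\tau>0$ (the Krylov estimate already used for \eqref{eq:value}, whose constant depends only on $\lambda$, $\|b\|_{B_b}$ and $\cO$), it suffices to prove the pointwise bounds
\[
\KL(\pi^*_\tau|\mu)(x)\le \frac{C}{\tau},\qquad \KL(\pi^*_\tau|\boldsymbol{\pi}(Z_0))(x)\le \frac{C(1+\tau)}{\tau},\qquad x\in\cO,
\]
with $C$ independent of $\tau$, and then to integrate and substitute. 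The first bound governs the second (bias) term in \eqref{eq:convergence_gf} and the second bound governs the first (optimization) term, so the two summands of \eqref{eqn:modified_conv} emerge.

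The main step, and the chief obstacle, is a $\tau$-uniform a priori estimate $\|v^*_\tau\|_{C^1(\overline\cO)}\le C$ with $C$ independent of $\tau>0$. The starting observation is that the regularized Hamiltonian \eqref{eq:Hamiltonian_tau} obeys the $\tau$-independent linear growth estimate $|H_\tau(x,u,p)|\le\sup_{a\in A}|b(x,a)^\top p-c(x,a)u+f(x,a)|\le\|b\|_{B_b}|p|+\|c\|_{B_b}|u|+\|f\|_{B_b}$, which holds because $\mu\in\cP(A)$ forces the log-sum-exp in \eqref{eq:Hamiltonian_tau} to lie between the essential infimum and supremum of its argument. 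Combining the maximum principle (to bound $\|v^*_\tau\|_{L^\infty(\cO)}$ by the data, using $\KL\ge0$ and $\E[\tau_\cO]\le C_\cO$) with the $W^{2,p^*}$ estimate for the uniformly elliptic operator $\tfrac12\tr(\sigma\sigma^\top D^2\cdot)$ applied to \eqref{eq:semilinear}, then bootstrapping through the embedding $W^{2,p^*}(\cO)\hookrightarrow C^{1,\alpha}(\overline\cO)$ (valid as $p^*>d$), yields $\|v^*_\tau\|_{W^{2,p^*}(\cO)}\le C$ and hence $\|v^*_\tau\|_{C^1(\overline\cO)}\le C$; the crucial point is that every constant here depends only on the data of Assumption~\ref{ass:data}, not on $\tau$. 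Consequently the feature $Z^*_\tau(x,a)=b(x,a)^\top Dv^*_\tau(x)-c(x,a)v^*_\tau(x)+f(x,a)$ from Proposition~\ref{prop:verification} satisfies $\|Z^*_\tau\|_{B_b(\cO\times A)}\le C_0$ uniformly in $\tau$.

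With $\|Z^*_\tau\|_{B_b}\le C_0$ in hand the KL bounds are elementary. Using the Gibbs form $\tfrac{d\pi^*_\tau}{d\mu}(a|x)=e^{-Z^*_\tau(x,a)/\tau}/\int_A e^{-Z^*_\tau(x,a')/\tau}\mu(da')$ of Proposition~\ref{prop:verification}, one writes
\[
\KL(\pi^*_\tau|\mu)(x)=-\frac1\tau\int_A Z^*_\tau(x,a)\,\pi^*_\tau(da|x)-\ln\int_A e^{-Z^*_\tau(x,a')/\tau}\mu(da'),
\]
and both terms are at most $C_0/\tau$ in absolute value, giving $\KL(\pi^*_\tau|\mu)(x)\le 2C_0/\tau$. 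For the reference $\boldsymbol{\pi}(Z_0)$ one decomposes $\KL(\pi^*_\tau|\boldsymbol{\pi}(Z_0))(x)=\KL(\pi^*_\tau|\mu)(x)-\int_A\ln\tfrac{d\boldsymbol{\pi}(Z_0)}{d\mu}(a|x)\,\pi^*_\tau(da|x)$ and bounds $|\ln\tfrac{d\boldsymbol{\pi}(Z_0)}{d\mu}|\le 2\|Z_0\|_{B_b(\cO\times A)}$ via \eqref{eq:operator_pi}, so that $\KL(\pi^*_\tau|\boldsymbol{\pi}(Z_0))(x)\le 2C_0/\tau+2\|Z_0\|_{B_b}\le C(1+\tau)/\tau$. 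Substituting into \eqref{eq:convergence_gf} (after multiplying by $\Gamma^{\pi^*_\tau}_t\le1$ and taking $\E^{\mathbb{P}^{x,\pi^*_\tau}}[\cdot]$, controlled through $\E[\tau_\cO]\le C_\cO$) produces \eqref{eqn:modified_conv}.

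For finite $A$ I would sharpen only the dependence on $\tau$. Here $\KL(\pi^*_\tau|\mu)(x)=-\mathrm{Ent}(\pi^*_\tau(\cdot|x))-\sum_{a}\pi^*_\tau(a|x)\ln\mu(\{a\})\le\ln\bigl(1/\min_{a}\mu(\{a\})\bigr)$, a bound independent of $\tau$, since the Shannon entropy is nonnegative and $\mu$ charges each of the finitely many atoms with mass at least $\min_a\mu(\{a\})>0$; the same reasoning gives $\KL(\pi^*_\tau|\boldsymbol{\pi}(Z_0))(x)\le\ln(1/\min_a\mu(\{a\}))+2\|Z_0\|_{B_b}$, again uniform in $\tau$. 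Substituting these $\tau$-independent bounds removes the prefactor $1/\tau$ and yields \eqref{eqn:modified_conv_fin_A}. The only remaining bookkeeping is that Proposition~\ref{thm:convergence_of_GF} carries the coefficient $(\btau_{s'}-\tau)^+$ while \eqref{eqn:modified_conv}--\eqref{eqn:modified_conv_fin_A} are written with $(\btau_{s'}-\tau)$; these agree in the regime relevant to the applications (a decreasing scheduler with $\tau\le\btau_s$, so that $\btau_{s'}\ge\tau$ on $[0,s]$), and otherwise one retains the positive part throughout.
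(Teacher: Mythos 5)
Your proposal is correct and follows essentially the same route as the paper: it invokes Proposition~\ref{thm:convergence_of_GF}, controls $\E^{\mathbb{P}^{x,\pi^*_\tau}}\int_0^{\tau_\cO}\Gamma^{\pi^*_\tau}_t\,dt$ uniformly via Krylov's exit-time estimate, and reduces everything to sup-norm KL bounds --- $\|\KL(\pi^*_\tau|\mu)\|_{B_b(\cO)}\le C/\tau$ and $\|\KL(\pi^*_\tau|\boldsymbol{\pi}(Z_0))\|_{B_b(\cO)}\le C(1+\tau^{-1})$ in general, both $O(1)$ for finite $A$ --- which is exactly the content of the paper's Lemma~\ref{lemma:entropy_bounds}. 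Your $\tau$-uniform estimate $\|v^*_\tau\|_{W^{2,p^*}(\cO)}\le C$, obtained from the $\tau$-independent growth of $H_\tau$, the maximum principle and the elliptic $W^{2,p^*}$ estimate with interpolation, is precisely the paper's Lemma~\ref{lemma:hjb_a_priori} (applied with $\eta=1$), and your finite-$A$ entropy argument matches the paper's.

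One point deserves care: \eqref{eqn:modified_conv} and \eqref{eqn:modified_conv_fin_A} carry $(\btau_{s'}-\tau)$ rather than $(\btau_{s'}-\tau)^+$, which is a genuinely sharper claim whenever $\tau>\btau_{s'}$ on part of $[0,s]$, so your fallback of ``retaining the positive part'' proves a slightly weaker inequality in that regime. The clean fix is to return to the proof of Proposition~\ref{thm:convergence_of_GF}: since $\tau\mapsto v^{\pi}_\tau(x)$ is affine by \eqref{eq:value}, one has the exact identity $v^{\pi^*_\tau}_{\btau_{s'}}(x)-v^{\pi^*_\tau}_{\tau}(x)=(\btau_{s'}-\tau)\,\E^{\mathbb{P}^{x,\pi^*_\tau}}\int_0^{\tau_\cO}\Gamma^{\pi^*_\tau}_t\KL(\pi^*_\tau|\mu)(X_t)\,dt$, so the positive part in \eqref{eq:convergence_gf} can be dropped at no cost; this is what the paper implicitly does when it cites the proposition without the positive part.
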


The proof of Theorem~\ref{cor:extend_conv_GF}  is given   in Section~\ref{sec:convergence}.
Note that for general action spaces, the upper bound \eqref{eqn:modified_conv}
with $\btau\equiv \tau$ 
blows up as $ {\tau}\to 0$. 
In other words, there is no  uniform polynomial or exponential convergence  
rate  of $v_\tau^{\boldsymbol{\pi}(Z_s)}(x)-v^*_\tau (x)$     with respect to $s>0$ and $\tau>0$ (cf.~\eqref{eq:convergence_gf_constant}).

In fact, the following corollary shows that \eqref{eq:md_intro} with a constant scheduler converges exponentially  
to the regularized value function,   extending \cite[Theorem 2.7]{kerimkulov2023fisher} to the present continuous-time setting. 
The proof follows directly from  
Proposition \ref{thm:convergence_of_GF}.
Exponential convergence of policies to the optimal regularized policy can be established using 
similar arguments as in 
\cite{kerimkulov2023fisher}.

\begin{corollary}
\label{cor:convergence_of_GF}
Suppose Assumption \ref{ass:data} holds. Let  $\tau > 0$, $Z_0\in B_b(\cO\times A)$,
and  
$Z \in \bigcap_{S>0}C^1([0,S];B_b(\cO\times A))$ be the solution to \eqref{eq:md_intro}
with $\btau \equiv \tau$.
Then for all  $s>0$ and $x\in \overline\cO$,
\begin{align}
\label{eq:convergence_gf_constant}
0\leq v_\tau^{\boldsymbol{\pi}(Z_s)}(x)-v^*_\tau (x)\leq\frac{\tau}{e^{\tau s} - 1}\E^{\mathbb{P}^{x,\pi^*_\tau}}\int_0^{\tau_\cO} \Gamma^{\pi^*_\tau}_t\KL(\pi_\tau^*|\boldsymbol\pi(Z_0))(X_t)\,dt\,.
\end{align}
\end{corollary}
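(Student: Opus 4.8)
The plan is to apply Proposition~\ref{thm:convergence_of_GF} directly with the constant scheduler $\btau\equiv\tau$ and observe that all of its time-dependence collapses. First I would check that the hypotheses of Proposition~\ref{thm:convergence_of_GF} are met: the constant function $\btau\equiv\tau$ lies in $C^1([0,\infty);(0,\infty))$ and is (weakly) decreasing, so the proposition applies. Moreover, since $\btau_s=\tau$ for every $s$, the quantity $v^{\boldsymbol{\pi}(Z_s)}_{\btau_s}$ on the left-hand side of~\eqref{eq:convergence_gf} coincides with $v^{\boldsymbol{\pi}(Z_s)}_{\tau}$, which is exactly the object appearing in the corollary.

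Next I would evaluate the two coefficients in~\eqref{eq:convergence_gf}. With $\btau\equiv\tau$ the inner integral is $\int_0^{s'}\btau_r\,dr=\tau s'$, hence $e^{\int_0^{s'}\btau_r\,dr}=e^{\tau s'}$ and the normalizing denominator becomes
\[
\int_0^s e^{\int_0^{s'}\btau_r\,dr}\,ds'=\int_0^s e^{\tau s'}\,ds'=\frac{e^{\tau s}-1}{\tau}\,.
\]
The decisive observation is that the second term of~\eqref{eq:convergence_gf} carries the factor $(\btau_{s'}-\tau)^+=(\tau-\tau)^+=0$, so it vanishes identically. Substituting these evaluations into~\eqref{eq:convergence_gf} then produces precisely the claimed upper bound
\[
v^{\boldsymbol{\pi}(Z_s)}_{\tau}(x)-v^*_\tau(x)\le \frac{\tau}{e^{\tau s}-1}\,\E^{\mathbb{P}^{x,\pi^*_\tau}}\int_0^{\tau_\cO}\Gamma^{\pi^*_\tau}_t\KL(\pi^*_\tau|\boldsymbol{\pi}(Z_0))(X_t)\,dt\,.
\]

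Finally, the lower bound $0\le v^{\boldsymbol{\pi}(Z_s)}_{\tau}(x)-v^*_\tau(x)$ is immediate from the definition~\eqref{eq:value_optimal} of $v^*_\tau$ as the infimum of $v^\pi_\tau(x)$ over $\pi\in\cP(A|\sR^d)$, applied to the admissible choice $\pi=\boldsymbol{\pi}(Z_s)\in\Pi_\mu\subset\cP(A|\sR^d)$. Since the whole argument is a direct specialization of Proposition~\ref{thm:convergence_of_GF}, there is no genuine obstacle; the only point worth flagging is the harmless convention that a constant scheduler counts as \emph{decreasing} in the hypothesis of Proposition~\ref{thm:convergence_of_GF}, which is what makes the cross term drop out and leaves the clean exponential rate.
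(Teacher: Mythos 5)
Your proof is correct and is essentially the paper's own argument: the paper likewise derives the corollary by specializing Proposition~\ref{thm:convergence_of_GF} to the constant scheduler $\btau\equiv\tau$, so that the second term vanishes via $(\btau_{s'}-\tau)^+=0$ and the denominator evaluates to $(e^{\tau s}-1)/\tau$. Your remarks on the lower bound (via the infimum defining $v^*_\tau$) and on constants counting as decreasing schedulers are exactly the right bookkeeping.
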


\subsection{Convergence of mirror descent with constant schedulers}
\label{sec:md_constant}

In this section, we  characterize the convergence rate of the flow  \eqref{eq:md_intro}  to  the unregularized problem \eqref{eq:value_optimal_intro} by employing an appropriately chosen constant scheduler $\btau$, whose value is determined based on the desired accuracy. 
The key step is to  quantify   the regularization bias $v^*_\tau-v^*_0$ for any fixed   $\tau>0$.

To this end, define the  unregularized Hamiltonian   
$H : \cO \times \sR\times \sR^{d}\to \sR$ such that  for all   
$(x,u, p )\in \cO \times   \sR\times \sR^{d}$, 
\begin{equation}
\label{eq:Hamiltonian_unregularized}
H (x,u,p)\coloneqq  \inf_{a\in A} \left( b(x,a)^\top p -c(x,a)u+f(x,a)  \right)\,. 
\end{equation}
Note that under Assumption~\ref{ass:data}, 
the function $H$ is well-defined due to the boundedness of  $b, c$ and $f$,
but $H$ may not   be Borel measurable  (see e.g., \cite{bertsekas1996stochastic}).
Assume further that  $H$ is 
Borel measurable,  then the HJB for the unregularized control problem \eqref{eq:value_optimal} (with $\tau=0$) is given by 
\begin{align}
\label{eq:hjb_unregualarised_1}
\inf_{a\in A}  \Big((\mathcal{L}^a v)(x)
+f(x,a)\Big)  =0,  \quad \textnormal{a.e.~$x\in \cO$};  \quad 
v(x)=g(x),  \quad x \in \partial \cO\,,
\end{align}
which can be equivalently written as 
\begin{align}
\label{eq:hjb_unregualarised}
\frac{1}{2}\tr(\sigma(x)\sigma(x)^\top D^2v(x) )+ H (x,v(x),Dv(x)) =0,  \quad \textnormal{a.e.~$x\in \cO$};  \quad 
v(x)=g(x),  \quad x \in \partial \cO\,.
\end{align}

We now provide sufficient conditions under which we prove the Borel  measurability of $H$,    characterize the optimal unregularized value function $v^*_0$ as the solution to  \eqref{eq:hjb_unregualarised},
and establish  the convergence of   $(v^*_\tau)_{\tau>0}$ 
to  $v^*_0$  as $\tau \to 0$. 

\begin{assumption}\label{assum:lsc}
\begin{enumerate}[(1)]
\item \label{item:measurability}
$A$ is a nonempty, compact and separable metric space. For all $x\in \cO $,
$ b(x,\cdot)$,  $c(x,\cdot)$  and $f(x,\cdot) $ are continuous on $A$.
\item \label{item:dense}
If a set $\mathcal C\subset A$ satisfies  $\mu(\mathcal C)=1$, then $\mathcal C$ is dense in $A$. 

\end{enumerate}

\end{assumption} 

Assumption \ref{assum:lsc} Item \ref{item:dense} requires the  reference measure $\mu$ in \eqref{eq:value} to   explore the entire action space $A$.
This  condition holds 
if $\mu$ assigns a nonzero probability to any nonempty open ball in $A$, and is satisfied by commonly used references measures such as   uniform measures for discrete action spaces  \cite{mei2020global,reisinger2021regularity}, and    Gaussian measures \cite{giegrich2024convergence}  and  uniform measures \cite{jia2022policy} for continuous action spaces.

\begin{theorem}
\label{thm:convergece_tau}
Suppose  Assumptions  \ref{ass:data}  and \ref{assum:lsc} hold. Then
$v^*_0$ given by~\eqref{eq:value_optimal} is the unique solution to \eqref{eq:hjb_unregualarised}
in  $  W^{2,p^*}(\cO)$ with $p^*$ as in Assumption~\ref{ass:data},
and there exists $C\ge 0$ such that for all $\tau>0 $, 
\[
0 \le  v^{\pi^*_\tau}_0  - v^*_0  \le {   v_\tau^*  - v_0^\ast  \leq }  C\| \left( H_\tau(\cdot, v^*_0(\cdot), Dv^*_0(\cdot))- H(\cdot, v^*_0(\cdot), Dv^*_0(\cdot))\right)^+\|_{L^{p^*}(\cO)}\,.
\]
Moreover, $\lim_{\tau \to 0}\| ( H_\tau(\cdot,  {v}^*_0(\cdot), D {v}^*_0(\cdot))- H(\cdot,  {v}^*_0(\cdot), D {v}^*_0(\cdot)))^+\|_{L^{p^*}(\cO)}=0$
and consequently, $\lim_{\tau \to 0} v^{\pi^*_\tau}_0 = v^*_0$ uniformly on $\overline{\cO}$. 
\end{theorem}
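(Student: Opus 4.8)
The plan is to reduce the entire statement to two independent ingredients: a quantitative comparison between the regularized HJB equation \eqref{eq:semilinear} and the unregularized HJB equation \eqref{eq:hjb_unregualarised}, and the vanishing of the Hamiltonian gap $(H_\tau-H)^+$ as $\tau\to0$. First I would dispose of the two left-hand inequalities, which are purely structural. Since $\KL(\cdot|\mu)\ge0$, one has $v^\pi_\tau\ge v^\pi_0\ge v^*_0$ for every $\pi$, so $v^*_\tau\ge v^*_0$; and $v^{\pi^*_\tau}_0\ge v^*_0$ because $\pi^*_\tau$ is admissible. Moreover Proposition~\ref{prop:verification} gives $v^*_\tau=v^{\pi^*_\tau}_\tau=v^{\pi^*_\tau}_0+\tau\,\big(\text{nonnegative }\KL\text{ term}\big)\ge v^{\pi^*_\tau}_0$, whence $v^{\pi^*_\tau}_0-v^*_0\le v^*_\tau-v^*_0$. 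It therefore remains to prove the well-posedness claim, the bound $\sup_{\cO}(v^*_\tau-v^*_0)\le C\|(H_\tau-H)^+(\cdot,v^*_0,Dv^*_0)\|_{L^{p^*}(\cO)}$, and the convergence to $0$ of this right-hand side.

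For the well-posedness assertion I would use Assumption~\ref{assum:lsc}\,(1) to verify that $H$ is Borel measurable and inherits the structural properties of $H_\tau$: the bound $|H(x,u,p)|\le\|b\|_\infty|p|+\|c\|_\infty|u|+\|f\|_\infty$, Lipschitz continuity in $(u,p)$ uniform in $x$, and monotonicity in $u$ (as $c\ge0$). Compactness of $A$ together with continuity of $b(x,\cdot),c(x,\cdot),f(x,\cdot)$ ensures the infimum in \eqref{eq:Hamiltonian_unregularized} is attained and, by a measurable selection argument, that $H$ is measurable with a measurable minimizing selector $x\mapsto a^*(x)$. With these properties, existence and uniqueness of a solution $v\in W^{2,p^*}(\cO)$ to \eqref{eq:hjb_unregualarised} follow from the same Leray--Schauder argument used for Proposition~\ref{prop:verification} (the unregularized $H$ is in fact better behaved, the unbounded $\KL$ term being absent). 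A verification argument then identifies $v$ with $v^*_0$: It\^o's formula for Sobolev functions (as in Proposition~\ref{prop:Bellman_PDE_wellposedness}) shows $v\le v^\pi_0$ for every $\pi$, while the Markov policy induced by the selector $a^*$ attains $v$, so $v=v^*_0$. Alternatively one may pass to the limit $\tau\to0$ in \eqref{eq:semilinear} using uniform $W^{2,p^*}$ bounds on $v^*_\tau$ and the compact embedding $W^{2,p^*}(\cO)\hookrightarrow C^1(\overline{\cO})$, identifying the limit by uniqueness.

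For the comparison bound, set $w\coloneqq v^*_\tau-v^*_0\in W^{2,p^*}(\cO)$, which vanishes on $\partial\cO$. Subtracting \eqref{eq:semilinear} and \eqref{eq:hjb_unregualarised} and expanding the Hamiltonian difference along the segment joining $(v^*_0,Dv^*_0)$ to $(v^*_\tau,Dv^*_\tau)$ gives $H_\tau(x,v^*_\tau,Dv^*_\tau)-H_\tau(x,v^*_0,Dv^*_0)=a_0(x)w+a_1(x)^\top Dw$, where, since $H_\tau$ is a smooth log-sum-exp, $a_0$ and $a_1$ are averaged Gibbs coefficients with $a_0\in[-\|c\|_\infty,0]$ and $|a_1|\le\|b\|_\infty$. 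Hence $w$ solves the linear uniformly elliptic equation $\tfrac12\tr(\sigma\sigma^\top D^2w)+a_1^\top Dw+a_0 w=-\big(H_\tau-H\big)(\cdot,v^*_0,Dv^*_0)$ with $w|_{\partial\cO}=0$ and zeroth-order coefficient $a_0\le0$. Because the added $\KL$ penalty is nonnegative, $H_\tau\ge H$, so the right-hand side is nonpositive, and the Aleksandrov--Bakelman--Pucci maximum principle (recall $p^*>d$) yields $\sup_{\cO}w\le C\|(H_\tau-H)^+(\cdot,v^*_0,Dv^*_0)\|_{L^{p^*}(\cO)}$. This is simultaneously the claimed estimate and a uniform (sup-norm) bound, so the resulting convergence is uniform on $\overline{\cO}$.

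Finally, to show the gap tends to $0$ in $L^{p^*}$, note that $v^*_0\in W^{2,p^*}(\cO)\hookrightarrow C^1(\overline{\cO})$, so $\Phi(x,a)\coloneqq b(x,a)^\top Dv^*_0(x)-c(x,a)v^*_0(x)+f(x,a)$ is bounded uniformly in $(x,a)$, whence $0\le H_\tau(\cdot,v^*_0,Dv^*_0)-H(\cdot,v^*_0,Dv^*_0)$ is bounded; moreover $\tau\mapsto H_\tau$ is nondecreasing, giving the $\tau$-uniform dominating bound $H_\tau-H\le H_1-H$ for $\tau\le1$. Pointwise in $x$, the Laplace principle applied to $H_\tau(x,v^*_0,Dv^*_0)=-\tau\ln\int_A e^{-\Phi(x,a)/\tau}\mu(da)$ shows the limit is the $\mu$-essential infimum of $\Phi(x,\cdot)$; and here Assumption~\ref{assum:lsc}\,(2) is essential, since every $\mu$-full set being dense forces $\mu$ to have full support, so by continuity this essential infimum equals $\inf_{a\in A}\Phi(x,\cdot)=H(x,v^*_0,Dv^*_0)$. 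Bounded convergence on the finite-measure domain $\cO$ then upgrades this to $\|(H_\tau-H)^+\|_{L^{p^*}(\cO)}\to0$. I expect the main obstacle to be twofold: establishing well-posedness and the verification identity for the merely measurable, possibly non-smooth unregularized $H$ (so that $v^*_0\in W^{2,p^*}$ is actually available for the linearization), and pinning down that the Laplace limit is the \emph{genuine} infimum $H$ rather than a strictly larger essential infimum --- it is precisely the exploration condition Assumption~\ref{assum:lsc}\,(2) that closes this gap and makes the regularization bias vanish.
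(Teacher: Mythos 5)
Your proposal is correct and follows essentially the same route as the paper: the same structural inequalities $0\le v^{\pi^*_\tau}_0-v^*_0\le v^*_\tau-v^*_0$, the same measurable-selection/Leray--Schauder/verification argument identifying $v^*_0$ with the unique $W^{2,p^*}(\cO)$ solution of \eqref{eq:hjb_unregualarised}, the same linearization of $H_\tau$ along the segment joining $(v^*_0,Dv^*_0)$ to $(v^*_\tau,Dv^*_\tau)$ with Gibbs-averaged coefficients, and the same Laplace-asymptotics-plus-density argument (via Assumption~\ref{assum:lsc}\,(2)) showing $(H_\tau-H)^+\to 0$. The only cosmetic deviation is that you close the comparison step with the Aleksandrov--Bakelman--Pucci maximum principle to obtain the sup-norm bound directly, whereas the paper (Proposition~\ref{prop:wp_unregularized}) invokes the $W^{2,p^*}$ elliptic estimate followed by Sobolev embedding; both are valid and yield the same conclusion.
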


The proof of Theorem~\ref{thm:convergece_tau} is given in Section~\ref{sec:van_reg_proofs}.

Theorem \ref{thm:convergece_tau}   indicates that 
for a sufficiently small $\tau>0$,
the optimal regularized policy $\pi^*_\tau$ is   $\varepsilon$-optimal    for the unregularized problem \eqref{eq:value_optimal_intro}.
To the best of our knowledge, 
this is the first time such a consistency result has been established for general action spaces and reference measures.
It extends similar consistency results previously established for discrete action spaces where $\mu$ is the uniform measure \cite{reisinger2021regularity}, as well as for finite-dimensional action spaces where $\mu$ is the Lebesgue measure \cite{tang2022exploratory}.

Theorem \ref{thm:convergece_tau} also bounds    the decay rate of the regularization bias     $ ( v^*_\tau-v^*_0)_{\tau>0}  $ using the  convergence rate of 
$(H_\tau-H)_{\tau>0}$. 
To obtain explicit bound of $v^*_\tau-v^*_0$,   we distinguish two different cases for the action space:
(1) $A$ is of finite cardinality;
(2) $A$ is a general space.

For discrete action spaces, 
$H_\tau-H$ can be bounded using    the cardinality of $A$.
This subsequently allows for 
bounding $v^*_\tau-  v^*_0 $
and further proving 
the flow \eqref{eq:md_intro} 
with a constant  $\btau$  achieves an error     of order $\cO(1/S)$ at time $S$. 

\begin{theorem}
\label{ex:discrete_action_space}

Suppose $A=\{a_1,\ldots, a_{N}\}$ for some $N\in \sN$ and $\mu\in \cP(A)$ is the uniform distribution over $A$.
Then for all $\tau>0$ and $(x,u,p)\in \cO\times \sR\times \sR^d$, 
$0\le H_\tau(x, u, p)- H(x, u, p) \le (\ln N) \tau$.

Assume further that Assumptions~\ref{ass:data} and~\ref{assum:lsc} hold, and  let   $Z_0\in B_b(\cO\times A)$.
Then  there exists $C\ge 0$ such that for all $S>1$, by taking $\btau \equiv {1}/{S}$, 
the solution   $ Z\in \cap_{S>0}C^1([0,S];B_b(\cO\times A))$ to 
\eqref{eq:md_intro} satisfies 
\begin{equation*}
0\leq v_0^{\boldsymbol{\pi}(Z_s)}(x) - v_0^\ast(x) 
\leq   \frac{C}{S},\quad \forall x\in \overline \cO, s\ge S\,.   
\end{equation*}

\end{theorem}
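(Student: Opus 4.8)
The plan is to prove the two assertions separately, combining the pointwise Hamiltonian estimate with the abstract convergence bound from Theorem~\ref{cor:extend_conv_GF} and the regularization-bias bound from Theorem~\ref{thm:convergece_tau}. First I would establish the claim $0 \le H_\tau(x,u,p) - H(x,u,p) \le (\ln N)\tau$. For the lower bound, one notes that adding $\tau \operatorname{KL}(m|\mu) \ge 0$ to the objective in~\eqref{eq:Hamiltonian_tau} can only increase the infimum over $m \in \cP(A)$ relative to $H$ in~\eqref{eq:Hamiltonian_unregularized}, since $H$ equals the infimum over Dirac measures $m = \delta_a$ (on which the KL term would be $+\infty$, but one approximates via the unregularized infimum over $A$). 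More directly, $H = \inf_{m \in \cP(A)} \int_A [\cdots]\,m(da)$ because the linear functional is minimized at an extreme point (a Dirac), so $H_\tau \ge H$ follows from nonnegativity of the KL term. For the upper bound, I would use the closed-form expression in the second line of~\eqref{eq:Hamiltonian_tau}: with $\mu$ uniform on $A = \{a_1,\dots,a_N\}$, writing $\ell_j \coloneqq b(x,a_j)^\top p - c(x,a_j)u + f(x,a_j)$ and $\ell_{\min} = \min_j \ell_j = H(x,u,p)$, one computes
\begin{align*}
H_\tau(x,u,p) = -\tau \ln\Big(\tfrac{1}{N}\sum_{j=1}^N e^{-\ell_j/\tau}\Big) \le -\tau \ln\Big(\tfrac{1}{N} e^{-\ell_{\min}/\tau}\Big) = \ell_{\min} + \tau \ln N,
\end{align*}
giving $H_\tau - H \le (\ln N)\tau$ immediately.

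Next I would derive the bound on the regularization bias. Plugging the pointwise estimate $0 \le H_\tau - H \le (\ln N)\tau$ (evaluated along $v^*_0$ and $Dv^*_0$) into the conclusion of Theorem~\ref{thm:convergece_tau}, the $L^{p^*}(\cO)$-norm of the positive part $(H_\tau - H)^+$ is at most $(\ln N)\tau \cdot |\cO|^{1/p^*}$, where $|\cO|$ is the (finite) Lebesgue measure of the bounded domain $\cO$. Hence there is a constant $C_1 \ge 0$, depending only on the data, with $0 \le v^*_\tau(x) - v^*_0(x) \le C_1 \tau$ uniformly in $x \in \overline{\cO}$. This controls the third term in the error decomposition~\eqref{eq:error_decomposition}.

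Then I would assemble the three-term decomposition~\eqref{eq:error_decomposition} with $\btau \equiv \tau = 1/S$ constant. The first term is nonpositive by positivity of KL. For the second (optimization) term, I would invoke the finite-cardinality bound~\eqref{eqn:modified_conv_fin_A} of Theorem~\ref{cor:extend_conv_GF}; with the constant scheduler $\btau \equiv \tau = 1/S$ the cross term $(\btau_{s'} - \tau)^+$ vanishes identically, so~\eqref{eqn:modified_conv_fin_A} collapses to
\begin{align*}
v^{\boldsymbol{\pi}(Z_s)}_{\btau_s}(x) - v^*_\tau(x) \le \frac{C}{\int_0^s e^{\tau s'}\,ds'} = \frac{C\tau}{e^{\tau s} - 1},
\end{align*}
which for $s \ge S$ and $\tau = 1/S$ is bounded by $\frac{C/S}{e^{s/S} - 1} \le \frac{C/S}{e - 1}$, i.e.\ $\cO(1/S)$. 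The third term is at most $C_1 \tau = C_1/S$ by the bias bound above. Adding the three contributions and noting $v^{\boldsymbol{\pi}(Z_s)}_0 \le v^{\boldsymbol{\pi}(Z_s)}_{\btau_s}$ (again by nonnegativity of the KL regularizer, so dropping entropy decreases the value), one obtains $0 \le v^{\boldsymbol{\pi}(Z_s)}_0(x) - v^*_0(x) \le C/S$ for all $x \in \overline{\cO}$ and $s \ge S$, with a new constant $C$ absorbing $C_1$ and $(e-1)^{-1}$.

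The main obstacle I anticipate is the careful bookkeeping needed to ensure the constant $C$ is genuinely uniform in $S$ and independent of the running time $s \ge S$. In particular one must verify that the constant from Theorem~\ref{cor:extend_conv_GF} does not itself depend on $\tau = 1/S$ (the statement asserts a single $C$ valid for all decreasing schedulers, which covers the constant scheduler), and that the denominator $e^{\tau s} - 1$ stays bounded below by a positive constant for $s \ge S$ and $\tau = 1/S$ — this is exactly why the condition $s \ge S$ (rather than merely $s > 0$) is imposed, since it guarantees $\tau s = s/S \ge 1$ and hence $e^{\tau s} - 1 \ge e - 1 > 0$. The remaining steps are routine once the Hamiltonian gap is controlled linearly in $\tau$.
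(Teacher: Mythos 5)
Your proposal is correct and follows essentially the same route as the paper, which states that the theorem "follows directly from \eqref{eqn:modified_conv_fin_A} in Theorem~\ref{cor:extend_conv_GF}, Theorem~\ref{thm:convergece_tau} and the well-known inequality $0\le \max_{1\le k\le N}a_k - \ln \frac{1}{N}\sum_{k=1}^N \exp(a_k) \le \ln N$" and omits the details. Your log-sum-exp bound is precisely that inequality, your bias bound is the paper's use of Theorem~\ref{thm:convergece_tau}, and your assembly via the decomposition \eqref{eq:error_decomposition} with the constant scheduler (for which the cross term in \eqref{eqn:modified_conv_fin_A} vanishes and $e^{\tau s}-1 \ge e-1$ for $s\ge S$) is exactly the intended argument.
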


Theorem \ref{ex:discrete_action_space}
follows directly from \eqref{eqn:modified_conv_fin_A} in 
Theorem \ref{cor:extend_conv_GF},  
Theorem \ref{thm:convergece_tau} and the well-known inequality that 
$
0\le \max_{1\le k\le N}a_k - \ln \frac{1}{N}\sum_{k=1}^N \exp(a_k) \le  \ln N
$ for all $ (a_k)_{k=1}^N\in \sR^N$, the details are omitted.

For general action spaces $A$,
analyzing the error 
$v^*_{\tau_0} - v^*_0$
is more technically involved,  as  the convergence rate of $(H_\tau(x,u,p)-H(x,u,p))_{\tau > 0}$ may depend  on  $(x,u,p)$. In the sequel, we optimize the scheduler $\btau$ under the following assumption on $(H_\tau -H)_{\tau >0}$.

\begin{assumption}
\label{assum:H_tau}
There exists $\alpha\ge 0$, $C\ge 0$ and $ {\tau}_{\max}\in (0,1)$  such that for all $\tau\in (0,{\tau}_{\max}]$ and $x\in \cO$, 
$$
H_\tau(x, v_0^*(x), Dv_0^*(x))- H(x, v_0^*(x), Dv_0^*(x))\le C\tau \left(\ln \frac{1}{\tau}\right)^\alpha\,,
$$
where  $v_0^*\in W^{2,p^*}(\cO)$ is defined by \eqref{eq:value_optimal}.

\end{assumption}

Assumption \ref{assum:H_tau} relaxes the uniform bound of $H_\tau-H$ in Theorem \ref{ex:discrete_action_space} into 
a local bound that depends on the unregularized value function $v_0^*$.  
Before presenting sufficient conditions  for Assumption \ref{assum:H_tau}, we first demonstrate that Assumption \ref{assum:H_tau} permits the choice of a scheduler $\btau$ in \eqref{eq:md_intro} that achieves an error  of order $\cO\left( {(\ln S)^{\alpha+1}}/{S}\right)$ at time $S$.

\begin{theorem}
\label{thm:conv_rate_continuous_constant}
Suppose  Assumptions~\ref{ass:data},~\ref{assum:lsc} and  \ref{assum:H_tau} hold,
and  let   $Z_0\in B_b(\cO\times A)$.
Then there exists $C\ge 0$ and $S_0>1$ such that for all $S\ge S_0$, by taking $\btau \equiv {\ln(S+1)}/{S}$, 
the solution   $  Z\in \cap_{S>0}C^1([0,S];B_b(\cO\times A))$ to 
\eqref{eq:md_intro} satisfies 
\begin{equation*}
0\leq v_0^{\boldsymbol{\pi}(Z_s)}(x) - v_0^\ast(x) 
\leq  C \frac{(\ln S)^{\alpha+1} }{S},\quad \forall x\in \overline \cO, s\ge S\,. 
\end{equation*}

\end{theorem}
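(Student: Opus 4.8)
The plan is to apply the error decomposition \eqref{eq:error_decomposition} with the constant scheduler $\btau\equiv\tau$, where $\tau\coloneqq\ln(S+1)/S$, and to control separately the optimization error $v^{\boldsymbol{\pi}(Z_s)}_{\tau}-v^*_\tau$ through Theorem~\ref{cor:extend_conv_GF} and the regularization bias $v^*_\tau-v^*_0$ through Theorem~\ref{thm:convergece_tau} and Assumption~\ref{assum:H_tau}. Since the first summand $v^{\boldsymbol{\pi}(Z_s)}_0-v^{\boldsymbol{\pi}(Z_s)}_{\tau}$ in \eqref{eq:error_decomposition} is nonpositive by the positivity of the KL divergence, it suffices to bound the remaining two terms and then to fix $S_0$ so large that $\tau\le\tau_{\max}$ (so that Assumption~\ref{assum:H_tau} applies) and the auxiliary logarithmic estimates below hold.

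First I would bound the optimization error. Substituting the constant scheduler $\btau\equiv\tau$ into \eqref{eqn:modified_conv} makes the second summand vanish, since $\btau_{s'}-\tau\equiv 0$; moreover $\int_0^s e^{\int_0^{s'}\tau\,dr}\,ds'=(e^{\tau s}-1)/\tau$. This collapses \eqref{eqn:modified_conv} to
\[
v^{\boldsymbol{\pi}(Z_s)}_{\tau}(x)-v^*_\tau(x)\le\frac{C(1+\tau)}{e^{\tau s}-1}\,.
\]
The point of the choice $\tau=\ln(S+1)/S$ is that $e^{\tau S}=e^{\ln(S+1)}=S+1$, so for every $s\ge S$ one has $e^{\tau s}-1\ge S$, whence the optimization error is at most $C(1+\tau)/S$, which is $\mathcal O(1/S)$ once $S_0$ is large enough that $\tau\le 1$.

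It remains to bound the regularization bias. Theorem~\ref{thm:convergece_tau} gives $0\le v^*_\tau-v^*_0\le C\|(H_\tau(\cdot,v^*_0,Dv^*_0)-H(\cdot,v^*_0,Dv^*_0))^+\|_{L^{p^*}(\cO)}$, and Assumption~\ref{assum:H_tau} bounds the integrand pointwise by $C\tau(\ln(1/\tau))^\alpha$, so the bias is at most $C\tau(\ln(1/\tau))^\alpha|\cO|^{1/p^*}$. With $\tau=\ln(S+1)/S$ one has $\ln(1/\tau)=\ln S-\ln\ln(S+1)\le\ln S$ and $\ln(S+1)\le 2\ln S$ for $S\ge S_0$, whence the bias is $\le C(\ln S)^{\alpha+1}/S$. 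Adding the two bounds and discarding the nonpositive first term of \eqref{eq:error_decomposition} yields the claimed rate $v^{\boldsymbol{\pi}(Z_s)}_0-v^*_0\le C(\ln S)^{\alpha+1}/S$ for all $s\ge S$.

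I do not anticipate a genuine analytic obstacle, as the heavy lifting is already carried by Theorems~\ref{cor:extend_conv_GF} and~\ref{thm:convergece_tau}; the crux is the calibration of $\tau$. The general action-space bound \eqref{eqn:modified_conv} carries an extra $1/\tau$ factor absent from the finite-action bound \eqref{eqn:modified_conv_fin_A}, which forces $e^{\tau S}\gtrsim S$, i.e.\ $\tau\gtrsim\ln S/S$, rather than the $\tau\sim 1/S$ admissible in the discrete case of Theorem~\ref{ex:discrete_action_space}. This inflated $\tau$ then enlarges the bias by precisely one logarithmic factor beyond the $(\ln(1/\tau))^\alpha$ already present in Assumption~\ref{assum:H_tau}, which accounts for the $(\ln S)^{\alpha+1}$ in the statement. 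The only care required is to choose $S_0$ making all the $\mathcal O$-constants and the inequalities $\tau\le\tau_{\max}$ and $\ln(S+1)\le 2\ln S$ simultaneously valid.
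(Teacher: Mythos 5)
Your proposal is correct and follows essentially the same route as the paper, which states that the theorem "follows directly from \eqref{eqn:modified_conv}, Theorem \ref{thm:convergece_tau}, and the specific choice of $\btau$" and omits the details. Your calculations — the collapse of \eqref{eqn:modified_conv} to $C(1+\tau)/(e^{\tau s}-1)$ for a constant scheduler, the calibration $e^{\tau S}=S+1$ giving the $\mathcal O(1/S)$ optimization error, and the bias bound $C\tau(\ln(1/\tau))^\alpha\lesssim (\ln S)^{\alpha+1}/S$ via Theorem \ref{thm:convergece_tau} and Assumption \ref{assum:H_tau} (valid once $S_0$ ensures $\tau\le\tau_{\max}$) — correctly supply exactly the omitted details.
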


Theorem \ref{thm:conv_rate_continuous_constant} 
follows directly from~\eqref{eqn:modified_conv},
Theorem \ref{thm:convergece_tau}, and the specific choice of $\btau$.
Detailed proofs are omitted for brevity.

We conclude this section by providing sufficient conditions for Assumption \ref{assum:H_tau}.
This   is closely related to obtaining precise Laplace asymptotics  for the integral in $H_\tau$,
\emph{uniformly} with respect to $(x,u,p)$
(see \cite[Chapter 7]{wong2001asymptotic}).
Quantifying the precise convergence rate in $\tau$ is challenging 
and typically has to be performed in a problem-dependent manner.
For instance, a rate of $\cO(\tau \log (1/\tau))$ is established 
for exploratory  temperature control problem
in \cite[Corollary 4.7]{tang2022exploratory}, 
and exploratory optimal stopping problem
\cite[Theorem 3.7]{dong2024randomized}, both with one-dimensional action spaces.
In Propositions \ref{ex:unique_min_in_conv_comp_set} and \ref{prop:EG_LQR},
we provide two different scenarios under which the same rate can be achieved.
The proofs are given in Section \ref{proof:lapace_asymptotic}.

The first example concerns the scenario where  the action space $A$ is finite-dimensional and the optimal actions of the unregularized   problem are unique and  achieved at the interior of the action set $A$.

\begin{proposition}
\label{ex:unique_min_in_conv_comp_set}
Suppose  Assumptions~\ref{ass:data} and~\ref{assum:lsc}  hold,
$A\subset \sR^k$ is a nonempty convex and compact set, and $\mu\in \cP(A)$ is the uniform distribution on $A$.
Let $v_0^*\in W^{2,p^*}(\cO)$ be the unique solution to \eqref{eq:hjb_unregualarised_1}.
Assume further that
$b\in C(\overline{\cO}\times A;\sR^d)$, 
$c\in C(\overline{\cO}\times A;\sR)$
and   $f\in C(\overline{\cO}\times A;\sR)$ are such that for all $x\in \overline{\cO}$,
$$A\ni  a\mapsto  h(x,a)\coloneqq b(x,a)^\top Dv^*_0(x) -c(x,a)v^*_0(x)+f(x,a)  \in \sR$$
admits a unique minimiser in the interior of $A$ and 
is twice  differentiable
with   derivative $D^2_{aa}h \in C(\overline{\cO}\times A;\sR^{k\times k})$.  
Then Assumption \ref{assum:H_tau} holds with $\alpha=1$.  

\end{proposition}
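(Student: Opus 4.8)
The plan is to verify Assumption~\ref{assum:H_tau} with $\alpha=1$ by establishing a uniform Laplace asymptotic expansion for the integral defining $H_\tau$ in \eqref{eq:Hamiltonian_tau}. Fix the notation $h(x,a)=b(x,a)^\top Dv_0^*(x)-c(x,a)v_0^*(x)+f(x,a)$, so that for each $x\in\cO$ the regularized Hamiltonian reads $H_\tau(x,v_0^*(x),Dv_0^*(x))=-\tau\ln\!\left(\frac{1}{|A|}\int_A e^{-h(x,a)/\tau}\,da\right)$, while $H(x,v_0^*(x),Dv_0^*(x))=\inf_{a\in A}h(x,a)=:m(x)$, the latter being attained at a unique interior minimiser $a^*(x)$. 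The target is to show
\begin{equation*}
H_\tau(x,v_0^*(x),Dv_0^*(x))-m(x)\le C\tau\ln\tfrac{1}{\tau}
\end{equation*}
for all small $\tau$ and all $x\in\cO$, with $C$ independent of $x$. Writing $g(x,a)=h(x,a)-m(x)\ge 0$, this reduces to an upper bound on $-\tau\ln\!\left(\frac{1}{|A|}\int_A e^{-g(x,a)/\tau}\,da\right)$, i.e.\ a \emph{lower} bound on the integral $I_\tau(x)\coloneqq\int_A e^{-g(x,a)/\tau}\,da$.

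First I would record the regularity and quantitative inputs. Since $v_0^*\in W^{2,p^*}(\cO)$ with $p^*>d$, Sobolev embedding gives $v_0^*\in C^{1,\beta}(\overline\cO)$ for some $\beta>0$, so $Dv_0^*$ is continuous and bounded on $\overline\cO$; combined with the assumed continuity of $b,c,f,D^2_{aa}h$ on the compact set $\overline\cO\times A$, the functions $h$, $D_a h$, $D^2_{aa}h$ are jointly continuous and uniformly bounded. The uniqueness of the interior minimiser $a^*(x)$ for each $x$, together with continuity, should be upgraded to (i) continuity of $x\mapsto a^*(x)$ (via a standard argmaximum/compactness argument) and (ii) a uniform interior separation, i.e.\ $\operatorname{dist}(a^*(x),\partial A)\ge\delta_0>0$ uniformly in $x$. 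The genuinely delicate quantitative input is a \emph{uniform nondegeneracy} of the Hessian: I would need $D^2_{aa}h(x,a^*(x))\succeq \kappa I$ for some $\kappa>0$ independent of $x$. This does not follow from uniqueness of the minimiser alone, so here I anticipate having to argue it — most cleanly by a compactness/continuity argument showing the smallest eigenvalue of $D^2_{aa}h(x,a^*(x))$ is a strictly positive continuous function of $x$ on the compact $\overline\cO$, hence bounded below; any point where it degenerates would contradict either uniqueness or the continuity structure. Establishing this uniform quadratic lower growth is the main obstacle.

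Granting uniform nondegeneracy, the Laplace lower bound is routine. By a second-order Taylor expansion with the Hessian bound, there are $\rho>0$ and $L>0$, uniform in $x$, such that $g(x,a)\le \frac{L}{2}|a-a^*(x)|^2$ for $|a-a^*(x)|\le\rho$; restricting the integral to this ball (which lies inside $A$ by the interior separation $\delta_0$, for $\rho\le\delta_0$) gives
\begin{equation*}
I_\tau(x)\ge \int_{|a-a^*(x)|\le\rho} e^{-\frac{L}{2\tau}|a-a^*(x)|^2}\,da
= \left(\tfrac{2\pi\tau}{L}\right)^{k/2}\Prob\!\left(|N|\le \rho\sqrt{L/\tau}\right),
\end{equation*}
where $N$ is a standard $k$-dimensional Gaussian; for $\tau$ small the probability factor exceeds $1/2$, uniformly in $x$. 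Hence $I_\tau(x)\ge c_k\,\tau^{k/2}$ for some $c_k>0$ and all small $\tau$, whence $-\tau\ln\!\left(\frac{1}{|A|}\int_A e^{-g/\tau}\right)\le \tau\ln|A|-\tau\ln(c_k\tau^{k/2})=\frac{k}{2}\tau\ln\frac{1}{\tau}+O(\tau)$. This yields exactly the bound $H_\tau-m\le C\tau\ln\frac1\tau\le C'\tau(\ln\frac1\tau)^1$ with the uniform constant, confirming Assumption~\ref{assum:H_tau} with $\alpha=1$. (The nonnegativity $H_\tau-m\ge 0$, needed for consistency with the statement's implicit sign, is immediate from Jensen's inequality applied to $-\tau\ln$, since $g\ge 0$.) I would conclude by remarking that the one-sided bound is all that Assumption~\ref{assum:H_tau} requires, so no matching upper bound on $I_\tau$ or full asymptotic expansion is needed.
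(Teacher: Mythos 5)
Your core argument is correct and is essentially the paper's own proof (Lemma~\ref{lemma:minimiser_interior}): continuity of the minimiser map $x\mapsto a^*(x)$ via the Berge maximum theorem, compactness of $\overline{\cO}$ to get a uniform interior separation $\delta_0$, a second-order Taylor expansion around the interior critical point (where $D_a h(x,a^*(x))=0$), a Gaussian integral over a small ball producing the factor $\tau^{k/2}$, and hence $H_\tau - H \le C\tau\ln\frac{1}{\tau}$ uniformly in $x$. The paper's proof additionally estimates the contribution of $A\setminus B_\varepsilon(a^*(x))$ via a uniform gap $m_\varepsilon>0$ (obtained through an upper-hemicontinuity argument), but since only a lower bound on the integral is needed, that term can simply be dropped, exactly as you do; your version is in this respect slightly cleaner.

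However, there is a genuine error in your reasoning that you should excise, even though it happens not to be load-bearing. You identify as ``the main obstacle'' the uniform nondegeneracy $D^2_{aa}h(x,a^*(x))\succeq \kappa I$, and you claim it can be established by a compactness/continuity argument because ``any point where it degenerates would contradict either uniqueness or the continuity structure.'' This is false: uniqueness of an interior minimiser does \emph{not} force a nondegenerate Hessian (take $h(x,a)=|a-a_0|^4$ near an interior $a_0$ --- unique minimiser, vanishing Hessian), and the smallest eigenvalue of $D^2_{aa}h(x,a^*(x))$, while continuous in $x$, can legitimately equal zero, so no compactness argument yields a positive lower bound. Fortunately, your actual Laplace computation never uses $\kappa$: the inequality $g(x,a)\le \frac{L}{2}|a-a^*(x)|^2$ requires only the uniform \emph{upper} bound $L$ on $D^2_{aa}h$ over the compact set $\overline{\cO}\times A$, which is immediate from the assumed continuity of $D^2_{aa}h$; this is precisely the constant $M_h$ in the paper's proof, and nothing more is needed. (A lower Hessian bound would only matter for a matching \emph{upper} bound on the integral, i.e.\ a lower bound on $H_\tau-H$, which Assumption~\ref{assum:H_tau} does not ask for.) Delete the nondegeneracy paragraph and replace ``granting uniform nondegeneracy'' by ``granting the uniform upper Hessian bound,'' and your proof is complete and correct.
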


In general, optimal actions of   
the unregularized   problem  \eqref{eq:value_optimal_intro} 
may be achieved   both at the interior and the boundary of the action space $A$. In such cases, it is crucial to analyze when the minimum value $H$ is attained at the boundary of $A$ and to quantify its impact on the convergence rate \emph{uniformly} with respect to $(x,u,p)$.
The following example delves  into the specific scenario where the action space is one-dimensional, the drift coefficient $b$ and discount factor $c$ are linear in $a$, and the running cost $f$ is quadratic in $a$.  A comprehensive  analysis for general action spaces  is left for future work.

\begin{proposition}
\label{prop:EG_LQR}
Suppose Assumptions~\ref{ass:data} and~\ref{assum:lsc} hold,
$A=[\alpha,\beta]$ for some $-\infty<\alpha<\beta<+\infty$,  and $\mu\in \cP(A)$ is the uniform distribution on $A$.
Assume further that   
there exist     $\bar{b},\widehat{b}\in B_b(\sR^d;\sR^d)$,
$\bar{c},\widehat{c}\in B_b(\sR^d;\R)$,
$\bar{f},\tilde{f},  \widehat{f}\in B_b(\sR^d;\sR)$  such that for all $x\in \cO$ and $a\in A$, 
$$
b(x,a)=\bar{b}(x)+\widehat{b}(x)a,\quad    c(x,a)=\bar{c}(x)+\widehat{c}(x)a,\quad 
f(x,a) = \bar{b}(x)+\tilde{f}(x)a +   \widehat{f}(x) a^2\,,  
$$
and $\inf_{x\in \cO}\widehat{f}(x)>0$. Then   Assumption \ref{assum:H_tau} holds with $\alpha=1$.

\end{proposition}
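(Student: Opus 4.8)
The plan is to establish Assumption~\ref{assum:H_tau} with $\alpha=1$ by obtaining a Laplace-type asymptotic expansion of the one-dimensional integral defining $H_\tau$, uniform in $(x,u,p)$, and comparing it to $H$. Fix $x\in\cO$ and write $u=v_0^*(x)$, $p=Dv_0^*(x)$. Under the structural assumption, the integrand exponent is
\[
h(x,a)=b(x,a)^\top p-c(x,a)u+f(x,a)=\widehat f(x)\,a^2+\ell(x)\,a+k(x),
\]
a quadratic in $a$ with leading coefficient $\widehat f(x)\ge\inf_\cO\widehat f>0$ (absorbing the $p,u$-dependent linear terms into $\ell(x)$). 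Since $v_0^*\in W^{2,p^*}(\cO)\hookrightarrow C^{1,\gamma}(\overline\cO)$ by Morrey's inequality (as $p^*>d$), the coefficients $\ell$ and $k$ are uniformly bounded on $\cO$, which is what will let me make all estimates uniform in $x$. Because the exponent is exactly quadratic and $A=[\alpha,\beta]$ is an interval, the integral $\int_\alpha^\beta e^{-h(x,a)/\tau}\,\frac{da}{\beta-\alpha}$ can be expressed \emph{exactly} via the Gaussian error function, and $H_\tau=-\tau\ln$ of this integral. This exactness is the feature I exploit rather than a generic Laplace lemma.

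First I would reduce to the behaviour near the unconstrained minimiser $a^*(x)=-\ell(x)/(2\widehat f(x))$ of the quadratic. The key case split is whether $a^*(x)$ lies inside $[\alpha,\beta]$ (interior minimum) or outside it (boundary minimum of $h$ on $A$), and this split must be handled uniformly. In the interior case, completing the square gives $h(x,a)=\widehat f(x)(a-a^*(x))^2+H(x,u,p)$ where $H=k-\ell^2/(4\widehat f)$; the integral becomes a Gaussian over a shifted interval, and $-\tau\ln$ of it yields $H_\tau-H=\tfrac{\tau}{2}\ln(\pi\tau/\widehat f)-\tau\ln(\beta-\alpha)+\tau\ln(\text{erf-correction})$, contributing the $\tfrac{\tau}{2}\ln(1/\tau)$ term and hence $\alpha=1$. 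In the boundary case, the minimum $H$ over $A$ is attained at an endpoint; here I would use the standard half-line Laplace estimate, where the leading correction is of order $\tau\ln(1/\tau)$ as well, matching $\alpha=1$. The main obstacle is making these two regimes patch together \emph{uniformly} as $x$ ranges over $\cO$, precisely when $a^*(x)$ is within $O(\sqrt\tau)$ of an endpoint: there the Gaussian integral over the truncated interval interpolates between a full Gaussian and a half-Gaussian, and neither expansion alone is valid.

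To handle the transition uniformly, I would substitute $a=a^*(x)+\sqrt{\tau/\widehat f(x)}\,\xi$ so that
\[
\int_\alpha^\beta e^{-h(x,a)/\tau}\,da
=\sqrt{\tfrac{\tau}{\widehat f(x)}}\,e^{-H(x,u,p)/\tau}\!\int_{\xi_-(x)}^{\xi_+(x)} e^{-\xi^2}\,d\xi,
\]
with $\xi_\pm(x)=\sqrt{\widehat f(x)/\tau}\,(\beta-a^*(x))$, resp.\ $(\alpha-a^*(x))$. The remaining factor $\int_{\xi_-}^{\xi_+}e^{-\xi^2}\,d\xi$ is bounded \emph{below} by a strictly positive constant uniformly in $x$ and $\tau$ — this is the crucial uniform nondegeneracy, following because $\inf_\cO\widehat f>0$, $\beta-\alpha>0$, and $a^*(x)$ stays in a bounded set, so the interval $[\xi_-,\xi_+]$ always contains an interval of fixed positive length straddling either $0$ or one of its endpoints; and it is bounded \emph{above} by $\sqrt\pi$. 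Taking $-\tau\ln$ then gives
\[
H_\tau(x,u,p)-H(x,u,p)
=\tfrac{\tau}{2}\ln\tfrac{1}{\tau}+\tfrac{\tau}{2}\ln\widehat f(x)-\tau\ln(\beta-\alpha)-\tau\ln\!\int_{\xi_-(x)}^{\xi_+(x)}\!e^{-\xi^2}\,d\xi,
\]
and the uniform two-sided bound on the last integral, together with the uniform bound on $\widehat f$, controls every term by $C\tau\ln(1/\tau)$ for $\tau\le\tau_{\max}<1$. This yields the claimed bound with $\alpha=1$, and the sign (upper bound is what Assumption~\ref{assum:H_tau} requires) follows since $H_\tau\ge H$ always and the $\tfrac{\tau}{2}\ln(1/\tau)$ term dominates. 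The only delicate point is confirming the lower bound on $\int_{\xi_-}^{\xi_+}e^{-\xi^2}d\xi$ is genuinely uniform across the interior/boundary transition, which the substitution makes transparent.
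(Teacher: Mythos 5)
Your overall strategy (reduce to an exact Gaussian integral, rescale around the unconstrained minimiser $a^*(x)=-\ell(x)/(2\widehat f(x))$, and patch interior/boundary regimes) parallels the paper's proof, but your execution fails at exactly the hard case. The claimed ``crucial uniform nondegeneracy''---that $I(x,\tau)\coloneqq\int_{\xi_-(x)}^{\xi_+(x)}e^{-\xi^2}\,d\xi$ is bounded below by a positive constant uniformly in $x$ and $\tau$---is false. If $a^*(x)$ lies at a fixed positive distance outside $[\alpha,\beta]$, say $\alpha-a^*(x)\ge\delta>0$ (perfectly compatible with $a^*$ ranging over a bounded set, which is all your hypotheses give), then $\xi_-(x)=\sqrt{\widehat f(x)/\tau}\,(\alpha-a^*(x))\ge \sqrt{\widehat f(x)/\tau}\,\delta\to\infty$ as $\tau\to 0$, the whole interval $[\xi_-,\xi_+]$ escapes to $+\infty$, and $I(x,\tau)\le \int_{\xi_-}^{\infty}e^{-\xi^2}\,d\xi \le e^{-\widehat f(x)\delta^2/\tau}$ is exponentially small, not bounded below; no interval ``straddles $0$'' in this regime. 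Relatedly, your displayed identity for $H_\tau-H$ is incorrect there: the substitution produces the factor $e^{-m(x)/\tau}$ with $m(x)=k(x)-\ell(x)^2/(4\widehat f(x))$ the \emph{unconstrained} minimum of the quadratic, whereas $H(x,u,p)$ is the minimum over $A=[\alpha,\beta]$; these coincide only when $a^*(x)\in[\alpha,\beta]$. When $a^*(x)\notin A$ the identity acquires the extra term $m(x)-H(x,u,p)$, which is of order $-1$ (not $O(\tau)$) and must cancel against $-\tau\ln I$, so the two quantities you treat as separately controllable are in fact individually unbounded relative to $\tau\ln(1/\tau)$.

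The repair requires exactly the quantitative error-function asymptotics your argument tries to avoid: a two-sided tail bound such as $\frac{\sqrt{\pi}}{2\sqrt{\pi}x+2}\le e^{x^2}\int_x^\infty e^{-t^2}\,dt<\frac{1}{x+1}$ for $x\ge 0$. In the exterior regime this gives $I\ge c\sqrt{\tau}\,e^{-\xi_-^2}$ (minus an exponentially smaller correction from the far endpoint), and since $\xi_-^2=(H-m)/\tau$ one gets $e^{-m/\tau}I\ge c\sqrt{\tau}\,e^{-H/\tau}$; combined with the prefactor $\sqrt{\tau/\widehat f}$ this yields $H_\tau\le H+C\tau\ln(1/\tau)$ uniformly. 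This is precisely how the paper's Lemma~\ref{lemma:H_tau_interval} handles its two boundary cases before Proposition~\ref{prop:EG_LQR} is deduced by absorbing $\widehat f(x)$ into an effective temperature $\tau(2\widehat f(x))^{-1}$ and an effective slope, both uniformly controlled via the $C^1$ bound on $v_0^*$. Your interior case and the transitional case ($a^*$ within $O(\sqrt\tau)$ of an endpoint, where $\xi_-$ or $\xi_+$ stays $O(1)$) are fine; the flipped sign on the $\tau\ln(\beta-\alpha)$ term is harmless.
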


\subsection{Convergence of mirror descent with annealing    schedulers}
\label{sec:md_anneal}

In this section, we  analyze the convergence rate  of the annealed flow \eqref{eq:md_intro} 
guided by a time-dependent scheduler $\btau$  that converges to $0$.
The analysis is more intricate than that for constant schedulers in Section \ref{sec:md_constant},
as one has to balance both terms in the estimates~\eqref{eqn:modified_conv} of~\eqref{eqn:modified_conv_fin_A},
as well as the
regularization bias $v^*_\tau-v^*_0$.
As before, we treat the cases with discrete action spaces and general action spaces separately.

For discrete action spaces, the following theorem shows that setting $\btau_s=1/(1+s)$ in  \eqref{eq:md_intro}
yields a rate of $\cO(1/s)$ as $s\to \infty$. 
The proof  is given in Section \ref{sec:proof_anneal}.

\begin{theorem}
\label{thm:conv_discrete_anneal}
Suppose    that Assumptions~\ref{ass:data} and~\ref{assum:lsc} hold, and $A$ is of finite cardinality. 
Let
$Z_0\in B_b(\cO\times A)$ and
let  $\btau\in C^1([0,\infty); (0,\infty))$ be such that $\btau_s =1/(1+s)$ for all $s>0$.
Then  there exists $C>0$ such that  the solution 
$Z\in \cap_{S>0}C^1([0,S];B_b(\cO\times A))$   to 
\eqref{eq:md_intro} satisfies 
\begin{equation*}
0\leq v_0^{\boldsymbol{\pi}(Z_s)}(x) - v_0^\ast(x) 
\leq   \frac{C}{s},\quad \forall x\in \overline \cO , s>1\,.
\end{equation*}

\end{theorem}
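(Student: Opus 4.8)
The plan is to invoke the error decomposition~\eqref{eq:error_decomposition} with the \emph{time-varying} comparison parameter $\tau=\btau_s$, and then to bound the two resulting nonnegative pieces—the optimization error $v^{\boldsymbol{\pi}(Z_s)}_{\btau_s}-v^*_{\btau_s}$ and the regularization bias $v^*_{\btau_s}-v^*_0$—separately, showing that with the schedule $\btau_s=1/(1+s)$ each of them is of order $\cO(1/s)$. Evaluating~\eqref{eq:error_decomposition} at $\tau=\btau_s$, the first term $v^{\boldsymbol{\pi}(Z_s)}_0-v^{\boldsymbol{\pi}(Z_s)}_{\btau_s}$ is nonpositive by positivity of the KL divergence, so it suffices to control the remaining two terms; the lower bound $v^{\boldsymbol{\pi}(Z_s)}_0-v^*_0\ge 0$ is immediate from the definition of $v^*_0$ as an infimum.

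For the optimization error I would apply the finite-cardinality estimate~\eqref{eqn:modified_conv_fin_A} of Theorem~\ref{cor:extend_conv_GF} with $\tau=\btau_s$. The decisive feature here is that the constant $C$ in~\eqref{eqn:modified_conv_fin_A} is independent of $\tau$, so no $1/\tau$ blow-up occurs as in the general-action-space bound~\eqref{eqn:modified_conv}; this is precisely what makes the discrete case tractable. Since $\btau_s=1/(1+s)$ gives $\int_0^{s'}\btau_r\,dr=\ln(1+s')$ and hence $e^{\int_0^{s'}\btau_r\,dr}=1+s'$, the weight integrals become elementary:
\[
\int_0^s e^{\int_0^{s'}\btau_r\,dr}\,ds'=\frac{s(2+s)}{2},\qquad
\int_0^s(\btau_{s'}-\btau_s)\,e^{\int_0^{s'}\btau_r\,dr}\,ds'=\frac{s^2}{2(1+s)}\,.
\]
Substituting these into~\eqref{eqn:modified_conv_fin_A} yields
\[
v^{\boldsymbol{\pi}(Z_s)}_{\btau_s}(x)-v^*_{\btau_s}(x)\le C\left(\frac{2}{s(2+s)}+\frac{s}{(1+s)(2+s)}\right),
\]
and since for every $s>0$ both $\tfrac{2}{s(2+s)}\le\tfrac1s$ and $\tfrac{s}{(1+s)(2+s)}\le\tfrac1s$, the optimization error is $\cO(1/s)$.

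For the regularization bias I would combine Theorem~\ref{thm:convergece_tau}, which gives $0\le v^*_\tau-v^*_0\le C\|(H_\tau-H)^+\|_{L^{p^*}(\cO)}$ (the difference being evaluated at $(\,\cdot\,,v^*_0,Dv^*_0)$), with the pointwise estimate $0\le H_\tau-H\le(\ln N)\tau$ from Theorem~\ref{ex:discrete_action_space}. This produces
\[
0\le v^*_{\btau_s}-v^*_0\le C(\ln N)\,|\cO|^{1/p^*}\,\btau_s=\cO(1/s)\,.
\]
Adding the two $\cO(1/s)$ bounds and using the error decomposition yields the claimed rate, uniformly in $x\in\overline\cO$ and for $s>1$. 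I expect the only genuine work beyond assembling the cited results to be the explicit integral evaluation above and the verification that the two contributions balance at the same order; the conceptual heavy lifting—in particular the $\tau$-uniform constant in~\eqref{eqn:modified_conv_fin_A}, which is special to finite $A$, together with the consistency and bias rate from Theorems~\ref{thm:convergece_tau} and~\ref{ex:discrete_action_space}—has already been carried out, so the main subtlety reduces to the choice of the comparison level $\tau=\btau_s$ rather than a fixed $\tau$.
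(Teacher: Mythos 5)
Your proposal is correct and follows essentially the same route as the paper's proof: the decomposition $v_0^{\boldsymbol{\pi}(Z_s)}-v_0^*\le (v_{\btau_s}^{\boldsymbol{\pi}(Z_s)}-v_{\btau_s}^*)+(v_{\btau_s}^*-v_0^*)$ with the time-varying comparison level $\tau=\btau_s$, the $\tau$-uniform finite-cardinality bound \eqref{eqn:modified_conv_fin_A} for the optimization error, Theorems~\ref{thm:convergece_tau} and~\ref{ex:discrete_action_space} for the $\cO(\btau_s)$ bias, and the same explicit integral evaluations ($e^{\int_0^{s'}\btau_r dr}=1+s'$, $\int_0^s e^{\int_0^{s'}\btau_r dr}ds'=\tfrac12 s^2+s$, second ratio $=\tfrac{s}{(1+s)(2+s)}$). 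The only cosmetic difference is that you cite \eqref{eqn:modified_conv_fin_A} where the paper's proof nominally cites \eqref{eqn:modified_conv}, but the bound the paper actually uses is the finite-$A$ one, so the arguments coincide.
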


Note that the flow \eqref{eq:md_intro} with entropy annealing is an anytime algorithm \cite{lattimore2020bandit}, meaning it does not require determining the running horizon 
in advance, and the error bound in Theorem \ref{thm:conv_discrete_anneal} holds for all 
large $s>0$. In contrast, the flow \eqref{eq:md_intro} with a constant scheduler, as described in Theorem \ref{ex:discrete_action_space}, is not an anytime algorithm since the choice of scheduler depends on the horizon 
$S$, and results in a non-zero regularization bias. 

For general action spaces, 
the following theorem proves that the scheduler 
$\btau_s=1/\sqrt{1+s}$  ensures the convergence of the flow \eqref{eq:md_intro} as  $s\to \infty$,  and further achieves a convergence rate of $\cO(1/\sqrt{s})$ 
 under Assumption   \ref{assum:H_tau}. 

\begin{theorem}
\label{thm:conv_general_anneal}
Suppose    that Assumptions~\ref{ass:data} and \ref{assum:lsc} hold. 
Let
$Z_0\in B_b(\cO\times A)$ and
let  $\btau\in C^1([0,\infty); (0,\infty))$ be such that $\btau_s =1/\sqrt{1+s}$ for all $s>0$.
Then  
the solution 
$Z\in \cap_{S>0}C^1([0,S];B_b(\cO\times A))$   to 
\eqref{eq:md_intro} satisfies 
$\lim_{s\to \infty}  v_0^{\boldsymbol{\pi}(Z_s)}(x) =v_0^\ast(x)$
for all $x\in \overline{\cO}$.

Assume further that 
Assumption  \ref{assum:H_tau} holds. Then
there exists $C>0$ such that  
\begin{equation*}
0\leq v_0^{\boldsymbol{\pi}(Z_s)}(x) - v_0^\ast(x) 
\leq   \frac{C (\ln s)^\alpha}{\sqrt{s}},\quad \forall x\in \overline \cO , s>1\,.
\end{equation*}

\end{theorem}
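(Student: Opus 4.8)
\textbf{Proof proposal for Theorem \ref{thm:conv_general_anneal}.}

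The plan is to use the error decomposition \eqref{eq:error_decomposition} together with the explicit optimization-error bound \eqref{eqn:modified_conv} from Theorem \ref{cor:extend_conv_GF}, applied with the \emph{time-dependent} scheduler $\btau_s = 1/\sqrt{1+s}$, and then to balance the three terms. First I would invoke \eqref{eqn:modified_conv}, evaluated along the trajectory $\tau = \btau_s$, to bound the optimization gap $v^{\boldsymbol{\pi}(Z_s)}_{\btau_s} - v^*_{\btau_s}$. Since the first term of \eqref{eq:error_decomposition} is nonpositive (positivity of KL), it suffices to control this optimization gap and the regularization bias $v^*_{\btau_s} - v^*_0$, the latter being handled by Theorem \ref{thm:convergece_tau} (for the plain convergence claim) and by Assumption \ref{assum:H_tau} (for the rate).

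The central computation is the asymptotic evaluation of the two quotients in \eqref{eqn:modified_conv}. Writing $\phi(s') \coloneqq \int_0^{s'}\btau_r\,dr = \int_0^{s'}(1+r)^{-1/2}\,dr = 2(\sqrt{1+s'}-1)$, the denominator $\int_0^s e^{\phi(s')}\,ds'$ grows like $e^{2\sqrt{1+s}}/\btau_s$ up to constants, by a Laplace/Watson-type estimate for the integral of $e^{\phi}$ against the slowly-varying weight. For the first term, I would show $\tfrac{1}{\tau}\cdot\tfrac{1+\tau}{\int_0^s e^{\phi}\,ds'}$ with $\tau = \btau_s$ is of order $\btau_s = 1/\sqrt{s}$; the factor $1/\tau = \sqrt{1+s}$ is compensated by the fast growth of the denominator. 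For the second (drift) term, the key observation is that $\btau_{s'} - \btau_s \ge 0$ for $s' \le s$ since $\btau$ is decreasing, and that the exponential weight $e^{\phi(s')}$ concentrates mass near $s' = s$; a careful integration-by-parts or direct estimate using $|\btau'_{s'}| = \tfrac12(1+s')^{-3/2}$ should show $\int_0^s(\btau_{s'}-\btau_s)e^{\phi}\,ds'\big/\int_0^s e^{\phi}\,ds'$ is also $\cO(\btau_s)$. Combining, the optimization error along $\tau=\btau_s$ is $\cO(1/\sqrt{s})$.

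For the regularization bias, Theorem \ref{thm:convergece_tau} gives $0 \le v^*_{\btau_s} - v^*_0 \le C\|(H_{\btau_s}-H)^+(\cdot,v^*_0,Dv^*_0)\|_{L^{p^*}(\cO)}$, which tends to $0$ as $s\to\infty$ since $\btau_s \to 0$; this yields the bare convergence $v^{\boldsymbol{\pi}(Z_s)}_0 \to v^*_0$ without any rate assumption. Under Assumption \ref{assum:H_tau}, the same quantity is bounded by $C\btau_s(\ln(1/\btau_s))^\alpha$. Since $\ln(1/\btau_s) = \tfrac12\ln(1+s) = \cO(\ln s)$ and $\btau_s = 1/\sqrt{1+s}$, the bias is $\cO((\ln s)^\alpha/\sqrt{s})$, which dominates the $\cO(1/\sqrt{s})$ optimization error and hence governs the final rate. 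Assembling all three terms in \eqref{eq:error_decomposition} gives the claimed bound $\cO((\ln s)^\alpha/\sqrt{s})$ uniformly in $x\in\overline{\cO}$.

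The main obstacle I anticipate is the precise asymptotic control of the second quotient in \eqref{eqn:modified_conv}: unlike the constant-scheduler case, here both the integrand weight $e^{\phi(s')}$ and the factor $\btau_{s'}-\btau_s$ vary, and one must show their ratio decays at exactly the rate $\btau_s$ rather than something slower. The delicate point is that the ``excess entropy'' contribution $\btau_{s'}-\btau_s$ is largest precisely where the exponential weight is smallest (near $s'=0$), so a crude bound would lose the rate; I expect to need the monotone decay and the integrable derivative $\btau'$ to localize the mass near $s'=s$ and extract the correct $\cO(\btau_s)$ order. The tuning $\btau_s = 1/\sqrt{1+s}$ is exactly what equates the optimization error $\cO(\btau_s)$ with the bias $\cO(\btau_s(\ln(1/\btau_s))^\alpha)$ up to the logarithmic factor, and verifying that no faster-decaying choice improves the balance (because a smaller $\btau$ would blow up the $1/\tau$ prefactor in the optimization term) is the conceptual heart of the argument.
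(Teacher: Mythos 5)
Your overall strategy is the same as the paper's: decompose the error as in \eqref{eq:error_decomposition}, apply \eqref{eqn:modified_conv} with $\tau=\btau_s$, and control the bias $v^*_{\btau_s}-v^*_0$ via Theorem \ref{thm:convergece_tau} (bare convergence) and Assumption \ref{assum:H_tau} (rate). However, there is a genuine quantitative gap at exactly the step you flag as the main obstacle. The second term of \eqref{eqn:modified_conv} carries the prefactor $C/\tau$, so with $\tau=\btau_s$ what must be proved is
\[
\frac{1}{\btau_s}\cdot\frac{\int_0^s(\btau_{s'}-\btau_s)\,e^{\int_0^{s'}\btau_r\,dr}\,ds'}{\int_0^s e^{\int_0^{s'}\btau_r\,dr}\,ds'}\;\le\;\frac{C}{\sqrt{s}}\,,
\]
i.e.\ the quotient you write down must be $\cO(\btau_s^2)=\cO(1/s)$. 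You only claim this quotient is $\cO(\btau_s)$; after multiplying by the prefactor $1/\btau_s=\sqrt{1+s}$ this gives merely $\cO(1)$ for the optimization error, which is insufficient even for the rate-free convergence assertion in the first part of the theorem, let alone the $\cO((\ln s)^\alpha/\sqrt{s})$ bound. Your stated target is thus off by a factor of $\sqrt{s}$ from what your own conclusion requires. (Ironically, the localization heuristic you sketch does deliver the correct order if pushed through: near $s'=s$ one has $\btau_{s'}-\btau_s\approx|\btau_s'|(s-s')$ and $e^{\phi(s')}\approx e^{\phi(s)}e^{-\btau_s(s-s')}$, giving a quotient of order $|\btau_s'|/\btau_s=\tfrac{1}{2(1+s)}$ --- but that is not the bound you state.)

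You should also note that the paper avoids Laplace-type asymptotics altogether, because for this scheduler both integrals are exact: since $\btau_{s'}e^{\phi(s')}=\tfrac{d}{ds'}e^{\phi(s')}$ with $\phi(s')=2\sqrt{1+s'}-2$, one has $\int_0^s\btau_{s'}e^{\phi(s')}\,ds'=e^{2\sqrt{1+s}-2}-1$, while the substitution $y=2\sqrt{1+s'}$ and integration by parts give $\int_0^s e^{\phi(s')}\,ds'=\tfrac12\bigl(e^{2\sqrt{1+s}-2}(2\sqrt{1+s}-1)-1\bigr)$. The ratio minus one is then an explicit expression which is bounded by $C/\sqrt{s}$ (this is \eqref{eq:dominating_term_general}); moreover the first term of \eqref{eqn:modified_conv} is in fact exponentially small, of order $e^{-2\sqrt{1+s}}$, not merely $\cO(1/\sqrt{s})$ as you assert (harmless, since you only need an upper bound). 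If you replace your claimed $\cO(\btau_s)$ estimate by these closed-form computations, or by your localization argument corrected to target $\cO(1/s)$, your proposal becomes the paper's proof.
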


The proof of Theorem \ref{thm:conv_general_anneal}  is given in Section \ref{sec:proof_anneal}.

\begin{remark}
\label{remark other schedulers}
It is not clear how to choose an alternative annealing scheduler that would improve the anytime rate $\mathcal{O}(1/\sqrt{s})$
given by Theorem \ref{thm:conv_general_anneal}.
For the scheduler $\btau_s = 1/\sqrt{1+s}$, the last term in \eqref{eqn:modified_conv} dominates the optimization error, which is approximately $\cO(1/\sqrt{s})$ (see \eqref{eq:dominating_term_general}). This matches the regularization bias up to a logarithmic term. A faster decaying $\boldsymbol{\tau}$ would reduce the regularization bias but increase the optimization error. For instance, if we take the scheduler $\btau_s = \ln(1+s) / (1+s)$,  as suggested in Theorem \ref{thm:conv_rate_continuous_constant}, then a careful asymptotic analysis reveals that the last term in \eqref{eqn:modified_conv} is approximately $\mathcal{O}(1/\ln s)$, resulting in an overall error of a worse rate $\mathcal{O}(1/\ln s)$ as $s\to \infty$.

In  Figure \ref{fig error plot annealing},  
we 
illustrate the behavior 
of the   overall error 
  $v_0^{\boldsymbol{\pi}(Z_S)}(x) - v_0^\ast(x)$
  under annealing schedules of the form 
  $\boldsymbol{\tau}_s = 1/(1+s)^\beta$,
  considering different values of  $\beta\in (0,1)$ and 
   different  running horizons $S$ of the gradient flow.
For a given $\beta$
and $S$,
we plot the upper bound of $v_0^{\boldsymbol{\pi}(Z_S)}(x) - v_0^\ast(x)$
  implied by  
\eqref{eqn:modified_conv}, Theorem~\ref{thm:conv_rate_continuous_constant} and Assumption~\ref{assum:H_tau} with $\alpha=1$.
The result shows that the optimal parameter, which minimizes the error, depends on the running horizon.
In our numerical results, the optimal \(\beta\) does not stabilize within the examined range of \(S\), leaving it unclear whether a scheduler within this parametric family optimizes the asymptotic performance as  \(S \to \infty\). Unfortunately, evaluating the error for longer time horizons is infeasible due to the exponentials in \eqref{eqn:modified_conv} becoming too large to handle with floating-point arithmetic.
\end{remark}

\begin{figure}
\begin{center}
\includegraphics[width=0.8\textwidth]{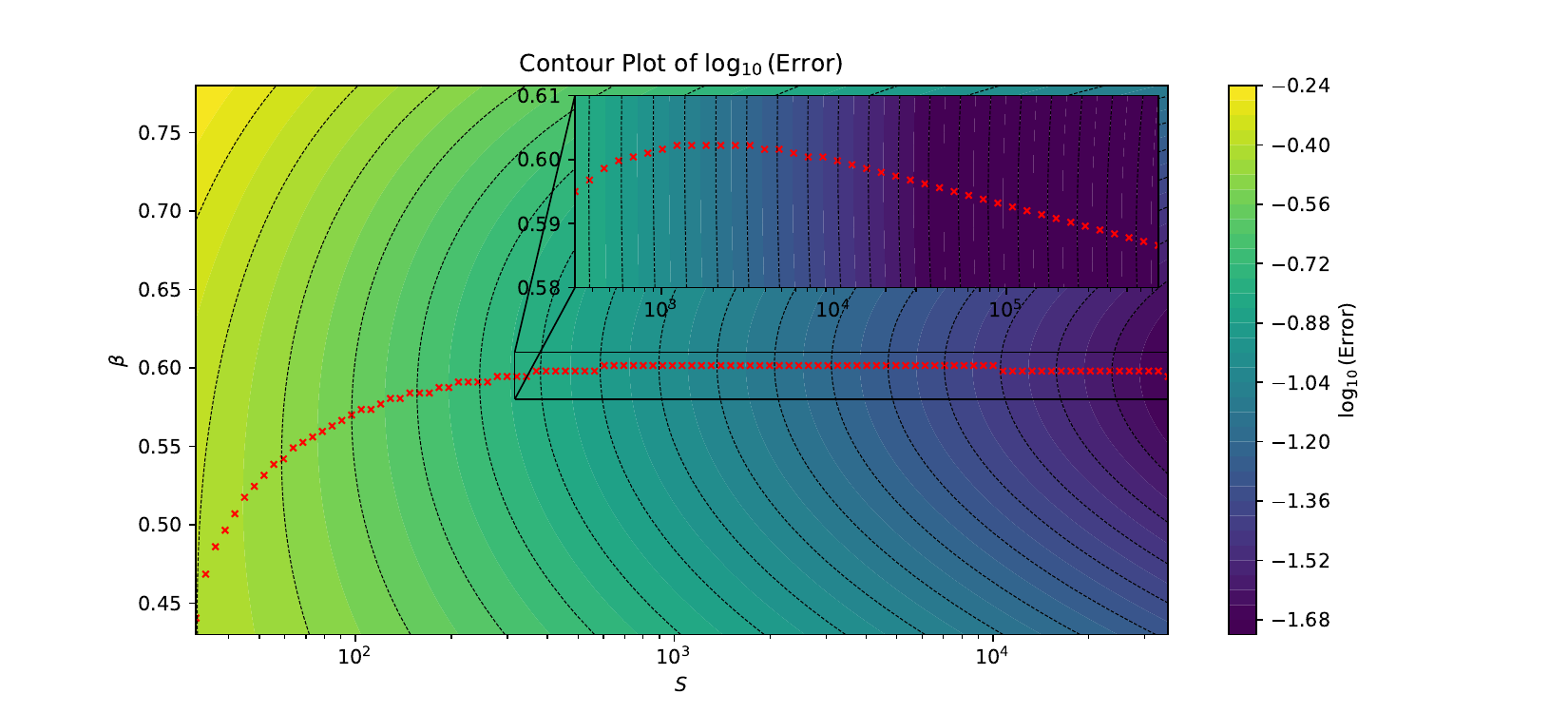}
\caption{
The overall error 
  $v_0^{\boldsymbol{\pi}(Z_S)}  - v_0^\ast $
  with   annealing schedulers   $\boldsymbol{\tau}_s = 1/(1+s)^\beta$,
for  different  $\beta\in (0,1)$ and running horizon $S$.}
\label{fig error plot annealing}
\end{center}	
\end{figure}

\subsection{Discussion: controlled diffusion coefficients}  
\label{sec:control_diffusion}

An analogue  mirror descent flow  can be proposed for exit time   problems
with controlled diffusions. 
For each $\pi\in \Pi_\mu$, 
consider the state process 
\begin{align*}
dX_t = \left(\int_Ab(X_t^\pi,a)\pi(da|X_t)\right)dt + \left(\int_A\sigma\sigma^\top (X_t,a)\pi(da|X_t)\right)^{\frac{1}{2}}dW_t\,,\enspace X_0=x\,,\enspace t\geq 0\,.
\end{align*}
A  similar   argument  as that in Section  \ref{sec:intro} shows  that 
the corresponding mirror descent flow is  
\begin{equation}
\label{eq:md_diffusion}
\partial_s {Z_s}(x,a) =-\left(  \mathcal L^a v^{\boldsymbol{\pi}(Z_s)}_{\btau_s} (x)+ f(x,a)  + \btau_s Z_s(x,a)\right)  \,,\quad   (x,a)\in \cO\times A,\,s>0 \,,
\end{equation}
with the   operator $ \mathcal L^a$   given by
\begin{align*}
(\mathcal{L}^a v)(x) = \frac{1}{2}\tr(\sigma(x,a)\sigma(x,a)^\top D^2v(x) )+b(x,a)^\top Dv(x) -c(x,a)v(x)\,.
\end{align*}
Compared with \eqref{eq:md_intro}, \eqref{eq:md_diffusion} involves a second-order differential operator due to the controlled diffusion coefficient.
Assume that \eqref{eq:md_diffusion} has a (sufficiently regular) solution $Z  $
along which   the map $ s \mapsto v^{\boldsymbol{\pi}(Z_s)}_{\btau_s}$ is differentiable.  Then 
one can extend 
Theorem  \ref{ref:cost_decrease_along_flow} to prove that $ s \mapsto v^{\boldsymbol{\pi}(Z_s)}_{\btau_s}$  decreases and
further quantify 
the optimization error $v^{\boldsymbol{\pi}(Z_s)}_{\btau_s}-v^*_\tau$ as in Proposition 
\ref{thm:convergence_of_GF}. 
The regularization bias 
$v^*_\tau-v^*_0$ can also be estimated 
under  sufficient regularity conditions on the coefficients 
as in 
\cite{reisinger2021regularity, tang2022exploratory}.

However, it remains unclear in which function space the flow \eqref{eq:md_diffusion} admits a solution that allows the value function to be differentiable. This primarily stems from the lack of regularity of 
$Z\mapsto \cL^a v^{\boldsymbol{\pi}(Z)}_\tau$. 
Given $\pi\in \Pi_\mu$, 
the controlled diffusion coefficient is  merely measurable, 
and hence 
standard elliptic regularity results can no longer be   applied to ensure the differentiability of  
$v^{\boldsymbol{\pi}(Z)}_\tau$.
Even when restricting to sufficiently regular $Z$,
it remains unclear  under which norm the map $Z\mapsto  \cL^a v^{\boldsymbol{\pi}(Z)}_\tau$
is continuous, which is essential for applying a fixed point theorem to establish the existence of a solution to   \eqref{eq:md_diffusion}. This lack of continuity   also hinders the application of the techniques developed in this paper to establish the differentiability of 
$Z\mapsto  v^{\boldsymbol{\pi}(Z)}_\tau$ along the flow \eqref{eq:md_diffusion};
see Section \ref{sec:Differentiability_Stability} for more details.

\section{Performance difference and regularity of cost functional}
\label{sec:Differentiability_Stability}
This section
establishes several essential properties of the regularized value function $v^\pi_\tau$, which will be used to analyze the well-posedness and convergence of the flow \eqref{eq:md_intro}.  
For notational simplicity, 
in the sequel,
we write 
$K\coloneqq \max\{ \|b\|_{B_b(\mathbb{R}^d\times A)}, \|c\|_{B_b(\mathbb{R}^d\times A)}, \|f\|_{B_b(\mathbb{R}^d\times A)}, \|\sigma\|_{B_b(\mathbb{R}^d)}\}$,
and 
denote by $C>0$  a generic constant 
which depends only on $d$, $p^*$, $\lambda$, $\cO$ and the modulus of continuity of $\sigma\sigma^\top$, 
and may take a different value at each occurrence.

We first prove the so-called performance difference lemma for any two different Gibbs policies.  
The proof is   based on  the following generalized  Feynman-Kac formula for   linear PDEs established in \cite[Ch.~2., Sec.~10, Theorem 1]{krylov2008controlled}. 

\begin{proposition}\label{prop:Fey_Kac}
Suppose  Assumption \ref{ass:data} holds. 
Let  $u\in W^{2,p^\ast}(\cO)$, $h\in L^{p^\ast}(\cO)$ 
and $\pi\in\Pi_\mu$ satisfy
\begin{equation}
\label{eqn:helper_F_K}
\int_A(\mathcal{L}^a u)(x)\pi(da|x)+h(x) =0\,\,\,\text{a.e.}\,\,\, x \in \cO\,; \quad 
u(x)=0, \, x \in \partial \cO\,.
\end{equation}
Then for all $x\in \cO$, $u(x)=\E^{\mathbb{P}^{x,\pi}}\int_0^{\tau_\cO}\Gamma^\pi_t h(X_t)\,dt$,
where  $(X^{x,\pi}_t)_{t\geq 0}$ is the unique  weak solution to \eqref{eqn:state}.
\end{proposition}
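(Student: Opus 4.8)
The plan is to recognise the $\pi$-averaged equation \eqref{eqn:helper_F_K} as a linear elliptic Dirichlet problem whose coefficients are exactly those of the diffusion $X^{x,\pi}$, and then to read off the stochastic representation from the generalized It\^o formula for Sobolev functions. First I would introduce the averaged coefficients $b^\pi(x)\coloneqq \int_A b(x,a)\pi(da|x)$ and $c^\pi(x)\coloneqq \int_A c(x,a)\pi(da|x)$, which are bounded (by $K$) and Borel measurable since $b,c$ are bounded measurable and $\pi$ is a kernel. With the operator $\mathcal{L}^a$ from \eqref{eq:generator} one then has the pointwise identity
\[
\int_A (\mathcal{L}^a u)(x)\,\pi(da|x) = \tfrac{1}{2}\tr(\sigma(x)\sigma(x)^\top D^2u(x)) + b^\pi(x)^\top Du(x) - c^\pi(x)u(x) =: (\mathcal{L}^\pi u)(x)\,,
\]
so that \eqref{eqn:helper_F_K} reads $\mathcal{L}^\pi u + h = 0$ a.e.\ in $\cO$ with $u=0$ on $\partial\cO$, and the diffusion and drift parts of $\mathcal{L}^\pi$ are precisely the generator of the weak solution to \eqref{eqn:state}.

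Next I would apply the It\^o--Krylov formula to $t\mapsto \Gamma^\pi_t u(X^{x,\pi}_t)$ on the stochastic interval $[0,\tau_\cO\wedge n]$. Since $p^*>d$, the Sobolev embedding $W^{2,p^*}(\cO)\hookrightarrow C^{1,\alpha}(\overline{\cO})$ makes $u$ and $Du$ bounded and continuous on $\overline\cO$, which is the regularity needed to apply the formula. The diffusion and first-order parts of the resulting drift coincide with the generator of $X^{x,\pi}$, while differentiating the finite-variation factor $\Gamma^\pi_t$ (via $d\Gamma^\pi_t=-c^\pi(X_t)\Gamma^\pi_t\,dt$, with no cross-variation term because $\Gamma^\pi$ is of finite variation) produces exactly the zeroth-order term $-c^\pi u$. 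Collecting these and substituting the equation $(\mathcal{L}^\pi u)(X_t)=-h(X_t)$ yields
\[
\Gamma^\pi_{\tau_\cO\wedge n}u(X_{\tau_\cO\wedge n}) - u(x) = -\int_0^{\tau_\cO\wedge n}\Gamma^\pi_t h(X_t)\,dt + \int_0^{\tau_\cO\wedge n}\Gamma^\pi_t Du(X_t)^\top\sigma(X_t)\,dW_t\,.
\]

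Taking expectations kills the stochastic integral, which is a true martingale because its integrand is bounded ($\|Du\|_{C^0(\overline{\cO})}\|\sigma\|_{B_b}\,$, with $\Gamma^\pi\le 1$). It then remains to send $n\to\infty$, using $\E^{\mathbb{P}^{x,\pi}}[\tau_\cO]<\infty$ so that $\tau_\cO\wedge n\to\tau_\cO$ a.s. The boundary term vanishes in the limit: $X_{\tau_\cO}\in\partial\cO$, $u$ is continuous with $u|_{\partial\cO}=0$, and $|\Gamma^\pi_{\tau_\cO\wedge n}u(X_{\tau_\cO\wedge n})|\le \|u\|_{C^0(\overline{\cO})}$, so dominated convergence gives $0$. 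For the integral term I would invoke the Krylov--Aleksandrov estimate $\E^{\mathbb{P}^{x,\pi}}\int_0^{\tau_\cO}|h(X_t)|\,dt\le C\|h\|_{L^{p^*}(\cO)}<\infty$ (valid for $p^*>d$ under the non-degeneracy of $\sigma$), which both makes $h(X_t)$ meaningful for the merely $L^{p^*}$ datum $h$ and provides the dominating function for passing to the limit, yielding $u(x)=\E^{\mathbb{P}^{x,\pi}}\int_0^{\tau_\cO}\Gamma^\pi_t h(X_t)\,dt$.

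The main obstacle, and the reason the result is quoted from \cite[Ch.~2., Sec.~10, Theorem 1]{krylov2008controlled} rather than derived by an elementary It\^o argument, is the justification of the chain rule for the merely Sobolev function $u\in W^{2,p^*}$ together with the fact that the equation holds only a.e. The substitution $(\mathcal{L}^\pi u)(X_t)=-h(X_t)$ is licit only because non-degeneracy forces the occupation measure of $X^{x,\pi}$ to be absolutely continuous with respect to Lebesgue measure, so the Lebesgue-null exceptional set on which \eqref{eqn:helper_F_K} fails carries zero $dt\otimes d\mathbb{P}$ mass. In practice I would therefore phrase the proof as verifying the hypotheses of Krylov's theorem for the averaged operator $\mathcal{L}^\pi$ and the discount factor $\Gamma^\pi$, rather than reproving the generalized It\^o formula from scratch.
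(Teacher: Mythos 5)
Your proposal is correct and matches the paper's treatment: the paper gives no independent proof of Proposition \ref{prop:Fey_Kac}, quoting it as the generalized Feynman--Kac formula established in \cite[Ch.~2, Sec.~10, Theorem 1]{krylov2008controlled}, i.e., resting on exactly the It\^o--Krylov formula for $W^{2,p^*}$ functions that you invoke. Your write-up simply fills in the standard derivation behind that citation --- averaging the kernel into a linear operator $\mathcal{L}^\pi$, localizing at $\tau_\cO\wedge n$, the bounded-integrand martingale argument, and the Krylov estimate to make sense of the a.e.-defined data $h$ and to pass to the limit --- all of which is sound.
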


\begin{lemma}[Performance Difference]
\label{lemma:performance_diff}
Suppose Assumption \ref{ass:data} holds. 
Then for all $\pi,\pi'\in\Pi_\mu$, $\tau>0$ and  $x\in \mathcal O$,
\begin{equation}
\label{eqn:performance_diff}
\begin{split}
v_\tau^{\pi}(x)-v_\tau^{\pi'}(x)&=\E^{\mathbb P^{x,\pi}}\int_0^{\tau_\cO}\Gamma^\pi_t\int_A\left(\mathcal{L}^av_\tau^{\pi'}(X_t)+f(X_t,a)+\tau\ln\frac{d\pi'}{d\mu}(a|X_t)\right)\left(\pi-\pi'\right)\left(da|X_t\right)dt\\
&\quad +\tau\E^{\mathbb P^{x,\pi}}\int_0^{\tau_\cO}\Gamma^\pi_t\KL\left(\pi\big|\pi'\right)(X_t)dt\,.
\end{split}
\end{equation}
Moreover,  $\mathcal{L}^a$ in~\eqref{eqn:performance_diff} can be replaced by  $\overline{\mathcal{L}}^a$ defined in \eqref{eq:first_order_generator}.
\end{lemma}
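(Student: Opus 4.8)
The plan is to represent the difference $w \coloneqq v_\tau^\pi - v_\tau^{\pi'}$ via the generalized Feynman--Kac formula of Proposition~\ref{prop:Fey_Kac}, applied along the $\pi$-controlled state process. By Proposition~\ref{prop:Bellman_PDE_wellposedness} both $v_\tau^\pi$ and $v_\tau^{\pi'}$ lie in $W^{2,p^*}(\cO)$ and agree with $g$ on $\partial\cO$, so $w\in W^{2,p^*}(\cO)$ with $w=0$ on $\partial\cO$. It therefore suffices to identify the source $h$ for which $\int_A (\mathcal L^a w)(x)\,\pi(da|x) + h(x)=0$ a.e., and to check $h\in L^{p^*}(\cO)$; then Proposition~\ref{prop:Fey_Kac} gives $w(x)=\E^{\mathbb P^{x,\pi}}\int_0^{\tau_\cO}\Gamma^\pi_t h(X_t)\,dt$, and the task reduces to showing that $h$ coincides with the integrand on the right-hand side of \eqref{eqn:performance_diff}.

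First I would use the on-policy Bellman equation \eqref{eq:on_policy_bellman} for $\pi$ to rewrite $\int_A (\mathcal L^a v_\tau^\pi)(x)\,\pi(da|x) = -\int_A f(x,a)\,\pi(da|x) - \tau\KL(\pi|\mu)(x)$, and subtract $\int_A(\mathcal L^a v_\tau^{\pi'})(x)\,\pi(da|x)$ to obtain an explicit formula for $\int_A(\mathcal L^a w)\,\pi(da|x)$. The goal is to match this against the candidate source
\begin{equation*}
h(x) = \int_A\Big(\mathcal L^a v_\tau^{\pi'}(x) + f(x,a) + \tau\ln\tfrac{d\pi'}{d\mu}(a|x)\Big)(\pi - \pi')(da|x) + \tau\KL(\pi|\pi')(x).
\end{equation*}
Two cancellations drive this. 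The integral of the bracket against $\pi'$ vanishes, since $\int_A(\mathcal L^a v_\tau^{\pi'} + f)\,\pi'(da|x) = -\tau\KL(\pi'|\mu)(x)$ by \eqref{eq:on_policy_bellman} for $\pi'$, while $\int_A \tau\ln\tfrac{d\pi'}{d\mu}\,\pi'(da|x) = \tau\KL(\pi'|\mu)(x)$. For the remaining $\pi$-integral, the Radon--Nikodym chain rule $\ln\tfrac{d\pi'}{d\mu} + \ln\tfrac{d\pi}{d\pi'} = \ln\tfrac{d\pi}{d\mu}$ (valid since $\pi,\pi'\in\Pi_\mu$ are both equivalent to $\mu$) lets me combine $\int_A \tau\ln\tfrac{d\pi'}{d\mu}\,\pi(da|x) + \tau\KL(\pi|\pi')(x) = \tau\KL(\pi|\mu)(x)$. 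Collecting terms yields $h(x) = \int_A(\mathcal L^a v_\tau^{\pi'})\,\pi(da|x) + \int_A f\,\pi(da|x) + \tau\KL(\pi|\mu)(x)$, which is exactly $-\int_A(\mathcal L^a w)\,\pi(da|x)$, confirming the Bellman-type identity for $w$.

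For integrability I would note that $h$ is in fact bounded: the $a$-independent second-order part $\tfrac12\tr(\sigma\sigma^\top D^2 v_\tau^{\pi'})$ is in $L^\infty(\cO)$ by Proposition~\ref{prop:Bellman_PDE_wellposedness}, the Sobolev embedding $W^{2,p^*}(\cO)\hookrightarrow C^{1,\alpha}(\overline\cO)$ (using $p^* > d$) bounds $Dv_\tau^{\pi'}$, the coefficients $b,c,f$ are bounded by Assumption~\ref{ass:data}, and the log-density and KL terms are bounded because $\pi,\pi'\in\Pi_\mu$ are parameterized by bounded features $Z\in B_b(\cO\times A)$. Hence $h\in L^\infty(\cO)\subset L^{p^*}(\cO)$ on the bounded domain, and Proposition~\ref{prop:Fey_Kac} applies to deliver \eqref{eqn:performance_diff}. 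Finally, for the \emph{moreover} claim, I would observe that $\mathcal L^a - \overline{\mathcal L}^a = \tfrac12\tr(\sigma\sigma^\top D^2\cdot)$ is independent of $a$, so when integrated against the signed kernel $(\pi-\pi')(\cdot|x)$, which has zero total mass as a difference of probability measures, this contribution vanishes and $\mathcal L^a$ may be replaced by $\overline{\mathcal L}^a$ in the integrand. I expect no serious obstacle: the proof is essentially bookkeeping, the only care being the measure-theoretic manipulation of the KL/log-density terms and confirming the source term's integrability so that Feynman--Kac applies.
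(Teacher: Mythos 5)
Your proposal is correct and follows essentially the same route as the paper's proof: you define the same source term $h$, verify the Bellman-type identity for $w=v_\tau^\pi-v_\tau^{\pi'}$ via the on-policy equation \eqref{eq:on_policy_bellman} for both policies (the paper arranges the cancellations as a difference of two vanishing on-policy expressions, while you simplify $h$ first, but the algebra is identical), and then apply Proposition~\ref{prop:Fey_Kac}. Your treatment of the \emph{moreover} claim, using that the second-order term is $a$-independent and integrates to zero against the zero-mass kernel $\pi-\pi'$, also matches the paper.
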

\begin{proof}
By Proposition \ref{prop:Bellman_PDE_wellposedness}, $v_\tau^{\pi}-v_\tau^{\pi'}\in W^{2,p^*}(\cO)\cap W^{1,{p^*}}_0(\cO)$. Define $h\in L^{p^*}(\cO)$ such that for a.e.~$x\in \cO$, 
\begin{align}
\label{eqn:forcing_increment_pde}
h(x)\coloneqq \int_A\left(\mathcal{L}^av_\tau^{\pi'}(x)+f(x,a)+\tau\ln\frac{d\pi'}{d\mu}(a|x)\right)\left(\pi-\pi'\right)(da|x)+\tau\KL\left(\pi\big|\pi'\right)(x)\,.
\end{align}
Then for a.e.~ $x\in \cO$,
\begin{align*}
\int_A\mathcal{L}^a\big(v_\tau^{\pi}-v_\tau^{\pi'}\big)(x)\pi(da|x)+h(x)&=\int_A
\left(\mathcal{L}^a v_\tau^{\pi}(x)+f(x,a)+\tau\ln\frac{d \pi}{d\mu}(a|x)\right)\pi(da|x)\\
&\enspace\enspace-\int_A
\left(\mathcal{L}^a v_\tau^{\pi'}(x)+f(x,a)+\tau\ln\frac{ d\pi'}{d\mu}(a|x)\right)\pi'(da|x)=0\,.
\end{align*}
Applying Proposition~\ref{prop:Fey_Kac} with $u=v_\tau^\pi-v_\tau^{\pi'}$ and $h$ given by~\eqref{eqn:forcing_increment_pde} 
leads to~\eqref{eqn:performance_diff}. 
The fact that $\frac12 \text{tr}(\sigma \sigma^\top D^2 v^{\pi'})$ is independent of $a$ implies that~\eqref{eqn:performance_diff} holds with $\mathcal{L}^a$ replaced by $\overline{\mathcal{L}}^a$.
\end{proof}

Our next aim is to prove that 
for each $x\in\cO$, the map $B_b(\cO\times A)\times (0,\infty)\ni (Z,\tau) \mapsto v_\tau^{\boldsymbol{\pi}(Z)}(x)\in \mathbb R$ is Hadamard differentiable.
We adopt the notion of Hadamard differentiability which is the weakest notion for which a chain rule holds (see the remark below \cite[Proposition 2.47]{Bonnans2000PerturbationAO}).The following definition recalls  the notion of Hadamard differentiability  as given in 
~\cite[Chapter 2.2]{Bonnans2000PerturbationAO}.

\begin{definition}
[Hadamard Derivative] 
\label{def:H_diff}
Let $X,Y$ be Banach spaces. We say $\mathcal{H}:X\rightarrow Y$ is Hadamard differentiable if there exists $\partial\mathcal{H}:X\rightarrow\mathscr{L}(X,Y)$, called the differential of $\mathcal{H}$, such that for all $x,v\in X$, and   all sequences $(h_n)_{n\in\mathbb{N}}\subset(0,1)$ and $(v_n)_{n\in\mathbb{N}}\subset X$ such that $\lim_{n\to \infty}h_n = 0$ and $\lim_{n\to \infty} v_n = v$,
\begin{align*}
\lim_{n\rightarrow\infty}\frac{\mathcal{H}(x+h_nv_n)-\mathcal{H}(x)}{h_n}=\partial\mathcal{H}(x)[v].
\end{align*}

\end{definition}

We then     summarize the Hadamard differentiability 
of some basic functions related to the operator $\boldsymbol{\pi}$ defined in \eqref{eq:operator_pi}. 
These results have been proved by
Propositions 3.6, 3.7 and 3.9 in~\cite{kerimkulov2023fisher}.

\begin{proposition}
\label{prop:pi_derivative}
\begin{enumerate}[(1)]
\item
\label{prop:mirror_map_derivative}
The map $\boldsymbol{\pi}:B_b(\cO\times A) \rightarrow b\mathcal{M}(A|\cO)$ is Hadamard differentiable. 
The differential $\partial\boldsymbol{\pi}:B_b(\cO\times A)\rightarrow\mathscr{L}(B_b(\cO\times A); b\mathcal{M}(A|\cO))$
satisfies for all $Z,Z'\in B_b(\cO\times A)$, 
\begin{align}
\label{eqn:Deriv_MM}
\partial\boldsymbol{\pi}(Z)[Z'](da|x)=\left(Z'(x,a)-\int_AZ'(x,\bar{a})\boldsymbol{\pi}(Z)(d\bar{a}|x)\right)\boldsymbol{\pi}(Z)(da|x)\,, 
\end{align}
and  $\left\|\partial\boldsymbol{\pi}(Z)\right\|_{\mathscr{L}(B_b(\cO\times A);b\mathcal{M}(A|\cO))}\leq 2$.

\item 
\label{prop:ln_pi_diff}
The map $\ln\frac{d\boldsymbol{\pi}}{d\mu}:B_b(\cO\times A)\rightarrow B_b(\cO\times A)$ is Hadamard differentiable. The   differential $\partial\ln\frac{d\boldsymbol{\pi}}{d\mu}:B_b(\cO\times A)\rightarrow\mathscr{L}(B_b(\cO\times A);B_b(\cO\times A))$ satisfies for all $Z,Z'\in B_b(\cO\times A)$, 
\begin{align}
\label{eqn:ln_mirror_map_derivative}
\left(\partial\ln\frac{d\boldsymbol{\pi}(Z)}{d\mu}[Z']\right)(x,a)=Z'(x,a)-\int_AZ'(x,a')\boldsymbol{\pi}(Z)(da'|x)\,,
\end{align}
and 
$\left\|\partial\ln\frac{d\boldsymbol{\pi}(Z)}{d\mu}\right\|_{\mathscr{L}(B_b(\cO\times A);B_b(\cO\times A))}\leq 2$.

\item 
\label{prop:diff_ln_RE}
The map $T:B_b(\cO\times A) \rightarrow B_b(\cO)$ defined by 
$ T(Z)=\ln\int_A e^{Z(\cdot,a)}\mu(da)
$ 
is Hadamard differentiable and for all $Z,Z'\in B_b(\cO\times A)$,
\begin{align}
\label{eqn:derivative}
\partial T(Z)[Z'](x)=\int_A Z'(x,a)\boldsymbol{\pi}(Z)(da|x).
\end{align}

\end{enumerate}

\end{proposition}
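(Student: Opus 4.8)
My plan is to prove part~\eqref{prop:diff_ln_RE} first, since the other two statements reduce to it via the sum and chain rules for Hadamard derivatives (which are available precisely because, as noted below Definition~\ref{def:H_diff}, Hadamard differentiability is the weakest notion for which the chain rule holds). Fix $Z\in B_b(\cO\times A)$, a direction $Z'$, and arbitrary sequences $h_n\searrow 0$ in $(0,1)$ and $Z'_n\to Z'$ in $B_b(\cO\times A)$. Because $\sup_n\|Z'_n\|_{B_b(\cO\times A)}<\infty$, a second-order Taylor expansion of $t\mapsto e^t$ gives, uniformly in $(x,a)$, that $e^{h_nZ'_n(x,a)}=1+h_nZ'_n(x,a)+r_n(x,a)$ with $\sup_{x,a}|r_n(x,a)|\le Ch_n^2$. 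Cancelling the normalising factor $\int_Ae^{Z(x,a')}\mu(da')$ shows
\[
T(Z+h_nZ'_n)(x)-T(Z)(x)=\ln\left(\int_A e^{h_nZ'_n(x,a)}\boldsymbol{\pi}(Z)(da|x)\right),
\]
and expanding $\ln(1+\cdot)$ to first order then yields
\[
\frac{T(Z+h_nZ'_n)(x)-T(Z)(x)}{h_n}=\int_A Z'_n(x,a)\,\boldsymbol{\pi}(Z)(da|x)+O(h_n),
\]
with the remainder bounded uniformly in $x$. Sending $n\to\infty$ and using $\|Z'_n-Z'\|_{B_b(\cO\times A)}\to 0$ together with the fact that $\boldsymbol{\pi}(Z)(\cdot|x)$ is a probability measure gives convergence in $B_b(\cO)$ to $\int_A Z'(x,a)\boldsymbol{\pi}(Z)(da|x)$, which is~\eqref{eqn:derivative}.

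For part~\eqref{prop:ln_pi_diff} I would use the algebraic identity $\ln\frac{d\boldsymbol{\pi}(Z)}{d\mu}(a|x)=Z(x,a)-T(Z)(x)$, so the map is the difference of the (trivially Hadamard differentiable) identity map and the composition of $Z\mapsto T(Z)$ with the bounded linear embedding $B_b(\cO)\hookrightarrow B_b(\cO\times A)$ sending $\phi$ to the function constant in $a$. The sum rule and part~\eqref{prop:diff_ln_RE} then produce~\eqref{eqn:ln_mirror_map_derivative}, and the operator-norm bound follows since the centred function $a\mapsto Z'(x,a)-\int_A Z'(x,a')\boldsymbol{\pi}(Z)(da'|x)$ has supremum at most $2\|Z'\|_{B_b(\cO\times A)}$ for every $x$.

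For part~\eqref{prop:mirror_map_derivative} I would write $\boldsymbol{\pi}(Z)(da|x)=\exp\!\big(\ln\tfrac{d\boldsymbol{\pi}(Z)}{d\mu}(a|x)\big)\mu(da)$, that is, $\boldsymbol{\pi}=\Phi\circ\ln\tfrac{d\boldsymbol{\pi}}{d\mu}$, where $\Phi(W)(da|x)\coloneqq e^{W(x,a)}\mu(da)$ maps into $b\mathcal{M}(A|\cO)$. The same Taylor argument as above, but now measuring increments in the norm $\sup_x\|\cdot\|_{\mathcal{M}(A)}$, shows $\Phi$ is Hadamard differentiable with $\partial\Phi(W)[W'](da|x)=W'(x,a)e^{W(x,a)}\mu(da)$; composing with part~\eqref{prop:ln_pi_diff} via the chain rule gives~\eqref{eqn:Deriv_MM}. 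The bound $\|\partial\boldsymbol{\pi}(Z)\|\le 2$ follows from $\int_A|Z'(x,a)-\int_A Z'(x,\bar a)\boldsymbol{\pi}(Z)(d\bar a|x)|\,\boldsymbol{\pi}(Z)(da|x)\le 2\|Z'\|_{B_b(\cO\times A)}$. Alternatively, one can argue directly by expanding the Radon--Nikodym derivative $\frac{d\boldsymbol{\pi}(Z+h_nZ'_n)}{d\boldsymbol{\pi}(Z)}(a|x)=e^{h_nZ'_n(x,a)}/\int_A e^{h_nZ'_n(x,\bar a)}\boldsymbol{\pi}(Z)(d\bar a|x)$ to first order in $h_n$ and integrating against the probability measure $\boldsymbol{\pi}(Z)(\cdot|x)$.

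The one genuine difficulty is ensuring that every Taylor remainder is controlled \emph{uniformly} in $x\in\cO$ (and in $a\in A$), so that convergence occurs in the supremum and total-variation norms rather than merely pointwise; this uniformity is exactly what the boundedness of $Z$ and of the convergent sequence $(Z'_n)_n$ supplies. Everything else is routine bookkeeping with the sum and chain rules, which is why I would organise the whole argument around part~\eqref{prop:diff_ln_RE}.
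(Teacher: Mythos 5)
Your proposal is correct, but it is worth noting that the paper does not actually prove Proposition~\ref{prop:pi_derivative} at all: it simply cites Propositions 3.6, 3.7 and 3.9 of \cite{kerimkulov2023fisher}, where the three items are established individually by direct difference-quotient estimates for each map. Your argument is therefore a genuinely different (and self-contained) route: you prove only item~\eqref{prop:diff_ln_RE} by a hands-on computation --- the uniform second-order Taylor expansion of $e^{h_nZ_n'}$ inside the probability measure $\boldsymbol{\pi}(Z)(\cdot|x)$, followed by the expansion of $\ln(1+\cdot)$, with all remainders $O(h_n)$ uniformly in $x$ thanks to $\sup_n\|Z_n'\|_{B_b(\cO\times A)}<\infty$ --- and then obtain item~\eqref{prop:ln_pi_diff} from the identity $\ln\tfrac{d\boldsymbol{\pi}(Z)}{d\mu}=Z-\iota(T(Z))$ (with $\iota$ the bounded linear embedding $B_b(\cO)\hookrightarrow B_b(\cO\times A)$) and item~\eqref{prop:mirror_map_derivative} from the factorization $\boldsymbol{\pi}=\Phi\circ\ln\tfrac{d\boldsymbol{\pi}}{d\mu}$ with $\Phi(W)(da|x)=e^{W(x,a)}\mu(da)$, invoking the sum and chain rules that are legitimate precisely because Hadamard differentiability supports them. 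I checked the three individual verifications (the uniform control of the Taylor remainders, the Hadamard differentiability of $\Phi$ in the norm $\sup_x\|\cdot\|_{\mathcal M(A)}$, and the resulting formula $\partial\Phi(W)[W'](da|x)=W'(x,a)e^{W(x,a)}\mu(da)$, which composed with \eqref{eqn:ln_mirror_map_derivative} indeed yields \eqref{eqn:Deriv_MM}), as well as both operator-norm bounds, and they are sound. What each approach buys: the paper's citation buys brevity and consistency with the discrete-time predecessor; your reduction buys a self-contained proof in which a single analytic computation (the log-partition map $T$) carries all the hard content, with the other two items following by soft functional-analytic bookkeeping --- a cleaner logical organization than proving all three maps' differentiability from scratch.
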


We proceed by proving three fundamental properties of the regularized value functions: (i) 
the local  Lipschitz continuity  of  $(Z,\tau)\mapsto v_\tau^{\boldsymbol{\pi}(Z)}$
(Proposition \ref{prop:W^{2,p} bound}),
(ii)
the linear growth of $Z\mapsto v_\tau^{\boldsymbol{\pi}(Z)}$
(Lemma \ref{lem:DV_s_bound}),
and (iii) the   differentiability
of the KL divergence 
(Lemma \ref{lem:entropy_diff}).
These properties serve as key ingredients in proving  the desired Hadamard differentiability of 
$(Z,\tau) \mapsto v_\tau^{\boldsymbol{\pi}(Z)}(x)$ (Proposition \ref{prop:funct_derivative_rho}).
To this end,
recall that   under   Assumption \ref{ass:data}, by the   Sobolev inequality \cite[Section 5.6.3, Theorem 6]{evans2022partial}, there exists a constant $C>0$, depending only on $d$, $p^*$ and $\cO$,
such that for all $u\in W^{2,p^*}(\cO)$,
\begin{align}
\label{eq:sobolev_inq}
\|u\|_{C^{1}(\overline{\cO})} \leq \|u\|_{C^{1,1-{d}/{p^*}}(\overline{\cO})}\leq C\|u\|_{W^{2,p^*}(\cO)}\,.
\end{align}

\begin{proposition}
\label{prop:W^{2,p} bound}
Suppose Assumption \ref{ass:data} holds.
There exists a constant $C>0$ such that
for all $Z,Z'\in B_b(\cO\times A)$ and $\tau, \tau'>0$,
\begin{align}
\|v_\tau^{\boldsymbol{\pi}(Z)}-v_\tau^{\boldsymbol{\pi}(Z')}\|_{W^{2,p^*}(\cO)}&\leq C(1+\tau)(
1+\|v_\tau^{\boldsymbol{\pi}(Z)}\|_{C^1(\overline{\cO})}+\|Z\|_{B_b(\cO\times A)})\|Z-Z'\|_{B_b(\cO\times A)},
\label{eqn:dual_W_2_p_inc}
\\
\|v^{\boldsymbol{\pi}(Z)}_{\tau}-v^{\boldsymbol{\pi}(Z)}_{\tau'}\|_{W^{2,p^*}(\cO)} 
&\leq C|\tau-\tau'|\|Z\|_{B_b(\cO\times A)}\,.
\label{eqn:lip_in_reg}
\end{align}
\end{proposition}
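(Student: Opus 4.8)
The plan is to reduce both inequalities to a single application of the global $W^{2,p^*}$ estimate for the linear Dirichlet problem, after exhibiting the relevant difference of value functions as the solution of an elliptic equation whose forcing term I bound by hand. Throughout, for $\pi\in\Pi_\mu$ I would write $\bar b^\pi(x)=\int_A b(x,a)\pi(da|x)$, $\bar c^\pi(x)=\int_A c(x,a)\pi(da|x)$, $\bar f^\pi(x)=\int_A f(x,a)\pi(da|x)$ and $\mathcal L^\pi v=\int_A(\mathcal L^a v)\pi(da|\cdot)=\tfrac12\tr(\sigma\sigma^\top D^2 v)+(\bar b^\pi)^\top Dv-\bar c^\pi v$, together with $\Phi(Z)(x)\coloneqq\KL(\boldsymbol\pi(Z)|\mu)(x)$ and $T(Z)(x)\coloneqq\ln\int_A e^{Z(x,a)}\mu(da)$ as in Proposition~\ref{prop:pi_derivative}. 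By Proposition~\ref{prop:Bellman_PDE_wellposedness}, for $\tau>0$ each $v^{\boldsymbol\pi(Z)}_\tau\in W^{2,p^*}(\cO)$ solves $\mathcal L^{\boldsymbol\pi(Z)}v^{\boldsymbol\pi(Z)}_\tau+\bar f^{\boldsymbol\pi(Z)}+\tau\Phi(Z)=0$ a.e.\ on $\cO$ with boundary value $g$. Since every operator $\mathcal L^{\pi}$ shares the continuous, uniformly elliptic leading part $\tfrac12\sigma\sigma^\top$ and has lower-order coefficients bounded by $K$ with the favourable sign $\bar c^\pi\ge0$, the standard global estimate (e.g.\ \cite[Thm.~9.15, Lem.~9.17]{gilbarg1977elliptic}) gives a constant $C$, \emph{uniform} in $\pi$, such that any $w\in W^{2,p^*}(\cO)\cap W^{1,p^*}_0(\cO)$ with $\mathcal L^\pi w=-F$ obeys $\|w\|_{W^{2,p^*}(\cO)}\le C\|F\|_{L^{p^*}(\cO)}$.

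For \eqref{eqn:dual_W_2_p_inc} I would subtract the two Bellman equations and add and subtract $\mathcal L^{\boldsymbol\pi(Z')}v^{\boldsymbol\pi(Z)}_\tau$, so that $w\coloneqq v^{\boldsymbol\pi(Z)}_\tau-v^{\boldsymbol\pi(Z')}_\tau$ solves $\mathcal L^{\boldsymbol\pi(Z')}w=-F$ with zero boundary data and
\[
F=\big(\bar b^{\boldsymbol\pi(Z)}-\bar b^{\boldsymbol\pi(Z')}\big)^\top Dv^{\boldsymbol\pi(Z)}_\tau-\big(\bar c^{\boldsymbol\pi(Z)}-\bar c^{\boldsymbol\pi(Z')}\big)v^{\boldsymbol\pi(Z)}_\tau+\big(\bar f^{\boldsymbol\pi(Z)}-\bar f^{\boldsymbol\pi(Z')}\big)+\tau\big(\Phi(Z)-\Phi(Z')\big);
\]
the second-order parts cancel because $\sigma$ is independent of $a$, which is precisely why the decomposition keeps the \emph{first} argument $v^{\boldsymbol\pi(Z)}_\tau$ in $F$. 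Because the differential of $\boldsymbol\pi$ has operator norm at most $2$ (Proposition~\ref{prop:pi_derivative}(1)), the map $\boldsymbol\pi$ is globally $2$-Lipschitz, so $\sup_x\|\boldsymbol\pi(Z)(\cdot|x)-\boldsymbol\pi(Z')(\cdot|x)\|_{\mathcal M(A)}\le2\|Z-Z'\|_{B_b(\cO\times A)}$, whence the first three terms of $F$ are pointwise bounded by $CK(1+\|v^{\boldsymbol\pi(Z)}_\tau\|_{C^1(\overline\cO)})\|Z-Z'\|_{B_b(\cO\times A)}$.

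The delicate term is $\tau(\Phi(Z)-\Phi(Z'))$, and here the crux is to avoid the spurious factor $\|Z'\|$ produced by a naive Lipschitz estimate of the KL divergence. I would instead use the three-point identity expanded around the first argument,
\[
\Phi(Z)(x)-\Phi(Z')(x)=\int_A Z(x,a)\big(\boldsymbol\pi(Z)-\boldsymbol\pi(Z')\big)(da|x)-\KL\big(\boldsymbol\pi(Z')\big|\boldsymbol\pi(Z)\big)(x),
\]
which follows from $\ln\tfrac{d\boldsymbol\pi(Z)}{d\mu}=Z-T(Z)$ and the fact that the constant $T(Z)(x)$ integrates to zero against a difference of probability measures. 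The first term is bounded by $2\|Z\|_{B_b(\cO\times A)}\|Z-Z'\|_{B_b(\cO\times A)}$ by the total-variation estimate above, while the second is nonnegative and, using that $T$ is $1$-Lipschitz in the sup-norm, is at most $2\|Z-Z'\|_{B_b(\cO\times A)}$; hence $\tau|\Phi(Z)-\Phi(Z')|\le 2\tau(1+\|Z\|_{B_b(\cO\times A)})\|Z-Z'\|_{B_b(\cO\times A)}$, linear in $\|Z-Z'\|$ with exactly the required dependence on $\|Z\|$. Collecting the four contributions, passing to $L^{p^*}(\cO)$ norms (the domain is bounded, absorbing $K$ and $|\cO|^{1/p^*}$ into $C$), and invoking the elliptic estimate yields \eqref{eqn:dual_W_2_p_inc}.

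For \eqref{eqn:lip_in_reg} I would fix $Z$ and subtract the Bellman equations for $v^{\boldsymbol\pi(Z)}_\tau$ and $v^{\boldsymbol\pi(Z)}_{\tau'}$; now $\mathcal L^{\boldsymbol\pi(Z)}$, the averaged drift and cost, and $\bar f^{\boldsymbol\pi(Z)}$ coincide, so $\tilde w\coloneqq v^{\boldsymbol\pi(Z)}_\tau-v^{\boldsymbol\pi(Z)}_{\tau'}$ solves $\mathcal L^{\boldsymbol\pi(Z)}\tilde w=-(\tau-\tau')\Phi(Z)$ with zero boundary data. It then suffices to bound $\|\Phi(Z)\|_{L^{p^*}(\cO)}$ linearly in $\|Z\|$: writing $\Phi(Z)(x)=\int_A Z\,d\boldsymbol\pi(Z)-T(Z)(x)$ and using $|\int_A Z\,d\boldsymbol\pi(Z)|\le\|Z\|_{B_b(\cO\times A)}$ together with $|T(Z)(x)|\le\|Z\|_{B_b(\cO\times A)}$ (valid because $\mu$ is a probability measure) gives $0\le\Phi(Z)(x)\le2\|Z\|_{B_b(\cO\times A)}$, and the elliptic estimate delivers \eqref{eqn:lip_in_reg}. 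The main obstacle throughout is the unboundedness of the KL divergence, which renders $Z\mapsto\Phi(Z)$ only locally Lipschitz; the three-point identity (and the elementary $\log$-sum-exp bound on $\Phi$ itself) is what converts this into the precise linear-in-$\|Z\|$ control demanded by both inequalities.
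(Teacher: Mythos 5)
Your proposal is correct and follows essentially the same route as the paper's proof: subtract the two on-policy Bellman equations, exhibit the difference of value functions as the solution of a linear Dirichlet problem with zero boundary data (the second-order terms cancelling since $\sigma$ is control-independent), bound the forcing term using the $2$-Lipschitz estimates for $\boldsymbol{\pi}$ and $\ln\tfrac{d\boldsymbol{\pi}}{d\mu}$ from Proposition~\ref{prop:pi_derivative}, and conclude with the uniform-in-$\pi$ global $W^{2,p^*}$ elliptic estimate. The only (cosmetic) difference is your treatment of the KL increment via the three-point identity $\Phi(Z)-\Phi(Z')=\int_A Z\,d(\boldsymbol{\pi}(Z)-\boldsymbol{\pi}(Z'))-\KL(\boldsymbol{\pi}(Z')|\boldsymbol{\pi}(Z))$ together with the $1$-Lipschitz property of the log-partition function, whereas the paper splits the same difference using $\bigl|\ln\tfrac{d\boldsymbol{\pi}(Z)}{d\mu}\bigr|\le 2\|Z\|_{B_b(\cO\times A)}$ and the differential bound on $\ln\tfrac{d\boldsymbol{\pi}}{d\mu}$; both yield the same linear-in-$\|Z\|$ control, so the arguments are equivalent in substance.
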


\begin{proof}
Let $\pi=\boldsymbol{\pi}(Z)$  and $\pi'=\boldsymbol{\pi}(Z')$.
We start by   showing that there exists a constant $C>0$ such that
\begin{equation}
\label{eqn:W_2_p_stability}
\begin{split}
\|v_\tau^\pi-v_\tau^{\pi'}\|_{W^{2,p^*}(\cO)}
\leq C\bigg(
\left(1+\left\|v_\tau^\pi\right\|_{C^1(\overline{\cO})}\right)\|\pi-\pi'\|_{b\mathcal{M}(A|\cO)}+\tau\|\KL(\pi|\mu)-\KL(\pi'|\mu)\|_{B_b(\cO)}
\bigg).
\end{split}
\end{equation}
Consider the Dirichlet problem 
\begin{align}
\label{eqn:increment_bellman}
\int_A \mathcal{L}^a w(x)\pi'(da|x)=-h(x),\;  x \in \cO; \quad 
w = 0, \; x \in \partial \cO,
\end{align}
where $h(x)=\int_A\mathcal{L}^av_\tau^{\pi}(x)+f(x,a)[\pi-\pi'](da|x)+\tau\left(\KL(\pi|\mu)(x)-\KL(\pi'|\mu)(x)\right)$.
Since $Z\in B_b(\cO\times A)$, Proposition~\ref{prop:Bellman_PDE_wellposedness} implies that
$v_\tau^\pi\in W^{2,p^*}(\cO)$ and so $h\in L^{p^*}(\cO)$. 
By 
standard elliptic regularity results
(see Lemma~\ref{lemma:elliptic_regularity}),
\eqref{eqn:increment_bellman} admits a unique solution $w\in  W^{2,p^*}(\cO)$
and 
$\|w\|_{W^{2,p^*}(\cO)}\leq C\|h\|_{L^{p^*}(\cO)}$,
for some constant 
$C>0$.
As shown in the proof of Lemma  \ref{lemma:performance_diff}, $v_\tau^\pi-v_\tau^{\pi'}$ satisfies \eqref{eqn:increment_bellman}, which implies $w=v_\tau^\pi-v_\tau^{\pi'}$. 
Hence it remains to   bound $\|h\|_{L^{p^*}(\cO)}$.
To that end note that 
\begin{align*}
|h(x)|&\leq\left\|b\cdot Dv_\tau^\pi-cv_\tau^\pi+f\right\|_{B_b(\cO\times A)}\|\pi-\pi'\|_{b\mathcal{M}(A|\cO)}+\tau\|\KL(\pi|\mu)-\KL(\pi'|\mu)\|_{B_b(\cO)}\\
&\leq K(\|v_\tau^\pi\|_{C^1(\overline{\cO})}+1)\|\pi-\pi'\|_{b\mathcal{M}(A|\cO)}+\tau\|\KL(\pi\|\mu)-\KL(\pi'|\mu)\|_{B_b(\cO)},
\end{align*}
which shows that~\eqref{eqn:W_2_p_stability} holds.

Now we prove \eqref{eqn:dual_W_2_p_inc}. 
Let $Z^\varepsilon=Z'+\varepsilon(Z-Z')$.
From Proposition~\ref{prop:pi_derivative} Item \ref{prop:mirror_map_derivative} we have 
\begin{equation}
\label{eqn:stability_helper}
\begin{split}
\|\pi-\pi'\|_{b\mathcal{M}(A|\cO)}&=\left\|\int_0^1\partial\boldsymbol{\pi}(Z^\varepsilon)[Z'-Z]d\varepsilon\right\|_{b\mathcal{M}(A|\cO)}\leq 2\|Z'-Z\|_{B_b(\cO\times A)}.
\end{split}
\end{equation}
For the entropy term we can write 
\begin{align}
\label{eqn:stability_helper_3}
&\|\KL(\pi|\mu)-\KL(\pi'|\mu)\|_{B_b(\cO)}=\|\KL(\boldsymbol{\pi}(Z)|\mu)-\KL(\boldsymbol{\pi}(Z')|\mu)\|_{B_b(\cO)}\nonumber\\
&=\sup_{x}\left|\int_A\ln\frac{d\boldsymbol{\pi}(Z)}{d\mu}(a|x)\boldsymbol{\pi}(Z)(da|x)-\int_A\ln\frac{d\boldsymbol{\pi}(Z')}{d\mu}(a|x)\boldsymbol{\pi}(Z')(da|x)\right|\nonumber\\
&\leq\sup_{x}\int_A\left|\ln\frac{d\boldsymbol{\pi}(Z)}{d\mu}(a|x)\right||\boldsymbol{\pi}(Z)-\boldsymbol{\pi}(Z')|(da|x)+\int_A\left|\ln\frac{d\boldsymbol{\pi}(Z)}{d\mu}(a|x)-\ln\frac{d\boldsymbol{\pi}(Z')}{d\mu}(a|x)\right|\pi'(da|x)\nonumber\\
&\leq 2\|Z\|_{B_b(\cO\times A)}\|\boldsymbol{\pi}(Z)-\boldsymbol{\pi}(Z')\|_{b\mathcal{M}(\cO\times A)}+\left\|\ln\frac{d\boldsymbol{\pi}(Z)}{d\mu}-\ln\frac{d\boldsymbol{\pi}(Z')}{d\mu}\right\|_{B_b(\cO\times A)}.
\end{align}
From Proposition \ref{prop:pi_derivative} Item \ref{prop:ln_pi_diff} and the mean value theorem
\begin{align*}
\ln\frac{d\boldsymbol{\pi}(Z)}{d\mu}-\ln\frac{d\boldsymbol{\pi}(Z')}{d\mu}=\int_0^1\left(\left[\partial\ln\frac{d\boldsymbol{\pi}(Z^\varepsilon)}{d\mu}\right](Z'-Z)\right)\,d\varepsilon.
\end{align*}
Taking the $\|\cdot\|_{B_b(\cO\times A)}$ norm  and applying the bound on the operator norm of $\partial\ln\frac{d\boldsymbol{\pi}(Z)}{d\mu}$ given in Proposition~\ref{prop:pi_derivative} Item \ref{prop:ln_pi_diff} implies
\begin{align}
\label{eqn:stability_helper_2}
\left\|\ln\frac{d\boldsymbol{\pi}(Z)}{d\mu}-\ln\frac{d\boldsymbol{\pi}(Z')}{d\mu}\right\|_{B_b(\cO\times A)}\leq 2\|Z-Z'\|_{B_b(\cO\times A)}.
\end{align}
Substituting \eqref{eqn:stability_helper} and $\eqref{eqn:stability_helper_2}$ into \eqref{eqn:stability_helper_3} yields
\begin{align}
\label{eqn:stab_helper_4}
\|\KL(\boldsymbol{\pi}(Z)|\mu)-\KL(\boldsymbol{\pi}(Z')|\mu)\|_{B_b(\cO)}\leq 2\left(2\|Z\|_{B_b(\cO\times A)} +1\right)\|Z-Z'\|_{B_b(\cO\times A)}.
\end{align}
Finally substituting \eqref{eqn:stability_helper} and \eqref{eqn:stab_helper_4} into \eqref{eqn:W_2_p_stability} implies there exists a constant $C>0$ 
such that
\begin{align*}
\|v_\tau^\pi-v_\tau^{\pi'}\|_{W^{2,p^*}(\cO)}&\leq C\left[2(1+\|v_\tau^{\boldsymbol{\pi}(Z)}\|_{C^1(\overline{\cO})})+2\tau(2\|Z\|_{B_b(\cO\times A)} + 1)\right]\|Z-Z'\|_{B_b(\cO\times A)}\\
&\leq C(1+\|v_\tau^{\boldsymbol{\pi}(Z)}\|_{C^1(\overline{\cO})}+\tau\|Z\|_{B_b(\cO\times A)}+\tau)\|Z-Z'\|_{B_b(\cO\times A)}.
\end{align*}
This together with~\eqref{eqn:stability_helper} 
proves the inequality \eqref{eqn:dual_W_2_p_inc}.

It remains to prove \eqref{eqn:lip_in_reg}.
Observe that 
$w\coloneqq v^{\boldsymbol{\pi}(Z)}_{\tau}-v^{\boldsymbol{\pi}(Z)}_{\tau'}\in W^{2,p^*}(\cO)$ and satisfies (cf.~\eqref{eq:on_policy_bellman})
\begin{align*}
\int_A \mathcal{L}^aw(x)\boldsymbol{\pi}(Z)(da|x)=-(\tau-\tau')\KL(\boldsymbol{\pi}(Z)|\mu)(x)   \,\,\, \textnormal{a.e.~$x\in \cO$};  \,\,\, 
w(x)=0,  \,\,\, x \in \partial \cO\,.
\end{align*}
This along with  Lemma \ref{lemma:elliptic_regularity} shows that    
\begin{align*}
\|v^{\boldsymbol{\pi}(Z)}_{\tau}-v^{\boldsymbol{\pi}(Z)}_{\tau'}\|_{W^{2,p^*}(\cO)}&\leq C|\tau-\tau'|\|\KL(\boldsymbol{\pi}(Z)|\mu)\|_{L^{p^*}(\cO)}
\le C|\tau-\tau'|\|\KL(\boldsymbol{\pi}(Z)|\mu)\|_{B_b(\cO)}
\\
&\le C|\tau-\tau'|\|Z\|_{B_b(\cO\times A)}\,,
\end{align*}
where the last inequality used  
\eqref{eqn:stab_helper_4}
and $\KL(\boldsymbol{\pi}(\textbf 0)|\mu)=0$.
\end{proof}

An immediate consequence of 
Proposition \ref{prop:W^{2,p} bound}
is the following linear growth of 
$Z\mapsto v_\tau^{\boldsymbol{\pi}(Z)}$.
It follows by taking $Z=0$ 
in 
\eqref{eqn:dual_W_2_p_inc}, 
and 
using 
\eqref{eq:sobolev_inq}
and the bound $\|v_\tau^{\boldsymbol{\pi}(\textbf{0})}\|_{W^{2,p^*}(\cO)}\le C$ due to Lemma \ref{lemma:elliptic_regularity}.
\begin{lemma}
\label{lem:DV_s_bound}
Suppose Assumption~\ref{ass:data} holds. 
There exists  a constant  $C>0$ 
such that
for all $Z\in B_b(\cO\times A)$ and $\tau>0$,
\begin{align*}
\|v_\tau^{\boldsymbol{\pi}(Z)}\|_{C^1(\overline{\cO})}\leq C(1+\tau)(1+\|Z\|_{B_b(\cO\times A)}).
\end{align*}
\end{lemma}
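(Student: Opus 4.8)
The plan is to read off the bound from the local Lipschitz estimate \eqref{eqn:dual_W_2_p_inc} of Proposition~\ref{prop:W^{2,p} bound}, comparing the policy $\boldsymbol{\pi}(Z)$ against the reference policy $\boldsymbol{\pi}(\textbf{0})$. The subtlety is that the right-hand side of \eqref{eqn:dual_W_2_p_inc} already involves $\|v_\tau^{\boldsymbol{\pi}(Z)}\|_{C^1(\overline{\cO})}$, which is exactly the quantity we want to control, so a naive application would be circular. The key observation that breaks the circularity is that this $C^1$-norm is attached to the \emph{first} argument of the estimate: by taking that argument to be $\textbf{0}$, the offending term becomes $\|v_\tau^{\boldsymbol{\pi}(\textbf{0})}\|_{C^1(\overline{\cO})}$, for which an a priori bound is available \emph{uniformly} in $\tau$.

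First I would establish this a priori bound. By the definition \eqref{eq:operator_pi}, $\boldsymbol{\pi}(\textbf{0})=\mu$, since the Gibbs exponent vanishes; hence $\KL(\boldsymbol{\pi}(\textbf{0})|\mu)=0$ pointwise and the entropy term in \eqref{eq:value} disappears, so $v_\tau^{\boldsymbol{\pi}(\textbf{0})}$ is independent of $\tau$ and equals the value of the constant policy $\mu$. By Proposition~\ref{prop:Bellman_PDE_wellposedness} it solves the linear Dirichlet problem \eqref{eq:on_policy_bellman} with vanishing entropy term, whose coefficients and forcing are bounded by $K$ (using Assumption~\ref{ass:data} and $g\in W^{2,p^*}(\cO)$). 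Elliptic regularity (Lemma~\ref{lemma:elliptic_regularity}) then gives $\|v_\tau^{\boldsymbol{\pi}(\textbf{0})}\|_{W^{2,p^*}(\cO)}\le C$ uniformly in $\tau$, and the Sobolev embedding \eqref{eq:sobolev_inq} upgrades this to $\|v_\tau^{\boldsymbol{\pi}(\textbf{0})}\|_{C^1(\overline{\cO})}\le C$.

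With this in hand I would apply \eqref{eqn:dual_W_2_p_inc} with first argument $\textbf{0}$ and second argument $Z$. The $C^1$-norm on the right is now the uniformly bounded quantity above, and the $\|Z\|_{B_b(\cO\times A)}$ appearing in the prefactor drops out (it is evaluated at $\textbf{0}$), leaving
\[
\|v_\tau^{\boldsymbol{\pi}(\textbf{0})}-v_\tau^{\boldsymbol{\pi}(Z)}\|_{W^{2,p^*}(\cO)}\le C(1+\tau)\|Z\|_{B_b(\cO\times A)}.
\]
Combining this with the uniform bound on $\|v_\tau^{\boldsymbol{\pi}(\textbf{0})}\|_{W^{2,p^*}(\cO)}$ via the triangle inequality yields $\|v_\tau^{\boldsymbol{\pi}(Z)}\|_{W^{2,p^*}(\cO)}\le C(1+\tau)(1+\|Z\|_{B_b(\cO\times A)})$, and a final application of \eqref{eq:sobolev_inq} converts this $W^{2,p^*}$-bound into the claimed $C^1$-bound.

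The only genuinely substantive step is the uniform-in-$\tau$ control of $\|v_\tau^{\boldsymbol{\pi}(\textbf{0})}\|_{C^1(\overline{\cO})}$; once one notices that the entropy term vanishes at the reference policy $\mu$, this reduces to a standard linear elliptic estimate, and the remainder is bookkeeping with the triangle inequality and the Sobolev embedding.
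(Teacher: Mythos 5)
Your proposal is correct and is essentially the paper's own argument: the paper also obtains the bound by taking the first argument in \eqref{eqn:dual_W_2_p_inc} to be $\textbf{0}$, using that $\boldsymbol{\pi}(\textbf{0})=\mu$ so the entropy term vanishes and Lemma~\ref{lemma:elliptic_regularity} gives $\|v_\tau^{\boldsymbol{\pi}(\textbf{0})}\|_{W^{2,p^*}(\cO)}\le C$ uniformly in $\tau$, and then concluding via the triangle inequality and the Sobolev embedding \eqref{eq:sobolev_inq}. Your explicit identification of how placing $\textbf{0}$ in the first slot breaks the apparent circularity is exactly the point the paper's one-line proof leaves implicit.
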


Finally we prove the differentiability of the KL divergence.
 
\begin{lemma}
\label{lem:entropy_diff}
The map $B_b(\cO\times A) \ni Z \mapsto \KL(\boldsymbol{\pi}(Z)|\mu) \in  B_b(\cO)$ 
is Hadamard differentiable and    for all $Z,Z'\in B_b(\cO\times A)$ and $x\in\cO$,
\begin{align*}
\partial\KL(\boldsymbol{\pi}(Z)|\mu)[Z'](x)=\int_A\ln\frac{d\boldsymbol{\pi}(Z)}{d\mu}(a|x)\partial\boldsymbol{\pi}(Z)[Z'](da|x).
\end{align*} 
\end{lemma}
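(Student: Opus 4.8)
The plan is to recognize $\KL(\boldsymbol{\pi}(\cdot)|\mu)$ as a continuous bilinear pairing evaluated along the two maps whose Hadamard differentiability was already recorded in Proposition~\ref{prop:pi_derivative}, and then to invoke the chain rule. Concretely, define $B\colon B_b(\cO\times A)\times b\mathcal{M}(A|\cO)\to B_b(\cO)$ by $B(\phi,\nu)(x)=\int_A\phi(x,a)\,\nu(da|x)$. The bound $\|B(\phi,\nu)\|_{B_b(\cO)}\le\|\phi\|_{B_b(\cO\times A)}\|\nu\|_{b\mathcal{M}(A|\cO)}$ shows $B$ is bounded bilinear, hence Fréchet differentiable with $\partial B(\phi,\nu)[\psi,\eta]=B(\psi,\nu)+B(\phi,\eta)$. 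Writing $F(Z)=\ln\frac{d\boldsymbol{\pi}(Z)}{d\mu}$ and $G(Z)=\boldsymbol{\pi}(Z)$, we have the identity $\KL(\boldsymbol{\pi}(Z)|\mu)=B(F(Z),G(Z))$.

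First I would note that $Z\mapsto(F(Z),G(Z))$ is Hadamard differentiable into the product space $B_b(\cO\times A)\times b\mathcal{M}(A|\cO)$, which is immediate from the componentwise differentiability in Proposition~\ref{prop:pi_derivative} Items~\ref{prop:mirror_map_derivative} and~\ref{prop:ln_pi_diff}, since the product-norm difference quotient is the maximum of the two component quotients. The chain rule for Hadamard-differentiable maps (valid by the choice of differentiability notion, per the discussion preceding Definition~\ref{def:H_diff}) then gives
\begin{align*}
\partial\KL(\boldsymbol{\pi}(Z)|\mu)[Z']
&=B\!\left(\partial F(Z)[Z'],G(Z)\right)+B\!\left(F(Z),\partial G(Z)[Z']\right)\\
&=\int_A\left(\partial\ln\tfrac{d\boldsymbol{\pi}(Z)}{d\mu}[Z']\right)(x,a)\,\boldsymbol{\pi}(Z)(da|x)+\int_A\ln\tfrac{d\boldsymbol{\pi}(Z)}{d\mu}(a|x)\,\partial\boldsymbol{\pi}(Z)[Z'](da|x).
\end{align*}
The crux — and the only nontrivial step — is to show the first term vanishes identically. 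Substituting the explicit expression $\partial\ln\frac{d\boldsymbol{\pi}(Z)}{d\mu}[Z'](x,a)=Z'(x,a)-\int_AZ'(x,\bar a)\boldsymbol{\pi}(Z)(d\bar a|x)$ from Proposition~\ref{prop:pi_derivative} Item~\ref{prop:ln_pi_diff} and integrating against the probability measure $\boldsymbol{\pi}(Z)(da|x)$, the subtracted constant is reproduced exactly, so the integral equals zero for every $x\in\cO$. This is the elementary fact that the first variation of a normalized softmax family has zero mean under its own law. The surviving second term is precisely the claimed formula.

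If one prefers to avoid citing the chain rule, the same conclusion follows by verifying Definition~\ref{def:H_diff} directly: for $h_n\searrow0$ and $Z'_n\to Z'$, set $W_n=Z+h_nZ'_n$ and split the difference quotient of $\KL$ into $\int_A\frac{F(W_n)-F(Z)}{h_n}\,G(W_n)$ and $\int_A F(Z)\,\frac{G(W_n)-G(Z)}{h_n}$. The second term converges in $B_b(\cO)$ to $B(F(Z),\partial G(Z)[Z'])$, using $\|F(Z)\|_{B_b(\cO\times A)}<\infty$ and the Hadamard differentiability of $G$; the first converges to $B(\partial F(Z)[Z'],G(Z))=0$ by the centering argument, since $\frac{F(W_n)-F(Z)}{h_n}\to\partial F(Z)[Z']$ uniformly while $\|G(W_n)(\cdot|x)\|_{\mathcal{M}(A)}=1$ and $G(W_n)\to G(Z)$. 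I expect the main obstacle to be ensuring every limit holds uniformly in $x\in\cO$, that is in the $B_b(\cO)$-norm rather than pointwise; this is handled throughout by the operator-norm bounds of Proposition~\ref{prop:pi_derivative} together with the fact that the kernels $G(W_n)$ carry unit total-variation mass uniformly in $x$.
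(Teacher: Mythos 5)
Your proposal is correct, and its primary route is genuinely different from the paper's. The paper proves the lemma by direct verification of Definition~\ref{def:H_diff}: it splits the difference quotient of $\KL(\boldsymbol{\pi}(\cdot)|\mu)$ into two terms $f_n+g_n$, where $f_n$ pairs the log-density at the \emph{perturbed} point $Z+h_nZ'_n$ with the difference quotient of the kernel, and $g_n$ pairs the difference quotient of the log-densities with the \emph{base} kernel $\boldsymbol{\pi}(Z)$; it then shows that $f_n$ converges to the claimed expression (using the $B_b$-stability of the log-densities together with Proposition~\ref{prop:pi_derivative} Item~\ref{prop:mirror_map_derivative}) and that $g_n\to0$ via a sandwich bound whose integrated limit vanishes --- which is exactly your centering observation that $\int_A\partial\ln\tfrac{d\boldsymbol{\pi}(Z)}{d\mu}[Z'](x,a)\,\boldsymbol{\pi}(Z)(da|x)=0$. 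Your chain-rule argument reaches the same cancellation more structurally: factor $\KL$ through the bounded bilinear pairing $B$ composed with the pair map $Z\mapsto(F(Z),G(Z))$, use that bounded bilinear maps are Fr\'echet (hence Hadamard) differentiable, apply the chain rule for Hadamard derivatives (the very property the paper invokes from Bonnans--Shapiro, e.g.\ in the proof of Theorem~\ref{ref:cost_decrease_along_flow}), and kill the extra term by centering. This is shorter, makes the mechanism (product rule plus zero-mean score) transparent, and would generalize to any smooth functional of the pair $(F(Z),G(Z))$; its only overhead is checking that $B$ indeed maps into $B_b(\cO)$ (measurability in $x$ of the pairing), a fact the paper uses implicitly throughout. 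Your fallback direct verification is essentially the paper's proof with the splitting transposed: your vanishing term carries the perturbed kernel $G(W_n)$ rather than the base one, so you additionally need $\|G(W_n)-G(Z)\|_{b\mathcal{M}(A|\cO)}\to0$ (supplied by Proposition~\ref{prop:pi_derivative} Item~\ref{prop:mirror_map_derivative}) before centering applies, while in exchange your surviving term converges without the log-density stability estimate the paper needs for its $f_n$. Both of your routes are sound; I see no gaps.
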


\begin{proof}
Let  $(Z'_n)_{n\in \sN} \subset B_b(\cO\times A)$ and $(h_n)_n\subset(0,1)$ is such that 
$\lim_{n\to\infty} \|Z'_n - Z'\|_{B_b(\cO\times A)}=0$ and  
$\lim_{n\to \infty} h_n = 0$.
Observe that for all $x\in \cO$,
\begin{align*}
\frac{\KL(\boldsymbol{\pi}(Z+h_n Z'_n)|\mu)(x)-\KL(\boldsymbol{\pi}(Z)|\mu)(x)}{h_n}=f_{n}(x)+g_{n}(x),
\end{align*}
where
\begin{align*}
f_{ n}(x)&\coloneqq \int_A\ln\frac{d\boldsymbol{\pi}(Z+h_n Z'_n)}{d\mu}(a|x)\left(\frac{\boldsymbol{\pi}(Z+h_n Z'_n)-\boldsymbol{\pi}(Z)}{h_n}\right)(da|x),\\
g_{ n}(x)&=\int_A\frac{\ln\frac{d\boldsymbol{\pi}(Z+h_n Z'_n)}{d\mu}(a|x)-\ln\frac{d\boldsymbol{\pi}(Z)}{d\mu}(a|x)}{h_n}\pi(Z)(da|x)\,.
\end{align*}
For the convergence of  $( f_{n})_{n\in \sN}$, we have
\begin{align*}
&\left\|f_{n}(x)-\int_A\ln\frac{d\boldsymbol{\pi}(Z)}{d\mu}(a|x)\partial\boldsymbol{\pi}(Z)[Z'](da|x)\right\|_{B_b(\cO)}\\
&\leq\left\|\ln\frac{d\boldsymbol{\pi}(Z+h_n Z'_n)}{d\mu}-\ln\frac{d\boldsymbol{\pi}(Z)}{d\mu}\right\|_{B_b(\cO\times A)}\left\|\frac{\boldsymbol{\pi}(Z+h_n Z'_n)-\boldsymbol{\pi}(Z)}{h_n}\right\|_{b\mathcal{M}(A|\cO)}\\
&\enspace\enspace+\left\|\ln\frac{d\boldsymbol{\pi}(Z)}{d\mu}\right\|_{B_b(\cO\times A)}\left\|\frac{\boldsymbol{\pi}(Z+h_n Z'_n)-\boldsymbol{\pi}(Z)}{h_n}-\partial\boldsymbol{\pi}(Z)[Z']\right\|_{b\mathcal{M}(A|\cO)},
\end{align*}
where converges to zero due to $\sup_{n\in \sN}\| Z_n\|_{B_b(\cO\times A)}<\infty$
and  Proposition \ref{prop:pi_derivative} Item \ref{prop:mirror_map_derivative}.
For the convergence of  $( g_{n})_{n\in \sN}$,
note that 
by Proposition \ref{prop:pi_derivative} Item \ref{prop:ln_pi_diff},
for any $\varepsilon>0$, there exists   $N\in\mathbb{N}$ such that 
for all $n\ge N$ and $(x,a)\in \cO\times A$,
\begin{align*}
Z'(x,a)-\int_AZ'(x,a)\boldsymbol{\pi}(Z)(da|x)-\varepsilon&\leq\frac{1}{h_n}\left(\ln\frac{d\boldsymbol{\pi}(Z+h_n Z'_n)}{d\mu}(a|x)-\ln\frac{d\boldsymbol{\pi}(Z)}{d\mu}(a|x)\right)\\
&\leq Z'(x,a)-\int_AZ'(x,a)\boldsymbol{\pi}(Z)(da|x)+\varepsilon\, ,
\end{align*}
from which by integrating both sides with $\boldsymbol{\pi}(Z)(da|x)$ yields
$\|g_n\|_{B_b(\cO)}\le \varepsilon$ for all $n\ge N$. This implies that $\lim_{n\to \infty }\|g_n\|_{B_b(\cO)}=0$.
\end{proof}

Now we are ready to present the desired Hadamard differentiability of $(Z,\tau) \mapsto v_\tau^{\boldsymbol{\pi}(Z)}(x)$ 
and compute  its Hadamard derivative.

\begin{proposition}
\label{prop:funct_derivative_rho}
Suppose Assumption \ref{ass:data} holds.
For all $x\in\cO$, the map 
$B_b(\cO\times A)\times (0,\infty)\ni(Z,\tau)\mapsto v^{\boldsymbol{\pi}(Z)}_{\tau}(x)\in \mathbb{R}$
is Hadamard differentiable,\footnote{Note that  
the domain $(0,\infty)$ of $\tau \mapsto v^{\boldsymbol{\pi}(Z)}_{\tau}(x)$  is  only a   subset of $\sR$,
and,   strictly speaking, does not align with Definition \ref{def:H_diff}. However, 
it is straightforward to extend Definition \ref{def:H_diff}
to this setting by restricting to all sequences 
$(\tau'_n)_{n\in \sN}\subset \sR $   and $(h_n)_{n\in \sN}\subset (0,1)$ 
such that
$\lim_{n\rightarrow\infty}\tau_n'=\tau'$,
$\lim_{n\rightarrow\infty}h_n=0$, and 
$\tau+h_n\tau'_n\subset (0,\infty)$ for all $n\in \sN$.  
One can show that the chain rule still holds under this relaxation.}   
and for all $Z,  Z' \in B_b(\cO\times A)$, $\tau>0$ and $\tau'\in \sR$,
\begin{equation}
\label{eqn:Had_derivative}
\begin{split}
\partial&v^{\boldsymbol{\pi}(Z)}_\tau(x)[(Z',\tau')]\\
&=
\E^{\mathbb{P}^{x,\boldsymbol{\pi}(Z)}}\int_0^{\tau_\cO}\Gamma^{\boldsymbol{\pi}(Z)}_t\int_A\left(\overline{\mathcal{L}}^a v_\tau^{\boldsymbol{\pi}(Z)}(X_t)+f(X_t,a)+\tau\ln\tfrac{d\boldsymbol{\pi}(Z)}{d\mu}(a|X_t)\right)\partial\boldsymbol{\pi}(Z)[Z']\left(da|X_t\right)dt\\
&\enspace\enspace+\tau'\E^{\mathbb{P}^{x,\boldsymbol{\pi}(Z)}}\int_0^{\tau_{\cO}}\Gamma^{\boldsymbol{\pi}(Z)}_t\KL(\boldsymbol{\pi}(Z)|\mu )(X_t)dt\,,
\end{split}
\end{equation}
where 
$\overline{\mathcal{L}}^a$ is defined in \eqref{eq:first_order_generator}.
Moreover, $\overline{\mathcal{L}}^a$  in~\eqref{eqn:Had_derivative} can be replaced by $\mathcal{L}^a$.
\end{proposition}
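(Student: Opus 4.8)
The plan is to verify Hadamard differentiability directly from Definition~\ref{def:H_diff} (extended to the coordinate $\tau\in(0,\infty)$ as in the footnote): for the base point $(Z,\tau)$ and arbitrary sequences $h_n\searrow 0$, $Z_n'\to Z'$ in $B_b(\cO\times A)$ and $\tau_n'\to\tau'$ in $\sR$ with $\tau+h_n\tau_n'>0$, I would show that the difference quotient converges to the right-hand side of \eqref{eqn:Had_derivative}. Writing $\pi\coloneqq\boldsymbol{\pi}(Z)$ and $\pi_n\coloneqq\boldsymbol{\pi}(Z+h_nZ_n')$, the first step is to split the increment
\[
v^{\pi_n}_{\tau+h_n\tau_n'}(x)-v^{\pi}_{\tau}(x)=\big(v^{\pi_n}_{\tau+h_n\tau_n'}(x)-v^{\pi_n}_{\tau}(x)\big)+\big(v^{\pi_n}_{\tau}(x)-v^{\pi}_{\tau}(x)\big).
\]
Because $v^\pi_\tau$ depends affinely on $\tau$ (the KL term in \eqref{eq:value} enters linearly), the first bracket equals exactly $h_n\tau_n'\,\E^{\mathbb{P}^{x,\pi_n}}\int_0^{\tau_\cO}\Gamma^{\pi_n}_t\KL(\pi_n|\mu)(X_t)\,dt$; dividing by $h_n$ and sending $n\to\infty$ yields the $\tau'$-term of \eqref{eqn:Had_derivative}, provided one has continuity of $Z\mapsto\E^{\mathbb{P}^{x,\boldsymbol{\pi}(Z)}}\int_0^{\tau_\cO}\Gamma^{\boldsymbol{\pi}(Z)}_t\KL(\boldsymbol{\pi}(Z)|\mu)(X_t)\,dt$, which is the Feynman--Kac solution of a Dirichlet problem with forcing $\KL(\boldsymbol{\pi}(Z)|\mu)$ and hence continuous by the same elliptic-stability argument used below for the main term.

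For the second bracket I would invoke the performance difference identity (Lemma~\ref{lemma:performance_diff}) with $\pi$ replaced by $\pi_n$ and $\pi'$ by $\pi$, giving
\[
v^{\pi_n}_{\tau}(x)-v^{\pi}_{\tau}(x)=\E^{\mathbb{P}^{x,\pi_n}}\!\int_0^{\tau_\cO}\!\Gamma^{\pi_n}_t\!\int_A\!\Big(\mathcal{L}^a v^{\pi}_\tau(X_t)+f(X_t,a)+\tau\ln\tfrac{d\pi}{d\mu}(a|X_t)\Big)(\pi_n-\pi)(da|X_t)\,dt+\tau\,\E^{\mathbb{P}^{x,\pi_n}}\!\int_0^{\tau_\cO}\!\Gamma^{\pi_n}_t\KL(\pi_n|\pi)(X_t)\,dt.
\]
After dividing by $h_n$, the KL remainder disappears: a direct expansion of the explicit Gibbs densities shows $\KL(\pi_n|\pi)(x)=O(h_n^2)$ uniformly in $x$ (the first variation of $m\mapsto\KL(m|\pi)$ at $m=\pi$ vanishes), so $\tfrac{\tau}{h_n}\E^{\mathbb{P}^{x,\pi_n}}\int_0^{\tau_\cO}\Gamma^{\pi_n}_t\KL(\pi_n|\pi)(X_t)\,dt=O(h_n)\to0$ using $\E^{\mathbb{P}^{x,\pi_n}}[\tau_\cO]<\infty$. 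It remains to pass to the limit in the linear term, where $\tfrac{\pi_n-\pi}{h_n}\to\partial\boldsymbol{\pi}(Z)[Z']$ in $b\mathcal{M}(A|\cO)$ by Proposition~\ref{prop:pi_derivative} Item~\ref{prop:mirror_map_derivative}.

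The hard part will be justifying this last limit, since \emph{both} the underlying measure $\mathbb{P}^{x,\pi_n}$ (hence the law of $X$ and the discount $\Gamma^{\pi_n}$) and the integrand vary with $n$. I would bypass a direct probabilistic continuity argument via the Feynman--Kac/PDE correspondence: setting $\psi_n(x)\coloneqq\int_A\big(\mathcal{L}^a v^\pi_\tau+f+\tau\ln\tfrac{d\pi}{d\mu}\big)\tfrac{\pi_n-\pi}{h_n}(da|x)$ and letting $\psi$ be its limit (with $\tfrac{\pi_n-\pi}{h_n}$ replaced by $\partial\boldsymbol{\pi}(Z)[Z']$), Proposition~\ref{prop:Fey_Kac} identifies the scaled linear term with the solution $u_n$ of $\int_A\mathcal{L}^a u_n\,\pi_n(da|\cdot)+\psi_n=0$ on $\cO$, $u_n|_{\partial\cO}=0$, and identifies the target with the solution $u$ of the analogous problem for $\pi$ and $\psi$. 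Subtracting, $u_n-u$ solves $\int_A\mathcal{L}^a(u_n-u)\pi_n(da|\cdot)=-(\psi_n-\psi)-\int_A\mathcal{L}^a u\,(\pi_n-\pi)(da|\cdot)$ on $\cO$ with zero boundary data, so the uniform elliptic estimate of Lemma~\ref{lemma:elliptic_regularity} (the coefficients $\int_A b\,\pi_n$, $\int_A c\,\pi_n$ and $\sigma$ meet its hypotheses uniformly in $n$) gives $\|u_n-u\|_{W^{2,p^*}(\cO)}\le C\|f_n\|_{L^{p^*}(\cO)}$ for the right-hand forcing $f_n$. Its first piece tends to $0$ because the integrand is bounded and $\tfrac{\pi_n-\pi}{h_n}\to\partial\boldsymbol{\pi}(Z)[Z']$ in $b\mathcal{M}(A|\cO)$; the decisive observation for the second piece is that the $a$-independent second-order part $\tfrac12\tr(\sigma\sigma^\top D^2 u)$ of $\mathcal{L}^a u$ integrates to $0$ against the zero-mass measure $\pi_n-\pi$, leaving $\int_A\overline{\mathcal{L}}^a u\,(\pi_n-\pi)(da|\cdot)$, bounded by $C\|u\|_{C^1(\overline\cO)}\|\pi_n-\pi\|_{b\mathcal{M}(A|\cO)}\to0$. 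Hence $\|u_n-u\|_{W^{2,p^*}(\cO)}\to0$, and the Sobolev embedding \eqref{eq:sobolev_inq} gives $u_n(x)\to u(x)$.

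Combining the two brackets reproduces \eqref{eqn:Had_derivative}, and the joint linearity in $(Z',\tau')$ together with the bounds above exhibits $\partial v^{\boldsymbol{\pi}(Z)}_\tau(x)$ as a bounded linear functional, establishing Hadamard differentiability. Finally, the claim that $\overline{\mathcal{L}}^a$ may be replaced by $\mathcal{L}^a$ in \eqref{eqn:Had_derivative} rests on the same cancellation: by \eqref{eqn:Deriv_MM} the measure $\partial\boldsymbol{\pi}(Z)[Z'](\cdot|x)$ has zero total mass, so adding the $a$-independent term $\tfrac12\tr(\sigma\sigma^\top D^2 v^\pi_\tau)(x)$ to the integrand leaves the inner integral unchanged.
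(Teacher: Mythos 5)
Your proof is correct, but it follows a genuinely different route from the paper's. The paper splits the increment in the opposite order (policy variation at the moving level $\tau+h_n\tau_n'$ first, then the $\tau$-variation at the \emph{fixed} policy $\boldsymbol{\pi}(Z)$) and, crucially, applies Lemma~\ref{lemma:performance_diff} with the roles of the two policies chosen so that every expectation sits under the single fixed measure $\mathbb{P}^{x,\boldsymbol{\pi}(Z)}$, while the integrand carries the moving value function $v^{\boldsymbol{\pi}(Z+h_nZ_n')}_{\tau+h_n\tau_n'}$; the limit is then taken via the $C^1$-stability of value functions (Proposition~\ref{prop:W^{2,p} bound} with the Sobolev embedding), Proposition~\ref{prop:pi_derivative}, and a separate differentiability result for the entropy (Lemma~\ref{lem:entropy_diff}) combined with dominated convergence. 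You make the opposite trade: your application of Lemma~\ref{lemma:performance_diff} keeps the integrand fixed (only $v^{\pi}_\tau$ and $\ln\frac{d\pi}{d\mu}$ appear) but places the expectation under the moving measure $\mathbb{P}^{x,\pi_n}$, which you control by writing both the $n$-th term and its limit as Dirichlet solutions via Proposition~\ref{prop:Fey_Kac}, subtracting, and using the uniformity in $n$ of the constant in Lemma~\ref{lemma:elliptic_regularity} together with the zero-mass cancellation of the second-order term --- the same cancellation the paper needs only for the final $\overline{\mathcal{L}}^a$ versus $\mathcal{L}^a$ remark. Your quadratic estimate $\KL(\pi_n|\pi)=O(h_n^2)$ (a cumulant-generating-function Taylor expansion, uniform in $x$ because $\sup_n\|Z_n'\|_{B_b(\cO\times A)}<\infty$) then disposes of the KL remainder with no analogue of Lemma~\ref{lem:entropy_diff}; in the paper that term instead survives as the difference quotient of $\KL(\cdot|\mu)$ and is what produces the $\tau\ln\frac{d\boldsymbol{\pi}(Z)}{d\mu}$ part of the derivative, which in your version comes directly from the performance-difference integrand. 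Both routes rest on the same pillars (performance difference, Feynman--Kac, uniform elliptic estimates, Proposition~\ref{prop:pi_derivative}, and $\sup_\pi\E^{\mathbb{P}^{x,\pi}}[\tau_\cO]<\infty$), so neither is materially shorter; yours buys independence from Lemma~\ref{lem:entropy_diff} and from value-function stability for the main term, at the cost of an elliptic-stability argument in the policy variable. One ingredient you should make explicit rather than fold into ``the same elliptic-stability argument'': the convergence of the forcings $\KL(\pi_n|\mu)\to\KL(\pi|\mu)$ in $B_b(\cO)$ (the paper's estimate \eqref{eqn:stab_helper_4}), which your treatment of the first bracket quietly requires as input; it is elementary but not automatic from the PDE estimate itself.
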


\begin{proof}
Fix $Z,Z'\in B_b(\cO\times A)$, $\tau>0$ and $\tau'\in \sR$. 
Let
$(Z'_n)_{n\in\mathbb{N}}\subset B_b(\cO\times A)$, $(\tau_n')_{n\in\mathbb{N}}\subset \sR$
and $(h_n)_{n\in\mathbb{N}}\subset(0,1)$ be sequences such that 
$\lim_{n\rightarrow\infty}Z'_n= Z'$, $\lim_{n\rightarrow\infty}\tau_n'=\tau'$ 
and $\lim_{n\rightarrow\infty}h_n=0$.
For all $n\in \sN$, 
define 
$\boldsymbol{\pi}_n\coloneqq \boldsymbol{\pi}(Z+h_n Z'_n)$ and $\boldsymbol{\pi}_\infty\coloneqq \boldsymbol{\pi}(Z)$.
Note that for all $n\in \sN$, 
\begin{align}
\label{eqn:H_diff_decomp}
\frac{v_{\tau+h_n\tau_n'}^{\boldsymbol{\pi}_n}(x)-v_\tau^{\boldsymbol{\pi}_\infty}(x)}{h_n}=\frac{v^{\boldsymbol{\pi}_n}_{\tau+h_n\tau_n'}(x)-v^{\boldsymbol{\pi}_\infty}_{\tau+h_n\tau_n'}(x)}{h_n}+\frac{v^{\boldsymbol{\pi}_\infty }_{\tau+h_n\tau_n'}(x)-v_\tau^{\boldsymbol{\pi}_\infty}(x)}{h_n}.
\end{align}
Observe that   by Lemma \ref{lemma:performance_diff}, 
the first term on the right-hand side of    \eqref{eqn:H_diff_decomp} can be rewritten as 
\begin{align*}
&\frac{v_{\tau+h_n\tau'_n}^{\boldsymbol{\pi}_n }(x)-v_{\tau+h_n\tau'_n}^{\boldsymbol{\pi}_\infty }(x)}{h_n}\\
&=-\frac{1}{h_n}\bigg[ 
\E^{\mathbb{P}^{x,\boldsymbol{\pi}_\infty}}\int_0^{\tau_\cO}\Gamma^{\boldsymbol{\pi}_\infty}_t\left(\int_A\left[\mathcal{L}^av_{\tau+h_n\tau'_n}^{\boldsymbol{\pi}_n}\left(X_t\right)+f\left(X_t,a\right)\right]
\left( {\boldsymbol{\pi}_\infty - \boldsymbol{\pi}_n} \right)(da|X_t)
\right)dt \\
&\enspace\enspace+(\tau+h_n\tau'_n)\E^{\mathbb{P}^{x,\boldsymbol{\pi}_\infty}}\int_0^{\tau_\cO}\Gamma^{\boldsymbol{\pi}_\infty}_t \bigg( \int_A\ln\frac{d\boldsymbol{\pi}_n}{d\mu}(a|X_t)\left({\boldsymbol{\pi}_\infty-\boldsymbol{\pi}_n} \right)(da|X_t) + \KL(\boldsymbol{\pi}_\infty|\boldsymbol{\pi}_n)(X_t)\bigg)dt\\
&=\E^{\mathbb{P}^{x,\boldsymbol{\pi}_\infty}}\int_0^{\tau_\cO}\Gamma^{\boldsymbol{\pi}_\infty}_t\left(\int_A\left[\mathcal{L}^av_{\tau+h_n\tau'_n}^{\boldsymbol{\pi}_n}\left(X_t\right)+f\left(X_t,a\right)\right]\left[\frac{\boldsymbol{\pi}_n-\boldsymbol{\pi}_\infty}{h_n}\right](da|X_t)\right)dt\\
&\quad +(\tau+h_n\tau'_n) \E^{\mathbb{P}^{x,\boldsymbol{\pi}_\infty}}\int_0^{\tau_\cO}\Gamma^{\boldsymbol{\pi}_\infty}_t\frac{\KL\left(\boldsymbol{\pi}_n|\mu\right)(X_t)-\KL\left(\boldsymbol{\pi}_\infty|\mu\right)(X_t)}{h_n}dt\,,
\end{align*}
which along with \eqref{eqn:H_diff_decomp} implies that 
\begin{align*}
\frac{v_{\tau+h_n\tau_n'}^{\boldsymbol{\pi}(Z+h_n Z'_n)}(x)-v_\tau^{\boldsymbol{\pi}(Z)}(x)}{h_n}=I^1_{n}+I^2_{n}+(\tau+h_n\tau_n')I^3_{n} +\tau_n'\E^{\mathbb{P}^{x,\boldsymbol{\pi}_\infty}}\int_0^{\tau_{\cO}}\Gamma^{\boldsymbol{\pi}_\infty}_t \KL(\boldsymbol{\pi}(Z)|\mu)(X_t)\,dt\,,
\end{align*}
where  $I^1_{n}$, $I^2_{n}$ and $I^3_{n}$ are defined by  
\begin{align*}
I^1_{n}&\coloneqq\E^{\mathbb{P}^{x,\boldsymbol{\pi}_\infty}}\int_0^{\tau_\cO}\Gamma^{\boldsymbol{\pi}_\infty}_t\left(\int_A\mathcal{L}^av_{\tau+h_n\tau_n'}^{\boldsymbol{\pi}_n}\left(X_t\right)\left[\frac{\boldsymbol{\pi}_n-\boldsymbol{\pi}_\infty}{h_n}\right]\left(da|X_t\right)\right)dt\,,
\\
I^2_{n}&\coloneqq \E^{\mathbb{P}^{x,\boldsymbol{\pi}_\infty}}\int_0^{\tau_\cO}\Gamma^{\boldsymbol{\pi}_\infty}_t\left(\int_Af\left(X_t,a\right)\left[\frac{\boldsymbol{\pi}_n-\boldsymbol{\pi}_\infty}{h_n}\right]\left(da|X_t\right)\right)dt
\,,\\
I^3_{n} 
&\coloneqq
\E^{\mathbb{P}^{x,\boldsymbol{\pi}_\infty}}\int_0^{\tau_\cO}\Gamma^{\boldsymbol{\pi}_\infty}_t\frac{\KL\left(\boldsymbol{\pi}_n|\mu\right)(X_t)-\KL\left(\boldsymbol{\pi}_\infty|\mu\right)(X_t)}{h_n}dt\,.
\end{align*}
It suffices to prove that 
\begin{align}
\label{eq I_1_n}
\lim_{n\to \infty} I^1_{n}&=\E^{\mathbb{P}^{x,\boldsymbol{\pi}_\infty}}\int_0^{\tau_\cO}\Gamma^{\boldsymbol{\pi}_\infty}_t\left(\int_A\mathcal{L}^av_\tau^{\boldsymbol{\pi}(Z)}(X_t)\partial\boldsymbol{\pi}(Z)[Z']\left(da|X_t\right)\right)dt\,,\\
\lim_{n\to \infty}I^2_{n}&=\E^{\mathbb{P}^{x,\boldsymbol{\pi}_\infty}}\int_0^{\tau_\cO}\Gamma^{\boldsymbol{\pi}_\infty}_t\left(\int_Af\left(X_t,a\right)\partial\boldsymbol{\pi}(Z)[Z']\left(da|X_t\right)\right)dt\,,\\
\label{eq I_3_n}
\lim_{n\to \infty} I^3_{n}&=\E^{\mathbb{P}^{x,\boldsymbol{\pi}_\infty}}\int_0^{\tau_\cO}\Gamma^{\boldsymbol{\pi}_\infty}_t\left(\int_A\ln\frac{d\boldsymbol{\pi}(Z)}{d\mu}\left(a|X_t\right)\partial\boldsymbol{\pi}(Z)[Z']\left(da|X_t\right)\right)dt\,.
\end{align}

To prove the convergence of    $(I^1_{n})_{n\in\sN}$, note that
\begin{align}
\label{eqn:help}
\begin{split}
&\left|I^1_{n}-\E^{\mathbb{P}^{x,\boldsymbol{\pi}_\infty}}\int_0^{\tau_\cO}\Gamma^{\boldsymbol{\pi}_\infty}_t\left(\int_A\mathcal{L}^av_\tau^{\boldsymbol{\pi}(Z)}(X_t)\partial\boldsymbol{\pi}(Z)[Z']\left(da|X_t\right)\right)dt\right|
\nonumber\\
&=\bigg|\E^{\mathbb{P}^{x,\boldsymbol{\pi}_\infty}}\int_0^{\tau_\cO}\Gamma^{\boldsymbol{\pi}_\infty}_t
\bigg\{ \int_A \bigg( b(X_t,a)^\top Dv_{\tau+h_n\tau_n'}^{\boldsymbol{\pi}_n}(X_t)-c(X_t,a)v_{\tau+h_n\tau_n'}^{\boldsymbol{\pi}_n}(X_t)\bigg) \left[\frac{\boldsymbol{\pi}_n-\boldsymbol{\pi}_\infty}{h_n}\right](da|X_t)\nonumber\\
&\quad -\int_A \bigg( b(X_t,a)\cdot Dv_\tau^{\boldsymbol{\pi}_\infty}(X_t)-c(X_t,a)v_\tau^{\boldsymbol{\pi}_\infty}(X_t)\bigg) \partial\boldsymbol{\pi}(Z)[Z']\left(da|X_t\right)\bigg\}\,dt  \bigg|\nonumber\\
& \leq\left\|b^\top Dv_{\tau+h_n\tau_n'}^{\boldsymbol{\pi}_n}-cv_{\tau+h_n\tau_n'}^{\boldsymbol{\pi}_n}\right\|_{B_b(\cO\times A)}\left\|\frac{\boldsymbol{\pi}_n -\boldsymbol{\pi}_\infty}{h_n}-\partial\boldsymbol{\pi}(Z)[Z']\right\|_{b\mathcal{M}(A|\cO)}\E^{\mathbb{P}^{x,\boldsymbol{\pi}_\infty}}\left[\tau_\cO\right]\nonumber\\
&\quad +\|b^\top (Dv_{\tau+h_n\tau_n'}^{\boldsymbol{\pi}_n}-Dv_\tau^{\boldsymbol{\pi}_\infty})-c(v_\tau^{\boldsymbol{\pi}_\infty}-v_{\tau+h_n\tau_n'}^{\boldsymbol{\pi}_n})\|_{B_b(\cO\times A)}\|\partial\boldsymbol{\pi}(Z)[Z']\|_{b\mathcal{M}(A|\cO)}\E^{\mathbb{P}^{x,\boldsymbol{\pi}_\infty}}\left[\tau_\cO\right]\nonumber\\
& \leq
C  \|v_{\tau+h_n\tau_n'}^{\boldsymbol{\pi}_n}\|_{C^1(\overline{\cO})}\left\|\frac{\boldsymbol{\pi}_n -\boldsymbol{\pi}_\infty}{h_n}-\partial\boldsymbol{\pi}(Z)[Z']\right\|_{b\mathcal{M}(A|\cO)}\E^{\mathbb{P}^{x,\boldsymbol{\pi}_\infty}}\left[\tau_\cO\right] 
\\
&\quad 
+C \|Z'\|_{B_b(\cO\times A)}\|v_{\tau+h_n\tau_n'}^{\boldsymbol{\pi}_n}-v_\tau^{\boldsymbol{\pi}_\infty}\|_{C^1(\overline{\cO})} \E^{\mathbb{P}^{x,\boldsymbol{\pi}_\infty}}\left[\tau_\cO\right],
\end{split}
\end{align}
where the final inequality used    Proposition \ref{prop:pi_derivative} Item \ref{prop:mirror_map_derivative}.
By Lemma \ref{lem:DV_s_bound}
and the uniform boundedness of the sequence $(Z'_n)_{n\in\mathbb{N}}$ in $B_b(\cO\times A)$,  
$\sup_{n\in \sN} \|v_{\tau+h_n\tau_n'}^{\boldsymbol{\pi}_n}\|_{C^1(\overline{\cO})}<\infty$.
Thus, by Proposition \ref{prop:pi_derivative} Item \ref{prop:mirror_map_derivative}, 
$\lim_{n\rightarrow\infty}\|v_{\tau+h_n\tau_n'}^{\boldsymbol{\pi}_n}\|_{C^1(\overline{\cO})}\left\|\frac{\boldsymbol{\pi}_n -\boldsymbol{\pi}_\infty}{h_n}-\partial\boldsymbol{\pi}(Z)[Z']\right\|_{b\mathcal{M}(A|\cO)}=0$.
Moreover, by Proposition~\ref{prop:W^{2,p} bound}, 
the Sobolev embedding~\eqref{eq:sobolev_inq} 
and the convergence of $(Z_n')_{n\in\mathbb{N}}$ and $(\tau'_n)_{n\in \sN}$,
$\lim_{n\to \infty} \|v_{\tau+h_n\tau_n'}^{\boldsymbol{\pi}_n}-v_\tau^{\boldsymbol{\pi}_\infty}\|_{C^1(\overline{\cO})} =0$.
This shows~\eqref{eq I_1_n} holds.
For $I^2_{n}$ we have 
\begin{align*}
&\left|I^2_{n}-\E^{\mathbb{P}^{x,\boldsymbol{\pi}_\infty}}\int_0^{\tau_\cO}\Gamma^{\boldsymbol{\pi}_\infty}_t\left(\int_Af(X_t,a)\partial\boldsymbol{\pi}(Z)[Z'](da|X_t)\right)dt\right|\\
& \leq \E^{\mathbb{P}^{x,\boldsymbol{\pi}_\infty}}\int_0^{\tau_\cO}\Gamma^{\boldsymbol{\pi}_\infty}_t\left(\int_A\left|f\left(X_t,a\right)\right|\left|\frac{\boldsymbol{\pi}_n-\boldsymbol{\pi}_\infty}{h_n}-\partial\boldsymbol{\pi}(Z)[Z']\right|\left(da|X_t\right)\right)dt\\
& \leq  \|f\|_{B_b(\cO\times A)} \E^{\mathbb{P}^{x,\boldsymbol{\pi}_\infty}}\int_0^{\tau_\cO}
\Gamma^{\boldsymbol{\pi}_\infty}_t
\left\|\left(
\frac{\boldsymbol{\pi}(Z+h_n Z'_n)-\boldsymbol{\pi}(Z)}{h_n}-\partial\boldsymbol{\pi}(Z)[Z']\right)\left(\cdot |X_t\right)
\right\|_{\mathcal{M}(A)}dt\\
& \leq C \left\| \frac{\boldsymbol{\pi}(Z+h_n Z'_n)-\boldsymbol{\pi}(Z)}{h_n}-\partial\boldsymbol{\pi}(Z)[Z']\right\|_{b\mathcal{M}(A|\cO)}\E^{\mathbb{P}^x,  {\boldsymbol{\pi}_\infty}}\left[\tau_\cO\right],
\end{align*}
which converges to zero as $n\rightarrow\infty$, 
due to 
$\E^{\mathbb{P}^x,\boldsymbol{\pi}(Z)}\left[\tau_\cO\right]<\infty$ and
Proposition~\ref{prop:pi_derivative} Item~\ref{prop:mirror_map_derivative}.
For $I^3_{n}$,
note that from
\eqref{eqn:stab_helper_4},  
we have that the intergrand is uniformly bounded in $n$. 
The dominated convergence theorem together with Lemma~\ref{lem:entropy_diff} yields~\eqref{eq I_3_n}.
This finishes the proof.
\end{proof}

\section{Proofs of Theorem~\ref{ref:cost_decrease_along_flow}, Proposition~\ref{thm:convergence_of_GF}  and Theorem~\ref{cor:extend_conv_GF}
}
\label{sec:convergence}

\begin{proof}
[Proof of Theorem~\ref{ref:cost_decrease_along_flow}]
As $Z\in C^1([0,S];B_b(\cO\times A))$  and $\tau\in C^1([0,\infty);(0,\infty))$,  the map $s\mapsto (Z_s,\btau_s)$
is differentiable.
From Proposition \ref{prop:funct_derivative_rho} 
the map $(Z,\tau)\mapsto v^{\boldsymbol{\pi}(Z)}_\tau(x)$ is Hadamard differentiable.
Thus by the chain rule, 
\begin{equation}
\label{eqn:pre_energy_function}
\begin{split}
&\partial_s v^{\boldsymbol{\pi}(Z_s)}_{\btau_s}(x)=\partial v^{\boldsymbol{\pi}(Z_s)}_{\btau_s}(x)[(\partial_s Z_s, \partial_s\btau_s)]\\
&=\E^{\mathbb{P}^{x,\boldsymbol{\pi}(Z_s)}}\int_0^{\tau_\cO}\Gamma^{\boldsymbol{\pi}(Z_s)}_t\int_A\left(\mathcal{L}^av_{\btau_s}^{\boldsymbol{\pi}(Z_s)}(X_t)+f(X_t,a)+\btau_s\ln\frac{d\boldsymbol{\pi}(Z_s)}{d\mu}(a|X_t)\right)\partial\boldsymbol{\pi}(Z_s)[\partial_s Z_s](da|X_t)dt\\
&\enspace\enspace+(\partial_s\btau_s)\E^{\mathbb{P}^{x,\boldsymbol{\pi}(Z_s)}}\int_0^{\tau_{\cO}}\Gamma^{\boldsymbol{\pi}(Z_s)}_t\KL(\boldsymbol{\pi}(Z_s)|\mu)(X_t)dt.
\end{split}
\end{equation}
By Proposition \ref{prop:pi_derivative} Item \ref{prop:mirror_map_derivative} and \eqref{eq:md_intro}, 
we have
\begin{align*}
&\partial\boldsymbol{\pi}(Z_s)[\partial_s Z_s]\left(da|X_t\right)=\left(\partial_s Z_s\left(X_t,a\right)-\int_A\partial_s Z_s\left(X_t,a'\right)\boldsymbol{\pi}(Z_s)\left(da'|X_t\right)\right)\boldsymbol{\pi}(Z_s) \left(da|X_t\right)\\
&=\bigg(-\left[\mathcal{L}^av_{\btau_s}^{\boldsymbol{\pi}(Z_s)}(X_t)+f(X_t,a)+{\btau_s}Z_s(X_t,a)\right]\\
&\enspace\enspace+\int_A\left[\mathcal{L}^{a'}v_{\btau_s}^{\boldsymbol{\pi}(Z_s)}(X_t)+f(X_t,a')+{\btau_s} Z_s(X_t,a')\right]\boldsymbol{\pi}(Z_s)\left(da'|X_t\right)
\bigg)\boldsymbol{\pi}(Z_s)\left(da|X_t\right),
\end{align*}
where the second identity used    the fact that the diffusion coefficient is independent of $a$.
By further    adding and subtracting the control-independent term  $\ln\left(\int_A e^{Z_s(X_t,a'')}\mu(da'')\right)$,
\begin{align*}
&\partial\boldsymbol{\pi}(Z_s)[\partial_s Z_s]\left(da|X_t\right)
\\
&
=-\bigg(\mathcal{L}^a v_{\btau_s}^{\boldsymbol{\pi}(Z_s)}(X_t)+ f(X_t,a)+\btau_s\ln\frac{d\boldsymbol{\pi}(Z_s)}{d\mu} (a|X_t)\\
&\quad -\int_A\left[\mathcal{L}^{a'} v_{\btau_s}^{\boldsymbol{\pi}(Z_s)}(X_t)+ f(X_t,a')+{\btau_s}\ln\frac{d\boldsymbol{\pi}(Z_s)}{d\mu} (a'|X_t)\right]\boldsymbol{\pi}(Z_s)\left(da'|X_t\right)\bigg)\boldsymbol{\pi}(Z_s)\left(da|X_t\right)
\\
&=-\left(\mathcal{L}^av_{\btau_s}^{\boldsymbol{\pi}(Z_s)}(X_t)+f(X_t,a)+\btau_s \ln\frac{d\boldsymbol{\pi}(Z_s)}{d\mu}(a|X_t)\right)\boldsymbol{\pi}(Z_s)\left(da|X_t\right)\,,
\end{align*}
where the last identity used the fact that 
$v_{\btau_s}^{\boldsymbol{\pi}(Z_s)}$ satisfies  \eqref{eq:on_policy_bellman}.
Substituting the identity into \eqref{eqn:pre_energy_function} completes the proof.
\end{proof}

To prove Proposition \ref{thm:convergence_of_GF},  
let  $\Phi:B_b(\cO\times A)\rightarrow B_b(\cO )$  by $\Phi(Z)(x)\coloneqq \ln\left(\int_Ae^{Z(x,a)}\mu(da)\right)$, and 
for each $x\in\cO$
and $\tilde{Z}\in B_b(\cO\times A)$, 
define  
$\mathcal{D}_x^{\boldsymbol{\pi}(\tilde{Z})}:B_b(\cO\times A)\times B_b(\cO\times A)\rightarrow\mathbb{R} $ by
\begin{equation*}
\begin{split}
& \mathcal{D}_x^{\boldsymbol{\pi}(\tilde{Z})}(Z,Z')
\\
&\coloneqq \E^{\mathbb{P}^{x,\boldsymbol{\pi}(\tilde{Z})}}\int_0^{\tau_\cO} \Gamma^{\boldsymbol{\pi}(\tilde{Z})}_t\bigg(\Phi(Z)\left(X_t\right)-\Phi(Z')\left(X_t\right)-\int_A \left(Z\left(X_t,a\right)-Z'\left(X_t,a\right)\right)\boldsymbol{\pi}(Z')\ (da|X_t)\bigg)dt.
\end{split}
\end{equation*}

The following lemma characterizes $\mathcal{D}_x^{\boldsymbol{\pi}(\tilde{Z})}$ as an  integrated KL divergence between two Gibbs policies. 
\begin{lemma}
\label{lem:D_to_RE}
For all   $x\in\cO$ and $Z,Z'\in B_b(\cO\times A)$, 
\begin{align*}
\mathcal{D}_x^{\boldsymbol{\pi}(Z')}(Z,Z')= \E^{\mathbb{P}^{x,\boldsymbol{\pi}(Z')}}\int_0^{\tau_\cO}\Gamma^{\boldsymbol{\pi}(Z')}_t\KL(\boldsymbol{\pi}(Z')|\boldsymbol{\pi}(Z))(X_t)dt\,.
\end{align*}
\end{lemma}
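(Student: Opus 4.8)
The plan is to reduce the identity to a pointwise algebraic fact about Gibbs policies, namely that for every $x\in\cO$ the bracketed integrand in the definition of $\mathcal D_x^{\boldsymbol{\pi}(Z')}(Z,Z')$ equals $\KL(\boldsymbol{\pi}(Z')|\boldsymbol{\pi}(Z))(x)$. Once this pointwise identity is established, the conclusion is immediate: both sides of the asserted equality are the expectation $\E^{\mathbb P^{x,\boldsymbol{\pi}(Z')}}\int_0^{\tau_\cO}\Gamma^{\boldsymbol{\pi}(Z')}_t(\,\cdot\,)(X_t)\,dt$ of the very same integrand evaluated along the state process, so nothing remains beyond substitution.

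The key computation is the explicit form of the log-density of a Gibbs policy. From the definition \eqref{eq:operator_pi}, for $x\in\cO$ one has $\frac{d\boldsymbol{\pi}(Z)}{d\mu}(a|x)=e^{Z(x,a)}/\int_A e^{Z(x,a')}\mu(da')$, so that $\ln\frac{d\boldsymbol{\pi}(Z)}{d\mu}(a|x)=Z(x,a)-\Phi(Z)(x)$ where $\Phi(Z)(x)=\ln\int_A e^{Z(x,a)}\mu(da)$. Writing the divergence as $\KL(\boldsymbol{\pi}(Z')|\boldsymbol{\pi}(Z))(x)=\int_A\big(\ln\frac{d\boldsymbol{\pi}(Z')}{d\mu}(a|x)-\ln\frac{d\boldsymbol{\pi}(Z)}{d\mu}(a|x)\big)\boldsymbol{\pi}(Z')(da|x)$ and inserting the two log-densities gives
\[
\KL(\boldsymbol{\pi}(Z')|\boldsymbol{\pi}(Z))(x)=\int_A\big(Z'(x,a)-Z(x,a)+\Phi(Z)(x)-\Phi(Z')(x)\big)\boldsymbol{\pi}(Z')(da|x).
\]

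Since $\Phi(Z)(x)-\Phi(Z')(x)$ does not depend on $a$ and $\boldsymbol{\pi}(Z')(\cdot|x)$ is a probability measure, this constant factors out of the integral, yielding $\Phi(Z)(x)-\Phi(Z')(x)-\int_A\big(Z(x,a)-Z'(x,a)\big)\boldsymbol{\pi}(Z')(da|x)$, which is precisely the integrand appearing in the definition of $\mathcal D_x^{\boldsymbol{\pi}(Z')}(Z,Z')$. Substituting this equality back into that definition completes the proof. There is no genuine obstacle here: the statement is a purely algebraic bookkeeping identity, and the only point to keep in view is that for $Z,Z'\in B_b(\cO\times A)$ the policies $\boldsymbol{\pi}(Z)$ and $\boldsymbol{\pi}(Z')$ are mutually absolutely continuous with bounded log-densities, so every integral above is finite and the interchange of the constant with the integration is fully justified.
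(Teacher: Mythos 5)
Your proof is correct and follows essentially the same route as the paper's: both reduce the claim to the pointwise algebraic identity that, for Gibbs policies, $\KL(\boldsymbol{\pi}(Z')|\boldsymbol{\pi}(Z))(x)$ equals $\Phi(Z)(x)-\Phi(Z')(x)-\int_A\bigl(Z(x,a)-Z'(x,a)\bigr)\boldsymbol{\pi}(Z')(da|x)$, obtained by expanding the log-densities $\ln\frac{d\boldsymbol{\pi}(Z)}{d\mu}(a|x)=Z(x,a)-\Phi(Z)(x)$, and then substitute into the definition of $\mathcal{D}_x^{\boldsymbol{\pi}(Z')}$. Your added remarks on mutual absolute continuity and the factoring out of the $a$-independent terms are exactly the implicit justifications in the paper's one-line computation.
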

\begin{proof}
Note that for all $f,g \in B_b(A)$,
\[
\begin{split}
& \int_A \bigg(\ln \frac{e^{g(a)}}{\int_A e^{g(a')}\mu(da')} - \ln \frac{e^{f(a)}}{\int_A e^{f(a')}\mu(da')} \bigg) \frac{e^{g(a)}}{\int_A e^{g(a')}\mu(da')}\mu(da)\\
& = \int_A \bigg(g (a)- \ln \int_A e^{g(a')}\,\mu(da') -f (a)+ \ln \int_A e^{f(a')}\,\mu(da')\bigg)\frac{e^{g(a)}}{\int_A e^{g(a')}\mu(da')}\mu(da)\,,
\end{split}
\]
which along with the definition of $\mathcal{D}_x$ yields the desired 
conclusion.
\end{proof}

\begin{proof}
[Proof of Proposition \ref{thm:convergence_of_GF}]
Let $\pi_s =\boldsymbol{\pi}(Z_s)$ for all $s>0$ and $\pi^*_\tau = \boldsymbol{\pi}(Z^*_\tau)$.
Using Proposition   \ref{prop:pi_derivative} Item \ref{prop:ln_pi_diff} and Item \ref{prop:diff_ln_RE}, 
and the chain rule we have 
\begin{align*}
\partial_s & \mathcal{D}^{\pi^*_{\tau}}_x(Z_s,Z_\tau^*)
=\E^{\mathbb{P}^{x,\pi_\tau^\ast}}\int_0^{\tau_\cO}\Gamma^{\pi_\tau^\ast}_t\left(\partial_s\Phi(Z_s)\left(X_t\right)-\int_A\partial_sZ_s\left(X_t,a\right)\pi^*_\tau (da|X_t)\right) dt\\
&=\E^{\mathbb{P}^{x,\pi_\tau^*}}\int_0^{\tau_\cO}\Gamma^{\pi_\tau^\ast}_t\left(\partial\Phi(Z_s)[\partial_s Z_s]\left(X_t\right)-\int_A\partial_sZ_s\left(X_t,a\right)
\pi^*_\tau (da|X_t) \right) dt\\
&=\E^{\mathbb{P}^{x,\pi_\tau^*}}\int_0^{\tau_\cO}\Gamma^{\pi_\tau^\ast}_t\int_A\partial_s Z_s\left(X_t,a\right)\left(\pi_s-\pi^*_\tau \right)\left(da|X_t\right)dt\\
&=\E^{\mathbb{P}^{x,\pi_\tau^*}}\int_0^{\tau_\cO}\Gamma^{\pi_\tau^\ast}_t\int_A\left(\partial_s Z_s\left(X_t,a\right)+\btau_s\ln\left(\int_A e^{Z_s(X_t,a')}\mu(da')\right)\right)\left( \pi_s-\pi^*_\tau \right)\left(da|X_t\right)dt,
\end{align*}
where   interchanging   the differentiation and integration   follows from  
the continuous differentiability of $Z$
and the dominated convergence theorem,
and the last identity used the fact that 
$\ln\left(\int_A e^{Z_s(X_t,a')}\mu(da')\right)$ is   independent of $a$. 
This along with the definition of the   flow \eqref{eq:md_intro}, and  Lemmas \ref{lemma:performance_diff} and  \ref{lem:D_to_RE} implies that
\begin{align*}
\partial_s\mathcal{D}^{\pi^*_{\tau}}_x(Z_s,Z^*_\tau)&=\E^{\mathbb{P}^{x,\pi^*_\tau}}\int_0^{\tau_\cO}\Gamma^{\pi_\tau^\ast}_t\int_A\left(\mathcal{L}^av^{\pi_s}_{\btau_s}(X_t)+f(X_t,a)+\btau_s\ln\frac{d\pi_s}{d\mu}(a|X_t)\right)(\pi^*_\tau-\pi_s)(da|X_t)dt\\
&=(v^{\pi^*_\tau}_{\btau_s}-v^{\pi_s}_{\btau_s})(x)-\btau_s\E^{\mathbb{P}^{x,\pi^*_\tau}}\int_0^{\tau_\cO}\Gamma^{\pi_\tau^\ast}_t\KL(\pi^*_\tau|\pi_s)(X_t)dt=(v^{\pi^*_\tau}_{\btau_s}-v^{\pi_s}_{\btau_s})(x)-\btau_s\mathcal{D}^{\pi^*_{\tau}}_x(Z_s,Z^*_\tau).
\end{align*}
Setting $I_s =e^{\int_0^s \btau_r dr}$ and solving the above ODE yields 
\begin{align}
\label{eqn:solved_ODE}
I_s\mathcal{D}^{\pi^*_{\tau}}_x(Z_s,Z^*_\tau)=\mathcal{D}_x(Z_0,Z^*_\tau)+\int_0^sI_{s'}(v^{\pi^*_\tau}_{\btau_{s'}}-v^{\pi_{s'}}_{\btau_{s'}})(x)ds'\,.
\end{align}
This along with  $I_s \ge 0$ and $\mathcal{D}^{\pi^*_{\tau}}_x(Z_s,Z^*_\tau)\ge 0$ (see Lemma \ref{lem:D_to_RE}) implies 
\begin{align*}
\int_0^sI_{s'}(v^{\pi_{s'}}_{\btau_{s'}}-v^{\pi^*_\tau}_{\btau_{s'}})(x)ds'\leq\mathcal{D}^{\pi^*_{\tau}}_x(Z_0,Z^*_\tau).
\end{align*}
Hence by the definition of the regularized value function   \eqref{eq:value},
\begin{equation}
\label{eqn:conv_pf_help_1}
\begin{split}
\int_0^sI_{s'}(v^{\pi_{s'}}_{\btau_{s'}}-v^{\pi^*_\tau}_\tau)(x)ds'&=\int_0^sI_{s'}(v^{\pi_{s'}}_{\btau_{s'}}-v^{\pi^*_\tau}_{\btau_{s'}})ds'+\int_0^sI_{s'}(v^{\pi^*_\tau}_{\btau_{s'}}-v^{\pi^*_\tau}_\tau)(x)ds'\\
&\leq\mathcal{D}_x(Z_0,Z^*_\tau) + \left(\E^{\mathbb{P}^{x,\pi^*_\tau}}\int_0^{\tau_\cO}\Gamma^{\pi_\tau^\ast}_t\KL(\pi^*_\tau|\mu)(X_t)dt\right)\int_0^sI_{s'}(\btau_{s'}-\tau)^+ds'.
\end{split}
\end{equation}
From Theorem \ref{ref:cost_decrease_along_flow} 
the map $s\mapsto v^{\boldsymbol{\pi}(Z_s)}_{\btau_s}(x)$ is decreasing, hence
\begin{align}
\label{eqn:conv_pf_help_2}
(v^{\pi_{s}}_{\btau_s}-v^{\pi^*_\tau}_\tau)(x)\int_0^sI_{s'}ds'\leq\int_0^sI_{s'}(v^{\pi_{s'}}_{\btau_{s'}}-v^{\pi^*_\tau}_\tau)(x)ds'\,.
\end{align}
Combining \eqref{eqn:conv_pf_help_1} and \eqref{eqn:conv_pf_help_2}
yields
\begin{align*}
(v^{\pi_{s}}_{\btau_s}-v^{\pi^*_\tau}_\tau)(x)\leq\frac{\E^{\pi^*_\tau}\int_0^{\tau_{\cO}}\Gamma^{\pi_\tau^\ast}_t\KL(\pi^*_\tau|\pi_0)(X_t)dt}{\int_0^sI_{s'}ds'}+\frac{\int_0^sI_{s'}(\btau_{s'}-\tau)^+ ds'}{\int_0^sI_{s'}ds'}\E^{\mathbb{P}^{x,\pi^*_\tau}}\int_0^{\cO}\Gamma^{\pi_\tau^\ast}_t\KL(\pi^*_\tau|\mu)(X_t)dt,
\end{align*}
which (recalling the definition of $I_s$) concludes the proof.
\end{proof}
\begin{lemma}
\label{lemma:entropy_bounds}
Suppose Assumption~\ref{ass:data} holds.
Let $\pi^0 \in \Pi_\mu$.
Then there exists $C>0$ such that 
for all $\tau>0$,  
$\|\KL\left(\pi^*_\tau|\pi^0\right)\|_{B_b(\cO)} \leq C(1+\tau^{-1})$
and 
$\|\KL\left(\pi^*_\tau|\mu\right)\|_{B_b(\cO)}\le C/\tau$.  

If $A$ is of finite cardinality,  then $ 
\sup_{\tau>0}(\|\KL\left(\pi^*_\tau|\pi^0\right)\|_{B_b(\cO)}+\|\KL\left(\pi^*_\tau|\mu\right)\|_{B_b(\cO)}) < \infty\,.	
$
\end{lemma}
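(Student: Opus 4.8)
The plan is to exploit the explicit Gibbs form of the optimal regularized policy provided by Proposition~\ref{prop:verification}, namely $\pi^*_\tau=\boldsymbol{\pi}(-\tfrac1\tau Z^*_\tau)$ with $Z^*_\tau(x,a)=b(x,a)^\top Dv^*_\tau(x)-c(x,a)v^*_\tau(x)+f(x,a)$, and to reduce both KL bounds to a single uniform-in-$\tau$ estimate $\|v^*_\tau\|_{C^1(\overline{\cO})}\le C$. Writing out $\tfrac{d\pi^*_\tau}{d\mu}$ and integrating its logarithm against $\pi^*_\tau$ gives, for every $x\in\cO$,
\[
\KL(\pi^*_\tau|\mu)(x)=\frac{1}{\tau}\left(H_\tau\big(x,v^*_\tau(x),Dv^*_\tau(x)\big)-\int_A Z^*_\tau(x,a)\,\pi^*_\tau(da|x)\right),
\]
where I use the closed-form expression in~\eqref{eq:Hamiltonian_tau} to identify $-\tau\ln\int_A e^{-Z^*_\tau(x,a)/\tau}\mu(da)=H_\tau(x,v^*_\tau(x),Dv^*_\tau(x))$. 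Hence, once $\|v^*_\tau\|_{C^1(\overline{\cO})}\le C$ is available, the boundedness of $b,c,f$ (Assumption~\ref{ass:data}) gives $\|Z^*_\tau\|_{B_b(\cO\times A)}\le C$ and $|H_\tau(x,v^*_\tau(x),Dv^*_\tau(x))|\le C$, so the identity yields $\|\KL(\pi^*_\tau|\mu)\|_{B_b(\cO)}\le C/\tau$.

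The crux is therefore this uniform $C^1$ bound, which I would establish in two stages. First, $\|v^*_\tau\|_{B_b(\cO)}\le C$ uniformly in $\tau$: the upper bound follows from the value-function definition~\eqref{eq:value} by comparison with the constant policy $\pi\equiv\mu$, for which the KL term vanishes, while the lower bound follows from nonnegativity of the KL term together with the uniform exit-time estimate $\sup_{x,\pi}\E^{\mathbb{P}^{x,\pi}}[\tau_\cO]<\infty$, valid since $\sigma$ is non-degenerate and $b$ is bounded. Second, because $H_\tau$ has linear growth uniformly in $\tau$, namely $|H_\tau(x,u,p)|\le K(1+|u|+|p|)$ (as $H\le H_\tau\le\int_A(b(x,a)^\top p-c(x,a)u+f(x,a))\,\mu(da)$), I would read~\eqref{eq:semilinear} as $\tfrac12\tr(\sigma\sigma^\top D^2v^*_\tau)=-H_\tau(\cdot,v^*_\tau,Dv^*_\tau)$ and apply the elliptic estimate of Lemma~\ref{lemma:elliptic_regularity} to obtain $\|v^*_\tau\|_{W^{2,p^*}(\cO)}\le C(1+\|v^*_\tau\|_{W^{1,p^*}(\cO)})$; absorbing the first-order term by an interpolation inequality and invoking the $L^\infty$ bound produces $\|v^*_\tau\|_{W^{2,p^*}(\cO)}\le C$ uniformly, whence $\|v^*_\tau\|_{C^1(\overline{\cO})}\le C$ via the Sobolev embedding~\eqref{eq:sobolev_inq}. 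I expect this a priori estimate to be the main obstacle, as it is precisely the bound underlying the well-posedness of~\eqref{eq:semilinear}.

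For the bound against a general reference $\pi^0=\boldsymbol{\pi}(Z^0)\in\Pi_\mu$ I would use the chain-rule decomposition
\[
\KL(\pi^*_\tau|\pi^0)(x)=\KL(\pi^*_\tau|\mu)(x)-\int_A\ln\tfrac{d\pi^0}{d\mu}(a|x)\,\pi^*_\tau(da|x),
\]
together with the elementary bound $\big\|\ln\tfrac{d\boldsymbol{\pi}(Z^0)}{d\mu}\big\|_{B_b(\cO\times A)}\le 2\|Z^0\|_{B_b(\cO\times A)}$, so that the correction term is controlled uniformly in $\tau$; combined with $\|\KL(\pi^*_\tau|\mu)\|_{B_b(\cO)}\le C/\tau$ this gives $\|\KL(\pi^*_\tau|\pi^0)\|_{B_b(\cO)}\le C(1+\tau^{-1})$.

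Finally, in the finite-cardinality case, since every policy in $\Pi_\mu$ is absolutely continuous with respect to $\mu$, only the support of $\mu$ is relevant and one may assume $\mu$ has full support $\{a_1,\dots,a_N\}$ with $\mu_{\min}\coloneqq\min_{1\le i\le N}\mu(\{a_i\})>0$. Then the elementary estimate $\KL(\nu|\mu)=\sum_{i}\nu_i\ln\nu_i-\sum_i\nu_i\ln\mu(\{a_i\})\le-\ln\mu_{\min}$, valid because $\sum_i\nu_i\ln\nu_i\le0$, bounds $\|\KL(\pi^*_\tau|\mu)\|_{B_b(\cO)}$ by a $\tau$-independent constant; feeding this into the decomposition of the previous paragraph yields the claimed uniform bound on $\|\KL(\pi^*_\tau|\pi^0)\|_{B_b(\cO)}+\|\KL(\pi^*_\tau|\mu)\|_{B_b(\cO)}$.
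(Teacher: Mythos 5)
Your proof is correct, and its skeleton matches the paper's: both arguments reduce the two KL bounds to the explicit Gibbs form of $\pi^*_\tau$ from Proposition~\ref{prop:verification} together with a uniform-in-$\tau$ bound on $\|v^*_\tau\|_{C^1(\overline{\cO})}$ (equivalently on $\|v^*_\tau\|_{W^{2,p^*}(\cO)}$, via the Sobolev embedding). The differences lie in execution. Where you derive the exact free-energy identity $\KL(\pi^*_\tau|\mu)=\tau^{-1}\bigl(H_\tau(\cdot,v^*_\tau,Dv^*_\tau)-\int_A Z^*_\tau\,d\pi^*_\tau\bigr)$ via \eqref{eq:Hamiltonian_tau} and bound each term, the paper estimates the log-density crudely, $|\ln\frac{d\pi^*_\tau}{d\mu}|\le 2\tau^{-1}\|Z^*_\tau\|_{B_b(\cO\times A)}$; both give $C/\tau$, your identity being sharper but yielding nothing extra here. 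The main divergence is that the paper does not re-derive the a priori estimate: it invokes its Lemma~\ref{lemma:hjb_a_priori} (with $\eta=1$), already established for the well-posedness of \eqref{eq:semilinear}, to get $\|v^*_\tau\|_{W^{2,p^*}(\cO)}\le C(1+\|g\|_{W^{2,p^*}(\cO)})$ uniformly in $\tau$. You reconstruct that estimate instead, replacing the paper's PDE maximum-principle step for the $L^\infty$ bound by a probabilistic comparison ($v^*_\tau\le v^\mu_\tau=v^\mu_0$ from above, nonnegativity of KL plus the uniform exit-time bound from below), then running the same elliptic-estimate-plus-interpolation argument; this is self-contained and more elementary, at the cost of duplicating work, and note that you must subtract $g$ before applying Lemma~\ref{lemma:elliptic_regularity}, which assumes zero boundary data — a routine point your sketch glosses over. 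Finally, in the finite-action case the paper bounds $\KL(\pi^*_\tau|\pi^0)$ directly using $\ln\pi^*_\tau(a_i|x)\le 0$, whereas you bound $\KL(\pi^*_\tau|\mu)\le-\ln\min_i\mu(\{a_i\})$ and pass through the chain-rule decomposition; these are the same idea in equivalent packagings, with your explicit reduction to the support of $\mu$ being slightly more careful than the paper's implicit assumption that $\mu(\{a_i\})>0$ for all $i$.
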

\begin{proof}
As $\pi_0 \in \Pi_\mu$, there is $Z_0 \in B_b(\mathcal O \times A)$ such that $\pi_0 = \boldsymbol{\pi}(Z_0)$. Then 
\begin{equation}
\label{eq:per_x_KL_est}
\begin{split}
\KL\left(\pi^*_\tau|\pi^0\right)(x)&=\int_A\left(\ln\frac{d\pi^*_\tau}{d\mu}(a|x)-\ln\frac{d\boldsymbol{\pi}(Z_0)}{d\mu}(a|x)\right) \pi^*_\tau(da|x)\\
&\leq\int_A\left|\ln\frac{d\pi^*_\tau}{d\mu}(a|x)\right|\pi^*_\tau(da|x)+\left\|\ln\frac{d\boldsymbol{\pi}(Z_0)}{d\mu}\right\|_{B_b(\cO\times A)}.	
\end{split}
\end{equation}	
From Proposition \ref{prop:verification} we have that $\pi^*_\tau(da|x)=\boldsymbol{\pi}(Z^*_\tau)(da|x)$ and $\left|\ln\frac{d\pi^*_\tau}{d\mu}(a|x)\right|\leq 2\|Z^*_\tau\|_{B_b(\cO\times A)}$.
Recalling the definition of $Z^*_\tau$ and using the  Sobolev embedding we have
\begin{equation}
\label{eqn:sup_x_KL}
\|Z^*_\tau\|_{B_b(\cO\times A)}
\leq\frac{2K}{\tau}(1+\|v^*_\tau\|_{C^1(\cO)})
\leq\frac{C}{\tau}(1+\|v^*_\tau\|_{W^{2,p^*}(\cO)})\,.
\end{equation}
From Proposition~\ref{prop:verification} $v^*_\tau$ is the unique strong solution to \eqref{eq:semilinear}, therefore using the estimate provided in Lemma \ref{lemma:hjb_a_priori} 
(with $\eta = 1$) there exists a constant $C$, independent of $\tau$, such that  $\|v^*_\tau\|_{W^{2,p^*}(\cO)}\leq C(1+\|g\|_{W^{2,p^*}(\cO)})$.
Hence there exists   $C\ge 0$ such that for all $\tau>0$ and $x\in \cO$,
\[
\begin{split}
\KL\left(\pi^*_\tau|\pi^0\right)(x) \leq C + 2\int_A\|Z^*_\tau\|_{B_b(\cO\times A)}\pi^*_\tau(da|x) \leq C + \frac{C}{\tau}(1+\|g\|_{W^{2,p^*}(\cO)}) \leq C(1+\tau^{-1})\,,
\end{split}
\]
which proves the first statement. 

To prove the second statement assume $A = \{a_1,\ldots,a_n\}$. 
Since $\pi^0 = \boldsymbol{\pi}(Z_0)$ we have that $\pi^0(a_i|x) \in (0,1)$ for all $i=1,\ldots,n$.
Then for all $x \in \mathcal O$ we have $\pi_\tau^\ast(\cdot|x)\ll\mu$ and hence
\[
\begin{split}
\operatorname{KL}(\pi^\ast_\tau|\pi^0)(x) & = \sum_{a_i\in A} \bigg( \ln \frac{\pi^\ast_\tau(a_i|x)}{\mu(a_i)} - \ln\frac{\pi^0(a_i|x)}{\mu(a_i)}\bigg)\pi^\ast_\tau(a_i|x)\\
& \leq \sum_{a_i\in A} (\ln \pi^\ast_\tau(a_i|x) - \ln \mu(a_i) )\pi^\ast_\tau(a_i|x) + \sum_{a_i\in A}  \bigg|\ln\frac{\pi^0(a_i|x)}{\mu(a_i)}\bigg|\pi^\ast_\tau(a_i|x)\\ 
& \leq \sum_{a_i\in A} \bigg( |\ln \mu(a_i)| + \bigg|\ln\frac{\pi^0(a_i|x)}{\mu(a_i)}\bigg|\bigg) \pi^\ast_\tau(a_i|x) \leq \max_{i=1,\ldots,n} \bigg( |\ln \mu(a_i)| + \bigg|\ln\frac{\pi^0(a_i|x)}{\mu(a_i)}\bigg|\bigg) 
=: C\,.	
\end{split}
\] 
This concludes the proof.
\end{proof}

\begin{proof}[Proof of Theorem~\ref{cor:extend_conv_GF}]
From Proposition \ref{thm:convergence_of_GF} and the non-negativity of the KL-divergence we have that
\begin{align*}
\label{eqn:bound_no_kl}
v^{\boldsymbol{\pi}(Z_s)}_{\btau_{s}}(x)-v^{\pi^*_{{\tau}}}_{{\tau}}(x)
&\leq
C\left(\frac{1}
{\int_0^s I_{s'}ds'}\|\KL(\pi^*_\tau|\boldsymbol{\pi}(Z_0))\|_{B_b(\mathcal O)} + \frac{\int_0^s(\btau_{s'}-{\tau}) I_{s'}ds'}{\int_0^s I_{s'}ds'}\|\KL(\pi^*_\tau|\mu)\|_{B_b(\mathcal O)}\right)\,,
\end{align*}
where $C\coloneqq \sup_{  \tau> 0}\E^{\mathbb{P}^{x,\pi^*_{{\tau}}}}\int_0^{\tau_\cO}\Gamma^{\pi^*_{{\tau}}}_t dt 
< \sup_{\pi} \E^{\mathbb{P}^{x,\pi}}\int_0^{\tau_\cO}\Gamma^{\pi}_t dt
< \infty$ by~\cite[Ch.~2, Sec.~2, Theorem 4, p.~54]{krylov2008controlled}.
The conclusion then follows from Lemma~\ref{lemma:entropy_bounds}. 
\end{proof}

\section{Proof of Theorem~\ref{thm:convergece_tau}}
\label{sec:van_reg_proofs}

We first prove that the unregularized HJB equation  \eqref{eq:hjb_unregualarised} admits a unique strong solution and quantify the difference between solutions of \eqref{eq:semilinear} and \eqref{eq:hjb_unregualarised} using the difference
of two Hamiltonians $H_\tau - H$.
The proof is given in 
Appendix~\ref{sec:well-posedness of HJB}.

\begin{proposition}
\label{prop:wp_unregularized}
Suppose Assumption   \ref{ass:data} holds and the function  $H : \cO \times   \sR\times \sR^{d}\to \sR$ in  \eqref{eq:Hamiltonian_unregularized} is measurable.  
Then~\eqref{eq:hjb_unregualarised} admits a unique solution  $\bar{v} \in  W^{2,p^*}(\cO)$ with $p^*$ as in Assumption~\ref{ass:data}.
Moreover,   
there exists $C\ge 0$ such that for all $\tau>0$, 
\[
\|v^*_\tau - \bar{v} \|_{W^{2,p^*}(\cO)} \le C\| \left( H_\tau(\cdot, \bar{v}(\cdot), D\bar{v}(\cdot))- H(\cdot, \bar{v}(\cdot), D\bar{v}(\cdot))\right)^+\|_{L^{p^*}(\cO)}\,,
\]
where $v^*_\tau$ is the solution to \eqref{eq:semilinear}.
\end{proposition}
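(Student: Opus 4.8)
The plan is to split the proof into existence of $\bar v$, uniqueness of $\bar v$, and the quantitative stability bound, with the stability bound being the main content.

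\textbf{Existence and uniqueness.} I would obtain a solution $\bar v\in W^{2,p^*}(\cO)$ to \eqref{eq:hjb_unregualarised} by the same Leray--Schauder scheme used for the regularised equation in Proposition~\ref{prop:verification}, simply replacing $H_\tau$ by $H$. The only place the standing hypothesis enters is that $H$ is Borel measurable in $x$: combined with the fact that $H(x,\cdot,\cdot)$ is $K$-Lipschitz (being an infimum of affine maps with $p$-slope $b(x,a)$ and $u$-slope $-c(x,a)$, $|b|,|c|\le K$) and nonincreasing in $u$ (since $c\ge0$), this makes $x\mapsto H(x,w(x),Dw(x))$ a well-defined element of $L^\infty(\cO)\subset L^{p^*}(\cO)$ for each $w\in C^1(\overline\cO)$, and $w\mapsto H(\cdot,w,Dw)$ continuous from $C^1(\overline\cO)$ into $L^{p^*}(\cO)$. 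The solution map $w\mapsto v$, with $v$ solving $\tfrac12\tr(\sigma\sigma^\top D^2 v)=-H(\cdot,w,Dw)$, $v|_{\partial\cO}=g$, is then compact by $W^{2,p^*}\hookrightarrow\hookrightarrow C^1$, and the $\tau$-free a priori bound $\|v\|_{W^{2,p^*}}\le C(1+\|g\|_{W^{2,p^*}})$ from Lemma~\ref{lemma:hjb_a_priori} controls the continuation. Uniqueness I would get from an ABP comparison using only the Lipschitz and monotonicity structure of $H$ (and \emph{not} the attainment of the infimum, so Assumption~\ref{assum:lsc} is unneeded): if $v_1,v_2$ both solve \eqref{eq:hjb_unregualarised}, set $w=v_1-v_2$; on $\{w>0\}$ one has $H(x,v_1,Dv_1)-H(x,v_2,Dv_1)\le0$ and $|H(x,v_2,Dv_1)-H(x,v_2,Dv_2)|\le K|Dw|$, so subtracting the equations gives $\tfrac12\tr(\sigma\sigma^\top D^2 w)+\beta(x)^\top Dw\ge0$ a.e.\ for some measurable $\beta$ with $|\beta|\le K$; since $w\le0$ on $\partial\{w>0\}$, ABP forces $\{w>0\}=\emptyset$, and symmetry yields $v_1=v_2$.

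\textbf{Stability estimate.} The key idea is to linearise the \emph{regularised} Hamiltonian $H_\tau$, which is smooth in $(u,p)$, and to keep $H$ only as a fixed source term. Subtracting \eqref{eq:semilinear} from \eqref{eq:hjb_unregualarised} gives, with $w=v^*_\tau-\bar v$,
\[
\tfrac12\tr(\sigma\sigma^\top D^2 w)+\big(H_\tau(x,v^*_\tau,Dv^*_\tau)-H_\tau(x,\bar v,D\bar v)\big)+\rho(x)=0, \qquad \rho(x):=H_\tau(x,\bar v,D\bar v)-H(x,\bar v,D\bar v).
\]
Here $\rho\ge0$ because $\tau\KL(\cdot|\mu)\ge0$ forces $H_\tau\ge H$ pointwise, so $\rho=(H_\tau-H)^+$ and $\rho\in L^\infty(\cO)\subset L^{p^*}(\cO)$. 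Since $\partial_pH_\tau=\int_A b(x,a)\,m_{x,u,p}(da)$ and $\partial_uH_\tau=-\int_A c(x,a)\,m_{x,u,p}(da)$ for the associated Gibbs measure $m_{x,u,p}$, the fundamental theorem of calculus along the segment joining $(\bar v,D\bar v)$ to $(v^*_\tau,Dv^*_\tau)$ gives
\[
H_\tau(x,v^*_\tau,Dv^*_\tau)-H_\tau(x,\bar v,D\bar v)=\int_A\big(b(x,a)^\top Dw(x)-c(x,a)w(x)\big)\,\bar\pi(da|x),
\]
where $\bar\pi(da|x)$ is the average over $t\in[0,1]$ of the intermediate Gibbs measures, hence an element of $\cP(A|\cO)$. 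Consequently $w$ solves the linear Dirichlet problem $\int_A(\mathcal{L}^a w)(x)\,\bar\pi(da|x)=-\rho(x)$ in $\cO$, $w|_{\partial\cO}=0$, whose coefficients $\int_A b\,\bar\pi$ and $\int_A c\,\bar\pi\ge0$ are bounded by $K$ uniformly in $\tau$. This is exactly the form treated in Lemma~\ref{lemma:elliptic_regularity}, which yields $\|w\|_{W^{2,p^*}(\cO)}\le C\|\rho\|_{L^{p^*}(\cO)}=C\|(H_\tau(\cdot,\bar v,D\bar v)-H(\cdot,\bar v,D\bar v))^+\|_{L^{p^*}(\cO)}$ with $C$ independent of $\tau$, as claimed.

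\textbf{Main obstacle.} The two delicate points are that existence must be established directly rather than by a vanishing-regularisation limit (passing $\tau\to0$ in the equation would require $H_\tau\to H$ pointwise, which can fail without the exploration Assumption~\ref{assum:lsc} that is \emph{not} assumed here), and that the linearisation must differentiate the smooth $H_\tau$ while leaving $H$ in the source. It is precisely this choice that produces a genuine averaged kernel $\bar\pi\in\cP(A|\cO)$ and the proper structure $\int_A c\,\bar\pi\ge0$, so that the pre-existing linear elliptic estimate applies with a constant uniform in $\tau$; linearising the nonsmooth $H$ instead would only give one-sided inequalities with mismatched coefficients.
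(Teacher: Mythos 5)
Your proposal is correct and follows essentially the same route as the paper's proof: Leray--Schauder existence (using that $H$ is Lipschitz in $(u,p)$ and measurable in $x$, hence Carath\'{e}odory), then linearisation of the smooth $H_\tau$ between $(\bar v,D\bar v)$ and $(v^*_\tau,Dv^*_\tau)$ with the nonnegative source $(H_\tau-H)^+$ kept fixed, followed by the $\tau$-uniform elliptic estimate of Lemma~\ref{lemma:elliptic_regularity} (the paper splits the increment into separate $p$- and $u$-segments with coefficients $\int_0^1\partial_{p_i}H_\tau\,dt$ and $\int_0^1\partial_u H_\tau\,dt$, while you average the Gibbs measures along a joint segment, which is the same computation). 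The one noteworthy difference is uniqueness: the paper simply refers back to the argument of Proposition~\ref{prop:verification}, which linearises a differentiable Hamiltonian, whereas your ABP comparison on $\{w>0\}$ uses only the Lipschitz and monotonicity structure of the nonsmooth $H$ — a detail the paper glosses over and that your version handles explicitly.
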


\begin{proof}[Proof of Theorem~\ref{thm:convergece_tau}]
Under Assumption~\ref{assum:lsc}, 
for all $(x,u,p)\in \cO\times  \sR\times \sR^d$,
$a\mapsto b(x,a)^\top p -c(x,a)u+f(x,a)$ is   continuous,
and hence by
\cite[Theorem 18.19]{aliprantis2006infinite}, 
$H$ is Borel measurable and 
there exists a Borel measurable function  
$\phi:  \cO \times   \sR\times \sR^{d}\to A$ such that  
\begin{equation}
\label{eq:optimal_action}
\phi  (x,u,p) \in \argmin_{a\in A}  \left( b(x,a)^\top p -c(x,a)u+f(x,a)  \right),\quad
\forall  (x,u, p )\in \cO \times   \sR\times \sR^{d}\,.
\end{equation}
This along with  Proposition~\ref{prop:wp_unregularized} implies that  the HJB equation~\eqref{eq:hjb_unregualarised} admits a unique strong solution $\bar{v}\in W^{2,p^*}(\cO)$. 
Define the candidate control {$\pi^*_0 \in \mathcal P(A|\sR^d)$ such that  $\pi^*_0(x) = \delta_{\phi(x,\bar{v}(x), D\bar{v}(x))}$} for all $x\in \cO$. 
Then using  the generalised It\^{o}'s formula~\cite[Theorem 1, p.~122]{krylov2008controlled}
and  standard verification arguments    (see e.g.,~\cite[Theorem 2.2]{reisinger2021regularity}),
one can show that  $v^*_0 \equiv \bar{v} $ and $\pi^*_0$ is an optimal control for the unregularized problem.

Now observe that  
for all $x\in \overline{\cO}$, 
\begin{align}
\label{eq:monotone_values}
0\le  v^{\pi^*_\tau}_0(x) - v^*_0(x) \le v^*_\tau (x) - v^*_0(x)\,, 
\end{align}
where we used  
$v^{\pi^*_\tau}_0 \le  v^{\pi^*_\tau}_\tau = v^*_\tau$
since $\textrm{KL}({\pi_\tau^\ast}|\mu)(x)\ge 0$ for all $x\in \sR^d$. 
Using that $v_0^\ast = \bar v$, Proposition \ref{prop:wp_unregularized} and  the Sobolev embedding theorem~\cite[Theorem 7.26]{gilbarg1977elliptic}, there exists  $C\ge 0$ such that  for all $\tau>0$,
\begin{align*}
\|v^*_\tau -  {v}^*_0 \|_{C(\overline{\cO})} \le C\| \left( H_\tau(\cdot,  {v}^*_0(\cdot), D {v}^*_0(\cdot))- H(\cdot,  {v}^*_0(\cdot), D {v}^*_0(\cdot))\right)^+\|_{L^{p^*}(\cO)}\,,
\end{align*}
which along with \eqref{eq:monotone_values} implies that 
there exists $C\ge 0$ such that for all $\tau>0$ and $x\in \overline{\cO}$,
\begin{align*}
0\le  v^{\pi^*_\tau}_0(x) - v^*_0(x) \le C\| \left( H_\tau(\cdot,  {v}^*_0(\cdot), D {v}^*_0(\cdot))- H(\cdot,  {v}^*_0(\cdot), D {v}^*_0(\cdot))\right)^+ \|_{L^{p^*}(\cO)}\,.
\end{align*}

It remains to prove 
$\lim_{\tau \to 0}\| \left( H_\tau(\cdot,  {v}^*_0(\cdot), D {v}^*_0(\cdot))- H(\cdot,  {v}^*_0(\cdot), D {v}^*_0(\cdot))\right)^+ \|_{L^{p^*}(\cO)}=0$. 
We first claim that for all $(x,u,p)\in \cO\times  \sR\times \sR^d$,
$\lim_{\tau \to 0} H_\tau(x, u, p) = H(x,  u, p)$. 
To see it, let $(x,u,p)\in \cO\times  \sR\times \sR^d$ be fixed, and 
recall that  for any measure space  $(E,\mathcal A, \nu)$ with $\nu(A)<\infty$
and 
any  bounded measurable function $g:E\to \sR$, 
$\lim_{p\to \infty}\left(\int_E |g(x) |^p\nu(\d x)\right)^{1/p}=\|g\|_{L^\infty(E,\nu)} $,
where $\|g\|_{L^\infty(E,\nu)} =\inf\{C\ge 0||g(x)|\le C,\;  \textnormal{for $\nu$-a.s.~$x\in E$}\} $ is the essential supremum of $g$ with respect to $\nu$.
Hence setting   $g: A\to \sR$ with  
$g(a) =  \exp\left(-({b(x,a)^\top p -c(x,a)u+f(x,a)} )\right)$ for all $a\in A$   and $p=1/\tau$ yields
\begin{align*}
\lim_{\tau\to 0}\left(\int_A |g(a) |^{1/\tau} \mu(\d a)\right)^{\tau}= \|g\|_{L^\infty(A,\mu)}\,,
\end{align*}
which along with
$g(a)>0$ for all $a\in A$ and the continuity of $(0,\infty)\ni x\mapsto \ln x\in \sR$ implies that 
\begin{align}
\label{eq:lim_H_tau}
\begin{split}
\lim_{\tau \to 0} H_\tau(x, u, p)  &= \lim_{\tau \to 0}  -\tau \ln\left(\int_A \exp\left(-\frac{b(x,a)^\top p -c(x,a)u+f(x,a)}{\tau }\right) \mu(da)\right)
\\
& = - \ln \left\|\exp\left(-({b(x,\cdot)^\top p -c(x,\cdot)u+f(x,\cdot)} )\right) \right\|_{L^\infty(A,\mu)}\,.
\end{split}
\end{align}
Now by the definition of the $\|\cdot\|_{L^\infty(A,\mu)}$-norm, there exists a set $N\subset A$ such that $\mu({N})=0$ and for all $a\in A\setminus N$,   
\[
\exp\left(-({b(x,a)^\top p -c(x,a)u+f(x,a)} )\right)\le \left\|\exp\left(-({b(x,\cdot)^\top p -c(x,\cdot)u+f(x,\cdot)} )\right) \right\|_{L^\infty(A,\mu)}\,.
\]
By  Assumption  \ref{assum:lsc} Item \ref{item:dense}, $A\setminus N$ is dense in $A$.     
Hence  for all $a\in A$, choosing   $(a_n)_{n\in \sN}\subset A\setminus N$ such that $\lim_{n\to \infty} a_n=a$
and using   the continuity of  
$a\mapsto  \exp\left(-({b(x,a)^\top p -c(x,a)u+f(x,a)} )\right)$ give that 
\begin{align*}
\exp\left(-({b(x,a)^\top p -c(x,a)u+f(x,a)} )\right)
&= \lim_{n\to \infty}  \exp\left(-({b(x,a_n)^\top p -c(x,a_n)u+f(x,a_n)} )\right)
\\
&\le   \left\|\exp\left(-({b(x,\cdot)^\top p -c(x,\cdot)u+f(x,\cdot)} )\right) \right\|_{L^\infty(A,\mu)}\,.
\end{align*}
This together with the compactness of $A$
shows that 
\begin{align*}
& \left\|\exp\left(-({b(x,\cdot)^\top p -c(x,\cdot)u+f(x,\cdot)} )\right) \right\|_{L^\infty(A,\mu)}
=\max_{a\in A} \exp\left(-({b(x,a)^\top p -c(x,a)u+f(x,a)} )\right)
\\
&\quad = \exp\left(-\min_{a\in A}({b(x,a)^\top p -c(x,a)u+f(x,a)} )\right)
=   \exp\left(-H(x, u,p) \right)\,,
\end{align*}
which along with 
\eqref{eq:lim_H_tau} implies that 
$\lim_{\tau \to 0} H_\tau(x, u, p) = H(x,  u, p)$. 
Consequently, for a.e.~$x\in \cO$,
\[
\lim_{\tau \to 0}  \left( H_\tau(x, v^*_0(x), Dv^*_0(x)) - H(x, v^*_0(x), Dv^*_0(x))\right)^+=0\,.
\]
By 
 \eqref{eq:Hamiltonian_tau}, the boundedness of coefficients and 
the Sobolev embedding 
$v^*_0 \in W^{2,p^*}(\cO)\subset  C^{1}(\overline{\cO})$, 
$$
\sup_{x\in \cO, \tau>0} |H_\tau(x, v^*_0(x), Dv^*_0(x))- H(x, v^*_0(x), Dv^*_0(x))|<\infty\,.
$$
Hence
$\lim_{\tau \to 0}\| ( H_\tau(\cdot,  {v}^*_0(\cdot), D {v}^*_0(\cdot))- H(\cdot,  {v}^*_0(\cdot), D {v}^*_0(\cdot)))^+\|_{L^{p^*}(\cO)}=0$ due to the dominated convergence theorem. 
This finishes the proof.  
\end{proof}

\section{Proofs of Theorems \ref{thm:conv_discrete_anneal} and \ref{thm:conv_general_anneal}}
\label{sec:proof_anneal}

\begin{proof}[Proof of Theorem \ref{thm:conv_discrete_anneal}]

By  \eqref{eqn:modified_conv} and
Theorems \ref{thm:convergece_tau} and \ref{ex:discrete_action_space},
there exists $C>0$ such that for all $s>0$, 
\begin{align}
\label{eq:error_anneal_discrete}
\begin{split}
0\le v^{{\boldsymbol{\pi}(Z_s)}  }_{0}(x)-v^{*}_{0}(x)
&\le v^{{\boldsymbol{\pi}(Z_s)}  }_{\btau_s}(x)-v^{*}_{\btau_s}(x)+ v^{*}_{\btau_s}(x)- v^{*}_{0}(x)
\\
& \leq
C \left( \frac{1 }
{\int_0^s e^{\int_0^{s'}\btau_{r}dr} d{s'}}
+\frac{\int_0^s(\btau_{s'}-{\btau_s})  e^{\int_0^{s'}\btau_{r}dr} ds'}{\int_0^s e^{\int_0^{s'}\btau_{r}dr} ds'} +\btau_s \right)\,.
\end{split}
\end{align}
Since $\btau_s =1/(1+s)$ for all $s>0$, 
$e^{\int_0^{s'}\btau_{r}dr}=e^{\ln (s'+1)}=s'+1$,  
$\int_0^s e^{\int_0^{s'}\btau_{r}dr} d{s'}=\frac{1}{2}s^2+s$, and 
$$
\frac{\int_0^s(\btau_{s'}-{\btau_s})^+ e^{\int_0^{s'}\btau_{r}dr} ds'}{\int_0^s e^{\int_0^{s'}\btau_{r}dr} ds'}
=\frac{\int_0^s\btau_{s'}  e^{\int_0^{s'}\btau_{r}dr} ds'}{\int_0^s e^{\int_0^{s'}\btau_{r}dr} ds'}-{\btau_s}
=\frac{1}{\frac{1}{2}s+1 }-\frac{1}{s+1}= \frac{ s}{(s+1)( s+2 )}\,.
$$
This along with \eqref{eq:error_anneal_discrete} proves the desired estimate. 
\end{proof}

\begin{proof}[Proof of Theorem \ref{thm:conv_general_anneal}]
By  \eqref{eqn:modified_conv},
there exists $C>0$ such that for all $s>0$, 
\begin{align}
\label{eq:error_anneal_general}
\begin{split}
0\le v^{{\boldsymbol{\pi}(Z_s)}  }_{0}(x)-v^{*}_{0}(x)
&\le v^{{\boldsymbol{\pi}(Z_s)}  }_{\btau_s}(x)-v^{*}_{\btau_s}(x)+ v^{*}_{\btau_s}(x)- v^{*}_{0}(x)
\\
& \leq
C \left( \frac{1 }
{\btau_s \int_0^s e^{\int_0^{s'}\btau_{r}dr} d{s'}}
+\frac{\int_0^s \btau_{s'}  e^{\int_0^{s'}\btau_{r}dr} ds'}{\btau_s \int_0^s e^{\int_0^{s'}\btau_{r}dr} ds'}-1 \right)
+v^{*}_{\btau_s}(x)- v^{*}_{0}(x) \,,
\end{split}
\end{align}
where we  used 
$\btau $ is uniformly bounded.
As   $\btau_s =1/\sqrt{s+1}$ for $s>0$, 
$
\int_0^s \btau_r \,dr = 2 { \sqrt{1+s}}-2$,
and    
\[
\int_0^s e^{\int_0^{s'} \btau_r \,dr}\,d{s'} = \int_0^s e^{2{ \sqrt{1+s'}}-2}\,ds' 
=\frac{e^{-2} }{2}\int_2^{2\sqrt{1+s}} e^{y}y dy
= \frac12 \left( e^{2{ \sqrt{1+s}}-2}\left(2{ \sqrt{1+s}} - 1\right) - 1\right)\,,
\]
where the last identity used  the integration by part formula.
Hence there exists $C>0$ and $S_0>0$  such that  the first term in \eqref{eq:error_anneal_general} can  be upper bounded by
\begin{equation*}
\frac{1}{\btau_s\int_0^s e^{\int_0^{s'}\btau_{r}dr}ds'} = \frac{\sqrt{1+s}}{\frac{1}{2} \left( e^{2{ \sqrt{1+s}}-2}\left(2{ \sqrt{1+s}} - 1\right) - 1\right)}  \le C e^{-2{ \sqrt{1+s}} },
\quad  \forall s\ge S_0\,.
\end{equation*}
For the second term  in \eqref{eq:error_anneal_general},
by setting $y=2\sqrt{1+s'}-2$ with $dy =\frac{1}{1+s'}ds'$,
\begin{equation*}
\int_0^s \btau_{s'} e^{\int_0^{s'} \btau_r \,dr}\,d{s'}  
=\int_0^s \frac{1}{\sqrt{1+{s'}}} e^{2\sqrt{1+{s'}}-2}\,d {s'} 
=\int_0^{2\sqrt{1+s}-2}  e^{y}\,dy=e^{2\sqrt{1+s}-2}-1 .
\end{equation*}
Hence for all sufficiently large $s>0$,
\begin{align}
\label{eq:dominating_term_general}
\frac{\int_0^s \btau_{s'}  e^{\int_0^{s'}\btau_{r}dr} ds'}{\btau_s \int_0^s e^{\int_0^{s'}\btau_{r}dr} ds'}-1
&=\frac{\sqrt{1+s}(e^{2\sqrt{1+s}-2}-1)}{
e^{2 \sqrt{1+s}-2} \left({ \sqrt{1+s}} - \frac{1}{2} \right) - \frac{1}{2}}-1
=\frac{ \frac{1}{2} e^{2 \sqrt{1+s}-2} -\sqrt{1+s} + \frac{1}{2}}{
e^{2 \sqrt{1+s}-2} \left({ \sqrt{1+s}} - \frac{1}{2} \right) - \frac{1}{2}}
\le \frac{C}{\sqrt{s}}.
\end{align}
Finally, 
the last term  in \eqref{eq:error_anneal_general} converges to zero as $s\to \infty $
under Assumptions~\ref{ass:data} and \ref{assum:lsc} due to Theorem \ref{thm:convergece_tau},
and 
is bounded by $C(\ln s)^\alpha /\sqrt{s}$ for all large $s$ under  Assumption \ref{assum:H_tau}. This completes the proof. 
\end{proof}

\section{Proofs of Propositions \ref{ex:unique_min_in_conv_comp_set} and   \ref{prop:EG_LQR}}
\label{proof:lapace_asymptotic}

Proposition \ref{ex:unique_min_in_conv_comp_set} follows directly from the following lemma
(with  $h$ defined in Proposition \ref{ex:unique_min_in_conv_comp_set}). 

\begin{lemma}
\label{lemma:minimiser_interior}
Let 
$\cO\subset \sR^d$ be a bounded domain,
$A\subset \sR^k$ be a nonempty convex and compact set and $\mu\in \cP(A)$ be the uniform distribution on $A$.
Let $h\in C(\overline{\cO}\times A;\sR)$ be    such that for all $x\in \overline{\cO}$,
$A\ni  a\mapsto  h(x,a)\in \sR$
admits a unique minimiser in the interior of $A$ and 
is twice  differentiable
with   derivative $D^2_{aa}h \in C(\overline{\cO}\times A;\sR^{k\times k})$.  
Then 
there exists $C\ge 0$ and $\tau_0>0$ such that for all   $x\in \overline{\cO}$
and all $\tau\in (0,\tau_0]$, 
$$
0\le -\tau\ln\left(\int_A \exp\left(-\frac{h(x,a)}{\tau}\right)\mu(da)\right) 
-\min_{a\in A} h(x,a)\le   C  \tau \ln\frac{1}{\tau}\,.
$$

\end{lemma}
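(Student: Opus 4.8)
The plan is to prove the lower bound trivially and to obtain the matching upper bound via a uniform Laplace-type estimate, the rate $\tau\ln(1/\tau)$ arising from the factor $\tau^{k/2}$ produced by the Gaussian integral around the interior minimiser. Throughout write $h^\ast(x)\coloneqq\min_{a\in A}h(x,a)$, let $a^\ast(x)$ denote the unique minimiser, and set $F_\tau(x)\coloneqq-\tau\ln\bigl(\int_A e^{-h(x,a)/\tau}\mu(da)\bigr)$. The lower bound $F_\tau(x)\ge h^\ast(x)$ is immediate: since $h(x,a)\ge h^\ast(x)$ we have $\int_A e^{-h(x,a)/\tau}\mu(da)\le e^{-h^\ast(x)/\tau}$, and applying $-\tau\ln(\cdot)$ reverses the inequality. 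The rest of the argument is devoted to the upper bound.

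First I would produce a global quadratic majorant of $h$. Since $D^2_{aa}h$ is continuous on the compact set $\overline{\cO}\times A$, it is bounded, say $\|D^2_{aa}h\|\le M$. Because $a^\ast(x)$ is an interior minimiser and $h(x,\cdot)$ is differentiable, $D_a h(x,a^\ast(x))=0$; and because $A$ is convex, the segment from $a^\ast(x)$ to any $a\in A$ lies in $A$, so a second-order Taylor expansion along this segment yields $h(x,a)\le h^\ast(x)+\tfrac{M}{2}|a-a^\ast(x)|^2$ for every $(x,a)\in\overline{\cO}\times A$, uniformly in $x$.

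The crux is then a uniform separation of the minimisers from $\partial A$. I would first argue that $x\mapsto a^\ast(x)$ is continuous: if $x_n\to x$, any subsequential limit $\bar a$ of $a^\ast(x_n)$ minimises $h(x,\cdot)$ by continuity of $h$, hence equals $a^\ast(x)$ by uniqueness, so the whole sequence converges. Consequently $K\coloneqq\{a^\ast(x):x\in\overline{\cO}\}$ is a compact subset of $\operatorname{int}(A)$, and $\delta_0\coloneqq\operatorname{dist}(K,\partial A)>0$, so $B(a^\ast(x),\delta_0)\subset A$ for all $x$. Using the quadratic majorant, restricting the integral to this ball, translating by $a^\ast(x)$ and rescaling $y=\sqrt\tau z$ gives, for $\tau\le\delta_0^2$, the $x$-uniform bound $\int_A e^{-h(x,a)/\tau}\mu(da)\ge c\,\tau^{k/2}e^{-h^\ast(x)/\tau}$, where $c>0$ collects $1/\mathrm{Leb}(A)$ and $\int_{B(0,1)}e^{-M|z|^2/2}\,dz$. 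Taking $-\tau\ln(\cdot)$ yields $F_\tau(x)-h^\ast(x)\le\tfrac{k}{2}\tau\ln\tfrac1\tau-\tau\ln c$, and for $\tau\le 1/e$ the term $-\tau\ln c$ is absorbed into $C\tau\ln(1/\tau)$ since then $\tau\le\tau\ln(1/\tau)$. Choosing $\tau_0=\min(1/e,\delta_0^2)$ completes the argument.

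The main obstacle is the uniformity in $x$, which reduces entirely to the uniform interior separation $\delta_0>0$; this is precisely where uniqueness of the minimiser is used, through continuity of the argmin together with compactness of $\overline{\cO}$. Once $\delta_0$ is in hand the Laplace estimate is routine. I would also double-check that the quadratic-majorant step needs only twice differentiability in $a$ with continuous Hessian, which is exactly what is assumed, and that finiteness of $\operatorname{diam}(A)$ keeps all constants bounded.
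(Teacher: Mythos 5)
Your proof is correct, and while it shares the paper's overall strategy (a uniform Laplace lower bound on the partition function, with uniformity in $x$ coming from continuity of the argmin map and compactness of $\overline{\cO}$), it is genuinely leaner in two places. First, where you prove continuity of $x\mapsto a^\ast(x)$ by a direct subsequence-plus-uniqueness argument, the paper invokes the Berge Maximum Theorem together with a lemma on single-valued upper hemicontinuous correspondences; these are interchangeable. Second, and more substantively, the paper splits $\int_A = \int_{B_\varepsilon(\phi(x))} + \int_{A\setminus B_\varepsilon(\phi(x))}$ and then spends a full page controlling the complement: it introduces the set-valued map $\psi(x)=A\setminus B_\varepsilon(\phi(x))$, proves its upper hemicontinuity, deduces lower semicontinuity of the gap function $m(x)=\min_{a\in\psi(x)}\bigl(h(x,a)-h(x,\phi(x))\bigr)$, and extracts a uniform gap $m_\varepsilon>0$ to get an exponentially small upper bound on the complement integral. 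Your observation that the integrand is positive — so the integral over $A$ is simply bounded below by the integral over the ball, and the complement can be discarded outright — makes all of that machinery unnecessary for the one-sided bound the lemma actually asserts (an upper bound on $-\tau\ln(\cdot)$ needs only a lower bound on the integral). Your rescaling $y=\sqrt{\tau}z$ and restriction to $B(0,1)$ likewise replaces the paper's polar-coordinate computation with the Gamma function and its tail estimate $\int_\varepsilon^\infty t^{k-1}e^{-\nu t^2}dt \le K_\varepsilon e^{-\nu\varepsilon^2}$. The only points to make fully explicit in a written version are (i) that the one-dimensional Taylor expansion with Lagrange remainder along the segment from $a^\ast(x)$ to $a$ is legitimate because the segment lies in $A$ (your global majorant uses convexity of $A$; note you only ever need it on $B(a^\ast(x),\delta_0)$, where it holds for the same reason as in the paper), and (ii) the absorption $-\tau\ln c \le |\ln c|\,\tau\ln(1/\tau)$ for $\tau\le 1/e$, which you already state. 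Neither is a gap.
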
 

\begin{proof}
Throughout this proof, 
for any   $U\subset \sR^k$ and $\varepsilon>0$, we define 
${B}_\varepsilon(U)\coloneqq \{a\in A\mid d(a,U )< \varepsilon\}$
with $d(a,U)=\inf\{|a-x|\mid x\in U\}$,
and define $\bar{B}_\varepsilon(U) =\{a\in A\mid d(a,U )\le  \varepsilon\} $.
We denote by
$\textrm{int}(A)$ the interior of $A$.

Consider  the map $\phi:\overline{\cO}\to A$ such that 
$\phi(x)=\argmin_{a\in A}h(x,a)$ for all $x\in \overline{\cO}$.
Note that $\phi$ is 
a upper hemicontinuous corresponding due to Berge Maximum Theorem (see   \cite[Theorem 17.31]{aliprantis2006infinite}) and single-valued due to the assumption. 
Hence by    \cite[Lemma 17.6]{aliprantis2006infinite},
$\phi:\overline{\cO}\to A $ is a continuous function, which along with the compactness of
$\overline{\cO}$ implies that the image  
$\phi(\overline{\cO})\coloneqq \{\phi(x)\mid x\in \overline{\cO}\}$ is compact. 
As it is assumed that  
$\phi(\overline{\cO}) \subset \textrm{int}(A)$, 
there exists 
$\varepsilon>0$ such that  $\bar{B}_\varepsilon(\phi(\overline{\cO}))  \subset \textrm{int}(A)$.

Now fix  $x\in \phi(\overline{\cO})$ and observe that 
\begin{align}
\label{eq:A_decompose}
\int_A \exp\left(-\frac{h(x,a)}{\tau}\right)\mu(da)
= \int_{ {B}_\varepsilon(\phi(x))} \exp\left(-\frac{h(x,a)}{\tau}\right)\mu(da)
+\int_{A\setminus  {B}_\varepsilon(\phi(x))} \exp\left(-\frac{h(x,a)}{\tau}\right)\mu(da)\,.
\end{align}
We first   estimate the first   term on the right hand side of \eqref{eq:A_decompose}.
As $h(x,\cdot)$ is twice continuously differentiable and $\phi(x)\in \textrm{int}(A)$, 
$D_a h(x,\phi(x))=0$ and by the mean value theorem, 
\begin{align}
\label{eq:quadratic_approximation}
h(x,a)=h(x,\phi(x))+\frac{1}{2}(a-\phi(x))^\top D^2_{aa}h(x,a_x)(a-\phi(x))\,,
\end{align}
for some $a_x\in A$ on the line segment connecting $a$ and $\phi(x)$. 
Let $M_h>0$ be such that 
$v^\top D^2_{aa}h(x,a)v\le M_h|v|^2$
for all $v\in \sR^k$, $x \in  \phi(\overline{\cO})$ and $a\in \phi(\overline{\cO})$.
The existence of $M_h$ is ensured by the continuity of $D^2_{aa}h$  and the compactness of $\overline{\cO}\times \phi(\overline{\cO})$. Using  \eqref{eq:quadratic_approximation}, the fact that 
$\mu(da)=\frac{da}{|A|_{\lambda^k}} $ with $|A|_{\lambda^k}$ being the Lebesgue measure of $A$,
and the change of variables formula,
\begin{align*}
&  \int_{ {B}_\varepsilon(\phi(x))} \exp\left(-\frac{h(x,a)}{\tau}\right)\mu(da)
\ge   \int_{ {B}_\varepsilon(\phi(x))} \exp\left(-\frac{h(x,\phi(x))+\frac{M_h}{2}|a-\phi(x)|^2}{\tau}\right)\frac{da}{|A|_{\lambda^k}} 
\\
&= 
\exp\left(-\frac{h(x,\phi(x))}{\tau}\right)  \frac{1}{|A|_{\lambda^k}} 
\int_{ {B}_\varepsilon(0)} \exp\left(- {\frac{M_h  }{2\tau}|a|^2} \right)  da 
\\
&= 
\exp\left(-\frac{h(x,\phi(x))}{\tau}\right)  \frac{1}{|A|_{\lambda^k}}   \int_0^\varepsilon
\int_{\partial {B}_1(0)} \exp\left(- {\frac{M_h }{2\tau}t^2} \right)t^{k-1} dS  dt
\\
&= 
\exp\left(-\frac{h(x,\phi(x))}{\tau}\right)  \frac{|\partial {B}_1(0)|_S}{|A|_{\lambda^k}}  
\left(
\int_0^\infty
\exp\left(- {\frac{M_h }{2\tau}t^2} \right)t^{k-1}   dt
-       \int_\varepsilon^\infty
\exp\left(- {\frac{M_h }{2\tau}t^2} \right)t^{k-1}   dt
\right)\,, 
\end{align*}
where $dS$ denotes the surface measure on the boundary of $ {B}_1(0)$. 
Note that for all   $\nu>0$, 
\begin{align*}
\int_0^\infty t^{k-1} e^{-\nu t^2}dt=\frac{1}{2\nu}  \int_0^\infty \left(\frac{z}{\nu}\right)^{\frac{k-2}{2}} e^{-z}dz= 
\frac{1}{2}\nu^{-\frac{k}{2}}\Gamma\left( \frac{k}{2} \right)\,,
\end{align*}
where $\Gamma$ is the gamma function.
A straightfoward induction argument further  shows that   there exists $K_\varepsilon\ge 0$ such that
$\int_\varepsilon^\infty t^{k-1} e^{-\nu t^2}dt \le K_\varepsilon e^{-\nu \varepsilon^2}   
$  for all $\nu>1$.
Hence  for all $\tau<\frac{M_h}{2}$, 
\begin{align}
\label{eq:integral_decompose_near_min}
\begin{split}
& \int_{ {B}_\varepsilon(\phi(x))} \exp\left(-\frac{h(x,a)}{\tau}\right)\mu(da)
\\
& \ge 
\exp\left(-\frac{h(x,\phi(x))}{\tau}\right)  \frac{|\partial {B}_1(0)|_S}{|A|_{\lambda^k}} \left(
\frac{1}{2}\left(\frac{2\tau }{M_h}\right)^{\frac{k}{2}}\Gamma\left( \frac{k}{2} \right)- K_\varepsilon e^{-\frac{M_h }{2\tau} \varepsilon^2}
\right)\,.
\end{split}
\end{align} 

To estimate the second term   on the right hand side of \eqref{eq:A_decompose},
consider the set-valued map $\psi:\overline{\cO}\twoheadrightarrow A$
such that 
$\psi (x) \coloneqq A\setminus  {B}_\varepsilon(\phi(x))$ for all $x\in \overline{\cO}$.
By the compactness of $A$, 
$\psi(x)$ is compact for all $x\in \overline{\cO}$.
We claim that 
$\psi$ is   upper hemicontinuous. To see it,  let $\{(x_n,y_n)\}_{n\in \sN}$ be a sequence such that 
$x_n\in  \overline{\cO}$ and $y_n\in \psi(x_n)$ for all $n\in \sN$, 
and $\lim_{n\to \infty}x_n=x$ for some $x\in  \overline{\cO}$. 
This implies that $ |y_n-x_n|\ge  \varepsilon $ for all $n\in \sN$. As $A$ is compact, 
there exists a subsequence
$\{y_{n_k}\}_{k\in \sN}$
of $\{y_n\}_{n\in \sN}$ such that $\lim_{k\to \infty} y_{n_k}=y$ for some $y\in A$. 
Then $|y-x|=\lim_{k\to \infty} |y_{n_k}-x_{n_k}|\ge \varepsilon$, which shows that  $y\in \psi(x)$
and subsequently  the upper hemicontinuity of $\psi$ due to    \cite[Theorem 17.20]{aliprantis2006infinite}. 
Now consider    $m: \overline{\cO}\to \sR$ such that for all $x\in \overline{\cO}$, 
$$
m(x)\coloneqq \min_{a\in \psi(x)}(h(x,a)-h(x,\phi(x)))= - \max_{a\in \psi(x)}( h(x,\phi(x))-h(x,a)) \,.
$$
As   $h$ and $\phi$ are continuous, 
$(x,a)\mapsto h(x,\phi(x))-h(x,a)$ is continuous on $\overline{\cO}\times A$. 
This along with the upper hemicontinuity of $\psi$ and \cite[Lemma 17.30]{aliprantis2006infinite} implies that 
$m$ is lower semicontinuous. 
Note that
for each $x\in \overline{\cO}$, 
since  $\phi(x)$ is the  unique minimiser of $a\mapsto h(x,a)$, 
$ h(x,a)-h(x,\phi(x))>0$ for all $a\in \psi(x)=A\setminus   {B}_\varepsilon(\phi(x))$,
which along with the compactness of $\psi(x)$ implies $m(x)>0$. 
The lower semicontinuity of $m$ and the compactness of $\overline{\cO}$ 
then imply that there exists $m_\varepsilon>0$ such that 
$ h(x,a)-h(x,\phi(x))\ge m_\varepsilon$ for all $x\in \overline{\cO}$ and $a\in A\setminus     {B}_\varepsilon(\phi(x))$.
Hence
\begin{align}
\label{eq:integral_decompose_away_min}
\begin{split}
&\int_{A\setminus  {B}_\varepsilon(\phi(x))} \exp\left(-\frac{h(x,a)}{\tau}\right)\mu(da)
\\
& =  \exp\left(-\frac{h(x,\phi(x))}{\tau}\right)  \int_{A\setminus  {B}_\varepsilon(\phi(x))} \exp\left(-\frac{h(x,a)-h(x,\phi(x))}{\tau}\right)\mu(da)
\\
&
\le  \exp\left(-\frac{h(x,\phi(x))}{\tau}\right) \exp\left(-\frac{m_\varepsilon}{\tau}\right)\,.
\end{split}
\end{align}

Combining \eqref{eq:integral_decompose_near_min} and \eqref{eq:integral_decompose_away_min} gives  
for all $\tau<\frac{M_h}{2}$, 
\begin{align*}
& \int_A \exp\left(-\frac{h(x,a)}{\tau}\right)\mu(da)
\\
&\ge \exp\left(-\frac{h(x,\phi(x))}{\tau}\right)  \frac{|\partial {B}_1(0)|_S}{|A|_{\lambda^k}}
\bigg(
\frac{1}{2}\left(\frac{2\tau }{M_h}\right)^{\frac{k}{2}}\Gamma\left( \frac{k}{2} \right)
- K_\varepsilon e^{-\frac{M_h }{2\tau} \varepsilon^2}
-  \frac{|A|_{\lambda^k}}{|\partial {B}_1(0)|_S}  e^{-\frac{m_\varepsilon}{\tau}}\bigg)\,.
\end{align*}
Note that  for any $C>0$ and $k\in \sN$, $\lim_{\tau\to 0}\frac{e^{-\frac{C}{\tau}}}{\tau^k}
=\lim_{x\to \infty } e^{-Cx }x^k=0$. Hence there exists $\tau_0>0$ and $C>0$ such that 
for all $\tau\in (0,\tau_0]$, 
\begin{align*}
\int_A \exp\left(-\frac{h(x,a)}{\tau}\right)\mu(da)
&\ge C \exp\left(-\frac{h(x,\phi(x))}{\tau}\right) 
\tau^{\frac{m+1}{2}}\,,
\end{align*}
which implies that 
$$
-\tau\ln\left( \int_A \exp\left(-\frac{h(x,a)}{\tau}\right)\mu(da)\right)
\le -\tau \ln C +h(x,\phi(x))  -\tau \frac{m+1}{2} \ln \tau \,.
$$
This along with the fact that 
$h(x,\phi(x)) =\min_{a\in A} h(x,a)$ completes the proof.  
\end{proof}

The following lemma will be used to prove Proposition \ref{prop:EG_LQR}.

\begin{lemma}
\label{lemma:H_tau_interval}
Let $A=[\alpha,\beta]$ for some $-\infty<\alpha<\beta<+\infty$ and $\mu\in \cP(A)$ be the uniform distribution on $A$.
Let  $\mathfrak{h}_\tau: \sR\to \sR$,   $\tau>0$,  be such that 
$\mathfrak{h}_\tau(p)=-\tau\ln\left(\int_A \exp(-\frac{pa+\frac{1}{2}a^2}{\tau})\mu(da)\right)$
for all $p\in \sR$, 
and let $\mathfrak{h}:\sR\to \sR$ be  such that  
$\mathfrak{h}(p)=\min_{a\in A}(pa+\frac{1}{2}a^2) $
for all $p\in \sR$. 
Then for each $M> 0$, there exists    $\tau_0>0$ and $C\ge 0$ such that for all   $p\in [-M,M]$
and all $\tau\in (0,\tau_0]$, 
$$
0\le \mathfrak{h}_\tau(p)-\mathfrak{h}(p) \le C  \tau \ln\frac{1}{\tau}\,.
$$
\end{lemma}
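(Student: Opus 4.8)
The plan is to prove the two inequalities separately, the lower bound being immediate and the upper bound requiring a uniform Laplace-type lower bound on the integral defining $\mathfrak{h}_\tau$. For the lower bound $\mathfrak{h}_\tau(p) \ge \mathfrak{h}(p)$, I would simply use that $g(a) := pa + \tfrac12 a^2 \ge \mathfrak{h}(p)$ for every $a \in A$, so $\int_A e^{-g(a)/\tau}\mu(da) \le e^{-\mathfrak{h}(p)/\tau}$ and hence $-\tau \ln(\cdots) \ge \mathfrak{h}(p)$. For the upper bound, after factoring out the minimum it suffices to show $\int_A \exp(-(g(a)-\mathfrak{h}(p))/\tau)\,\mu(da) \ge c\,\tau$ for a fixed $c>0$, uniformly over $p \in [-M,M]$; taking $-\tau\ln$ of this then yields $\mathfrak{h}_\tau(p) - \mathfrak{h}(p) \le \tau\ln(1/\tau) + O(\tau)$, and the $O(\tau)$ term is absorbed into $C\tau\ln(1/\tau)$ for $\tau$ small.

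The second step exploits that $g$ is exactly quadratic: writing $a^\ast = -p$ for the unconstrained minimiser and $\bar a = \operatorname{proj}_{[\alpha,\beta]}(a^\ast)$ for its projection onto $A$, one has $g(a) = \tfrac12(a-a^\ast)^2 - \tfrac12 p^2$ and $\mathfrak{h}(p) = \tfrac12(\bar a - a^\ast)^2 - \tfrac12 p^2$, so that the normalised exponent $\tilde g(a) := g(a)-\mathfrak{h}(p) = \tfrac12[(a-a^\ast)^2 - (\bar a - a^\ast)^2]$ is nonnegative on $A$ and vanishes at $\bar a$.

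For the key estimate I would not appeal to separate interior/boundary Laplace asymptotics but instead use a single crude bound anchored at $\bar a$. Since $\bar a \in [\alpha,\beta]$ and $\beta-\alpha>0$, for $\tau \le (\beta-\alpha)/2$ at least one of the length-$\tau$ intervals $[\bar a,\bar a+\tau]$ or $[\bar a-\tau,\bar a]$ lies in $A$; call it $I$ and parametrise it by $a = \bar a \pm s$, $s\in[0,\tau]$. A direct computation gives $\tilde g(\bar a \pm s) = \tfrac12 s^2 \pm s(\bar a - a^\ast)$, and the decisive observation is that the one-sided slope into the domain, $|\bar a - a^\ast|$, is bounded by $|\alpha|+|\beta|+M =: L$ uniformly in $p\in[-M,M]$: when $a^\ast\in[\alpha,\beta]$ it is $0$, and when $a^\ast$ lies outside $A$ it equals $|\alpha+p|$ or $|\beta+p|$. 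Hence $\tilde g \le \tfrac12\tau^2 + L\tau \le (\tfrac12 + L)\tau =: L'\tau$ on $I$, and therefore $\int_A e^{-\tilde g/\tau}\mu(da) \ge \tfrac{1}{\beta-\alpha}\int_I e^{-L'}\,da = \tfrac{e^{-L'}}{\beta-\alpha}\,\tau$. Feeding this into the reduction of the first step gives $\mathfrak{h}_\tau(p)-\mathfrak{h}(p) \le \tau\ln(1/\tau) + \tau(L' + \ln(\beta-\alpha))$, and choosing $\tau_0 \le \min\{1,(\beta-\alpha)/2,e^{-1}\}$ lets me absorb the linear term, yielding the claim with $C$ depending only on $\alpha,\beta,M$.

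The main obstacle — and the reason the surrounding discussion singles out boundary minima — is obtaining the constant uniformly as the unconstrained minimiser $a^\ast = -p$ crosses the endpoints of $A$, where the sharp asymptotics switch from the Gaussian rate $\tau^{1/2}$ (interior) to the exponential rate $\tau^{1}$ (strict boundary). I expect the clean way around this is precisely the uniform slope bound above: anchoring the lower bound at the projection $\bar a$ and using the length-$\tau$ window gives the non-sharp but sufficient rate $\tau$ in every regime at once, so no case distinction or delicate interpolation in the transition zone is needed. I would double-check only that the worst-case linear coefficient $|\bar a - a^\ast|$ stays bounded — which holds because $p$ ranges over the compact set $[-M,M]$ — and that $\tau_0$ is small enough for the length-$\tau$ window to fit inside $[\alpha,\beta]$.
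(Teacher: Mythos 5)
Your proposal is correct, and it takes a genuinely different route from the paper. The paper exploits the exact Gaussian structure: after the change of variables $\int_A e^{-(pa+\frac12 a^2)/\tau}\mu(da)=\frac{\sqrt{2\tau}}{\beta-\alpha}e^{p^2/(2\tau)}\int_{(\alpha+p)/\sqrt{2\tau}}^{(\beta+p)/\sqrt{2\tau}}e^{-t^2}dt$, it splits $[-M,M]$ into three regimes according to whether the unconstrained minimiser $-p$ lies in $[\alpha,\beta]$, to its left, or to its right, and in the boundary regimes invokes Mills-ratio bounds $\frac{\sqrt\pi}{2\sqrt\pi x+2}\le e^{x^2}\int_x^\infty e^{-t^2}dt<\frac{1}{x+1}$ on the Gaussian tail, with a separate monotonicity argument in the interior regime. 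Your argument replaces all of this by a single uniform estimate: anchoring a length-$\tau$ window at the projection $\bar a$ of the unconstrained minimiser and using the uniform bound $|\bar a - a^*|\le|\alpha|+|\beta|+M$ to control the exponent by $(\tfrac12+L)\tau$ on that window, which yields $\int_A e^{-\tilde g/\tau}\mu(da)\ge c\tau$ in every regime at once. What you give up is sharpness in the interior regime, where the paper's Gaussian computation yields the rate $\sqrt{\tau}$ for the integral (hence the constant $\tfrac12$ in front of $\tau\ln(1/\tau)$); what you gain is the elimination of the case analysis and of any special-function estimates, and a proof that visibly generalizes: your window argument only uses that the exponent has a uniformly bounded one-sided slope at the constrained minimiser, so the same proof gives $H_\tau-H\le C\tau\ln(1/\tau)$ for any family of functions $a\mapsto h(x,a)$ that is Lipschitz in $a$ uniformly in the parameter, not just for quadratics on an interval. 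The small technical points you flag (the window of length $\tau$ fitting inside $A$ for $\tau\le(\beta-\alpha)/2$, absorbing the $O(\tau)$ term for $\tau\le e^{-1}$, and the compactness of $[-M,M]$ guaranteeing the slope bound) are exactly the right ones and are all handled correctly.
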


\begin{proof}
It is easy to see from   $\mu\in \cP(A)$ and the definition of $\mathfrak{h}$ that   $\mathfrak{h}_\tau(p)\ge \mathfrak{h}(p)$. Hence it suffices to obtain an upper bound 
of $\mathfrak{h}_\tau(p)- \mathfrak{h}(p)$. 
Observe that by a change of variable, for all $p\in \sR$, 
\begin{align}
\label{eq:change_of_variable}
\begin{split}
\int_A e^{-\frac{pa+\frac{1}{2}a^2}{\tau}}\mu(da)
&=\frac{1}{\beta-\alpha}\int_{\alpha}^{\beta } e^{-\frac{pa+\frac{1}{2}a^2}{\tau}} da
=\frac{1}{\beta-\alpha}e^{\frac{p^2}{2\tau}}\int_{\alpha}^{\beta}  e^{-\frac{ (a+p)^2}{2\tau}}da
\\
&=
\frac{1}{\beta-\alpha}e^{\frac{p^2}{2\tau}}\sqrt{2\tau}  \int_{\frac{\alpha+p}{\sqrt{2\tau}}}^{\frac{\beta+p}{\sqrt{2\tau}}}  e^{-t^2} dt\,. 
\end{split}
\end{align}
In the sequel, we assume without loss generality that $M\ge \max\{|\alpha|,|\beta|\}$, fix   a sufficiently small $\tau_0\in (0,1)$,  
and    establish lower bounds   of \eqref{eq:change_of_variable} in terms of  $p\in [-M,M]$ and $\tau\in (0,\tau_0]$. 
For notational simplicity, we denote by $C$ a generic constant independent of $p$ and $\tau$.

We start by assuming that  $p\in [-\beta,-\alpha]$.
In this case, $[\alpha,\beta]\ni a\mapsto pa+\frac{1}{2}a^2\in \sR$ achieves its minimum at $a=-p$,
and hence  
$\mathfrak{h}(p)=-\frac{1}{2}p^2 $. As $\tau \in  (0,\tau_0]$ and $p\in [-\beta,-\alpha]$, 
$ 0\ge  \frac{\alpha+p}{\sqrt{2\tau_0 }} \ge  \frac{\alpha+p}{\sqrt{2\tau}}$
and 
$\frac{\beta+p}{\sqrt{2\tau}}\ge \frac{\beta+p}{\sqrt{2\tau_0 }}\ge 0$. This implies that for all $p\in  [-\beta,-\alpha]$,
\begin{align}
\label{eq:integral_case1}
\int_{\frac{\alpha+p}{\sqrt{2\tau}}}^{\frac{\beta+p}{\sqrt{2\tau}}}  e^{-t^2} dt\ge
\int_{\frac{\alpha+p}{\sqrt{2\tau_0}}}^{\frac{\beta+p}{\sqrt{2\tau_0 }}}  e^{-t^2} dt\,.
\end{align}
By the Leibniz integral rule, 
$$
\frac{d}{dp}\int_{\frac{\alpha+p}{\sqrt{2\tau_0}}}^{\frac{\beta+p}{\sqrt{2\tau_0}}}  e^{-t^2} dt
=\frac{1}{\sqrt{2\tau_0}} e^{-\frac{(\beta+p)^2}{2\tau_0}} - \frac{1}{\sqrt{2\tau_0}} e^{-\frac{(\alpha+p)^2}{2\tau_0}} 
=\frac{1}{\sqrt{2\tau_0}} e^{-\frac{(\beta+p)^2}{2\tau_0}} (1-  e^{\frac{\beta^2-\alpha^2+2(\beta-\alpha)p}{2\tau_0}} )\,.
$$ 
Thus 
$p\mapsto \int_{\frac{\alpha+p}{\sqrt{2\tau_0}}}^{\frac{\beta+p}{\sqrt{2\tau_0}}}  e^{-t^2} dt$
is increasing on $[-\beta,-\frac{\alpha+\beta}{2}]$, decreasing on 
$[-\frac{\alpha+\beta}{2},-\alpha]$, 
and has the minimum $ \int_{0}^{\frac{\beta-\alpha}{\sqrt{2 \tau_0}}}  e^{-t^2} dt$ on   $  [-\beta,-\alpha]$. 
This along with \eqref{eq:change_of_variable} and \eqref{eq:integral_case1} shows that 
\begin{align*}
\int_A e^{-\frac{pa+\frac{1}{2}a^2}{\tau}}\mu(da)\ge 
\frac{1}{\beta-\alpha}e^{\frac{p^2}{2\tau}}\sqrt{2\tau} \int_{0}^{\frac{\beta-\alpha}{\sqrt{2\tau_0}}}  e^{-t^2} dt
\,.
\end{align*}
Taking the logarithm on both sides of the above inequality and multiplying by $-\tau$ yield  
$$
\mathfrak{h}_\tau(p) =-\tau \ln\left(\int_A e^{-\frac{pa+\frac{1}{2}a^2}{\tau}}\mu(da)\right) 
\le   \tau C - \frac{p^2}{2 }-\frac{\tau}{2}  \ln   \tau
=\mathfrak{h}(p) 
+ \tau C  -\frac{\tau}{2}  \ln   \tau\,.   
$$
Hence $ \mathfrak{h}_\tau(p)-\mathfrak{h}(p)\le C  \tau \ln\frac{1}{\tau}$ for all
$p\in [-\beta,-\alpha]$ and $\tau \in (0,\tau_0]$. 

We then consider the case where  $p\in [ -\alpha, M]$.
Then $[\alpha,\beta]\ni a\mapsto pa+\frac{1}{2}a^2\in \sR$ achieves its minimum at $a=\alpha$,
and hence  
$\mathfrak{h}(p)=p\alpha + \frac{1}{2}\alpha^2 $.
By \eqref{eq:change_of_variable},
\begin{align}
\label{eq:integral_-p_less_alpha}
\begin{split}
\int_A e^{-\frac{pa+\frac{1}{2}a^2}{\tau}}\mu(da)
&=
\frac{\sqrt{2\tau}}{\beta-\alpha}e^{\frac{p^2}{2\tau}} 
\left(\int_{\frac{\alpha+p}{\sqrt{2\tau}}}^\infty  e^{-t^2} dt -
\int_{\frac{\beta+p}{\sqrt{2\tau}}}^{\infty}  e^{-t^2} dt
\right) 
\\
&=\frac{\sqrt{2\tau}}{\beta-\alpha}e^{-\frac{\mathfrak{h}(p)}{ \tau}} e^{\frac{(\alpha+p)^2}{2\tau}} 
\left(\int_{\frac{\alpha+p}{\sqrt{2\tau}}}^\infty  e^{-t^2} dt -
\int_{\frac{\beta+p}{\sqrt{2\tau}}}^{\infty}  e^{-t^2} dt
\right) 
\,. 
\end{split}
\end{align}
By \cite[Equation 7.8.3]{olver2010nist}, 
$\frac{\sqrt{\pi}}{2\sqrt{\pi}x+2}\le e^{x^2}\int_x^\infty e^{-t^2}dt<\frac{1}{x+1} $ for all $x\ge 0$. 
As $p+\alpha\ge 0$, $p\le M$ and $\tau\in (0,\tau_0]$,
\begin{align}
\label{eq:estimate_-p_less_alpha}
\begin{split}
e^{\frac{(\alpha+p)^2}{2\tau}} \int_{\frac{\alpha+p}{\sqrt{2\tau}}}^\infty  e^{-t^2} dt 
&\ge 
\frac{\sqrt{\pi}}{2\sqrt{\pi} \frac{\alpha+p}{\sqrt{2\tau}}+2}
\ge 
\frac{\sqrt{\pi}}{2\sqrt{\pi} \frac{M+\alpha}{\sqrt{2\tau}}+2}
\ge 
\frac{\sqrt{2\pi\tau }}{2\sqrt{\pi}  {(M+  \alpha )} +2\sqrt{2\tau_0}}\,,
\\
e^{\frac{(\alpha+p)^2}{2\tau}} \int_{\frac{\beta+p}{\sqrt{2\tau}}}^\infty  e^{-t^2} dt  
&=
e^{\frac{(\alpha-\beta) (\alpha+\beta+2p)}{2\tau}} e^{\frac{(\beta+p)^2}{2\tau}} \int_{\frac{\beta+p}{\sqrt{2\tau}}}^\infty  e^{-t^2} dt 
\le e^{\frac{-(\beta-\alpha)^2}{2\tau}} 
\frac{1}{\frac{\beta+p}{\sqrt{2\tau}}+1}
\le e^{\frac{-(\beta-\alpha)^2}{2\tau_0}} 
\frac{\sqrt{2\tau}}{\beta-\alpha }\,.
\end{split}
\end{align}
Suppose that $\tau_0>0$ is sufficiently small such that 
$$
\frac{1}{2}\frac{\sqrt{2\pi }}{2\sqrt{\pi}  {(M+  \alpha )} +2\sqrt{2\tau_0}}
\ge e^{\frac{-(\beta-\alpha)^2}{2\tau_0}} 
\frac{\sqrt{2}}{\beta-\alpha }\,.
$$
Then 
by \eqref{eq:integral_-p_less_alpha}
and \eqref{eq:estimate_-p_less_alpha},
there exists $C\ge 0$ such that 
$
\int_A e^{-\frac{pa+\frac{1}{2}a^2}{\tau}}\mu(da)
\ge  C  \tau   e^{-\frac{\mathfrak{h}(p)}{ \tau}}     
$ for all $p\in [-\alpha, M]$ and $\tau\in (0,\tau_0]$, which subsequently implies that 
\begin{align}
\label{eq:h_tau-h_-p_less_alpha}
-\tau \ln\left(\int_A e^{-\frac{pa+\frac{1}{2}a^2}{\tau}}\mu(da)\right) 
\le \mathfrak{h}(p) + C \tau  \ln \frac{1}{  \tau}\,.
\end{align}

Finally,  consider the case where  $p\in [ -M,-\beta]$.
Then $[\alpha,\beta]\ni a\mapsto pa+\frac{1}{2}a^2\in \sR$ achieves its minimum at $a=\beta$,
and hence  
$\mathfrak{h}(p)=p\beta + \frac{1}{2}\beta^2 $.
By \eqref{eq:change_of_variable},
\begin{align}
\label{eq:integral_-p_larger_beta}
\begin{split}
\int_A e^{-\frac{pa+\frac{1}{2}a^2}{\tau}}\mu(da)
&=
\frac{\sqrt{2\tau}}{\beta-\alpha}e^{\frac{p^2}{2\tau}} 
\left(\int_{-\frac{\beta+p}{\sqrt{2\tau}}}^\infty  e^{-t^2} dt -
\int_{-\frac{\alpha+p}{\sqrt{2\tau}}}^{\infty}  e^{-t^2} dt
\right) 
\\
&=\frac{\sqrt{2\tau}}{\beta-\alpha}e^{-\frac{\mathfrak{h}(p)}{ \tau}} e^{\frac{(\beta+p)^2}{2\tau}} 
\left(\int_{-\frac{\beta+p}{\sqrt{2\tau}}}^\infty  e^{-t^2} dt -
\int_{-\frac{\alpha+p}{\sqrt{2\tau}}}^{\infty}  e^{-t^2} dt
\right) 
\,. 
\end{split}
\end{align}
Similar to \eqref{eq:estimate_-p_less_alpha}, for all  $p\in [ -M,-\beta]$  and $\tau\in (0,\tau_0]$,
\begin{align*}
\begin{split}
e^{\frac{(\beta+p)^2}{2\tau}} \int_{-\frac{\beta+p}{\sqrt{2\tau}}}^\infty  e^{-t^2} dt 
&\ge 
\frac{\sqrt{\pi}}{2\sqrt{\pi} \frac{-(\beta+p)}{\sqrt{2\tau}}+2}
\ge 
\frac{\sqrt{2\pi\tau }}{2\sqrt{\pi}  {(M -  \beta )} +2\sqrt{2\tau_0}}\,,
\\
e^{\frac{(\beta+p)^2}{2\tau}} \int_{-\frac{\alpha+p}{\sqrt{2\tau}}}^\infty  e^{-t^2} dt  
&=
e^{\frac{(\beta -\alpha) (\alpha+\beta+2p)}{2\tau}} e^{\frac{(\alpha+p)^2}{2\tau}} \int_{-\frac{\alpha+p}{\sqrt{2\tau}}}^\infty  e^{-t^2} dt 
\le e^{\frac{-(\beta-\alpha)^2}{2\tau}} 
\frac{1}{-\frac{\alpha+p}{\sqrt{2\tau}}+1}
\le e^{\frac{-(\beta-\alpha)^2}{2\tau_0}} 
\frac{\sqrt{2\tau}}{\beta-\alpha }\,.
\end{split}
\end{align*}
This along with
\eqref{eq:integral_-p_larger_beta} implies 
\eqref{eq:h_tau-h_-p_less_alpha} also holds for  $p\in [ -M,-\beta]$.
This completes the proof. 
\end{proof}

\begin{proof}[Proof of Proposition~\ref{prop:EG_LQR}]
Throughout this proof,  let  $\tau_0>0$ be fixed, and $C\ge 0$ be a generic constant independent of $\tau$. 
Observe that for all $(x,u,p)\in \cO\times \sR\times \sR^p$, 
\begin{align*}
& H_\tau(x,u,p) 
\\
& =  \bar{b}(x)^\top p - \bar{c}(x)u + \bar{b}(x)
-\tau \ln\Bigg(\int_A \exp\Bigg(-\frac{ 
(2\widehat{f}(x))^{-1} \big( \widehat{b}(x)^\top p  - \widehat{c}(x)u +\tilde{f}(x) \big)   a +   \frac{1}{2} a^2
}{\tau (2\widehat{f}(x))^{-1} }\Bigg) \mu(da)\Bigg)\,
\end{align*}
By Proposition \ref{prop:wp_unregularized} and
the Sobolev embedding,  
$v^*_0 \in    C^{1}(\overline{\cO})$, and hence there exists $M\ge 0$ such that 
$|v^*_0(x)|+ |D v^*_0(x)|\le M$ for all $x\in \overline{\cO}$.
This along   with
$\inf_{x\in \cO}\widehat{f}(x)>0$ 
and the boundedness of $ \widehat{b}$, $ \widehat{c}$ and $ \widehat{f}$
implies that there exists $C\ge 0$ such that  
$$ 
\left|  (2\widehat{f}(x))^{-1}  \left(\widehat{b}(x)^\top Dv^*_0(x)  - \widehat{c}(x) v^*_0(x)  +\tilde{f}(x)\right)\right|\le C,
\quad \forall x\in \cO\,.
$$ 
Hence by Proposition \ref{lemma:H_tau_interval}, 
for all $\tau\in (0,\tau_0]$ and $x\in \cO$, 
\begin{align*}
&H_\tau(x,v^*_0(x),Dv^*_0(x)) 
\\
&
\le  \bar{b}(x)^\top p - \bar{c}(x)u + \bar{b}(x) 
+   2\widehat{f}(x)   \min_{a\in A} 
\left( (2\widehat{f}(x))^{-1} \big( \widehat{b}(x)^\top p  - \widehat{c}(x)u +\tilde{f}(x) \big)   a +   \frac{1}{2} a^2\right)
+C\tau \ln \frac{1}{\tau}
\\
& = 
H(x,v^*_0(x),Dv^*_0(x)) +C\tau \ln \frac{1}{\tau}\,.
\end{align*}
This completes the proof. 
\end{proof}

\appendix

\section{Proofs of Propositions~\ref{prop:Bellman_PDE_wellposedness},  \ref{prop:verification} and \ref{prop:wp_unregularized} }
\label{sec:well-posedness of HJB}

We first 
recall the following   $W^{2,p}$-estimate for  linear elliptic PDEs  proved in~\cite[Theorems 6.3 and 6.4]{chen1998second},
which will be used frequently in the subsequent analysis.

\begin{lemma}
\label{lemma:elliptic_regularity}
Let      $\cO$ be a bounded domain in  $\sR^d$
whose boundary $\partial \cO$ is of the class $C^{1,1}$. 
Let $a^{ij}, b^i:\sR^d\to \sR$, $i,j=1,\ldots, d$, 
and $c:\sR^d\to \sR$
be measurable functions
such that  
$a^{ij}\in C(\overline{\cO})$ for all $i,j=1,\ldots, d$,
and 
there exists  $\lambda, \Lambda>0$ such that 
$   \sum_{i,j=1}^d  a^{ij}(x)\xi_i\xi_j \ge \lambda |\xi|^2$ for all $x\in \cO$ and $\xi=(\xi_i)_{i=1}^d\in \sR^d$,  
$ \sum_{i,j=1}^d  \|a^{ij}\|_{B_b(\cO)} + \sum_{i=1}^d  \|b^i\|_{B_b(\cO)}+\|c\|_{B_b(\cO)}\le  \Lambda$
and  $c\ge 0$. 
Then for each $p\in (1,\infty)$ and $f\in L^p(\cO)$, 
there exists a unique solution   $u\in W^{2,p}(\cO)\cap W^{1,p}_0(\cO)$ 
to the following  boundary value problem 
$$
\sum_{i,j=1}^d  a^{ij}D_{ij}v+ \sum_{i=1}^d b^iD_i v-cv+f=0 \quad \textnormal{in $\cO$;}
\quad v=0 \quad \textnormal{on $\partial \cO$,}
$$
and  $\|u\|_{W^{2,p}(\cO)}\le C \|f\|_{L^p(\cO)}$, 
with a constant  $C$ depending  only on $d, p, \lambda, \Lambda$, $\cO$ and the modulus of continuity of $(a^{ij})_{i,j=1}^d$. 
\end{lemma}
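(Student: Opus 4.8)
The plan is to follow the classical Calderón--Zygmund / Agmon--Douglis--Nirenberg theory for non-divergence form elliptic operators, exactly as packaged in \cite[Chapter 9]{gilbarg1977elliptic} (the cited \cite[Theorems 6.3 and 6.4]{chen1998second} contain the same statement). Writing $Lu = \sum_{i,j} a^{ij} D_{ij} u + \sum_i b^i D_i u - cu$, I would establish the claim in three steps: (i) a global a priori $W^{2,p}$ estimate; (ii) removal of the lower-order term using the maximum principle and the sign condition $c \ge 0$; (iii) existence and uniqueness via the method of continuity.

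\textbf{A priori estimate.} For a solution $u \in W^{2,p}(\cO) \cap W^{1,p}_0(\cO)$ of $Lu = -f$, the first goal is $\|u\|_{W^{2,p}(\cO)} \le C(\|f\|_{L^p(\cO)} + \|u\|_{L^p(\cO)})$. I would freeze the leading coefficients at a point $x_0$, so that $\sum_{i,j} a^{ij}(x_0) D_{ij}$ is a constant-coefficient elliptic operator to which the interior Calderón--Zygmund estimate applies; the perturbation $\sum_{i,j}(a^{ij}(\cdot) - a^{ij}(x_0)) D_{ij} u$ is controlled on a small ball by the modulus of continuity of $(a^{ij})$. This is precisely why $a^{ij} \in C(\overline{\cO})$ (and not merely $L^\infty$) is assumed, and why the constant depends on this modulus rather than on a Hölder norm. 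Covering $\cO$ by finitely many such balls and absorbing, the lower-order terms $\sum_i b^i D_i u - cu$ are handled by the interpolation inequality $\|Du\|_{L^p} \le \varepsilon \|D^2 u\|_{L^p} + C_\varepsilon \|u\|_{L^p}$ together with the uniform bound $\Lambda$ on all coefficients. Near $\partial\cO$ I would flatten the boundary using its $C^{1,1}$ regularity, which keeps the transformed leading part continuous and the local boundary flat, and then apply the half-space estimate; the hypothesis $\partial\cO \in C^{1,1}$ enters exactly here.

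\textbf{Removing the zeroth-order term and solvability.} To upgrade to $\|u\|_{W^{2,p}(\cO)} \le C \|f\|_{L^p(\cO)}$, I would bound $\|u\|_{L^p}$ by $\|f\|_{L^p}$. When $p \ge d$ this follows from the Aleksandrov--Bakelman--Pucci maximum principle, $\sup_{\cO}|u| \le C \|f\|_{L^d(\cO)} \le C \|f\|_{L^p(\cO)}$, which uses $c \ge 0$ and uniform ellipticity and simultaneously yields uniqueness. Existence then follows from the method of continuity: join $L$ to $\Delta$ by $L_t = (1-t)\Delta + tL$, observe that every $L_t$ satisfies the a priori estimate with a $t$-independent constant, and combine the resulting closedness of $\{t : L_t \text{ maps onto } L^p(\cO)\}$ with solvability at $t=0$.

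\textbf{Main obstacle.} The technical heart is the \emph{global} $W^{2,p}$ estimate up to the boundary with merely \emph{continuous} leading coefficients: the freezing argument must be carried out near $\partial\cO$ after flattening, and one must check that the constant depends on $(a^{ij})$ only through its modulus of continuity and on $\partial\cO$ only through its $C^{1,1}$ character. A secondary subtlety is uniqueness in $W^{2,p}$ for $p < d$, where ABP does not apply directly; here I would argue by compactness or duality (approximating $f$ by $L^d$ data, or testing against the solution of the formal adjoint problem) to conclude $u \equiv 0$ when $f = 0$. As all of this is standard, in the paper it suffices to invoke \cite[Theorems 6.3 and 6.4]{chen1998second} directly.
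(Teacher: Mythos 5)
Your proposal is correct and takes essentially the same approach as the paper: the paper offers no proof of this lemma at all, but simply recalls it as established in \cite[Theorems 6.3 and 6.4]{chen1998second}, which is precisely the citation you conclude is sufficient. Your sketch of the underlying Calder\'on--Zygmund argument (freezing coefficients, boundary flattening via the $C^{1,1}$ hypothesis, interpolation for lower-order terms, ABP plus method of continuity, with the honestly flagged subtlety of uniqueness for $1<p<d$) is a faithful reconstruction of the standard theory behind the cited result, as in \cite[Chapter 9]{gilbarg1977elliptic}.
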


\begin{proof}[Proof of Proposition~\ref{prop:Bellman_PDE_wellposedness}]
As $\pi\in \Pi_{\mu}$, \eqref{eq:on_policy_bellman} can be equivalently written as 
\begin{align}
\frac{1}{2}\tr(\sigma(x)\sigma(x)^\top D^2v(x) )+\tilde{b}(x )^\top Dv(x) -\tilde{c}(x)v(x)+\tilde{f}(x)+ \tau \tilde{h}(x)=0, \quad  x \in \cO\,,     
\end{align}
where 
$\tilde{b}:\cO\to \sR^d$, $\tilde{c}: \cO\to [0,\infty)$,  $\tilde{f}:\cO\to \sR$
and $\tilde{h}:\cO\to \sR$ are measurable functions given by
\begin{align}
\begin{split}
\tilde{b}(x)&= \int_A b(x,a) \pi(da|x), \quad 
\tilde{c}(x)= \int_A c(x,a) \pi(da|x), \quad
\tilde{f}(x)= \int_A f(x,a) \pi(da|x)\,. 
\\
\tilde{h}(x)&=\textrm{KL}(\pi |\mu)(x)
=\int_A\left(Z(x,a)-\ln\left(\int_A Z(x,a')\mu(da')\right)\right)\pi(da|x)\,,
\end{split}
\end{align} 
for some $Z\in B_b(\cO\times A)$.
As $\pi(A|x)=1$ for all $x\in \cO$,  
$\|\tilde{b }\|_{B_b(\cO)}\le \| {b }\|_{B_b(\cO)}$,
$\|\tilde{c}\|_{B_b(\cO)}\le \| c\|_{B_b(\cO)}$,
$\|\tilde{f }\|_{B_b(\cO)}\le \| f\|_{B_b(\cO)}$  
and 
$ \|\tilde{h}\|_{B_b(\cO)}
\le 2\|Z\|_{B_b(\cO\times A)}$. 
Hence by Lemma \ref{lemma:elliptic_regularity} and $g\in W^{2,p^*}(\cO)$, 
\eqref{eq:on_policy_bellman} admits a unique   solution $v:\cO\to \sR$
such that 
$v\in W^{2,p^*}(\cO)$ and $v-g\in W^{1,p^*}_0(\cO)$.
By using the Sobolev embedding $W^{2,p^*}(\cO)\subset  C^{1}(\overline{\cO})$ and 
\eqref{eq:on_policy_bellman}, one can deduce that 
$\tr(\sigma \sigma^\top D^2 v_\tau) \in L^\infty(\cO)$.
Finally,
applying the It\^o formula~\cite[Theorem 1, p.~122]{krylov2008controlled} for functions in $W^{2,2}(\cO)$ yields 
\[
v(x) = -\mathbb E^{\mathbb P^{x,\pi}}\left[\int_0^{\tau_\cO} \Gamma^\pi_t\int_A (\mathcal L^av)(X_t)\,\pi(da|X_t)\,dt + \Gamma^\pi_{\tau_\cO} v(X_{\tau_\cO})\right]\,, 
\]
which along with~\eqref{eq:value} and~\eqref{eq:on_policy_bellman} implies
\[
v(x) = \mathbb E^{\mathbb P^{x,\pi}}\left[\int_0^{\tau_\cO} \bigg( \Gamma^\pi_t\int_A f (X_t,a)\,\pi(da|X_t)+\tau \textrm{KL}(\pi|\mu)(X_t)\bigg)\,dt + \Gamma^\pi_{\tau_\cO} g(X_{\tau_\cO})\right] = v^\pi(x)\,.
\]
This concludes the proof.
\end{proof}

The following lemma states   some elementary properties of $H_\tau$. The proof follows   from
a straightforward computation 
and Assumption \ref{ass:data}  and   is therefore omitted. 

\begin{lemma}
\label{lemma:H_tau_regularity}
Suppose Assumption \ref{ass:data} holds, 
and  $\tau > 0$. For all $x\in \cO$,
$(u,p)\mapsto H_\tau(x,u,p)$ is   continuously  differentiable   
and 
\begin{align}
\partial_u H_\tau (x,u,p) = - \int_A  c(x,a) \boldsymbol{\pi}(h_{u,p} )(da|x),
\quad 
\partial_p H_\tau (x,u,p) =   \int_A  b(x,a) \boldsymbol{\pi}(h_{u,p})(da|x),
\end{align}
where $\boldsymbol{\pi} $
is defined by
\eqref{eq:operator_pi}, and   $  h_{u,p} (x,a) \coloneqq -\frac{b(x,a)^\top p -c(x,a)u+f(x,a)}{\tau } 
$. 
Moreover, 
there exists   $C\ge 0$, independent of $\tau$, such that for all $x\in \cO$, $u\in \sR$ and $p\in \sR^d$,
$|H_\tau (x,0,0)|\le C$, 
$-C\le \partial_u H_\tau(x,u,p)\le 0$ and $ |\partial_p H_\tau(x,u,p) |\le C$.
\end{lemma}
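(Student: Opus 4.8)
The plan is to differentiate $H_\tau$ under the integral sign, exploiting that it is, up to the prefactor $-\tau$, the logarithm of a Gibbs normalizing constant. I would write $\Phi(x,u,p)\coloneqq \int_A e^{h_{u,p}(x,a)}\mu(da)$, so that $H_\tau(x,u,p)=-\tau\ln\Phi(x,u,p)$, and observe that by \eqref{eq:operator_pi} the Gibbs kernel satisfies $\boldsymbol{\pi}(h_{u,p})(da|x)=\Phi(x,u,p)^{-1}e^{h_{u,p}(x,a)}\mu(da)$, i.e.\ the normalized integrand is exactly the measure appearing in the claimed formulas.

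First I would establish the $C^1$ regularity in $(u,p)$. Since $\partial_u h_{u,p}(x,a)=c(x,a)/\tau$ and $\partial_{p_i}h_{u,p}(x,a)=-b_i(x,a)/\tau$, and since $b,c,f$ are bounded by $K$ (so that $h_{u,p}(x,\cdot)$, the integrand $e^{h_{u,p}(x,\cdot)}$ and its $(u,p)$-derivatives are uniformly bounded on any bounded set of $(u,p)$, with $\mu$ a probability measure), the dominated convergence theorem justifies differentiation under the integral sign. This yields $\partial_u\Phi=\tau^{-1}\int_A c\,e^{h_{u,p}}\mu(da)$ and $\partial_{p_i}\Phi=-\tau^{-1}\int_A b_i\,e^{h_{u,p}}\mu(da)$. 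Dividing by $\Phi$ and multiplying by $-\tau$, the factor $\tau$ cancels and the normalization converts $e^{h_{u,p}}\mu(da)/\Phi$ into $\boldsymbol{\pi}(h_{u,p})(da|x)$, giving precisely the stated expressions for $\partial_u H_\tau$ and $\partial_p H_\tau$. Continuity of these derivatives, and hence the $C^1$ claim, follows from a further application of dominated convergence using the continuity of $(u,p)\mapsto e^{h_{u,p}(x,a)}$ together with the same uniform domination.

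For the uniform bounds the key point is that $\boldsymbol{\pi}(h_{u,p})(\cdot|x)$ is a probability measure, so the derivative expressions are averages of the coefficients. At $(u,p)=(0,0)$ one has $h_{0,0}(x,a)=-f(x,a)/\tau$ with $|f|\le K$, whence $e^{-K/\tau}\le\Phi(x,0,0)\le e^{K/\tau}$ and therefore $|H_\tau(x,0,0)|=\tau|\ln\Phi(x,0,0)|\le K$. The sign and lower bound on $\partial_u H_\tau$ follow from $c\ge 0$ and $\|c\|_{B_b(\sR^d\times A)}\le K$ in $\partial_u H_\tau=-\int_A c\,\boldsymbol{\pi}(h_{u,p})(da|x)$: nonnegativity of $c$ forces $\partial_u H_\tau\le 0$, while $c\le K$ gives $\partial_u H_\tau\ge -K$. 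Finally $|\partial_p H_\tau|\le\int_A|b|\,\boldsymbol{\pi}(h_{u,p})(da|x)\le K$ since $\|b\|_{B_b(\sR^d\times A)}\le K$. Each constant may be taken equal to $K$, which is independent of $\tau$. There is no genuine obstacle in this lemma; the only point requiring (routine) care is the justification of differentiation under the integral, where the boundedness of the coefficients and the finiteness of $\mu$ enter, and the cancellation of the $\tau$ prefactor against the $1/\tau$ produced by differentiating $h_{u,p}$ is exactly what makes the bounds uniform in $\tau$.
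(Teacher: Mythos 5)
Your proof is correct: the computation of $\partial_u H_\tau$ and $\partial_p H_\tau$ via differentiation under the integral sign (justified by dominated convergence, using boundedness of $b,c,f$ and $\mu\in\cP(A)$), the cancellation of $\tau$ against the $1/\tau$ from differentiating $h_{u,p}$, and the uniform bounds from $c\ge 0$, $\|b\|_{B_b},\|c\|_{B_b},\|f\|_{B_b}\le K$ and the fact that $\boldsymbol{\pi}(h_{u,p})(\cdot|x)$ is a probability measure are all sound. The paper omits this proof as a ``straightforward computation'' following from Assumption \ref{ass:data}, and your argument is exactly that intended computation.
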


The next lemma establishes an a priori estimate for~\eqref{eq:semilinear}.  

\begin{lemma}
\label{lemma:hjb_a_priori}
Suppose Assumption \ref{ass:data} holds, 
and $\tau > 0$.
There exists  $C\ge 0$, independent of $\tau$,
such that if   
$v\in W^{2,p^*}(\cO) $ and   $\eta \in [0,1]$   satisfy 
\begin{equation}
\label{eq:hjb_eta}
\frac{1}{2}\tr(\sigma(x)\sigma(x)^\top D^2v(x) )+\eta  H_\tau (x,v(x),Dv(x)) =0,\quad \textnormal{a.e.~$x\in \cO$}\, ;  \quad 
v(x)=\eta  g(x),  \quad x \in \partial \cO\,, 
\end{equation}
then 
$\|v\|_{W^{2,{p^*}}(\cO)}\le C (1+\|g\|_{W^{2,{p^*}}(\cO)})$. 
\end{lemma}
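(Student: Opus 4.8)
The plan is to linearise the nonlinearity $H_\tau$ so that \eqref{eq:hjb_eta} becomes a linear elliptic equation with coefficients that are bounded \emph{uniformly in $\tau$ and $\eta$}, and then to invoke the linear estimate of Lemma~\ref{lemma:elliptic_regularity}. The key observation is that, since $v\in W^{2,p^*}(\cO)\subset C^1(\overline{\cO})$, the maps $x\mapsto v(x)$ and $x\mapsto Dv(x)$ are continuous, so by the fundamental theorem of calculus applied to $t\mapsto H_\tau(x,tv(x),tDv(x))$ and the differentiability of $H_\tau$ in $(u,p)$ from Lemma~\ref{lemma:H_tau_regularity}, one can write
\[
H_\tau(x,v(x),Dv(x))=H_\tau(x,0,0)+\tilde b(x)^\top Dv(x)-\tilde c(x)v(x),
\]
where $\tilde b(x)\coloneqq\int_0^1\partial_p H_\tau(x,tv(x),tDv(x))\,dt$ and $\tilde c(x)\coloneqq-\int_0^1\partial_u H_\tau(x,tv(x),tDv(x))\,dt$ are measurable. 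The uniform derivative bounds in Lemma~\ref{lemma:H_tau_regularity} then give $\|\tilde b\|_{B_b(\cO)}\le C$, $0\le\tilde c\le C$, and $\|H_\tau(\cdot,0,0)\|_{B_b(\cO)}\le C$, all with $C$ independent of $\tau$. Substituting this identity, the function $v$ solves the \emph{linear} Dirichlet problem $\frac12\tr(\sigma\sigma^\top D^2v)+\eta\tilde b^\top Dv-\eta\tilde c\,v+\eta H_\tau(\cdot,0,0)=0$ in $\cO$ with $v=\eta g$ on $\partial\cO$.

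Next I would homogenise the boundary data by setting $w\coloneqq v-\eta g\in W^{2,p^*}(\cO)\cap W^{1,p^*}_0(\cO)$, so that $w=0$ on $\partial\cO$. A direct substitution shows $w$ satisfies $\frac12\tr(\sigma\sigma^\top D^2w)+\eta\tilde b^\top Dw-\eta\tilde c\,w+f_w=0$ with forcing $f_w=\eta[\frac12\tr(\sigma\sigma^\top D^2 g)+\eta\tilde b^\top Dg-\eta\tilde c\,g+H_\tau(\cdot,0,0)]$. Using $|\tilde b|,\tilde c,|H_\tau(\cdot,0,0)|\le C$, the boundedness of $\sigma\sigma^\top$, $\eta\in[0,1]$, and $\cO$ bounded, one obtains $\|f_w\|_{L^{p^*}(\cO)}\le C(1+\|g\|_{W^{2,p^*}(\cO)})$. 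I then apply Lemma~\ref{lemma:elliptic_regularity} to the operator with second-order coefficients $a^{ij}=\tfrac12(\sigma\sigma^\top)_{ij}$ (continuous by $\sigma\in C(\sR^d)$, uniformly elliptic with constant $\lambda$, and bounded), first-order coefficients $\eta\tilde b$ (bounded), and zeroth-order coefficient $\eta\tilde c\ge0$ (bounded). This yields $\|w\|_{W^{2,p^*}(\cO)}\le C\|f_w\|_{L^{p^*}(\cO)}\le C(1+\|g\|_{W^{2,p^*}(\cO)})$, and the triangle inequality $\|v\|_{W^{2,p^*}(\cO)}\le\|w\|_{W^{2,p^*}(\cO)}+\|g\|_{W^{2,p^*}(\cO)}$ completes the estimate.

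The main point to watch is the claim that the final constant $C$ is \emph{independent of $\tau$ and of $\eta$}. This is exactly where the uniform (in $\tau$) bounds of Lemma~\ref{lemma:H_tau_regularity} are essential: they ensure the coefficients $\eta\tilde b$, $\eta\tilde c$ and the forcing admit a $(\tau,\eta)$-independent bound $\Lambda$, while the second-order coefficients $\tfrac12\sigma\sigma^\top$ and their modulus of continuity are fixed once and for all. Since the constant in Lemma~\ref{lemma:elliptic_regularity} depends only on $d,p^*,\lambda,\Lambda,\cO$ and that modulus of continuity, it transfers unchanged, giving the desired $\tau$-uniform a priori bound needed for the Leray--Schauder argument. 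The only routine care required is the sign condition $\tilde c\ge0$ (inherited from $\partial_u H_\tau\le0$) and the measurability of $\tilde b,\tilde c$, both of which follow directly from Lemma~\ref{lemma:H_tau_regularity}.
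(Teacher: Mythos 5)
Your proof is correct, but it takes a genuinely different route from the paper's. You linearise $H_\tau$ along the ray $t\mapsto (tv(x),tDv(x))$ once and for all, so that $v$ (and then $w=v-\eta g$) solves a \emph{single} linear equation with bounded measurable coefficients $\eta\tilde b$, $\eta\tilde c\ge 0$ and forcing controlled by $\|g\|_{W^{2,p^*}(\cO)}$; the full estimate then follows in one stroke from Lemma~\ref{lemma:elliptic_regularity}, whose constant depends only on $d,p^*,\lambda,\Lambda,\cO$ and the modulus of continuity of $\sigma\sigma^\top$ — hence is uniform in $\tau$ and $\eta$ thanks to the uniform derivative bounds of Lemma~\ref{lemma:H_tau_regularity} (note also that the uniqueness assertion of Lemma~\ref{lemma:elliptic_regularity} is what lets you identify $w$ with the solution the lemma produces, so the estimate indeed applies to $w$). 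The paper instead proceeds in three steps: it first bounds $\|w\|_{W^{2,p^*}(\cO)}$ by $C(1+\|w\|_{W^{1,p^*}(\cO)}+\|g\|_{W^{2,p^*}(\cO)})$ using the estimate for the pure second-order operator (\cite[Lemma~9.17]{gilbarg1977elliptic}) together with the global Lipschitz property of $H_\tau$ in $(u,p)$; it then absorbs the $W^{1,p^*}$ term via the interpolation inequality; and finally it controls $\|w\|_{L^{p^*}(\cO)}$ by linearising around $(\eta g,\eta Dg)$ and invoking the maximum principle (\cite[Theorem~9.1]{gilbarg1977elliptic}), which is the only place the sign condition $\partial_u H_\tau\le 0$ enters. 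Your argument is shorter and avoids both the interpolation step and the separate sup-norm estimate, at the price of leaning on the full strength of Lemma~\ref{lemma:elliptic_regularity} for operators with merely bounded measurable lower-order coefficients; the paper's decomposition isolates more explicitly where each structural property of $H_\tau$ (Lipschitz bounds versus monotonicity in $u$) is used. Both arguments rest on the same two pillars — the $\tau$-uniform bounds of Lemma~\ref{lemma:H_tau_regularity} and $C^{1,1}$ elliptic regularity on $\cO$ — so your proof is a valid substitute.
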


\begin{proof}
Throughout this proof, let $C\ge 0$ be a generic constant 
which is 
independent of $\tau$, $\eta$ and $g$, and  may take a different value at each occurrence. 
As $g \in W^{2,{p^*}}(\cO)$,    $w\coloneqq v-\eta g\in W^{2,{p^*}}(\cO)\cap W^{1,{p^*}}_0(\cO)$   satisfies 
\begin{equation}
\label{eq:w_pde}
\frac{1}{2}\tr(\sigma(x)\sigma(x)^\top D^2(w+\eta g)(x) )+ \eta H_\tau (x,(w+\eta  g)(x),D(w+\eta  g)(x)) =0 \,,\quad  \textnormal{a.e.~$x\in \cO$}\,.
\end{equation}
By  \cite[Lemma 9.17]{gilbarg1977elliptic}, there exists $C\ge 0$ such that 
$\|w\|_{W^{2,{p^*}}(\cO)}\le C\| \frac{1}{2}\tr(\sigma \sigma^\top D^2w)\|_{L^{{p^*}}(\cO)}$, which along with \eqref{eq:w_pde}, 
Lemmas \ref{lemma:elliptic_regularity} and \ref{lemma:H_tau_regularity} and Assumption \ref{ass:data}  implies  
\begin{align}
\label{eq:w_estimate_1_p}
\begin{split}
\|w\|_{W^{2,{p^*}}(\cO)}
&\le C\left\|-\eta   H_\tau (\cdot,(w+\eta  g)(\cdot),D(w+\eta  g)(\cdot)) -  \eta   \frac{1}{2}\tr(\sigma \sigma^\top D^2g )\right\|_{L^{{p^*}}(\cO)}
\\
&\le C\left(   \| H_\tau (\cdot,0,0)\|_{L^{{p^*}}(\cO)}+\|w+\eta  g\|_{W^{1,{p^*}}(\cO) } +    \| g\|_{W^{2,{p^*}}(\cO) }\right)
\\
&\le C\left(1+ \| w\|_{W^{1,{p^*}}(\cO)}+\| g\|_{W^{2,{p^*}}(\cO)}  \right)\,.
\end{split}
\end{align}
By the interpolation inequality  \cite[Theorem 7.28]{gilbarg1977elliptic},
for all $\varepsilon>0$, there exists  $C_\varepsilon\ge 0$  such that 
$\| w\|_{W^{1,{p^*}}(\cO)} \le \varepsilon \| w\|_{W^{2,{p^*}}(\cO)} +C_\varepsilon  \| w\|_{L^{p^*}(\cO)}$ for all $w\in  W^{2,{p^*}}(\cO)$. 
Choosing a sufficiently small $\varepsilon$ and using \eqref{eq:w_estimate_1_p} yield
\begin{align}
\label{eq:w_estimate_p}
\|w\|_{W^{2,{p^*}}(\cO)}
\le C\left(1+ \| w\|_{L^{{p^*}}(\cO)}+\| g\|_{W^{2,{p^*}}(\cO)}  \right)\,.
\end{align}
To estimate $\| w\|_{L^{{p^*}}(\cO)}$, 
observe from \eqref{eq:w_pde} that 
\begin{equation}
\label{eq:w_pde_linear}
\frac{1}{2}\tr(\sigma(x)\sigma(x)^\top D^2w(x) )+
\sum_{i=1}^d \tilde{b}^i(x) D_i w(x)+ 
\tilde{c} (x) w(x)   =\tilde{h}(x)\,, \quad  \textnormal{a.e.~$x\in \cO$}\,,
\end{equation}
where for all $i=1,\ldots, d$,  
\begin{align*}
\tilde{b}^i(x)& =\int_0^1 \partial_{p_i} H_\tau (x,(w+\eta  g)(x),\eta  Dg(x)+tDw(x)) dt \,,
\\
\tilde{c}(x)& = \int_0^1 \partial_{u} H_\tau (x,\eta  g(x)+tw(x) ,\eta  Dg(x) ) dt\,,
\\
\tilde{h}(x)& =   - \eta  \frac{1}{2}\tr(\sigma(x) \sigma(x) ^\top D^2g(x)  )- H_\tau (x,\eta  g(x) ,\eta  Dg(x) )\,.
\end{align*}
By Lemma \ref{lemma:H_tau_regularity}, $|\tilde{b}(x)|\le C$ and   $  \tilde{c}(x)\le 0$ for all $x\in \cO$.
Hence  
as $p^*>d$ and $w\in W^{2,{p^*}}(\cO)\cap W^{1,{p^*}}_0(\cO)$, the   maximum principle  \cite[Theorem 9.1]{gilbarg1977elliptic} 
shows that 
$$
\|w\|_{L^\infty (\cO)}\le C\|\tilde{h}\|_{L^d(\cO)}\le  C\left(1+ \| g\|_{W^{2,{p^*}}(\cO)}  \right)\,,
$$
which along with \eqref{eq:w_estimate_p} yields the desired estimate.
\end{proof}

Now we are ready to prove Proposition  \ref{prop:verification}.

\begin{proof}[Proof of Proposition \ref{prop:verification}]

We start by proving the existence of a solution to \eqref{eq:semilinear} in $W^{2,p^*}(\cO)$,
with 
$p^*>0$    in Assumption \ref{ass:data}.
Define the map $T: W^{1,p^*}(\cO)\to W^{1,p^*}(\cO)$ such that for all $v\in W^{1,p^*}(\cO)$, $u=Tv$ is the unique solution to 
\begin{equation}
\frac{1}{2}\tr(\sigma(x)\sigma(x)^\top D^2u(x) )+ H_\tau (x,v(x),Dv(x)) =0,\quad \textnormal{a.e.~$x\in \cO$}\, ;  \quad 
u(x)=g(x),  \quad x \in \partial \cO\,, 
\end{equation}
By Lemmas \ref{lemma:elliptic_regularity} and \ref{lemma:H_tau_regularity}, 
$x\mapsto H_\tau (x,v(x),Dv(x))$ is in $L^{p^*}(\cO)$ and hence 
$u=Tv\in W^{2,p^*}(\cO)$ is well-defined. 

We claim that   $T:W^{1,p^*}(\cO)\to W^{1,p^*}(\cO)$ is continuous and compact. 
For the continuity of $T$,
observe that by Lemma \ref{lemma:H_tau_regularity},
$H_\tau$ is a Carath\'{e}odory function, i.e.,  $H_\tau $ is measurable in $x$ and continuous in $(u,p)$,
and for all $g\in L^{p^*}(\cO) $ and $h\in L^{p^*}(\cO)^d$, the function 
$x\mapsto H_\tau(x,g(x),h(x))$ is in $L^{p^*}(\cO)$.
Hence 
by \cite[Theorem 4]{goldberg1992nemytskij},
the Nemytskij operator $L^{p^*}(\cO) \times  L^{p^*}(\cO)^d \ni (g, h) \mapsto H_\tau(x,g(\cdot),h(\cdot))\in L^{p^*}(\cO)$ is continuous.
This   along with the continuity of 
$W^{1, p^*}(\cO)\ni g\mapsto (g,Dg) \in  L^{p^*}(\cO) \times  L^{p^*}(\cO)^d $ and Lemma \ref{lemma:elliptic_regularity}
implies  that $T: W^{1,p^*}(\cO)\to W^{1,p^*}(\cO)$ is continuous.
For the compactness of $T$, 
by Lemmas \ref{lemma:elliptic_regularity} and \ref{lemma:H_tau_regularity},
$T$ maps bounded sets in $W^{1,p^*}(\cO)$ to bounded sets in $W^{2,p^*}(\cO)$, 
which are precompact in $W^{1,p^*}(\cO)$, 
due to   $p^*>d$ and the Kondrachov   embedding theorem \cite[Theorem 7.26]{gilbarg1977elliptic}.

Finally, 
for all 
$\eta\in [0,1]$ and $v\in W^{1,p^*}(\cO)$ such that  $v=\eta T v$, it holds  that   $v$ is in $ W^{2,p^*}(\cO)$ and satisfies \eqref{eq:hjb_eta}. 
This along with
Lemma \ref{lemma:hjb_a_priori} implies that  there exists $C\ge 0$, independent of $\eta$ and $v$ such that $\|v\|_{ W^{2,p^*}(\cO)}\le C$. 
Hence by the Leray-Schauder Theorem \cite[Theorem 11.3]{gilbarg1977elliptic}, 
there exists $u\in W^{1,p^*}(\cO)$ such that $u=Tu$. 
This implies that 
\eqref{eq:semilinear} admits a solution $u\in W^{2,p^*}(\cO)$. 

To prove the uniqueness of solutions to \eqref{eq:semilinear}, 
let $u, v\in W^{2,p^*}(\cO)$ satisfy  \eqref{eq:semilinear}.
Then $w=u-v\in W^{2,p^*}(\cO)\cap W^{1,p^*}_0(\cO)$ satisfies 
\begin{equation*}
\frac{1}{2}\tr(\sigma(x)\sigma(x)^\top D^2w(x) )+
\sum_{i=1}^d \tilde{b}^i(x) D_i w(x)+ 
\tilde{c} (x) w(x)   =0\,, \quad  \textnormal{a.e.~$x\in \cO$}\,,
\end{equation*}
where   
\begin{align*}
\tilde{b}^i(x)& =\int_0^1 \partial_{p_i} H_\tau (x, u(x),   Dv(x)+tD(u-v)(x)) dt\,, \quad i=1,\ldots, d  \,,
\\
\tilde{c}(x)& = \int_0^1 \partial_{u} H_\tau (x,   v(x)+t(u-v) (x) ,   Dv (x) ) dt\,.
\end{align*}
As $\tilde{c}\le 0$,  the   maximum principle  \cite[Theorem 9.1]{gilbarg1977elliptic}  implies that $u\le v$.
Interchanging the roles of $u$ and $v$ shows that $u=v$, which implies the uniqueness of the solution to \eqref{eq:semilinear}. 

It remains to prove that the optimal  value function $v^*_\tau $ in \eqref{eq:value_optimal}  is the solution to 
\eqref{eq:semilinear} and  $ \pi^*_\tau $ is an optimal control.
This follows from the standard verification arguments    (see e.g., \cite[Theorem 2.2]{reisinger2021regularity})
using  the generalised   It\^{o}'s formula   \cite[Theorem 1, p.~122]{krylov2008controlled}.  The detailed steps are  omitted. 
\end{proof}

Finally, we prove Proposition~\ref{prop:wp_unregularized}.

\begin{proof}[Proof of Proposition~\ref{prop:wp_unregularized}]  
It is easy to see that for all $x\in \cO$ and $(u,p),(u',p')\in \sR\times \sR^d$,
$$
|H(x,u,p)-H(x,u',p')| \le \|b\|_{B_b(\sR^d\times A)}|p-p'|+  \|c\|_{B_b(\sR^d\times A)}|u-u'|. 
$$
Since the statement assumes   that
$H$  is measurable with respect to  $x$,  $H$ is a Carath\'{e}odory function. 
Hence the existence and uniqueness of a solution  $\bar{v} \in  W^{2,p^*}(\cO)$ to \eqref{eq:hjb_unregualarised} can be proved by the Leray-Schauder Theorem \cite[Theorem 11.3]{gilbarg1977elliptic} as in the proof of Proposition \ref{prop:verification}.

To estimate 
$ \|v^*_\tau - \bar{v} \|_{W^{2,p^*}(\cO)}$,
observe  that  $  v^*_\tau - \bar{v} \in W^{2,{p^*}}(\cO)\cap W^{1,{p^*}}_0(\cO)$   satisfies
for a.e.~$x\in \cO$,
\begin{align}
\begin{split}
&\frac{1}{2}\tr(\sigma(x)\sigma(x)^\top D^2 ( v^*_\tau - \bar{v} ) (x) )
+ 
H_\tau (x,  v^*_\tau (x),D v^*_\tau(x)) 
-H (x, \bar{v} (x),D\bar{v} (x)) 
\\
&=\frac{1}{2}\tr(\sigma(x)\sigma(x)^\top D^2 ( v^*_\tau - \bar{v} ) (x) )
+ 
H_\tau (x,  v^*_\tau (x),D v^*_\tau(x)) 
-H_\tau (x, \bar{v} (x),D\bar{v} (x)) 
\\
&\quad 
+ H_\tau (x, \bar{v} (x),D\bar{v} (x)) -H (x, \bar{v} (x),D\bar{v} (x)) 
=0 \,.
\end{split}
\end{align}
This implies that $w\coloneqq v^*_\tau - \bar{v}$  satisfies 
\begin{equation}
\frac{1}{2}\tr(\sigma(x)\sigma(x)^\top D^2w(x) )+
\sum_{i=1}^d \tilde{b}^i(x) D_i w(x)+ 
\tilde{c} (x) w(x)   +\tilde{h}(x)=0\,, \quad  \textnormal{a.e.~$x\in \cO$}\,,
\end{equation}
where for all $i=1,\ldots, d$,  
\begin{align*}
\tilde{b}^i(x)& =\int_0^1 \partial_{p_i} H_\tau (x, v^*_\tau (x),   D\bar{v}  (x)+t Dw (x)) dt \,,
\\
\tilde{c}(x)& = \int_0^1 \partial_{u} H_\tau (x,  \bar{v} (x)+tw(x) ,   D \bar{v} (x) ) dt\,,
\\
\tilde{h}(x)& =  H_\tau (x, \bar{v} (x),D\bar{v} (x)) -H (x, \bar{v} (x),D\bar{v} (x))\, \,.
\end{align*}
By Lemma \ref{lemma:H_tau_regularity}, 
there exists a constant $C\ge 0$, independent of $\tau$, such that 
$|\tilde{b}(x)|\le C$ and   $  \tilde{c}(x)\le 0$ for all $x\in \cO$.
Hence  the desired $W^{2,p^*}$-estimate follows from  Lemma \ref{lemma:elliptic_regularity}
and the inequality that for all $(x,u,p)\in \cO\times \sR\times \sR^d$, 
\begin{align*}
H_\tau (x,u,p)&\ge  -\tau \ln\left(\int_A \exp\left(-\frac{\min_{a\in A}(b(x,a)^\top p -c(x,a)u+f(x,a))}{\tau }\right) \mu(da)\right)
= H(x,u,p)\,.
\end{align*}
This finishes the proof. 
\end{proof}

\section{Proof of  Theorem~\ref{thm:Existence_Solutions}}
\label{sec:existence_of_sols}

The  main technical issue is that the nonlinearity  $B_b(\cO\times A) \ni Z \mapsto   \overline{\cL}^\cdot v_\tau^{\boldsymbol{\pi}(Z)} $ in \eqref{eq:md_intro}  is    merely    locally Lipschitz continuous (Proposition~\ref{prop:W^{2,p} bound}).
To address this issue, we proceed with   the following   three steps:   
We first   show that a truncated version of the mirror descent flow~\eqref{eq:md_intro} has a unique solution (Lemma~\ref{lem:local_existence}).
Then we  will use the linear growth  
$  Z \mapsto   \overline{\cL}^\cdot v_\tau^{\boldsymbol{\pi}(Z)} $ (Lemma \ref{lem:DV_s_bound})
to obtain an a priori estimate for solutions to~\eqref{eq:md_intro} (Lemma~\ref{lem:|Z_s|_bound}).
Finally, we'll combine these two intermediate steps to prove Theorem~\ref{thm:Existence_Solutions}.

Fix  $\btau\in C([0,\infty); (0,\infty))$.
Define the operation $\mathcal{H}:(0,\infty)\times B_b(\cO\times A)\rightarrow    B_b(\cO\times A)$  by 
\begin{align*}
\mathcal{H}(s,Z) = -(b^\top Dv^{\boldsymbol{\pi}(Z)}_{\btau_s} - cv^{\boldsymbol{\pi}(Z)}_{\btau_s} + f +{\btau_s} Z)\,.
\end{align*}
For all $N>0$, define the operator 
$\mathcal{H}_N:(0,\infty)\times B_b(\cO\times A)\rightarrow   B_b(\cO\times A)$ by
\begin{align*}
\mathcal{H}_N(s,Z)=\begin{cases}
\mathcal{H}(s,Z), & \|Z\|_{B_b(\cO\times A)}\leq N \\
\mathcal{H}\left(s,\frac{NZ}{\|Z\|_{B_b(\cO\times A)}}\right), & \|Z\|_{B_b(\cO\times A)}>N
\end{cases},
\end{align*}
Recall the constant $K$ defined at the beginning of Section \ref{sec:Differentiability_Stability}.
\begin{lemma}
\label{lem:local_existence}
Suppose Assumption \ref{ass:data} holds. 
For each $N\in \sN$,  $\btau\in C([0,\infty); (0,\infty))$  and $Z^0\in B_b(\cO\times A)$,
there exists a unique $Z\in \cap_{S>0}C^1([0,S];B_b(\cO\times A))$ satisfying 
$\partial_s Z_s=\mathcal{H}_N(s,Z_s)$ for all $s>0$ and $Z_0=Z^0$.
\end{lemma}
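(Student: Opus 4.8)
The plan is to recast the truncated equation as an integral fixed-point problem and apply the Banach fixed point theorem on each finite interval $[0,S]$, the point being that truncation upgrades the merely locally Lipschitz nonlinearity $Z\mapsto\overline{\cL}^\cdot v^{\boldsymbol\pi(Z)}_{\btau_s}$ (Proposition~\ref{prop:W^{2,p} bound}) into a globally Lipschitz one. Fix $S>0$ and work in the Banach space $C([0,S];B_b(\cO\times A))$. A function $Z$ solves $\partial_s Z_s=\mathcal H_N(s,Z_s)$ with $Z_0=Z^0$ if and only if it is a fixed point of $\Phi(Z)_s=Z^0+\int_0^s\mathcal H_N(r,Z_r)\,dr$; and once a continuous fixed point is found, continuity of $r\mapsto\mathcal H_N(r,Z_r)$ shows $\Phi(Z)=Z$ is continuously differentiable, so the solution automatically lies in $C^1([0,S];B_b(\cO\times A))$.

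The key estimate I would establish is that $\mathcal H_N(s,\cdot)$ is globally Lipschitz on $B_b(\cO\times A)$, uniformly for $s\in[0,S]$. Writing $\mathcal H_N(s,Z)=\mathcal H(s,T_N(Z))$, where $T_N(Z)=Z$ if $\|Z\|_{B_b(\cO\times A)}\le N$ and $T_N(Z)=NZ/\|Z\|_{B_b(\cO\times A)}$ otherwise, I would first recall that the radial retraction $T_N$ onto the ball of radius $N$ is globally Lipschitz (with constant at most $2$ in any normed space). It then suffices to bound $\mathcal H(s,\cdot)$ on that ball: for $\|W\|_{B_b(\cO\times A)},\|W'\|_{B_b(\cO\times A)}\le N$,
\[
\|\mathcal H(s,W)-\mathcal H(s,W')\|_{B_b(\cO\times A)}\le K\,\|v^{\boldsymbol\pi(W)}_{\btau_s}-v^{\boldsymbol\pi(W')}_{\btau_s}\|_{C^1(\overline\cO)}+\btau_s\,\|W-W'\|_{B_b(\cO\times A)}.
\]
Combining the Sobolev embedding \eqref{eq:sobolev_inq}, the stability bound \eqref{eqn:dual_W_2_p_inc} with $\tau=\btau_s$, and the linear growth of $\|v^{\boldsymbol\pi(W)}_{\btau_s}\|_{C^1(\overline\cO)}$ from Lemma~\ref{lem:DV_s_bound} controls the first term by $C(1+\btau_s)^2(1+N)\|W-W'\|_{B_b(\cO\times A)}$. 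Since $\btau$ is continuous, $\sup_{s\in[0,S]}\btau_s<\infty$, and there is a constant $L_{N,S}$ with $\|\mathcal H_N(s,Z)-\mathcal H_N(s,Z')\|_{B_b(\cO\times A)}\le L_{N,S}\|Z-Z'\|_{B_b(\cO\times A)}$ for all $s\in[0,S]$.

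Next I would check continuity of $\mathcal H_N$ in $s$: for fixed $Z$ the $s$-dependence enters only through $\btau_s$, directly and via $v^{\boldsymbol\pi(T_N(Z))}_{\btau_s}$, and the regularity-in-$\tau$ estimate \eqref{eqn:lip_in_reg} together with continuity of $\btau$ gives $\|v^{\boldsymbol\pi(T_N(Z))}_{\btau_s}-v^{\boldsymbol\pi(T_N(Z))}_{\btau_{s_0}}\|_{W^{2,p^*}(\cO)}\to0$ as $s\to s_0$. Hence $\mathcal H_N$ is jointly continuous, so $r\mapsto\mathcal H_N(r,Z_r)$ is continuous, hence Riemann integrable into $B_b(\cO\times A)$, whenever $Z\in C([0,S];B_b(\cO\times A))$, making $\Phi$ well defined. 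Equipping $C([0,S];B_b(\cO\times A))$ with the Bielecki norm $\|Z\|_\lambda=\sup_{s\in[0,S]}e^{-\lambda s}\|Z_s\|_{B_b(\cO\times A)}$, the Lipschitz bound yields $\|\Phi(Z)-\Phi(Z')\|_\lambda\le(L_{N,S}/\lambda)\|Z-Z'\|_\lambda$, so $\Phi$ is a contraction for $\lambda>L_{N,S}$ and the Banach fixed point theorem gives a unique solution on $[0,S]$. As $S>0$ is arbitrary and restrictions to shorter intervals coincide by this uniqueness, the solutions patch into a unique $Z\in\cap_{S>0}C^1([0,S];B_b(\cO\times A))$, which is the assertion.

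The only genuinely delicate step is the uniform Lipschitz estimate of the second paragraph: because $Z\mapsto\overline{\cL}^\cdot v^{\boldsymbol\pi(Z)}_\tau$ is merely locally Lipschitz, everything hinges on the truncation confining the argument to the ball of radius $N$, where Proposition~\ref{prop:W^{2,p} bound} and Lemma~\ref{lem:DV_s_bound} supply the $N$- and $\btau$-dependent constant. Verifying the Lipschitz property of the radial retraction $T_N$ and tracking the dependence of $L_{N,S}$ on $\sup_{[0,S]}\btau$ are the remaining routine ingredients.
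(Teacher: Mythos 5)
Your proposal is correct and follows essentially the same route as the paper's proof: a global Lipschitz bound for $\mathcal{H}_N$ on the ball of radius $N$ via Proposition~\ref{prop:W^{2,p} bound}, the Sobolev embedding \eqref{eq:sobolev_inq} and Lemma~\ref{lem:DV_s_bound}, followed by a contraction argument in the exponentially weighted (Bielecki) norm on $C([0,S];B_b(\cO\times A))$ and patching over arbitrary $S>0$ by uniqueness. You are in fact slightly more careful than the paper in two spots: you justify the reduction to the ball by the Lipschitz property of the radial retraction $T_N$ (the paper passes over this with a ``without loss of generality''), and you verify continuity of $s\mapsto\mathcal{H}_N(s,Z)$ via \eqref{eqn:lip_in_reg}, which is what makes the integrand continuous and the fixed point genuinely $C^1$.
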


\begin{proof}
Throughout this proof let $\|\cdot\|=\|\cdot\|_{B_b(\cO\times A)}$.
We first show that there exists a constant $C>0$, depending on $N$, such that for all $Z_1,Z_2\in B_b(\cO\times A)$
and 
$s\in(0,\infty)$,
\begin{align}
\label{eqn:local_lip}
\|\mathcal{H}_N(s,Z_1)-\mathcal{H}_N(s,Z_2)\|\leq C(1+\mathcal{T}_s) \|Z_1 - Z_2\|\,,
\quad \textnormal{with $\mathcal{T}_s = \sup_{r\in[0,s]}\btau_r$}\,.
\end{align}
Without loss of generality, assume $Z_1,Z_2\in B_b(\cO\times A)$ satisfy $\|Z_1\|,\|Z_2\|\leq N$.
Then 
\begin{align*}
\|\mathcal{H}(s,Z_1)-\mathcal{H}(s,Z_2)\|&\leq K\|Dv_{\btau_{s}}^{\boldsymbol{\pi}(Z_1)}-Dv_{\btau_{s}}^{\boldsymbol{\pi}(Z_2)}\|_{C^0(\overline{\cO})}+K\|v_{\btau_{s}}^{\boldsymbol{\pi}(Z_1)}-v_{\btau_{s}}^{\boldsymbol{\pi}(Z_2)} \|_{C^0(\overline{\cO})}+  \btau_{s}\|Z_1-Z_2\|\\
&\leq 2C \|v_{\btau_s}^{\boldsymbol{\pi}(Z_1)}-v_{\btau_s}^{\boldsymbol{\pi}(Z_2)}\|_{W^{2,p^*}(\cO)}+\btau_s\|Z_1-Z_2\|,
\end{align*}
where the final inequality follows from Sobolev embedding
with a generic constant 
$C>0$ depending only on $d$, $p^*$, $\lambda$, $K$, $\cO$  and the modulus of continuity of $\sigma\sigma^\top$.
This along with  Proposition \ref{prop:W^{2,p} bound}  implies that
\begin{align*}
\|v_{\btau_s}^{\boldsymbol{\pi}(Z_1)}-v_{\btau_s}^{\boldsymbol{\pi}(Z_2)}\|_{W^{2,p^*}(\cO)}
&\leq C(1+\btau_s)(
1+\|v_{\btau_s}^{\boldsymbol{\pi}(Z_1)}\|_{C^1(\overline{\cO})}+\|Z_1\|
)\|Z_1-Z_2\|
\\
&
\leq C(1+\btau_s)(1+N )\|Z_1-Z_2\|\,,
\end{align*}
where the second inequality follows from Lemma \ref{lem:DV_s_bound}.
This proves \eqref{eqn:local_lip}. 

We are now ready to prove the desired well-posedness result.  
Fix an arbitrary $S>0$ and let 
$\eta>0$ be a constant to be determined later.
Let 
$X_{S,\eta}\coloneqq C\left([0,S];B_b(\cO\times A)\right)$ be equipped with the norm 
$\|k\|_{S,\eta}\coloneqq \sup_{s\in[0,S]}e^{-\eta s}\|k_s\|$.
Note that the norms $\|\cdot\|_{S,\eta}$ and
$\|\cdot\|_{S,0}$ are equivalent and 
since $X_{S,0}$ is a Banach space (see~\cite[Theorem 3.2-2]{ciarlet2013linear}), $X_{S,\eta}$ is also a Banach space.
Define $\psi:X_{S,\eta}\rightarrow X_{S,\eta}$ 
by $\psi(Z)_s = Z^0 + \int_0^s\mathcal{H}_{N}(r, Z_{r}) d r$.
We will show $\psi$ is a contraction on $X_{S,\eta}$ for an appropriate choice of $\eta$. 
To that end note that
\begin{align*}
\|\psi(Z)_s-\psi(\tilde{Z})_s\| & \leq\int_0^s\|\mathcal{H}_{N}(\bar{s},Z_{\bar{s}})-\mathcal{H}_{N}(\bar{s},\tilde{Z}_{\bar{s}})\|\,d\bar{s}\leq C(1+\mathcal{T}_S) \int_0^s\|Z_{\bar{s}}-\tilde{Z}_{\bar{s}}\|e^{-\eta \bar{s}}e^{\eta \bar{s}}\,d\bar{s}\\
&\leq C(1+\mathcal{T}_S)\sup_{r\in[0,S]}e^{-\eta r}\|Z_r-\tilde{Z}_r\|\int_0^se^{\eta \bar{s}}d\bar{s}
\leq\eta^{-1}C(1+\mathcal{T}_S) \|Z-\tilde{Z}\|_{S,\eta}e^{\eta s},
\end{align*}
with  the constant $C$  given   in \eqref{eqn:local_lip}.
Setting $C_0=C(1+\mathcal{T}_S) $ and  $\eta = C_0+1$, multiplying both sides of the above inequality by $e^{-\eta   s}$ and then taking a supremum over $[0,S]$ gives $\|\psi(Z)-\psi(\tilde{Z})\|_{S,\eta_0}\leq\frac{C_0}{C_0+1}\|Z-\tilde{Z}\|_{S,\eta }$.
Therefore from Banach's fixed point theorem there exists a unique $Z\in (C([0,S];B_b(\cO\times A)),\|\cdot\|_{S,\eta_0})$ such that $Z_s = Z_0 + \int_0^s\mathcal{H}_N(\bar{s},Z_{\bar{s}})\,d\bar{s}$, and from the equivalence of the norms $\|\cdot\|_{S,\eta_0}$ and $\|\cdot\|_{S,0}$ we have that $Z\in (C([0,S];B_b(\cO\times A)),\|\cdot\|_{S,0})$. 
By the fundamental theorem of calculus,  $Z$ is differentiable,
i.e. $Z\in C^{1}([0,S];B_b(\cO\times A))$. 
Since $S>0$ was arbitrary, we get that $Z\in \bigcap_{S>0}C^1([0,S];B_b(\cO\times A))$
and satisfies $\partial_s Z_s = \mathcal{H}_N(s,Z_s)$ for all $s>0$ and   $Z_0=Z^0$.
\end{proof}

\begin{lemma}
\label{lem:|Z_s|_bound}
Suppose Assumption~\ref{ass:data} holds.
Let $\btau\in C([0,\infty); (0,\infty))$ and $Z_0\in B_b(\cO\times A)$.
Then  there exists a constant $C>0$ depending on $d$, $p^*$, $\lambda$, $K$, $\cO$ and the modulus of continuity of   $\sigma\sigma^\top$ 
such that 
for all    $Z\in\cap_{S>0}C^1([0,S];B_b(\cO\times A))$ satisfying    \eqref{eq:md_intro}
and all  $S\geq 0$, 
\begin{align}
\label{eqn:Z_s_bound_along_flow}
\sup_{s\in[0,S]}\|Z_s\|_{B_b(\cO\times A)} \leq C(1+\mathcal{T}_S+ \|Z_0\|_{B_b(\cO\times A)})e^{C(1+\mathcal{T}_S)S}\,,\quad 
\textnormal{with $\mathcal{T}_s = \sup_{r\in[0,s]}\btau_r$}\,.
\end{align}
\end{lemma}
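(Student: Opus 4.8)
The plan is to integrate the flow in time and close a Gronwall estimate, using the \emph{linear} (not merely local Lipschitz) growth of the value function from Lemma~\ref{lem:DV_s_bound} as the essential input. Throughout write $\|\cdot\|\coloneqq\|\cdot\|_{B_b(\cO\times A)}$, as in the proof of Lemma~\ref{lem:local_existence}, and recall the notation $\mathcal H(s,Z)=-(b^\top Dv^{\boldsymbol{\pi}(Z)}_{\btau_s}-cv^{\boldsymbol{\pi}(Z)}_{\btau_s}+f+\btau_s Z)$ introduced before Lemma~\ref{lem:local_existence}.

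First I would reformulate the flow as an integral equation. Since $Z\in C^1([0,S];B_b(\cO\times A))$ solves \eqref{eq:md_intro}, we have $\partial_s Z_s=\mathcal H(s,Z_s)$, so by the fundamental theorem of calculus $Z_s=Z_0+\int_0^s\mathcal H(r,Z_r)\,dr$, and hence $\|Z_s\|\le\|Z_0\|+\int_0^s\|\mathcal H(r,Z_r)\|\,dr$ for every $s\in[0,S]$.

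Next I would bound the integrand. By the definition of $\mathcal H$, the triangle inequality, the definition of $K$, and the identity $\|v\|_{C^0(\overline{\cO})}+\|Dv\|_{C^0(\overline{\cO})}=\|v\|_{C^1(\overline{\cO})}$, one obtains $\|\mathcal H(r,Z_r)\|\le K\|v^{\boldsymbol{\pi}(Z_r)}_{\btau_r}\|_{C^1(\overline{\cO})}+K+\btau_r\|Z_r\|$. Applying Lemma~\ref{lem:DV_s_bound} to control $\|v^{\boldsymbol{\pi}(Z_r)}_{\btau_r}\|_{C^1(\overline{\cO})}\le C(1+\btau_r)(1+\|Z_r\|)$, and using $\btau_r\le\mathcal T_S$ for $r\le S$, I would absorb constants to reach $\|\mathcal H(r,Z_r)\|\le C(1+\mathcal T_S)(1+\|Z_r\|)$ for all $r\in[0,S]$, with $C$ depending only on $d,p^*,\lambda,K,\cO$ and the modulus of continuity of $\sigma\sigma^\top$. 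Finally, setting $g(s)\coloneqq 1+\|Z_0\|+\int_0^s\|\mathcal H(r,Z_r)\|\,dr$, which is $C^1$ with $g(0)=1+\|Z_0\|$ and $\|Z_s\|\le g(s)-1\le g(s)$, the previous bound gives $g'(s)=\|\mathcal H(s,Z_s)\|\le C(1+\mathcal T_S)(1+\|Z_s\|)\le C(1+\mathcal T_S)g(s)$. Gronwall's inequality then yields $g(s)\le(1+\|Z_0\|)e^{C(1+\mathcal T_S)s}$, so that for $s\le S$,
\[
\|Z_s\|\le(1+\|Z_0\|)e^{C(1+\mathcal T_S)S}\le C(1+\mathcal T_S+\|Z_0\|)e^{C(1+\mathcal T_S)S},
\]
which is exactly the claimed estimate \eqref{eqn:Z_s_bound_along_flow}.

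Since every ingredient---the integral reformulation, the triangle-inequality bound, and Lemma~\ref{lem:DV_s_bound}---is already in hand, there is no genuine obstacle here; the only care needed is bookkeeping the $\mathcal T_S$-dependence of the constants. It is worth emphasizing conceptually, however, that the linear growth of $Z\mapsto\overline{\cL}^\cdot v^{\boldsymbol{\pi}(Z)}_\tau$ (as opposed to the merely local Lipschitz continuity from Proposition~\ref{prop:W^{2,p} bound}) is precisely what makes Gronwall applicable and thereby prevents finite-time blow-up; this a priori bound is exactly the step that, combined with Lemma~\ref{lem:local_existence}, upgrades the truncated local solution to a global one in the proof of Theorem~\ref{thm:Existence_Solutions}.
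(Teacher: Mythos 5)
Your proof is correct and follows essentially the same route as the paper's: integrate the flow in time, bound the nonlinearity via the linear-growth estimate of Lemma~\ref{lem:DV_s_bound}, and close with Gronwall's inequality. The only cosmetic difference is that you run Gronwall through the explicit auxiliary function $g$ (and keep $\mathcal T_S$ rather than $\mathcal T_r$ inside the integral), which changes nothing of substance.
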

\begin{proof}
Throughout this proof, 
let $\pi_s=\boldsymbol{\pi}(Z_s)$ for all $s>0$,
and  let $C>0$ be a generic constant depending only on $d$, $p^*$, $\lambda$, $K$, $\cO$  and the modulus of continuity of $\sigma\sigma^\top$.
Integrating  \eqref{eq:md_intro}  from $0$ to $s$ yields for all $(x,a)\in \cO\times A$, 
$$
{Z_s}(x,a) =Z_0(s,a)-\int_0^s \left( \overline{ \mathcal L}^a v^{\pi_r}_{\btau_r} (x)+ f(x,a)  + \btau_r Z_r(x,a)\right)dr \,,
$$
Taking the $\|\cdot\|_{B_b(\cO\times A)}$ norm on both sides gives
\begin{align*}
\|Z_s\|_{B_b(\cO\times A)}\leq\|Z_0\|_{B_b(\cO\times A)}&+ \int_0^s C(   \| v^{\pi_r}_{\btau_r}\|_{C^1(\bar{\cO})}+1+ \cT_r \|Z_{r}\|_{B_b(\cO\times A)})dr\,,
\end{align*}
which along with Lemma \ref{lem:DV_s_bound} shows that 
\begin{align*}
\|Z_s\|_{B_b(\cO\times A)}\leq\|Z_0\|_{B_b(\cO\times A)}&+ \int_0^s C(  1+\cT_r  +(1+ \cT_r) \|Z_{r}\|_{B_b(\cO\times A)})dr\,.
\end{align*}
The desired estimate follows from    Gronwall's inequality.
\end{proof}
\begin{proof}
[Proof of Theorem
\ref{thm:Existence_Solutions}]
Fix an arbitrary  $S>0$, and let  $M>0$ be the right-hand side of \eqref{eqn:Z_s_bound_along_flow}.
By Lemma \ref{lem:local_existence}, there exists a unique  $\tilde Z \in \bigcap_{S>0} C^1([0,S];B_b(\mathcal O\times A))$ such that  $\partial_s \tilde Z_s = \mathcal H_{2M}(s, \tilde Z_s)$ for all $s>0$ and   $\tilde Z_0 = Z_0$. 
Let $S_M \coloneqq  \inf\{s\geq 0 : \|\tilde Z_s\|_{B_b(\mathcal O \times A)} \geq 2M\}$.
Assume for the moment that $S_M < S$.
On $[0,S_M]$,  we have $\partial_s \tilde Z_s = \mathcal H(s, \tilde Z_s)$ and hence by  Lemma~\ref{lem:|Z_s|_bound},  
$    \|\tilde Z_{S_M}\|_{B_b(\mathcal O \times A)}     \leq M
$
due to the assumption that $S_M \leq S$.
But that implies $2M \leq \|\tilde Z_{S_M}\|_{B_b(\mathcal O \times A)}  \leq M$ which is a contradiction.
Thus $S\leq S_M$ which means that on $[0,S]$,  $\tilde Z$ is the unique function in    $C^1([0,S];B_b(\mathcal O\times A))$ satisfying  \eqref{eq:md_intro} on $[0,S]$.
\end{proof}

\bibliographystyle{siam}
\bibliography{pg_control}

\end{document}